\documentclass[9pt,a4paper,leqno]{article}
\usepackage{blindtext}
\usepackage{fullpage}
\usepackage{amsmath}
\usepackage{amsthm}
\usepackage{amsfonts}
\usepackage{amssymb}
\usepackage{mathrsfs}
\usepackage{cases}
\usepackage{comment}
\usepackage{graphicx} 
\usepackage{mathtools}
\usepackage{soul}
\usepackage{mathabx}
\usepackage{xcolor}
\usepackage[hidelinks]{hyperref}
\usepackage{bbm}

\newtheorem{assumption}{Assumption}

\newtheorem{theorem}{Theorem}[section]
\newtheorem{notation}[theorem]{Notation}
\newtheorem{remark}[theorem]{Remark}
\newtheorem{definition}[theorem]{Definition}
\newtheorem{proposition}[theorem]{Proposition}
\newtheorem{lemma}[theorem]{Lemma} 
 
\newtheorem{corollary}[theorem]{Corollary}

\numberwithin{equation}{section}

\definecolor{LB}{rgb}{0,0.5,0.5}

\newcommand{\g}{\gamma}
\newcommand{\al}{\alpha}
\newcommand{\N}{\mathbb N}
\newcommand{\be}{\beta}
\newcommand{\R}{\mathbb R}
\newcommand{\lam}{\lambda}
\newcommand{\eps}{\varepsilon}
\newcommand{\E}{\mathbb E}

\def\sha{{\cal A}}
\def\shb{{\cal B}}
\def\shc{{\cal C}}
\def\shd{{\cal D}}
\def\she{{\cal E}}
\def\shf{{\cal F}}

\def\shn{{\cal N}}
\def\shl{{\cal L}}
\def\shp{{\cal P}}
\def\shs{{\cal S}}

\def\Q{\mathbb Q}
\def\P{\mathbb P}

\def\dots{...\,}

\title{Interacting particles in random irregular media}

\author{Luca Bondi\footnote{University of Torino and ENSTA, Institut Polytechnique de Paris, luca.bondi@unito.it}, Elena Issoglio\footnote{University of Torino, elena.issoglio@unito.it}, and Francesco Russo\footnote{ENSTA, Institut Polytechnique de Paris, francesco.russo@ensta.fr}}
\date{October 2026}

\begin{document}

\maketitle

\begin{abstract}
  In this paper,
  we first develop novel tools of independent interest in the field of stochastic calculus related to SDEs
  with distributional drift, namely the formulation of a quasi-Stroock-Varadhan martingale problem
  in the sense that the test functions are of class $C^{1,2}$ instead of solutions
  of Kolmogorov type PDE. This has several applications, among which,
  the time restriction and the notion of conditional
  rough martingale problem and related solution.

  A significant focus is also brought to singular systems of $d N$-dimensional particles defined by a
  $dN$-dimensional stochastic differential equation (SDE) with additive Brownian noise.
  The singularity  features in the drift coefficient, which is a distribution in a Besov space
  of negative index $-\beta$. We study the well-posedness of the system and the tightness of its associated empirical measures.

  \end{abstract}

{\bf Key words and phrases.}  Stochastic differential equations; quasi Stroock-Varadhan
martingale problem; distributional drift; interacting particle system.

{\bf 2020 MSC}. 60H10; 60H30; 60H50; 60F99; 35K55; 35K67.
  
\tableofcontents

\section{Introduction}\label{sc:intro}

  The focus of this paper is
  the system of singular interacting $d$--dimensional particles given by
\begin{equation}\label{eq:IPSBIR}
	\left\{
	\begin{array}{l}
		dX_t^{i,N}=F(K*\mu^N(X_t^{i,N}))b(t,X_t^{i,N})dt+dW_t^i,  \ t\in [0,T]\\
		X_0^{i,N}\sim \nu,\ 1\leq i\leq N,
	\end{array}
	\right.
\end{equation}
where $N\in\N$,  $T$ is a finite time horizon,
$F:\R\to\R$ is a non--linear differentiable function, $K: \R^d\to\R$ is a $\be$--H\"older continuous
probability kernel for $\be\in(0,1/2)$, $b$ is a time-dependent generalised function
taking values in a negative inductive Besov space of index $(-\beta)+$, namely
$b\in C([0,T];\shc^{(-\beta)+}(\R^d;\R^d))$ (see Definition \ref{def:besov} below),  $W$ is a $d$--dimensional Brownian motion and $\nu$ is a Borel probability measure on $\R^d$.
 $\mu^N=\frac1N\sum_{j=1}^N\delta_{X_t^{j,N}}$ is the so called empirical measure where $\delta$ is the Dirac delta measure.

The particle system \eqref{eq:IPSBIR} describes a collection of $d$-dimensional interacting
particles where the interactions are captured by the singular kernel $K$, which models a
pairwise interaction between the particles averaged through the empirical measure $\mu^N$:
of course the term $K*\mu^N(X^{i,N}_t)$ can be re-written as 
$$
\frac1N\sum_{j=1}^NK(X_t^{i,N}-X_t^{j,N}).
$$
At the initial time $t=0$, the system is assumed to start from an
exchangeable configuration where 
$(X_0^{i,N}),1\leq i\leq N$ are drawn from the common law $\nu$.

At this stage the system \eqref{eq:IPSBIR} is only formal, since
the drift component $b(t, \cdot)$
is not a function but a Schwartz distribution 
so that the evaluation at points does not make sense. In fact the product
$F\left(\frac{1}{N}\sum_{j=1}^N K(X_t^{i,N}-X_t^{j,N})\right) \cdot b(t,X_t^{i,N})$, is
considered as the pointwise product
in the sense of Bony, see e.g. \cite{bony}.
It is a again a well-defined distribution since
$(x^1, \ldots,x^N) \mapsto F\left(\frac{1}{N}\sum_{j=1}^N K(x^i-x^j)\right)$
belongs to $C_T \shc^\beta(\R^{dN})$.
We can then consider the system \eqref{eq:IPSBIR} within the framework of Stochastic Differential Equations (SDEs) with distributional coefficients, which are in general modelled in the literature
as suitable martingale problems, see e.g. \cite{issoglio_russoMPb}.
It is important to emphasise that a process solving \eqref{eq:IPSBIR} is generally
not a semimartingale.
For this reason it will be improper to consider  test functions $f$ of class $C^{1,2}$ describing 
the decomposition $f(t, X^1_t, \ldots, X^N_t), t \in [0,T]$, as in
the case of the Stroock-Varadhan martingale problem.

One significant question concerns the behaviour of the empirical measures $\mu^N$
when $N \rightarrow \infty$. As far as we know this is the first paper
attacking this problem of particle systems which are not semimartingales.
This paper not only investigates the first step, i.e. the tightness,
but, in order to reach that objective, it establishes a series
of new useful results in the framework of the
stochastic analysis of SDEs with distributional drift.

As we already mentioned, this paper is concerned with two connected main subjects.
\begin{enumerate}
\item How the macroscopic evolution of an interacting particle system is captured through the  behaviour of its empirical measures.
  \item The development of new stochastic analysis tools for  SDEs with distributional drift.
\end{enumerate}
We mention below some short historical elements  related to the two subjects.
\begin{enumerate}
\item  In the literature, topic 1. is very closely related to
  the so called {\it chaos propagation},
The core mathematical idea behind
is the asymptotic independence that emerges in large interacting particle systems as the number of particles $N \to \infty$. Consider $N$ interacting particles
$X_t^{i,N} \in \R^d, 1 \le i \le N$. Even though this phenomenon was observed by some
theoretical physicists starting probably with L. Boltzmann, it seems
that that property was introduced, as {\it Boltzmann property}, in \cite{kac}. On the
other hand the  terminology {\it chaos propagation} appears only later in \cite{mckeanChaos}.
According to Proposition 2. of \cite{Sznitman}, the propagation of chaos is equivalent to the fact that as $N \to \infty$, the empirical measure $\mu^N$
(as random element taking values in $\shp(\Omega_T^d)$)
converges weakly to a deterministic law $U$.
Suppose that the coupled particle system is a solution of stochastic differential equation
$$dX_t^{i,N} = \tilde b\left(t,X_t^{i,N}, \mu_t^N\right)dt + \tilde \sigma\left(t,X_t^{i,N}, \mu_t^N\right)dW_t^i, \quad 1 \le i \le N,$$
where $\tilde b:[0,T]\times \R^d\times \shp(\R^d)\to \R^d$ and $\tilde \sigma:\tilde b:[0,T]\times \R^d\times \shp(\R^d)\to \R^{d\times d}$ are two bounded smooth coefficients.
When the initial states $\{X_0^{i,N}\}_{i=1}^N$ are independent and identically distributed with law $\nu$ (meaning the joint initial law is the product measure $\nu^{\otimes N}$),
then propagation of chaos holds, so that the empirical measure converges weakly to $U$ being the 
the law of the solution  $\bar X$ satisfying 
$$d\bar{X}_t = \tilde b\left(t,\bar{X}_t, \text{Law}(\bar{X}_t)\right)dt + \tilde \sigma\left(t,\bar{X}_t, \text{Law}(\bar{X}_t)\right)dW_t,$$

One basic reference in the literature is the Saint-Flour course of
A.S. Sznitman,
 see e.g. \cite{Sznitman},
 which in particular formalises Kac's notion of chaos
 when the diffusion is  the identity matrix ($\tilde \sigma={\rm Id}$) and the the drift is
 smooth of convolution type.
 Since the convolution kernel is fixed, one speaks often
 of {\it strong} (resp. {\it weak}) interactions when the the support is
 relatively large (resp. small).
 
 In \cite{Oelschlaeger}, the author investigates a model of  interacting $N$-particle system
 with so called {\it moderate interaction}, i.e. the dependence on the interacting measure is tuned
 via a convolution with some smooth probability densities sequence $V^N$, which converges to the $\delta$-measure. 
 In particular, if $\mu^N$ is the empirical measure, then  the empirical measure at each time $t$, i.e. $\mu^N_t$
 (together with $V^N*\mu^N_t$)
converges weakly (when $N$ goes to infinity) to
 a deterministic density  law $u_t$.
 In this sense, it appears a trade-off between the
 smoothing of the kernel and the
 number of particles.

 In this case, $u_t, t \in [0,T],$  constitute the time-marginals of the law of
 a 
 solution $U$ to a McKean-Vlasov SDE
 and  $t \mapsto u_t, $
 is solution of a non-linear (Fokker-Planck) PDE.
 Later, \cite{coppoletta} extended this result,
 proving the effective convergence of the empirical measures $\mu^N$
 to $U$, whereas in \cite{Oelschlaeger} only the time-marginals
 were proved to converge.

Then, in \cite{JourMeleard}, the authors further generalised these limiting regimes to non-constant diffusion coefficients, establishing quantitative rates of convergence.  
 We also mention some empirical numerical results
 by in the case of a generalized porous media
(with possibly discontinuous  coefficients)
 equations
 confirming the propagation of chaos even in this irregular context, see e.g.
 \cite{cuvelier1, cuvelier2}.
Recent relevant references can be found for 
  classical McKean-Vlasov associated with particle systems, see Chapter 3 of \cite{BIR_Review}, and for a recent review on propagation of chaos, see \cite{chaintron1,chaintron2}.

\item

  Also in topic 2. literature is now quite rich and very active.
We refer to an SDE of the type

\begin{equation}
	\label{eq:SDEmod}
	dX_{t} = B(t,X_{t})dt + dW_{t}, \quad X_{0} \sim \nu, \quad t \in [0,T],
\end{equation}
where $W$ is a standard $d$-dimensional Brownian motion, $\nu$ is a Borel probability measure on $\R^{d}$, and
the singular drift $B \in C([0,T];\shc^{(-\beta)+}(\R^d;\R^d))$,
with $-\beta \in (-1/2,0)$. 
In some papers the  noise is often also multiplicative in the sense that
a non-identity diffusion term appears.
The presence of the Brownian motion term is crucial to  give a sense to SDEs as \eqref{eq:SDEmod}.
Indeed, the system would be inherently ill-posed without noise, whereas the presence of $W$
enables a framework of regularisation, this phenomenon is called {\em regularisation by noise}, see e.g. \cite{flandoli-book}.

Stochastic differential equations with  distributional coefficients have been investigated extensively since the early 2000s.
Foundational work in one-dimension are
\cite{frw1,frw2, rtrut}  (resp. \cite{BassChenDir}) as far as weak (resp. strong) solutions are concerned.
The solutions were Dirichlet processes, in the sense of \cite{FolDir}:
the weak solutions were expressed with the help of a suitable martingale problem,
the strong solution as limit of more regular diffusion processes. 

Later an impressive activity on the subject in the multidimensional case has been performed:
many of them implemented the stochastic sewing lemma techniques, see \cite{KhoaLe}, which generalises the deterministic sewing lemma \cite{FeyelDeLaPradelle1,gubinelli}.
Among the applications we mention
 \cite{anzelletti24, athreya2020, butkovsky.et.al2021,  goudenege23}.
 Contributions  can be found also when the noise is a fractional Brownian motion,
 see e.g. \cite{GaleatiGerencser25}.

Other techniques have continued the martingale problems formulation, see
 \cite{flandoli_et.al14}, \cite{issoglio_russoMPb}. 
 \cite{chaudru_menozzi} investigated the case of alpha-stable noise and \cite{cannizzaro}
 has considered a context beyond the so called Young regime.
For other references we refer to
 Sections 5.4 and 5.5 of \cite{BIR_Review} and \cite{BandiniRusso_DistrDrift}
which extends the one-dimensional techniques to path-dependent SDEs with jumps.
\end{enumerate}
In view of our paper results about 
item 2. we formulate our framework.
Instead of addressing \eqref{eq:SDEmod} directly, one characterises
the law under which the canonical process formally satisfies \eqref{eq:SDEmod},  via a {\it rough martingale problem},
denominated by rMP($B, \nu)$, see Definition \ref{def:rMP}, associated with its parabolic generator
\begin{equation}\label{eq:shl}
	\mathcal{L}f = \partial_{t}f + \frac{1}{2}\Delta f + \nabla f \cdot B.
      \end{equation}
      and initial condition $\nu$.
      The classical martingale problem formulation is to be attributed to Stroock and Varadhan
      \cite{stroock_varadhan}. 
Instead of directly studying the SDE
\begin{equation}\label{eq:SDE_SV}
	dX_t=b(t,X_t)dt+\sigma(t,X_t)dW_t,
\end{equation}
 Stroock and Varadhan characterised the probability measure $\P$ on the canonical space $\Omega_T^d = C([0, T], \R^d)$
 so that, the quantity
$$f(t,X_t)-f(0,X_0)-\int_0^t L f(s,X_s)\, ds$$
is a local martingale under $\P$, for every
$f \in C^{1,2}([0,T] \times \R^d)$,
where
$$Lf(t,x) = \frac{1}{2} \sum_{i,j=1}^d a_{ij}(t,x) \partial_{x_i x_j}^2 f(t,x) + \sum_{i=1}^d b_i(t,x) \partial_{x_i} f(t,x),$$
  and $a(t,x)=\sigma\sigma^\top(t,x)$. This approach has been proven to be equivalent to finding solutions in law to \eqref{eq:SDE_SV}, see e.g. Section 5.4 Proposition 4.6 of \cite{karatzasShreve}. 
  This strategy was then generalised in our context
\eqref{eq:SDEmod}  of singular drifts, see
  e.g.  \cite{issoglio_russoMPb} which has characterized the
  so called {\it virtual solution} of \cite{flandoli_et.al14}.
  \cite{cannizzaro} has generalized this to a even more singular situation
  (beyond the Young regime),
  in the framework of paracontrolled distributions.
  The main difference between the rough martingale problem
  formulation and the classical Stroock–Varadhan one is that
  $C_b^{1,2}([0,T]\times \R^d)$ cannot be a priori used since
  $f(t,X_t)$ is generally not a semimartingale but only a
  weak Dirichlet process,
  see e.g. Chapter 15 of \cite{Russo_Vallois_Book} or \cite{er2} for the explanation of the concept.
  Consequently the chosen domain to replace $C_b^{1,2}([0,T]\times \R^d)$ is constituted
by a suitable class $\shd_\shl$ of bounded functions being solutions of $\shl f= g$, see Notation
\ref{def:D} where $g$ is a function, so that $\int_0^\cdot  g(s,X_s)\, ds$
is well-defined. A probability $\P$ will be a solution if 
$f(t,X_t)-f(0,X_0)-\int_0^t \shl f(s,X_s)\, ds$ is a $\P$-local martingale
for every $f \in \shd_\shl$.
More details are given in
 Section 5.4 of \cite{BIR_Review}.

 Section 4. of this work is devoted to study several properties of the
 rough martingale problem associated with \eqref{eq:SDEmod}
 in particular introducing different domains for which the martingale
 problem is well-posed it has a unique solution, see Section
 \ref{ssc:equivdoms}.
 One of those domains is $\hat \shd_\shl$, see Section \ref{sec:LGrMP},
 which contains
 a suitable class of (this time unbounded) solutions to
 $\shl f= g$.

 The most significant ones are given in 
 Theorem \ref{thm:extendA} where we introduce domains
 of $C^{1,2}$-type, making the formulation closer to
 the Stroock-Varadhan one. As mentioned earlier for $f$ of this type
 $f(t,X_t)$ is not a semimartingale so we need to develop some
 stochastic calculus techniques to expand those processes.
A very useful technique concerns
 the local-time operator $$C_T\shc^{0+}\ni g\mapsto \int_0^\cdot g(r,X_r)dr\in \shc,$$ which is classically well-defined for bounded continuous functions, can be continuously extended to a
 broader class of Besov distributions, i.e. $C_T\shc^{(-\beta)+}$, see Proposition \ref{pr:extendA}.

 The aforementioned  quasi-Stroock-Varadhan formulation
 has four applications.

 \begin{enumerate}
\item
   First, the dynamics of the solution to \eqref{eq:SDEmod}
   is focused via genuine stochastic analysis, in the sense
   that we specify that the canonical process is indeed a solution
   of the SDE \eqref{eq:SDEmod},
   which is an immediate when $B$ is a function.
   Corollary \ref{cor:extendA} provides a precise sense of the SDE
   taking into account the fact that
   $id(x)=x$ belongs to $C_{ub}^{2+}(\R^d)$. For some aspects
   this procedure simplifies a bit the formalism
   of Section 6. of \cite{issoglio_russoMPb}.
\item 
  Secondly, we answer a natural question related
  to the solution $\P$ of the rough martingale problem
  on an interval $[0,T]$. For $t \in [0,T)$, we expect that  the restriction
  $\P^t$ of $\P$  to $\Omega^d_t$ is (the unique)
  solution of the rough martingale problem
  related to the restricted drift $B\vert{}_{[0,t]}$.
  With the original formulation of the rough martingale problem
  with domain $\shd_\shl$, it was a mathematical change to prove
  this property, see Remark \ref{rmk:difficulty}.
  With the new quasi-Stroock-Varadhan formulation
  this is almost an immediate consequence.
  \item 
     We discuss a technique to recover solutions
   to {\it conditional rough martingale problems},
   namely given $\P$ solution to a rough martingale problem on $\R^d$ we construct $\P_{\bar \g}$ from the conditional expectation of $\P$ with respect to $(Z^2=\g^2,\ldots,Z^d=\g^d)$ such that $\P_{\bar\g}$ is a solution to a rough martingale problem on $\R$.
   When $B$ is a function, the fact that the canonical process is a solution
   of a classical SDE makes the procedure quite direct.
   In the case of rough martingale procedure
   the quasi-Stroock-Varadhan formulation becomes an efficient tool
   mimicking the case when $B$ is a function.
 \item
   We  establish useful uniform $L^p$-estimates for a
   generic solution $\P$ of rMP($B,\nu$)
   of the Besov norm of the drift $B$.

\end{enumerate}
Finally we use previous tools to establish the tightness of the empirical measures
$\mu^N$ of the singular  interacting particles \eqref{eq:IPSBIR}.
By  Proposition 2.2 of \cite{Sznitman}, this property is equivalent to
the tightness of the law $\shl$aw($Z^{1,N}$) of the sequence of the first particle $Z^{1,N}$, taking into account the exchangeability of every $N$-system of
particles. Theorem \ref{thm:MomentsCond}) shows finally the tightness
of the laws of the $(Z^{1,N})$ making use of items 3. and 4. above.
The main result is stated in Corollary \ref{cor:MomentsCond}.

\medskip

 The remainder of this paper is organised as follows.
 Section \ref{sc:prelim} introduces the topological setting and provides preliminary Schauder and Bernstein estimates on Besov and inductive spaces,
 in particular for functions belonging to unbounded
 Besov-H\"older funtions.
Section \ref{sec:AR} contains new analytical results
on Besov distributions, in particular  the rigorous embedding of $d$-dimensional distributions into $dN$-dimensions
ones, see Section \ref{ssc:wellpos}.
In the same section, Section \ref{sec:Kolmogorov} provides some
useful technical results on
Kolmogorov PDEs involving Besov distributions.

Section \ref{sec:LG} is focused on the rough martingale problem: Section \ref{ssc:domain} recalls basic definitions and existing results on the rough martingale problem, Section \ref{ssc:equivdoms} compares the different domains used in the literature, Section \ref{sec:LGrMP} defines the linear growth
rough martingale problem, and Section \ref{sec:ext-op} recovers the quasi-Stroock–Varadhan formulation and the dynamics of the SDE \eqref{eq:SDEmod}. Section \ref{sec:extension-alia} examines the structural properties of the
rough martingale problem, such as {\it time restriction} and
moments.
Next, Section \ref{sec:CRMP} addresses the conditional rough martingale problem. Finally, Section \ref{sec:IPS} applies these tools to establish the
     tightness of empirical measures for the interacting particle system \eqref{eq:IPSBIR}.

     \section{Topological setting: definitions and recalls }

     \label{sc:prelim}
 
\subsection{Basic preliminaries}

 \label{sc:basic-prelim}

 We give first some definitions and preliminary results on spaces and functions which will be used later on. $T >0$ will be a fixed terminal time. By convention, any vector $x\in\R^d$ is a column vector.
 Nevertheless the scalar product of two vectors $x$ and $y$
 will simply be denoted $x y$. Given $x \in \R^d$ (resp. a finite-dimensional matrix $M\in \R^{d\times d}$)
     $\vert x\vert $ (resp. $\vert M\vert $) will denote the Euclidean 
     (resp. Frobenius) norm of  $x$ (resp. $M$). 
If $F$ is a finite dimensional  normed space  and $E$ is a given set,
for   $f:E \rightarrow F$ 
we denote $\|f\|_{\infty}:= \sup_{x\in E} \vert f(x)\vert  $.
$C_b^2(\R^d)$ (resp. $C^{1,2}([0,T] \times \R^d)$)
will be the linear subspace
of $C^2(\R^d)$ (resp. $C_b^{1,2}([0,T] \times \R^d)$) constituted by real-valued bounded
functions with bounded derivatives.
This is also a Banach space 
equipped with a natural norm which sums up 
the usual sup-norm the function and all the derivatives.

The space $C^{1,2}_{ubuc}([0,T]\times\R^d)$ will be the space of (not necessarily
bounded functions) $f$ such that $\partial_tf,\  \nabla f,\ {\rm Hess}(f)$ are bounded and ${\rm Hess}(f)$ is uniformly continuous.
The space $C_{ub}^{1,2+}([0,T]\times\R^d)$ (resp. $C_{ub}^{2+}(\R^d)$) will be the set of
(possibly unbounded) functions
in  $C^{1,2}([0,T] \times \R^d)$ (resp. $C^{2}(\R^d)$)
with bounded derivatives,
such that there exists $\epsilon>0$ so that its second derivatives w.r.t. $x$ are 
$\epsilon$-H\"older continuous (uniformly with respect to $t$).
We will also denote by
 $C_{b}^{1,2+}([0,T]\times\R^d)$ (resp. $C_{b}^{2+}(\R^d)$)
 the linear space of bounded functions belonging to
  $C_{ub}^{1,2+}([0,T]\times\R^d)$ (resp. $C_{ub}^{2+}(\R^d)$).
  We observe that we have the inclusion of spaces $C_{ub}^{1,2+}([0,T]\times\R^d)\subseteq C^{1,2}_{ubuc}([0,T]\times\R^d)$ holds.
  Finally the set of functions $C_c^{1,2+}([0,T]\times \R^d)$ will denote the set
  of $C_{ub}^{1,2+}([0,T]\times \R^d) $ functions with compact support.
 
  $C^{0,1}([0,T] \times \R^d)$ will be the Fréchet space of
  real valued continuous functions on $[0,T] \times \R^d$
  such that the first order derivatives are continuous.

We denote  by $\shs:=\shs(\R^d) $, the space of real-valued Schwartz functions on
$\R^d$ and by $\shs':=\shs'(\R^d)$     the space of tempered distributions.
We denote by $\mathcal{F}$ and $\mathcal{F}^{-1}$ the Fourier transform on $\shs$ and
its inverse, see Definition 1.20 in Section 1.2.3 of \cite{sawano}, which are extended on $\mathcal{S}'$ in the usual way, see Definition 1.21 in the same section of \cite{sawano}. The dual pairing on $\shs$ and $\shs'$ will be indicated with $\langle\cdot,\cdot\rangle$. $L^1:= L^1 (\R^d)$ will denote the space of real-valued Lebesgue integrable functions.
Given a topological linear space $E$, $C_TE$ (resp. $B_TE$)  will denote the space of $E$-valued continuous (resp. bounded Borel) functions defined on $[0,T]$
and $\shp(E)$  will denote the set of all Borel  probability measures on $E$.

Throughout the paper $c$ will denote a (universal, but possibly dependent on the
dimension $d$) positive constant, while $C$ will denote a positive  constant which will depend on one or more parameters  (possibly indicated with subscripts), that will be specified case by case. These constants can vary from line to line.
In particular $C_T$ will always indicate a strictly positive increasing function
$T \mapsto C_T$.
Given a function $f:[0,T] \times \R^d \rightarrow \R$, we will often denote $f(s):= f(s,\cdot)$.

We introduce a small discussion about heat semigroups.
\begin{definition}\label{def:heatkern}
  Let $P_t$ denote the semigroup generated by $\frac12\Delta$ on $\shs$, in particular for all $\Phi\in\shs$ we define
  $P_t \Phi = \int_{\R^d} p_t(x-y) \Phi(y) dy, t \ge 0$,
 where $p$ is the heat kernel
\begin{equation}\label{eq:heatkern}
p_t(z):=\frac1{(2\pi t)^{d/2}} \exp^{-\frac{|z|^2}t}.
\end{equation}
It is easy to see that $P_t:\shs\to\shs$. 
Moreover we can extend it to $\shs'$ by dual pairing (and we denote it with the same notation
for simplicity). 
One has $\langle P_t\Psi, \Phi\rangle:=\langle \Psi,P_t\Phi, \rangle$ for all $\Psi\in\shs'$ and $\Phi\in\shs$, the equality holds without changing the sign, because $p$ is symmetric.
\end{definition}
In the whole paper $(P_t)$ will denote the heat semigroup introduced above.

We conclude this  subsection introducing some definitions and conventions related to probability concepts.
Given a probability space $(\Omega, \she, P)$ and a filtration $\shf :=(\shf_t)$ fulfilling the usual conditions
we will denote by $\shc^{\shf, P}$, or simply by $\shc^\P$, when the underlying filtration and probability 
are easily identifiable, the $\shf^X$- adapted and progressively measurable processes defined on $[0,T]$ taking values in $\R^d$. It is an $F$-type space when equipped with a metric
associated with the u.c.p. convergence with respect to $P$.
Throughout the whole paper we will denote by $\Omega_T^d$ the canonical space $C([0,T];\R^d)$ and with $Z$ the canonical process and we will denote by $\shb(\Omega)$ the Borel $\sigma$-algebra on $\Omega$ .

\subsection{Besov-H\"older spaces and inequalities}.

Besov spaces are a fundamental element of this study, and we give a definition of a particular class of those spaces.
\begin{definition}\label{def:besov}
  The Besov space $=\shc^{\g}:=\shc^{\g}(\R^d)$ is defined by set of real-valued functions
    \begin{equation*}
        \shc^{\g}:=\Bigg\{f\in\mathcal{S}':\sup_{j\in\N}2^{j\g}\|\mathcal{F}^{-1}(\varphi_j\cdot\mathcal{F}f)\|_{\infty}<\infty\Bigg\},
    \end{equation*}
    where $(\varphi_j)_{j\in\N}$ is a dyadic partition of unity.
 A classical general reference is \cite{sawano},
 Section 2.1, which uses the denomination Nikolskii–Besov spaces
 or  \cite{bahouri}, Section 1.3.
By Proposition 2.3 in \cite{sawano}, we have that $\shc^{\g'}\subseteq \shc^{\g}$ for all $\g,\g'\in\R$ such that $\g<\g'$.
By Theorem 2.7 in  \cite{sawano}, we know that if $\g\in(0,1)$, then the Besov Space $\shc^\g$ is equivalent to the
H\"older-Zygmund space, which we will still denote $\shc^\g$ by a slight abuse of notation. This is endowed with the norm $\|\cdot\|_{\shc^\g}$, defined for all $h\in\shc^\g$ as

Furthermore, we recall that, if $\g\in\R_+\setminus \N$, then the Besov space $\shc^{\g}$ coincides with the H\"older space, namely the space of bounded functions with bounded derivatives up to order $\lfloor\g\rfloor$ and such that the $\lfloor\g\rfloor$-th derivative is $(\g-\lfloor\g\rfloor)$--H\"older continuous.

\begin{equation}\label{eq:holdnorm}
  \|h\|_{\shc^\g}:=\|h\|_{\infty}+\|h\|_{\g},
\end{equation}
where
$$\|h\|_{\g}:=
\sup_{x,y\in\R^d, x\not=y}\frac{|h(x)-h(y)|}{|x-y|^\g}.$$
If $\g\in(1,2)$
\begin{equation}\label{eq:holdnorm1}
 \|h\|_{\shc^\g}:=\Vert h \Vert_\infty + \Vert\nabla h \Vert_\infty +\sup_{x,y\in\R^d, x\not=y}\frac{|h(x)-h(y)|}{|x-y|^{\g-1}},
 \end{equation}
see Definition 1.49 of \cite{bahouri}. 
 Furthermore, we define the inductive (resp. projective) Besov Spaces $\shc^{\g+}$ (resp. $\shc^{\g-}$)as 
    \begin{equation}\label{eq:indBesov}
        \shc^{\g+}:=\bigcup_{\al>\g}\shc^{\al}\quad,\quad \shc^{\g-}:=\bigcap_{\al<\g}\shc^{\al}.
    \end{equation}
  \end{definition}
  We will denote by $C_c^\gamma$ (resp. $C_c^{\gamma+}, C_c^\infty$)
  the subset of functions in $C^\gamma$ (resp. $C^{\gamma+}, C^\infty(\R^d)$)
  with compact support.
We recall that Lemma B.2 of \cite{issoglio_russoMK} guarantees that $C_T\shc^{\g+}=\bigcup_{\al>\g}C_T\shc^{\al}$. This constitutes a characterisation of these inductive spaces of distributions.

\begin{remark}\label{rm:equivnorm}
We want to point out that the norm introduced in \eqref{eq:holdnorm} is equivalent to the one defined by
\begin{equation}\label{eq:holdnorm2}
\Vert h\Vert_\infty +\sup_{|x-y|<1,\ x\not=y}\frac{|h(x)-h(y)|}{|x-y|^\g},\ \forall h\in\shc^\g.
\end{equation}
Indeed, it holds that 
$$\frac13\|h\|_{\shc^\g}\leq \Vert h \Vert_\infty+\sup_{|x-y|<1,\ x\not=y}\frac{|h(x)-h(y)|}{|x-y|^\g}\leq \|h\|_{\shc^\g},\ \forall h\in\shc^\g.$$
Thanks to this equivalence, we will still denote \eqref{eq:holdnorm2} by $\|h\|_{\shc^\g}$ if there will be no ambiguity of notation. It is easy to verify that this equivalence holds also for \eqref{eq:holdnorm1}.
\end{remark}

In the whole paper, for a given $\gamma \in \R$, we will denote by $\shc^\gamma(\R^d;\R^d)$
(resp. $\shc^\gamma(\R^{dN};\R^{dN})$) any vector whose components are elements of $\shc^\gamma(\R^d)$ (resp.  $\shc^\gamma(\R^{dN})$). For $ \Gamma = (\Gamma^1, \ldots, \Gamma^d)^\top \in \shc^\gamma(\R^d;\R^d)$,
we denote
\begin{equation}\label{eq:normvec}
	 \Vert \Gamma \Vert_{\shc^\gamma(\R^d;\R^d)} := \sup_i \Vert \Gamma^i \Vert_{\shc^\gamma(\R^d)}.
	\end{equation}
A similar convention can be used replacing $d$ with $Nd$.

  The estimates below are known as {\em Schauder's estimates}:  for the proof we refer to \cite[Lemma 2.5]{catellier_chouk}, see also \cite{gubinelli_imkeller_perkowski} for similar results.
%

\begin{lemma}\label{lm:schauder}
	Let $\gamma \in \R$. Then  there exists a constant $c$ such that the following hold.
	 Let $f\in \shc^{\g}$. Then, for any $\theta\geq 0$,
			\begin{equation}\label{eq:schauder1}
			\|P_tf\|_{\shc^{2\theta+\g}}\leq c t^{-\theta} \|f\|_{\shc^\g},        
		\end{equation}
	for all $t>0$. 
	Moreover for  all $\theta \in (0,1)$, if  $f\in\shc^{2\theta+\g}$  we have
	\begin{equation}\label{eq:schauder2}
		\|P_tf-f\|_{\shc^\g}\leq ct^{\theta} \|f\|_{\shc^{2\theta+\g}}.
              \end{equation}

      \end{lemma}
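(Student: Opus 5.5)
We prove both estimates through the Littlewood--Paley characterisation of $\shc^\g$ in Definition \ref{def:besov}, writing $\Delta_j f := \mathcal{F}^{-1}(\varphi_j \mathcal{F} f)$. The heat semigroup is the Fourier multiplier $e^{-t|\xi|^2/2}$ (up to the normalisation in \eqref{eq:heatkern}), so it commutes with each $\Delta_j$. Both estimates therefore reduce to uniform-in-$j$ bounds on the blocks $\Delta_j P_t f$ and $\Delta_j(P_t f - f)$, in terms of $\|\Delta_j f\|_\infty \le 2^{-j\g}\|f\|_{\shc^\g}$.

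The key step is a localised Bernstein-type multiplier estimate. For $j\ge1$, $\varphi_j$ is supported in an annulus $\{c_1 2^j\le|\xi|\le c_2 2^j\}$. Pick a smooth $\tilde\varphi$ equal to $1$ on the reference annulus, and write
\[
\Delta_j P_t f = \mathcal{F}^{-1}\bigl(\tilde\varphi(2^{-j}\cdot)\, e^{-t|\cdot|^2/2}\bigr) * \Delta_j f .
\]
We then show that the kernel $k_{j,t}:=\mathcal{F}^{-1}(\tilde\varphi(2^{-j}\cdot)e^{-t|\cdot|^2/2})$ satisfies $\|k_{j,t}\|_{L^1}\le c\, e^{-c' t 2^{2j}}$. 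After rescaling $\xi=2^j\eta$, this is the $L^1$ norm of the inverse Fourier transform of $\tilde\varphi(\eta)e^{-t2^{2j}|\eta|^2/2}$. That norm is bounded by a finite number of $L^1$ norms of $\eta$-derivatives of this compactly supported symbol, using the standard bound $\|\mathcal{F}^{-1}g\|_{L^1}\le c\|(1-\Delta)^{d}g\|_{L^1}$. Each derivative carries polynomial factors in $\lambda = t2^{2j}$ times $e^{-c\lambda}$, and these are absorbed into $e^{-c'\lambda}$. This is the step requiring the most care, but it is classical (cf.\ Lemma 2.4 of \cite{bahouri}). The low-frequency block $j=0$ is handled by the trivial bound $\|P_t\Delta_0 f\|_\infty\le\|\Delta_0 f\|_\infty$, since $P_t$ is an $L^\infty$ contraction.

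For \eqref{eq:schauder1}, Young's inequality gives
\[
2^{j(2\theta+\g)}\|\Delta_jP_tf\|_\infty\le c\,2^{2j\theta}e^{-c't2^{2j}}\,2^{j\g}\|\Delta_j f\|_\infty\le c\,t^{-\theta}\sup_{\lambda>0}\lambda^\theta e^{-c'\lambda}\,\|f\|_{\shc^\g}.
\]
For $j=0$ we use $2^{0}\le t^{-\theta}$ when $t\le1$. For $t>1$ the low-frequency term is not decaying, so we either restrict to $t\in(0,T]$ and let $c$ depend on $T$, or note that the statement is intended for bounded times. We flag this point: the constant will depend on $T$ for the $j=0$ block.

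For \eqref{eq:schauder2}, the symbol of $\Delta_j(P_t-I)$ is $\varphi_j(\xi)(e^{-t|\xi|^2/2}-1)$. For $j\ge1$ we write $e^{-t|\xi|^2/2}-1=-\int_0^t \tfrac{|\xi|^2}{2}e^{-s|\xi|^2/2}\,ds$, and the same kernel estimate gives $\|\Delta_j(P_t-I)f\|_\infty\le c\min(1,t2^{2j})\|\Delta_jf\|_\infty$; the bound $1$ comes from treating $P_t$ and $I$ separately. For the $j=0$ block we use the same integral representation with $|\xi|\lesssim1$, which gives the factor $ct\le c\,t^{\theta}$ for bounded $t$. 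Interpolating with $\min(1,x)\le x^\theta$ for $\theta\in(0,1)$, we get
\[
2^{j\g}\|\Delta_j(P_tf-f)\|_\infty\le c\,t^\theta 2^{j(2\theta+\g)}\|\Delta_jf\|_\infty\le c\,t^\theta\|f\|_{\shc^{2\theta+\g}}.
\]
Taking the supremum over $j$ concludes the proof.
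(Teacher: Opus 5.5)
Your proof is correct and is essentially the standard Littlewood--Paley argument behind Lemma 2.5 of \cite{catellier_chouk}, to which the paper defers: you use the block estimate $\|\Delta_j P_t f\|_\infty\le c\,e^{-c't2^{2j}}\|\Delta_j f\|_\infty$ for $j\ge1$ (Lemma 2.4 of \cite{bahouri}) and the $L^\infty$-contraction bound for $j=0$, and you compare $e^{-c't2^{2j}}$, resp.\ $\min(1,t2^{2j})\le (t2^{2j})^\theta$, with the dyadic weights. The caveat you raise is a genuine defect of the statement, not an artefact of your method: for $\theta>0$, \eqref{eq:schauder1} fails as $t\to\infty$ for $f\equiv 1$ (then $P_tf=f$, so the left-hand side is a positive constant independent of $t$ while the right-hand side tends to $0$), so $c$ must depend on a horizon $T$, which is harmless because the paper only applies the estimate on bounded time intervals; \eqref{eq:schauder2}, by contrast, holds for all $t>0$, since at $j=0$ you may also use $\|(P_t-I)\Delta_0 f\|_\infty\le 2\|\Delta_0 f\|_\infty\le 2t^\theta\|\Delta_0 f\|_\infty$ for $t\ge 1$.
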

Another classical inequality is stated below.
\begin{lemma}(Bernstein's inequality).\label{lm:bern}
Let $\g\in\R$ then there exists $c>0$ such that 
\begin{equation*}
    \|\nabla g\|_{\shc^\g}\leq c\|g\|_{\shc^{\g+1}},
\end{equation*}
for all $g\in\shc^{\g+1}$.
\end{lemma}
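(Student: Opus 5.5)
We prove Bernstein's inequality (Lemma \ref{lm:bern}) directly from the Littlewood--Paley definition of $\shc^{\g}$ in Definition \ref{def:besov}. Write $\Delta_j g:=\mathcal{F}^{-1}(\varphi_j\mathcal{F}g)$ for the dyadic blocks. Since differentiation is a Fourier multiplier, it commutes with the blocks: $\Delta_j(\partial_k g)=\partial_k\Delta_j g$. The claim therefore reduces to the blockwise estimate
$$\|\partial_k \Delta_j g\|_\infty\le c\,2^{j}\|\Delta_j g\|_\infty, \qquad j\in\N,\ k=1,\dots,d.$$
Granted this, multiplying by $2^{j\g}$ and taking the supremum over $j$ gives $\|\partial_k g\|_{\shc^\g}\le c\sup_j 2^{j(\g+1)}\|\Delta_j g\|_\infty=c\|g\|_{\shc^{\g+1}}$. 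Taking the supremum over components, as in \eqref{eq:normvec}, yields the statement.

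To prove the blockwise estimate we use the support properties of the partition. For $j\ge1$, $\varphi_j=\varphi(2^{-j}\cdot)$ is supported in an annulus $2^j\mathcal{A}$. For $j=0$, $\varphi_0$ is supported in a ball. Choose $\psi\in C_c^\infty$ with $\psi\equiv1$ on a neighbourhood of $\mathcal{A}\cup\mathrm{supp}\,\varphi_0$, and set $\psi_j:=\psi(2^{-j}\cdot)$. Then $\psi_j\varphi_j=\varphi_j$, so
$$\Delta_j g=\mathcal{F}^{-1}(\psi_j)*\Delta_j g, \qquad \partial_k\Delta_j g=\big(\partial_k\mathcal{F}^{-1}\psi_j\big)*\Delta_j g .$$
Scaling gives $\mathcal{F}^{-1}\psi_j(x)=2^{jd}h(2^jx)$ with $h:=\mathcal{F}^{-1}\psi\in\shs$. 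Hence $\partial_k\mathcal{F}^{-1}\psi_j(x)=2^{j}2^{jd}(\partial_k h)(2^jx)$, and
$$\|\partial_k\mathcal{F}^{-1}\psi_j\|_{L^1}=2^j\|\partial_kh\|_{L^1}.$$
Young's inequality $\|u*v\|_\infty\le\|u\|_{L^1}\|v\|_\infty$ then yields the blockwise estimate with $c=\max_k\|\partial_k h\|_{L^1}$, a constant independent of $j$ and $g$.

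The only point requiring care is justifying these manipulations for a tempered distribution $g$, rather than a function. Each $\Delta_j g$ is the inverse Fourier transform of a compactly supported distribution, so by Paley--Wiener it is a smooth function of polynomial growth; it is bounded because $g\in\shc^{\g+1}$. The identities $\mathcal{F}(\partial_k u)=i\xi_k\mathcal{F}u$ and $\mathcal{F}(u*v)=\mathcal{F}u\,\mathcal{F}v$ (for $u\in\shs$, $v\in\shs'$) hold on $\shs'$ by duality. These justify both the commutation $\Delta_j\partial_k=\partial_k\Delta_j$ and the convolution representation above. This step is routine but is the one to check.
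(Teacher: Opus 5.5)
Your proof is correct. The paper gives no proof of this lemma; it quotes Bernstein's inequality as a classical fact. What you wrote is the standard Littlewood--Paley argument behind it: commute $\partial_k$ with the blocks $\Delta_j$, then obtain the blockwise bound $\|\partial_k\Delta_j g\|_\infty\le c\,2^j\|\Delta_j g\|_\infty$ from the rescaled cut-off $\psi_j$ and Young's inequality. Two constants are simply absorbed into $c$: the Fourier-normalisation factor in the convolution identity, and, where the paper uses the equivalent H\"older--Zygmund norms, the norm-equivalence constants.
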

The following is an important estimate, which allows to define the so-called {\em pointwise product} of distributions.
Indeed let $\al,\gamma>0$
with $\al-\gamma>0$.
There exists a constant $c>0$ such that
\begin{equation}\label{eq:bony}
    \|f \cdot g\|_{\shc^{-\gamma}} \leq c\|f\|_{\shc^{\al}}\|g\|_{\shc^{-\gamma}},
  \end{equation}
  for all $f \in \shc^{\al}$ and $g\in \shc^{-\gamma}$: in particular
  the pointwise product $fg$ is a well-defined element of $\shc^{-\gamma}$.
  Furthermore,
for all $f\in C_T\shc^{\al}$ and $g\in C_T\shc^{-\gamma}$, then $ f g \in C_T\shc^{-\gamma}$ and
\begin{equation}\label{eq:bonyt}
    \|f \cdot g\|_{C_T\shc^{-\gamma}} \leq c\|f\|_{C_T\shc^{\al}}\|g\|_{C_T\shc^{-\gamma}}, 
  \end{equation}
  see (2.9) and (2.10) of \cite{issoglio_russoMK}.
  Given an $\R^d$-valued vector function (resp. distribution)
  $f = (f_1, \ldots,f_d)$ (resp. $g = (g_1,\ldots, g_d)$)
  $ f \cdot g$ will denote the distribution
  $\sum_{i=1}^d f_i \cdot g_i$.

We focus now on some spaces of unbounded H\"older  functions, firstly introduced in Section 3 of \cite{issoglio_russoPDEa}, which will play a central role throughout the whole paper.
In particular this set will include functions with linear growth such as the component of the identity function $id(x)=x$.
This set will include  unbounded test functions in the domain of the martingale problem,
see Section \ref{sec:LGrMP}.
\begin{notation}\label{def:DC}
Given $\g\in(0,1)$, we define the space $ D\shc^\g$ as
\begin{equation}
 D\shc^\g:=\{h:\R^d\to\R \text{ differentiable function s.t. }\nabla h\in\shc^\g(\R^d;\R^d)\}.
\end{equation}
On $ D\shc^\g$, we introduce  the norm
\begin{equation}\label{eq:DC}
\|h\|_{ D\shc^\g}:=|h(0)|+\|\nabla h\|_{\shc^\g}.
\end{equation}
\end{notation}
\begin{remark} \label{rmk:DCgamma+}
  We notice immediately that for all $\g\in(0,1)$ the inclusion $\shc^{1+\g}\subseteq  D\shc^\g$ holds.
  Furthermore, from \cite[Lemma 3.1]{issoglio_russoPDEa} the space $( D\shc^\g,\|\cdot\|_{ D\shc^\g})$ is a Banach space. 
  The definition of $D\shc^{\g+}$
(resp. $D\shc^{\g-}$)
  is analogous to the one given in \eqref{eq:indBesov} for $\shc^{\g+}$.
\end{remark}

The lemma below  constitutes an extension of Schauder's estimates
for the aforementioned spaces. Its proof is postponed to Section \ref{app:B1} of the Appendix.

\begin{lemma}\label{lm:schauderDC}
Let $\gamma \in (0,1)$.
Then,  there
is  constants $ C_T > 0$, such that $T \mapsto C_T$ is increasing, such that
for all $\theta\ge 0$ such that $2\theta + \gamma <1$
\begin{equation}\label{eq:schaud1DC}
     \|P_tf\|_{D\shc^{2\theta+\g}}\le C_T (1 + t^{-\theta})\|f\|_{D\shc^\g}, t \in (0,T],
  \end{equation}
  for all $f\in D\shc^\g$.
  Moreover, there is a universal $c$ such that, for all $\theta \in (0,1)$, we have the following.
		For all $f\in D\shc^{\g+2\theta}$,  it yields
		\begin{equation}\label{eq:schaud2DC}
			\|P_tf-f\|_{D\shc^\g}\leq ct^{\frac{2\theta\wedge 1}2} \|f\|_{D\shc^{2\theta+\g}}.
		\end{equation}

\end{lemma}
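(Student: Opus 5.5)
The plan is to reduce everything to the classical Schauder estimates of Lemma \ref{lm:schauder}, applied to the gradient. The key identity is that the heat semigroup commutes with differentiation: for $f\in D\shc^\g$ (which has at most linear growth, so $P_tf$ is well defined), $\nabla P_t f = P_t \nabla f$. This is checked by differentiating under the integral in $P_tf(x)=\int p_t(x-y)f(y)\,dy$ and integrating by parts. The growth $|f(y)|\le |f(0)|+\|\nabla f\|_\infty |y|$ together with the Gaussian decay of $p_t$ justifies both steps. By the definition \eqref{eq:DC}, the $D\shc$-norm then splits into a value at $0$ and a $\shc$-norm of $\nabla f$. Each piece is handled separately.

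For \eqref{eq:schaud1DC}, I apply \eqref{eq:schauder1} componentwise to $\nabla f\in\shc^\g(\R^d;\R^d)$. This gives $\|\nabla P_tf\|_{\shc^{2\theta+\g}}=\|P_t\nabla f\|_{\shc^{2\theta+\g}}\le c\,t^{-\theta}\|\nabla f\|_{\shc^\g}$. For the value at the origin, I write $P_tf(0)=\int p_t(y)f(y)\,dy$ and use the linear growth bound:
\[
|P_tf(0)|\le |f(0)|+\|\nabla f\|_\infty\int p_t(y)|y|\,dy\le |f(0)|+c\sqrt{t}\,\|\nabla f\|_{\shc^\g}.
\]
Since $\sqrt t\le \sqrt T$, this is at most $C_T\|f\|_{D\shc^\g}$ with $C_T$ increasing. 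Summing the two bounds gives $C_T(1+t^{-\theta})\|f\|_{D\shc^\g}$. The condition $2\theta+\g<1$ only ensures that the target space $D\shc^{2\theta+\g}$ is of the type of Notation \ref{def:DC}.

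For \eqref{eq:schaud2DC}, the gradient part is again \eqref{eq:schauder2}: $\|\nabla(P_tf-f)\|_{\shc^\g}=\|P_t\nabla f-\nabla f\|_{\shc^\g}\le c\,t^\theta\|\nabla f\|_{\shc^{2\theta+\g}}$. The value at the origin is where the exponent $\frac{2\theta\wedge1}{2}$ comes from, and I expect it to be the delicate step. I will use the symmetry of $p_t$ to write
\[
P_tf(0)-f(0)=\int p_t(y)\bigl(f(y)-f(0)-\nabla f(0)\cdot y\bigr)\,dy,
\]
since the linear term integrates to zero. By the mean value theorem, $|f(y)-f(0)-\nabla f(0)\cdot y|\le |y|\sup_{|z|\le|y|}|\nabla f(z)-\nabla f(0)|$. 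I bound this by $|y|^{1+(2\theta+\g)\wedge1}\|\nabla f\|_{\shc^{2\theta+\g}}$ when $2\theta+\g<1$, using H\"older continuity of $\nabla f$. When the regularity index exceeds $1$, I bound it instead by $|y|^2\|\nabla f\|_{\shc^{2\theta+\g}}$, using the bounded second derivative via Bernstein, Lemma \ref{lm:bern}.

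Integrating against $p_t$ gives a factor $t^{(1+\alpha)/2}$ with $\alpha=(2\theta+\g)\wedge 1$. For $t\le 1$ this is at most $t^{\frac{2\theta\wedge1}{2}}$ up to a constant. For $t>1$ this comparison of powers fails, and one should either restrict to $t\le1$ or absorb the difference into the constant. If $t>1$ causes trouble, I would instead use a cruder bound. Since $|\nabla f(z)-\nabla f(0)|\le 2\|\nabla f\|_\infty$, this gives $|P_tf(0)-f(0)|\le c\sqrt t\,\|\nabla f\|_\infty$. I would then interpolate between this bound and the one above to reach the stated exponent $\frac{2\theta\wedge1}{2}$. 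Finally, I combine the origin term with the gradient term, noting $t^\theta\le t^{\frac{2\theta\wedge1}{2}}$ for $t\le1$, to conclude.
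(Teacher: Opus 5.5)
Your route is the paper's. You commute $\nabla$ with $P_t$, apply \eqref{eq:schauder1} and \eqref{eq:schauder2} to $\nabla f$, and treat the value at the origin separately. For \eqref{eq:schaud1DC} your argument coincides with the paper's, which takes $C_T=\max\{c,1+\sqrt{2T/\pi}\}$.

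For the origin term in \eqref{eq:schaud2DC}, your second-order expansion is more than needed. As written it also does not cover $2\theta+\g=1$, where $\shc^1$ is the Zygmund space and $\nabla f$ need not be Lipschitz. The paper simply reuses the first-order bound $|P_tf(0)-f(0)|\le\|\nabla f\|_\infty\,\E|W_t|\le c\sqrt t\,\|f\|_{D\shc^{2\theta+\g}}$. This suffices because the target exponent $\theta\wedge\frac12$ never exceeds $\frac12$.

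Your concern about $t>1$ is well founded, but the interpolation fallback cannot work. Both of your bounds grow at least like $\sqrt t$, and in fact no argument gives \eqref{eq:schaud2DC} with a universal $c$ for large $t$ when $\theta<\frac12$. Take $f(x)=\sqrt{1+|x|^2}$. Its gradient is bounded with bounded derivatives of all orders, so $\|f\|_{D\shc^{2\theta+\g}}<\infty$. On the other hand, $\|P_tf-f\|_{D\shc^{\g}}\ge P_tf(0)-f(0)\ge\E|W_t|-1=c_d\sqrt t-1$, which is not $O(t^\theta)$.

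The paper's proof has the same feature: adding its two bounds gives $c(\sqrt t+t^\theta)$, which is at most $2c\,t^{\theta\wedge\frac12}$ only for $t\le1$. So the correct fix is the first one you mention. Either restrict to $t\in(0,1]$, or for $t\in(0,T]$ use a $T$-dependent constant such as $c(1+\sqrt T)$. That is all the later use in \eqref{eq:PuTDC} requires, since there $t-s\le T$ and the constant already depends on $T$.
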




In the sequel we will make use of the following.

\begin{assumption}\label{ass:beta}
 Let  $0<\be<\frac12$.
\end{assumption}

\begin{assumption}\label{ass:B}
	$B = (B^1,\ldots, B^d)^\top \in C_T\shc^{(-\beta)+}(\R^d;\R^d)$.
\end{assumption}

We recall here a result from \cite{issoglio_russoMK}, which will be used later.
\begin{lemma}(\cite[Proposition 2.4]{issoglio_russoMK}).\label{lm:Xncont}
Let $g\in C_T\shc^{(-\beta)+}$.  Let us consider the sequence $(g^n)$ defined by
\begin{equation}\label{eq:molbn}
g^n(t,·) :=p_{\frac1n} \ast g(t,\cdot),
\end{equation}
for any fixed $t \in[0,T ]$, $n \geq1$, where $p$ is defined in \eqref{eq:heatkern}. Then
we have the following.
\begin{itemize}
\item[(i)] For each $n$, $g^n$ is globally bounded, together with all its space derivatives.
\item[(ii)] For each $n$, $t\mapsto g^n(t,\cdot)$ is continuous in $\shc^\g$ for all $\g>0$. In particular $g^n\in C_T\shc^{(-\beta)+}$.
\item[(iii)] We have the convergence $g^n \to g $ in $C_T\shc^{(-\beta)+}$.
\end{itemize}

\end{lemma}

 We prove here a modification of \cite[Lemma 3.2]{issoglio19}, for the case of the backward heat semigroup.

 The proof of the lemma below is postponed to Section \ref{app:B2} of the Appendix.

  \begin{lemma} \label{lm:issoglio}
	If  $h\in B_T \mathcal C^{(-\beta )+}$ then $\int_\cdot^T P_{s-\cdot} h(s) ds \in C_T\shc^{1+\be}$.
      \end{lemma}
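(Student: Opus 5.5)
Set $u(t):=\int_t^T P_{s-t}h(s)\,ds$. Since $h\in B_T\shc^{(-\beta)+}$ and the map $t\mapsto h(t)$ is only bounded Borel, I first fix a single $\alpha>\beta$ such that $h\in B_T\shc^{-\alpha'}$ for some $\alpha'<\alpha$ close to $\beta$. More precisely, I choose $\alpha'\in(\beta-\varepsilon_0... )$, i.e. $h(s)\in\shc^{-\beta+\delta}$ with a uniform bound $M:=\sup_s\|h(s)\|_{\shc^{-\beta+\delta}}<\infty$ for some $\delta>0$. This uniformity is the first point to check. For $B_T$ with values in the inductive space it should be part of the meaning of boundedness; otherwise I would invoke the analogue of Lemma B.2 of \cite{issoglio_russoMK}. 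Membership in $\shc^{1+\beta}$ then comes from Schauder's estimate \eqref{eq:schauder1} with $\gamma=-\beta+\delta$ and $2\theta=1+2\beta-\delta/2$ (so that $2\theta+\gamma=1+\beta+\delta/2$), which gives
\[
\|P_{s-t}h(s)\|_{\shc^{1+\beta+\delta/2}}\le c\,(s-t)^{-\theta}M .
\]
Because $\beta<1/2$ and $\delta>0$, we have $\theta=\tfrac12+\beta-\tfrac\delta4<1$ once $\delta$ is chosen small but positive. Hence the Bochner-type integral converges in $\shc^{1+\beta+\delta/2}\subset\shc^{1+\beta}$, and $\sup_t\|u(t)\|_{\shc^{1+\beta+\delta/2}}\le CT^{1-\theta}M$. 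Measurability of $s\mapsto P_{s-t}h(s)$ is inherited from $h$; if needed I would understand the integral in the weak, $\shs'$-pairing sense and then bound the norm via the Littlewood–Paley blocks.

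For continuity in $t$, I take $t<t'$ and split
\[
u(t)-u(t')=\int_t^{t'}P_{s-t}h(s)\,ds+\int_{t'}^T\big(P_{s-t}-P_{s-t'}\big)h(s)\,ds .
\]
The first term is bounded in $\shc^{1+\beta}$ by $cM\int_t^{t'}(s-t)^{-\theta}ds\le C(t'-t)^{1-\theta}\to0$. For the second I write $P_{s-t}=P_{t'-t}P_{s-t'}$, so the integrand equals $(P_{t'-t}-I)P_{s-t'}h(s)$. Applying \eqref{eq:schauder2} with $\gamma=1+\beta$ and a small $\theta'=\delta/8$, then \eqref{eq:schauder1}, gives
\[
\|(P_{t'-t}-I)P_{s-t'}h(s)\|_{\shc^{1+\beta}}\le c(t'-t)^{\delta/8}\|P_{s-t'}h(s)\|_{\shc^{1+\beta+\delta/4}}\le c(t'-t)^{\delta/8}(s-t')^{-\theta''}M,
\]
where $\theta''=\tfrac12+\beta+\tfrac\delta8-\tfrac\delta2<1$. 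The last factor is integrable, so the second term is $O((t'-t)^{\delta/8})$.

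The main obstacle is bookkeeping the exponents so that strictly positive regularity $\delta$ is spent both on integrability of the singularity (which needs $\theta<1$, using $\beta<1/2$) and on the Hölder modulus in time. The gain from $(-\beta)+$ over $-\beta$ is exactly what makes this work. A secondary issue is justifying the uniform bound in a fixed $\shc^{-\beta+\delta}$ for a merely bounded Borel $h$.
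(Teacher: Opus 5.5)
Your proposal is correct and is essentially the paper's proof. You use the same splitting of $u(t)-u(t')$ into $\int_t^{t'}P_{s-t}h(s)\,ds$ and $\int_{t'}^T(P_{t'-t}-I)P_{s-t'}h(s)\,ds$, control both with \eqref{eq:schauder1} and \eqref{eq:schauder2}, and rely on $\beta<\frac12$ to make the time singularity integrable. The only cosmetic difference is where the semigroup difference is estimated: the paper lets $I-P_\eps$ act on $h(s)$ at the rough level $\shc^{-\alpha}\to\shc^{-\alpha-2\nu}$ before smoothing, whereas you apply it after smoothing, which is immaterial since the operators commute.

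One small correction to your closing remark: the slack that actually drives the argument is $2\beta<1$, not the strict gain $\delta>0$ coming from $(-\beta)+$. The same proof works for $h\in B_T\shc^{-\beta}$ if you take the time-H\"older exponent small.
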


\section{New analytical results on Besov distributions}

\label{sec:AR}

Throughout the section Assumption \ref{ass:beta} will hold.

\subsection{The Kolmogorov equation and related estimates in $D \shc^{\beta+}$}

\label{sec:Kolmogorov}

In this section we suppose the validity of Assumptions \ref{ass:beta} and \ref{ass:B} and
we focus on various analytical results concerning the parabolic
Kolmogorov PDE with distributional drift
\begin{equation} \label{eq:KolmPDE}
	\partial_t u+\frac12\Delta u+ \nabla u  \cdot B = g, \ u(T,\cdot) = u_T,
\end{equation}
in the
space of unbounded H\"older functions introduced in
Notation \ref{def:DC}.
Indeed here
$u_T \in   D\shc^{\beta+}$ and
$g \in C_T\shc^{(-\beta)+}$.

Solutions of \eqref{eq:KolmPDE} are considered 
in the weak and mild sense in $C_T  D\shc^{\beta}$, see Definitions 4.1 and 4.2.
in \cite{issoglio_russoPDEa},
which are shown to be equivalent in Proposition 4.5 therein.
We recall here the mild formulation, which is the one used throughout the paper.
\begin{definition}\label{def:mild}
	Let  $u\in C_T   D\shc^{\beta}  $. Then u is said to be a mild solution of \eqref{eq:KolmPDE} if it satisfies
	\begin{equation}\label{eq:mild}
		u(t) = P_{T-t} u_T  + \int_t^T P_{s-t} \left(\nabla u(s)\cdot B(s) \right) d s - \int_t^T P_{s-t}  g(s)d s, \ \forall t\in[0,T].
	\end{equation}
	
\end{definition}

\begin{remark} \label{rmk:componentwise}
	Often $g, u_T$ take value $\R^d$. In that case  the
	solution $u$ will also take values in $\R^d$.  In this case $u$  will be said to be a solution of \eqref{eq:KolmPDE}
	if it holds componentwise.
\end{remark}
\begin{theorem}\label{thm:PDEU}
	There exists a unique solution $u$ to \eqref{eq:KolmPDE} in
	$C_T  D\shc^{\beta}$. If moreover $u_T\in \shc^{1+\beta}$, then $u\in C_T\shc^{1+\beta}$.
\end{theorem}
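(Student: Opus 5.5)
The plan is a fixed point argument for the mild formulation \eqref{eq:mild} in $C_T D\shc^{\beta}$, with the time horizon handled by an exponentially weighted norm. Since $B\in C_T\shc^{(-\beta)+}$, the characterisation of inductive spaces recalled after Definition \ref{def:besov} gives $\alpha>\beta$, which we may take with $\alpha<1/2$, such that $B\in C_T\shc^{-\alpha}$. We may also choose $\gamma>\alpha$ with $u_T\in D\shc^{\gamma}$ and $g\in C_T\shc^{-\gamma'}$ for some $\gamma'<\gamma$, after shrinking exponents appropriately. The space in which the contraction is run is $C_T D\shc^{\gamma}$ with $\gamma\in(\alpha,1-\alpha)$; this window is non-empty because $\alpha<1/2$. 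The resulting fixed point then lies in $C_T D\shc^{\beta}$. For $\rho>0$ we equip this space with the equivalent norm $\|u\|^{(\rho)}:=\sup_t e^{-\rho(T-t)}\|u(t)\|_{D\shc^\gamma}$.

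We define
\[
I(u)(t):=P_{T-t}u_T+\int_t^T P_{s-t}(\nabla u(s)\cdot B(s))\,ds-\int_t^TP_{s-t}g(s)\,ds .
\]

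\textbf{Term $P_{T-t}u_T$.} It is bounded in $C_T D\shc^\gamma$ by \eqref{eq:schaud1DC} with $\theta=0$. Its continuity in $t$ comes from \eqref{eq:schaud2DC}, using the slack between $u_T\in D\shc^{\gamma+\epsilon}$ and the target index $\gamma$.

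\textbf{Integral terms.} Since $\nabla u(s)\in\shc^\gamma$ and $\gamma>\alpha$, the Bony estimate \eqref{eq:bony} gives $\|\nabla u(s)\cdot B(s)\|_{\shc^{-\alpha}}\le c\|u(s)\|_{D\shc^\gamma}\|B\|_{C_T\shc^{-\alpha}}$. Let $h$ be a distribution in $\shc^{-\alpha}$. We need $\|P_{s-t}h\|_{D\shc^\gamma}$, which controls $|P_{s-t}h(0)|$ and $\|\nabla P_{s-t}h\|_{\shc^\gamma}$. By Bernstein (Lemma \ref{lm:bern}) and Schauder \eqref{eq:schauder1},
\[
\|\nabla P_{s-t}h\|_{\shc^\gamma}\le c\|P_{s-t}h\|_{\shc^{1+\gamma}}\le c(s-t)^{-(1+\gamma+\alpha)/2}\|h\|_{\shc^{-\alpha}} .
\]
The exponent $(1+\gamma+\alpha)/2$ is $<1$ exactly because $\gamma<1-\alpha$, so the singularity is integrable. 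The value at $0$ is bounded by $\|P_{s-t}h\|_\infty\le c(s-t)^{-\alpha/2}\|h\|_{\shc^{-\alpha}}$.

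This yields the contraction estimate
\[
\|I(u)-I(v)\|^{(\rho)}\le c\|B\|\sup_t\int_t^T e^{-\rho(s-t)}(s-t)^{-(1+\gamma+\alpha)/2}ds\,\|u-v\|^{(\rho)}\le C\rho^{-(1-\gamma-\alpha)/2}\|u-v\|^{(\rho)} .
\]
Taking $\rho$ large makes $I$ a contraction. Banach's fixed point theorem applies since $D\shc^\gamma$ is Banach (Remark \ref{rmk:DCgamma+}). The $g$-term is handled identically, or more directly via Lemma \ref{lm:issoglio}, which places it in $C_T\shc^{1+\beta}\subset C_TD\shc^\beta$.

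\textbf{Time continuity of the convolution integrals.} This is the main technical step. For $t<t'$, we split
\[
\int_t^T P_{s-t}F(s)\,ds-\int_{t'}^T P_{s-t'}F(s)\,ds=\int_t^{t'}P_{s-t}F(s)\,ds+\int_{t'}^T (P_{t'-t}-I)P_{s-t'}F(s)\,ds .
\]
We estimate the first piece with the singular bound above. For the second we use \eqref{eq:schaud2DC} at a slightly higher regularity $\gamma+2\theta<1-\alpha$, which costs a factor $(t'-t)^{\theta}$ against a still integrable singularity.

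\textbf{Uniqueness.} Uniqueness in $C_TD\shc^\beta$ is the delicate point, since the contraction was run in $D\shc^\gamma$ with $\gamma>\alpha>\beta$. The first attempt is to observe that any mild solution in $C_TD\shc^\beta$ automatically lies in $C_TD\shc^\gamma$. For the product $\nabla u\cdot B$ to even make sense in \eqref{eq:mild} one needs $\beta>\alpha$ in the Bony sense; I would exploit the inductive structure by choosing $\alpha\in(\beta',\beta)$ for some $B\in C_T\shc^{-\beta'}$ with $\beta'<\beta$. Then \eqref{eq:mild} together with the smoothing estimates bootstraps $u$ into $D\shc^{\gamma}$ for every $\gamma<1-\alpha$, and uniqueness follows from the contraction in that space.

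\textbf{Second assertion.} If $u_T\in\shc^{1+\beta}$, we repeat the argument in $C_T\shc^{1+\gamma}$, where the sup norm replaces $|h(0)|$. The bound $\|P_{s-t}h\|_{\shc^{1+\gamma}}\le c(s-t)^{-(1+\gamma+\alpha)/2}\|h\|_{\shc^{-\alpha}}$ holds directly from \eqref{eq:schauder1}. The bounded fixed point coincides with $u$ by uniqueness in $C_TD\shc^\beta$, giving $u\in C_T\shc^{1+\beta}$.
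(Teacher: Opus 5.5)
Your proposal is correct in substance. The paper does not prove the statement in the text; it imports it from Theorem 4.7 and Remark 4.8 of \cite{issoglio_russoPDEa} (with $\lambda=0$). Your weighted-norm contraction on the mild formulation is therefore a self-contained alternative. It is the fixed-point counterpart of the a priori bound the paper does prove in Appendix C for Theorem \ref{thm:unDC}. That proof uses the same kernel $(s-t)^{-\eta}$ with $\eta=(1+\alpha+\beta)/2<1$, and the same tools \eqref{eq:schauder1}, \eqref{eq:bony} and Lemma \ref{lm:schauderDC}. The only difference is that it closes with a generalised Gronwall (Mittag-Leffler) inequality rather than a contraction. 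Your route removes the dependence on the external reference. The citation additionally supplies the equivalence between weak and mild solutions, which is not needed for the statement as phrased through Definition \ref{def:mild}.

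You should, however, fix a sign slip at the start. By the characterisation recalled after Definition \ref{def:besov}, $B\in C_T\shc^{(-\beta)+}$ means $B\in C_T\shc^{-\alpha}$ for some $\alpha<\beta$ (likewise for $g$), not $\alpha>\beta$.

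\begin{itemize}
\item Your existence step survives the slip, because it only discards regularity. This holds provided $\alpha<\beta+\epsilon_0$, where $u_T\in D\shc^{\beta+\epsilon_0}$.
\item With $\alpha>\beta$, though, the product $\nabla u\cdot B$ for a general $u\in C_TD\shc^\beta$ is not covered by \eqref{eq:bony}. That is why your uniqueness paragraph has to switch conventions midway.
\end{itemize}

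With the correct sign, take $\gamma=\beta$ directly. Then \eqref{eq:bony} needs $\beta>\alpha$, and integrability of the kernel needs $\alpha+\beta<1$. Continuity of $t\mapsto P_{T-t}u_T$ in $D\shc^\beta$ uses the slack $\epsilon_0$. The contraction then gives existence and uniqueness in $C_TD\shc^\beta$ at once, and the bootstrap disappears. As written, the bootstrap also overclaims: the terminal term is only in $D\shc^{\beta+\epsilon_0}$ uniformly in $t$, so you do not reach every $\gamma<1-\alpha$, although you do reach what you need.

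Two minor points:
\begin{itemize}
\item Bound $|P_{s-t}h(0)|$ through $\shc^\delta$ for some $\delta>0$ rather than through $\shc^0$, since $\shc^0$ does not control the sup norm. This gives the still integrable factor $(s-t)^{-(\alpha+\delta)/2}$.
\item In the second assertion, say where the slack at $t=T$ comes from, namely $D\shc^{\beta+\epsilon_0}\cap\shc^{1+\beta}\subset\shc^{1+\beta+\epsilon_0}$. For $u_T\in\shc^{1+\beta}$ alone, $t\mapsto P_{T-t}u_T$ is in general not continuous at $T$ in $\shc^{1+\beta}$. No second fixed point is needed there: $u-P_{T-\cdot}u_T\in C_T\shc^{1+\beta}$ by Lemma \ref{lm:issoglio}, exactly as in \eqref{eq:1+beta}.
\end{itemize}
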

	\begin{remark} \label{rmk:PDEU}

In fact, the aforementioned unique solution also belongs to 
$C_T  D\shc^{\beta+}$. If, moreover, $u_T\in \shc^{(1+\beta)+}$, then $u\in C_T\shc^{(1+\beta)+}$.
\end{remark}
\begin{proof}[Proof of Theorem \ref{thm:PDEU}]
  This is the content of Theorem 4.7 and Remark 4.8 in \cite{issoglio_russoPDEa} with
  $ b=B$ and $\lambda=0$.
\end{proof}

The theorem below provides upper bounds and time modulus of continuity of
the solution $u$ to \eqref{eq:KolmPDE}. These will turn to be fundamental tools for proving $L^p$ estimates in Section \ref{sec:Lpestim}.
The proof  is postponed to Section \ref{app:C1} of the Appendix.


\begin{theorem}\label{thm:unDC}
Let $0 < \alpha < \beta$.
  There are functions $(t,x,y,z) \mapsto R_{\al,\beta}(t, x,y,z)$
  and $(t,x,y,z) \mapsto R^0_{\al,\beta} (t,x,y,z)$,
  increasing in all variables,
such that the following holds.
Given  $B\in C_T\shc^{-\al}(\R^d;\R^d)$,
$g\in C_T\shc^{-\al}$, $u_T\in D\shc^{\beta+}$,
the unique solution of \eqref{eq:KolmPDE}
$u$ fulfils the properties below.
	\begin{itemize}
	\item[i)] \begin{equation} \label{eq:nablaunDC}
		\| u\|_{C_TD\shc^\beta} \leq  R_{\al,\beta}(T, \|B\|_{C_T\shc^{-\al}},\|g\|_{C_T\shc^{-\al}},\|u_T\|_{ D\shc^{\beta}}).
              \end{equation}
              
	\item[ii)] For $0<\eps<\be$, we have 
	\begin{equation}\label{eq:holdunDC}
          \| u  (t,\cdot)- u  (s,\cdot)\|_{D \shc^\eps}\leq
          \vert t-s\vert^{\frac{\be-\eps}2}
		R^0_{\al,\beta}(T,\|B\|_{C_T\shc^{-\al}},\|g\|_{C_T\shc^{-\al}},\|u_T\|_{ D\shc^\beta}),\quad \forall\ t,s\leq T.
	\end{equation}
      \item[iii)] We suppose now $u_T\in\shc^{1+\beta}$.
        For $0<\eps<\be$, we have
	\begin{equation}\label{eq:holdun}
          \|u  (t,\cdot)-u  (s,\cdot)\|_{\shc^{1+\eps}}\leq
          \vert t-s\vert^{\frac{\be-\eps}2}
		R^0_{\al,\beta} (T, \| B \|_{C_T\shc^{-\al}}, \| g \|_{C_T\shc^{-\al}}, \|u_T\|_{\shc^{1+\beta}}),\quad \forall\ t,s\leq T.
	\end{equation}
	\end{itemize}
	
      \end{theorem}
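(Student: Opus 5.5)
We work throughout with the mild formulation \eqref{eq:mild}, and we use the explicit $B$-norm only through $\|B\|_{C_T\shc^{-\al}}$. The strategy is the classical one for this type of linear equation: a fixed-point argument in $C_T D\shc^\beta$ equipped with an exponentially weighted norm. Since $\al<\beta$, the pointwise product $\nabla u(s)\cdot B(s)$ makes sense by \eqref{eq:bony}. Using Definition \ref{def:DC}, the $\shc^\beta$ norm of $\nabla u$ is controlled by $\|u\|_{D\shc^\beta}$, and
\[
\|\nabla u(s)\cdot B(s)\|_{\shc^{-\al}}\le c\,\|u(s)\|_{D\shc^\beta}\|B\|_{C_T\shc^{-\al}} .
\]
Next we apply Lemma \ref{lm:schauderDC}, or rather its analogue for maps $\shc^{-\al}\to D\shc^{\beta}$. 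This analogue is obtained by applying Lemma \ref{lm:schauder} to $\nabla P_{s-t}h=P_{s-t}\nabla h$ with $\theta=(1+\beta+\al)/2<1$, and by estimating the value at $0$ separately. It gives
\[
\|P_{s-t}h\|_{D\shc^\beta}\le C_T\bigl(1+(s-t)^{-(1+\beta+\al)/2}\bigr)\|h\|_{\shc^{-\al}},
\]
and the exponent $(1+\beta+\al)/2$ is integrable since $\al+\beta<1$. Controlling $|P_{s-t}h(0)|$ for a distribution $h$ requires a little care; we bound it by $\|P_{s-t}h\|_\infty\le c(s-t)^{-\al/2}\|h\|_{\shc^{-\al}}$ using \eqref{eq:schauder1} with $\theta=\al/2$.

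\textbf{Step (i).} Set $\rho>0$ and use the norm $\sup_t e^{-\rho(T-t)}\|u(t)\|_{D\shc^\beta}$. From \eqref{eq:mild}, with the term $P_{T-t}u_T$ bounded via \eqref{eq:schaud1DC} at $\theta=0$, we obtain
\[
\|u(t)\|_{D\shc^\beta}\le C_T\|u_T\|_{D\shc^\beta}+C_T\|g\|\,T^{\delta}+C_T\|B\|\int_t^T\bigl(1+(s-t)^{-\kappa}\bigr)\|u(s)\|_{D\shc^\beta}\,ds ,
\qquad \kappa=\tfrac{1+\beta+\al}{2}<1 .
\]
We close this by a Gronwall-type lemma with weakly singular kernel, or equivalently by choosing $\rho$ large, depending increasingly on $\|B\|$ and $T$, so that $C_T\|B\|\int_0^T(1+r^{-\kappa})e^{-\rho r}dr\le 1/2$. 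This produces $R_{\al,\beta}$, and it is monotone because every constant involved is monotone. This step is the main technical point: keeping explicit, increasing dependence of the constants, in particular on $\rho(\|B\|)$.

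\textbf{Step (ii).} For $t<t'$ we split the difference $u(t)-u(t')$ as follows.
\begin{itemize}
\item[(a)] $P_{T-t}u_T-P_{T-t'}u_T=(P_{t'-t}-I)P_{T-t'}u_T$. By \eqref{eq:schaud2DC} with $2\theta=\beta-\eps$, this is bounded by $|t'-t|^{(\beta-\eps)/2}\|u_T\|_{D\shc^\beta}$. Here we need $\|P_{T-t'}u_T\|_{D\shc^\beta}\le C\|u_T\|_{D\shc^\beta}$, which follows from \eqref{eq:schaud1DC} with $\theta=0$.
\item[(b)] $(P_{t'-t}-I)\int_{t'}^T P_{s-t'}f(s)\,ds$, where $f=\nabla u\cdot B-g$. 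The integral lies in $D\shc^\beta$ with a bound from step (i), so again \eqref{eq:schaud2DC} applies.
\item[(c)] $\int_t^{t'}P_{s-t}f(s)\,ds$, measured in $D\shc^\eps$ with $\eps<\beta$. It is bounded by $\int_t^{t'}(s-t)^{-(1+\eps+\al)/2}ds\sim|t'-t|^{(1-\eps-\al)/2}$, which is at least $|t'-t|^{(\beta-\eps)/2}$ up to a factor $T$-power, since $1-\al>\beta$.
\end{itemize}
Collecting (a)–(c) yields $R^0_{\al,\beta}$.

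\textbf{Step (iii).} The argument is identical, with $D\shc$ replaced by $\shc^{1+\cdot}$ and using Lemma \ref{lm:schauder} directly. Here the $\sup$-norm part poses no difficulty, because $u_T$ is bounded and $u\in C_T\shc^{1+\beta}$ by Theorem \ref{thm:PDEU}. The first step is to redo the a priori bound in $C_T\shc^{1+\beta}$ with the same Gronwall argument, and then to repeat the three-term splitting.
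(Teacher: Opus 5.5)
Your proposal is correct and follows essentially the paper's route: the mild formulation, Schauder plus Bony bounds, a weakly singular Gronwall argument for i), and the same three-term splitting for ii) and iii), namely the terminal term, $(P_{t'-t}-I)$ acting on the tail integral, and the short-time integral. The paper uses a generalized Gronwall inequality giving a Mittag-Leffler constant, which your weighted-norm variant replaces equivalently; the one slip is that \eqref{eq:schauder1} with $\theta=\al/2$ only gives a $\shc^{0}$ bound, which does not control $\|\cdot\|_\infty$, so bound $|P_{s-t}h(0)|$ instead via $\|P_{s-t}h\|_{\shc^{1+\beta}}\le c(s-t)^{-\frac{1+\al+\beta}{2}}\|h\|_{\shc^{-\al}}$, as the paper does by estimating $v(t):=u(t)-P_{T-t}u_T$ in $\shc^{1+\beta}$ and using $\|\cdot\|_{D\shc^\beta}\le\|\cdot\|_{\shc^{1+\beta}}$.
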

      \begin{remark} \label{rmk:unDC}
        The explicit form of $R_{\al,\beta}$ (resp. $R^0_{\al,\beta}$) is given in \eqref{eq:Ralpha}
        (resp. \eqref{eq:R0}). Taking $u_T = 0$, $g= 0$, it is obvious that the unique solution of
        \eqref{eq:KolmPDE} is $u \equiv 0$. Since
        $R(T, x, 0, 0) = R^0(T,x,0,0) \equiv 0$,
        we observe that the upper bounds \eqref{eq:nablaunDC}, \eqref{eq:holdunDC} and \eqref{eq:holdun}
are optimal.
        \end{remark}

\begin{corollary} \label{cor:unDC}
  Let $(u_T^n)$ be a bounded sequence in $D\shc^{\beta}$.
Let $0 < \al<\be$.
  We consider a bounded sequence $(B^n)$ (resp. $(g^n)$) 
 in $C_T\shc^{(-\alpha}(\R^d; \R^d)$ 
 (resp. $C_T\shc^{(-\alpha}(\R^d)$).
For each $n$, let $u^n$ be the
  unique solution in
	 $ C_TD\shc^\be$
        to the Kolmogorov equation \eqref{eq:KolmPDE}
	with $B=B^n$, $g=g^n$ and $u_T=u^n_T$.
        We have the following.
	\begin{itemize}
		\item[(i)] The sequence $(\| u^n\|_{C_T  D\shc^\beta})$  is bounded.
		\item[(ii)] Let $\eps \in (0,\beta)$.  The sequence $u^n:[0,T] \rightarrow D\shc^{\epsilon}$  is $\frac{\be-\eps}{2}$-equi H\"older continuous.
       \item[(iii)] If $(u^n_T)$ is a bounded sequence in $\shc^{1+\beta}$ and $\eps\in(0,\beta)$, then the sequence $u^n:[0,T] \rightarrow \shc^{1+\epsilon}$
         is $\frac{\be-\eps}{2}$-equi H\"older continuous.
	\end{itemize}

\end{corollary}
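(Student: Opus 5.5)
The corollary follows from Theorem \ref{thm:unDC} together with the monotonicity of the bounding functions $R_{\al,\beta}$ and $R^0_{\al,\beta}$ in all their arguments.

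First, by hypothesis there are constants $M_B, M_g, M_T < \infty$ such that for every $n$
\[
\|B^n\|_{C_T\shc^{-\al}} \le M_B, \qquad \|g^n\|_{C_T\shc^{-\al}} \le M_g, \qquad \|u^n_T\|_{D\shc^\beta} \le M_T.
\]
In item (iii), $M_T$ is instead taken as a bound for $\|u^n_T\|_{\shc^{1+\beta}}$.

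Before applying Theorem \ref{thm:unDC}, one must check that $u^n_T$ lies in $D\shc^{\beta+}$, as the theorem requires; the corollary only assumes boundedness in $D\shc^\beta$. I would handle this in one of two ways.

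\begin{itemize}
\item Read the corollary's hypothesis as implicitly requiring $u_T^n \in D\shc^{\beta+}$ (each element individually), with the bound taken in $D\shc^\beta$. This is consistent with the theorem, whose estimates only involve $\|u_T\|_{D\shc^\beta}$.
\item Alternatively, observe that the proof of Theorem \ref{thm:unDC} (and the existence part, Theorem \ref{thm:PDEU}) only uses $u_T \in D\shc^\beta$ through $\|u_T\|_{D\shc^\beta}$, since the mild formulation makes sense in $C_T D\shc^\beta$.
\end{itemize}

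Likewise, $B^n, g^n \in C_T\shc^{-\al}$ with $\al<\be$ implies $B^n, g^n \in C_T\shc^{(-\beta)+}$. So Assumption \ref{ass:B} holds for each $n$, and the unique solution $u^n \in C_T D\shc^\beta$ exists by Theorem \ref{thm:PDEU}.

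\textbf{Item (i).} Apply \eqref{eq:nablaunDC} to each $u^n$. Since $R_{\al,\beta}$ is increasing in each variable,
\[
\|u^n\|_{C_TD\shc^\beta} \le R_{\al,\beta}(T, M_B, M_g, M_T),
\]
which is a bound uniform in $n$.

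\textbf{Item (ii).} Fix $\eps\in(0,\beta)$. By \eqref{eq:holdunDC} and monotonicity of $R^0_{\al,\beta}$, for all $s,t\le T$,
\[
\|u^n(t)-u^n(s)\|_{D\shc^\eps} \le |t-s|^{\frac{\be-\eps}2}\, R^0_{\al,\beta}(T,M_B,M_g,M_T).
\]
Since the constant does not depend on $n$, this is exactly $\frac{\be-\eps}2$-equi-H\"older continuity.

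\textbf{Item (iii).} The argument is identical to (ii), using \eqref{eq:holdun} with $M_T$ now bounding $\|u^n_T\|_{\shc^{1+\beta}}$.

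The only delicate point is the hypothesis check discussed above; the remaining steps are direct substitutions of uniform bounds into increasing functions.
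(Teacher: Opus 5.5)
Your proposal is correct and is essentially the paper's proof. The uniform bounds on $\|B^n\|_{C_T\shc^{-\al}}$, $\|g^n\|_{C_T\shc^{-\al}}$ and $\|u^n_T\|$ are inserted into the increasing functions $R_{\al,\beta}$ and $R^0_{\al,\beta}$ of Theorem \ref{thm:unDC}, with item (iii) handled identically through \eqref{eq:holdun}.

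On the hypothesis check, your first reading is the one the paper tacitly uses: each $u^n_T\in D\shc^{\beta+}$, with bounds measured in $D\shc^{\beta}$. Your second alternative is not safe. For $u_T$ merely in $D\shc^{\beta}$, the map $t\mapsto P_{T-t}u_T$ need not be continuous into $D\shc^{\beta}$ at $t=T$, so existence of a solution in $C_TD\shc^{\beta}$ is not automatic.
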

\begin{proof}
  We prove item {\it (i),(ii)} because, for item {\it (iii)}, one just needs to repeat the same argument but using Theorem \ref{thm:unDC} item {\it iii)}. 
	By assumption we have
	$$\sup_{n\in\N}\|B^n\|_{C_T\shc^{-\al}}<\infty,\ \sup_{n\in\N}\|g^n\|_{C_T\shc^{-\al}}<\infty\ \text{ and } \sup_{n\in\N}\|u^n_T\|_{D\shc^{\beta}}<\infty.$$
By  Theorem \ref{thm:unDC} (i) and (ii), taking in particular into account that    $R_{\al,\beta}$ and
	$R^0_{\al,\beta}$ are increasing in all variables,  we have
	\begin{equation}\label{eq:nablaunDCbis}
		\| u^n\|_{C_TD\shc^\beta}\leq R_{\al,\beta}(T,\sup_{n\in\N}\|B^n\|_{C_T\shc^{-\al}},\sup_{n\in\N}\|g^n\|_{C_T\shc^{-\al}},\|u^n_T\|_{ D\shc^\beta})
	\end{equation}
	and
	\begin{equation}\label{eq:holdunDCbis}
		\| u^n(t)- u^n(s)\|_{D\shc^{\eps}}\leq (t-s)^{\frac{\be-\eps}2}R^0_{\al,\beta}(T,\sup_{n\in\N}\|B^n\|_{C_T\shc^{-\al}},\sup_{n\in\N}\|g^n\|_{C_T\shc^{-\al}},\|u^n_T\|_{ D\shc^\beta}).
	\end{equation}
	Thus, we conclude.
\end{proof}
We now state and prove a useful result that will be used throughout the whole paper.

\begin{corollary}\label{cor:unDCBis}    
  Let $g\in C_T\shc^{(-\beta)+}$ and $u_T\in D\shc^{\beta+}$. Let $u
  \in C_TD\shc^{\beta}$ 
  the solution to \eqref{eq:KolmPDE}. Let also $(B^n)\subseteq C_T\shc^{(-\beta)+}(\R^d;\R^d)$ (resp. $(g^n)\subseteq C_T\shc^{(-\beta)+} $)
  converging to $B$ in $C_T\shc^{(-\beta)+}$ (resp. to $g$ in $C_T\shc^{(-\beta)+}$).
  Let $(u^n_T)$ be  a sequence in $ D\shc^{\beta+}$ converging to $u_T$ in $ D\shc^{\beta+}$.

  For each $n$, let us denote by
   $u^n \in C_T D\shc^{\beta}$ be the solution of
\begin{equation}\label{eq:shln}
		\shl^nu^n=g^n,\ u^n(T)=u^n_T,
	\end{equation} 
where $\shl^n u = \partial_t u +\frac12\Delta u+\nabla u \cdot B^n$.

Then the sequence  $(u^n)$ converges to
 $u$ in $C_TD\shc^{\beta+}$.
\end{corollary}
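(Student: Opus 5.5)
The plan is a compactness argument: use the equicontinuity estimates of Corollary \ref{cor:unDC} to extract convergent subsequences, identify every limit as $u$ through the mild formulation \eqref{eq:mild} and the uniqueness in Theorem \ref{thm:PDEU}, and then upgrade to convergence of the whole sequence. Convergence in $C_T\shc^{(-\beta)+}$ means convergence in $C_T\shc^{-\al}$ for some $\al<\be$, by the characterisation via Lemma B.2 of \cite{issoglio_russoMK}. Likewise, convergence of $u^n_T$ in $D\shc^{\beta+}$ means convergence in $D\shc^{\beta'}$ for some $\beta'>\beta$.

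\textbf{Step 1: fixing the exponents.} I fix one $\al<\be$ that works for both $(B^n)$ and $(g^n)$, and I fix $\beta'\in(\beta,1/2)$ for $(u^n_T)$. I apply Theorem \ref{thm:unDC} and Corollary \ref{cor:unDC} with the pair $(\al,\beta')$ in place of $(\al,\beta)$. This requires $B^n,g^n\in C_T\shc^{-\al}$ with $\al<\beta'$, which holds. The corollary then gives that $(u^n)$ is bounded in $C_TD\shc^{\beta'}$, and that for each $\eps\in(\beta,\beta')$ it is $\frac{\beta'-\eps}{2}$-equi-H\"older from $[0,T]$ into $D\shc^\eps$.

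\textbf{Step 2: difference estimate.} Set $w^n=u^n-u$, and choose $\beta<\eps<\beta''<\beta'$ with $u\in C_TD\shc^{\beta''}$ (possible by Remark \ref{rmk:PDEU}, after shrinking $\beta'$ if needed). From \eqref{eq:mild},
\[
w^n(t)=P_{T-t}(u^n_T-u_T)+\int_t^T P_{s-t}\big(\nabla w^n(s)\cdot B^n(s)\big)ds+\int_t^T P_{s-t}\big(\nabla u(s)\cdot(B^n-B)(s)\big)ds-\int_t^T P_{s-t}(g^n-g)(s)ds.
\]
This is exactly the Kolmogorov equation \eqref{eq:KolmPDE} for $w^n$, with drift $B^n$, terminal condition $u^n_T-u_T$, and source $\tilde g^n:=g^n-g-\nabla u\cdot(B^n-B)$. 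By the Bony estimate \eqref{eq:bonyt} with $\nabla u\in C_T\shc^{\beta''}$ and $\beta''>\al$,
\[
\|\tilde g^n\|_{C_T\shc^{-\al}}\le \|g^n-g\|_{C_T\shc^{-\al}}+c\,\|\nabla u\|_{C_T\shc^{\beta''}}\|B^n-B\|_{C_T\shc^{-\al}}\to0 .
\]
Applying Theorem \ref{thm:unDC}(i) to $w^n$ with exponents $(\al,\eps)$ gives
\[
\|w^n\|_{C_TD\shc^\eps}\le R_{\al,\eps}\big(T,\sup_m\|B^m\|_{C_T\shc^{-\al}},\|\tilde g^n\|_{C_T\shc^{-\al}},\|u^n_T-u_T\|_{D\shc^\eps}\big).
\]
The second argument of $R_{\al,\eps}$ stays bounded while the last two tend to $0$.

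\textbf{Main obstacle.} The difficulty is to show that the right-hand side above actually tends to $0$. Monotonicity of $R$ together with the optimality remark (Remark \ref{rmk:unDC}, $R(T,x,0,0)=0$) gives this only if $R$ is continuous at such points. The text states only monotonicity, so I would verify continuity from the explicit form \eqref{eq:Ralpha}. I expect it to be linear in the last two arguments, since the equation is linear in $(g,u_T)$.

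\textbf{Fallback if continuity fails.} I would use the compactness route. By Step 1 and Arzel\`a--Ascoli type arguments (local compactness of the embedding $D\shc^{\beta'}\hookrightarrow D\shc^{\eps}$ on compacts), subsequences converge locally. Passing to the limit in \eqref{eq:mild}, using Lemma \ref{lm:schauder}, identifies the limit as $u$ by uniqueness. The global (non-local) convergence would then still have to be recovered, and that is where the linearity of the estimate is genuinely needed.
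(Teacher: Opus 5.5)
Your argument is correct. The step you flag as the main obstacle is not one: the paper gives $R_{\al,\beta}$ explicitly in \eqref{eq:Ralpha} as $R_{\al,\beta}(T,x,y,z)=\bigl(c\frac{T^{1-\eta}}{1-\eta}y+C_Tz\bigr)E_{1-\eta}\bigl(\Gamma(1-\eta)T^{1-\eta}cx\bigr)$. This is linear in $(y,z)$ and finite for bounded $x$, so $R_{\al,\eps}(T,\sup_m\|B^m\|_{C_T\shc^{-\al}},\|\tilde g^n\|_{C_T\shc^{-\al}},\|u^n_T-u_T\|_{D\shc^\eps})\to0$ and your Step 2 closes the proof. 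You can drop the compactness fallback, which, as you note, would not by itself give convergence in the global $D\shc$ norms. Step 1 is only needed to place $u^n$ and $u$ in $C_TD\shc^{\eps}$ for one fixed $\eps\in(\beta,1/2)$, via Theorem \ref{thm:PDEU} at the shifted exponent together with uniqueness in $C_TD\shc^\beta$. That makes $w^n$ the unique solution at level $\eps$.

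Your route differs from the paper's in where the perturbation is placed.

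\textbf{Paper's route.} It keeps the limiting operator and writes $\shl u^n=g^n+\nabla u^n\cdot(B-B^n)$, using $u^n\in C^1([0,T];\shs')$ (Remark 4.4 of \cite{issoglio_russoPDEa}) to evaluate $\shl u^n$. It then needs the uniform bound $\sup_n\|\nabla u^n\|_{C_T\shc^\beta}<\infty$ from Corollary \ref{cor:unDC}(i) to see that this source tends to $g$ in $C_T\shc^{-\al}$. Finally it invokes the external continuity of the fixed-drift solution map (Remark 4.18 after Lemma 4.17 of \cite{issoglio_russoPDEa}), which gives the convergence directly in $C_TD\shc^{\beta+}$.

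\textbf{Your route.} You subtract the limit and let $w^n=u^n-u$ solve the equation with the moving drift $B^n$ and the vanishing source $g^n-g-\nabla u\cdot(B^n-B)$. This works directly on the mild formulation. It needs no uniform bound on $u^n$, only regularity of the fixed $u$ and membership of each $u^n$ in $C_TD\shc^\eps$. It uses only the in-paper a priori estimate of Theorem \ref{thm:unDC}(i), through its monotonicity in the drift norm and its linearity in the data. The cost is doing the exponent bookkeeping, running Theorem \ref{thm:unDC} with $(\al,\eps)$ and $\al<\beta<\eps<1/2$, explicitly.

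Your proof is therefore more self-contained, while the paper's is shorter because it outsources the linear stability estimate.
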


\begin{remark} \label{rmk:unDCBis}
If in previous statement we replace $C_TD\shc^{\beta+}$ (resp. $D\shc^{\beta+}$)
with $C_T\shc^{(1+\beta)+}$ (resp. $\shc^{(1+\beta)+}$)
then, by analogous argument we have that  $(u^n)$ converges 
 $u$ in $C_T\shc^{(1+\beta)+}$.
  \end{remark}

\begin{proof} [Proof of Corollary \ref{cor:unDCBis}]
  Since $u^n$ is the solution to \eqref{eq:shln},
  then, by Remark 4.4 \cite{issoglio_russoPDEa}, $u^n\in C_TD\shc^{\beta}\cap C^1([0,T;\shs'])$, so we can evaluate $\shl u^n$ to get
  $$\shl u^n=\shl^nu^n+\nabla u^n\cdot(B-B^n)=g^n+\nabla u^n\cdot(B-B^n)\in C_T\shc^{(-\beta)+}.$$
  By Corollary \ref{cor:unDC},
  there exists some $C>0$ such that $\|\nabla u^n\|_{\shc^{\beta}}\leq C$ for all $n$.
By assumption, there 
  is  $\al<\be$, such that both  $B^n\to B$ and $g^n\to g$ in $C_T\shc^{-\al}$. Consequently,  taking also into account \eqref{eq:bonyt},  we have that 
  $$ \|g^n+\nabla u^n\cdot(B-B^n)-g\|_{C_T\shc^{-\al}}\leq  \|g^n-g\|_{C_T\shc^{-\al}}+\|\nabla u^n\|_{C_T\shc^{\beta}}\|(B-B^n)\|_{C_T\shc^{-\al}}\leq\|g^n-g\|_{C_T\shc^{-\al}}
  +C\|(B-B^n)\|_{C_T\shc^{-\al}},$$
  which converges to $0$.
 Thus,  Remark 4.18 after Lemma 4.17 in \cite{issoglio_russoPDEa} yields that $u^n\to u$ in $C_TD\shc^{\beta+}$, and this concludes the proof.
\end{proof}

The proposition below relates to the case when $B$ belongs to $C_T\shc^{0+}(\R^d;\R^d)$ and the terminal condition is smooth.
\begin{proposition}\label{pr:lunardi}
	Let $h\in C_T\shc^{0+}(\R^d;\R^d)$, $g\in C_T\shc^{0+}$ and $u_T\in C_b^{2+}(\R^d)$. Then the unique solution $u\in C_T\shc^{(1+\beta)+}$ of 
	\begin{equation}\label{eq:lunardi}
		\partial_t u+\frac12 \Delta u+\nabla u h=g , \ u(T)=u_T.
	\end{equation}
	provided by Theorem \ref{thm:PDEU} is a classical solution, in particular $u\in C_b^{1,2+}([0,T] \times \R^d)$.
\end{proposition}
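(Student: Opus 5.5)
We plan to show that, when the drift $h$ and the source $g$ are genuine Hölder functions, the mild solution from Theorem \ref{thm:PDEU} is a classical solution by a bootstrap on the mild formulation \eqref{eq:mild}. Fix $\gamma\in(0,1)$ small, with $\gamma<\beta$, such that $h\in C_T\shc^{\gamma}(\R^d;\R^d)$ and $g\in C_T\shc^{\gamma}$. This uses the characterisation $C_T\shc^{0+}=\bigcup_{\al>0}C_T\shc^{\al}$ from Lemma B.2 of \cite{issoglio_russoMK}. Also fix $\epsilon>0$ with $u_T\in C_b^{2+\epsilon}$; we may take $\epsilon\le\gamma$.

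Since $u_T\in C_b^{2+}\subseteq \shc^{(1+\beta)+}$, Remark \ref{rmk:PDEU} gives $u\in C_T\shc^{1+\beta'}$ for some $\beta'>\beta$. In particular $\nabla u\in C_T\shc^{\beta'}$ is a bounded Hölder function. Hence the product $f:=\nabla u\cdot h-g$ is an ordinary pointwise product and lies in $C_T\shc^{\gamma}$, with $\|f\|_{C_T\shc^\gamma}\le c\|\nabla u\|_{C_T\shc^{\gamma}}\|h\|_{C_T\shc^\gamma}+\|g\|_{C_T\shc^\gamma}$. The mild equation \eqref{eq:mild} then reads
$$u(t)=P_{T-t}u_T+\int_t^T P_{s-t}f(s)\,ds .$$
This is the mild form of the inhomogeneous backward heat equation $\partial_t u+\frac12\Delta u=-f$, with $f$ Hölder in space and continuous in time.

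We then invoke classical parabolic Schauder theory for the heat equation, for instance Lunardi's theory of analytic semigroups in Hölder spaces, or a direct estimate using Lemma \ref{lm:schauder}. This shows that $v(t):=\int_t^T P_{s-t}f(s)\,ds$ belongs to $C_T\shc^{2+\gamma'}$ for $\gamma'<\gamma$, and even to $C_T\shc^{2+\gamma}$ in the Lunardi setting. The direct estimate bounds $\|P_{s-t}f(s)\|_{\shc^{2+\gamma'}}\le c(s-t)^{-(1-(\gamma-\gamma')/2)}\|f\|_{C_T\shc^\gamma}$ by \eqref{eq:schauder1} with $2\theta=2+\gamma'-\gamma$; this is integrable, and continuity in $t$ follows from \eqref{eq:schauder2}. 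Moreover $v$ is differentiable in time with $\partial_t v=-\frac12\Delta v-f$, both terms continuous and bounded.

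The term $P_{T-t}u_T$ is smooth for $t<T$. Its second derivatives equal $P_{T-t}\partial^2 u_T$, which are bounded and $\epsilon$-Hölder uniformly in $t$, and $\partial_t P_{T-t}u_T=-\frac12\Delta P_{T-t}u_T$ is continuous up to $t=T$, since $\Delta u_T$ is uniformly continuous. Adding the two pieces gives $u\in C^{1,2}$ with bounded $\partial_t u$, $\nabla u$ and ${\rm Hess}(u)$, and ${\rm Hess}(u)$ is $\min(\epsilon,\gamma')$-Hölder uniformly in $t$. Boundedness of $u$ itself comes from $u\in C_T\shc^{1+\beta}$. Hence $u\in C_b^{1,2+}([0,T]\times\R^d)$ and the equation \eqref{eq:lunardi} holds pointwise.

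The main obstacle is the time regularity, specifically the existence and continuity of $\partial_t v$. The spatial bound is a routine consequence of Schauder. For the time derivative we would differentiate $v(t)=\int_t^T P_{s-t}f(s)ds$, which gives $-f(t)-\int_t^T\frac12\Delta P_{s-t}f(s)ds$, and justify the interchange through the integrable singularity estimated above. Continuity of $f$ in $t$ with values in $\shc^\gamma$, which follows from $u\in C_T\shc^{1+\beta'}$ and $h,g$ continuous, ensures these expressions are continuous. If this direct route becomes delicate, we would instead cite Lunardi's optimal regularity theorem for $\partial_t+\frac12\Delta$ in $C_T\shc^\gamma$.
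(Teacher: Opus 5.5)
Your argument is correct, but it takes a different route from the paper.

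The paper argues by existence plus uniqueness. After a time reversal, it quotes Theorem 5.1.9 of \cite{lunardi95} to get a classical solution $\phi\in C_b^{1,2+}([0,T]\times\R^d)$ of the full equation \eqref{eq:lunardi}, drift term included. It then notes that $\phi\in C_T\shc^{1+\beta}$ and, since the Bony product extends the ordinary one, that $\phi$ solves the equation in the sense of Theorem \ref{thm:PDEU}. Uniqueness then forces $u=\phi$.

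You instead bootstrap from the mild solution itself. Once $u\in C_T\shc^{1+\beta'}$, the term $f=\nabla u\cdot h-g$ is a genuine H\"older source in $C_T\shc^{\gamma}$, so \eqref{eq:mild} becomes Duhamel's formula for the plain backward heat equation. From there only constant-coefficient Schauder theory (Lemma \ref{lm:schauder}) is needed.

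The paper's route is a one-line citation. However, it leans on a variable-coefficient parabolic result whose hypotheses (coefficients merely continuous in time, H\"older in space) have to be matched. It also uses without comment that a $C_b^{1,2+}$ classical solution lies in $C_T\shc^{1+\beta}$ and satisfies the mild/weak formulation. Your route is essentially self-contained, works directly with the object the statement is about, and needs no uniqueness. The loss of exponent ($\gamma'<\gamma$ in the direct estimate) is harmless, since $C_b^{1,2+}$ only asks for some positive H\"older exponent.

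The time-derivative step you flag as the main obstacle can be settled without differentiating under the integral sign. With $v(t)=\int_t^TP_{s-t}f(s)\,ds$, one has $v(t)-v(t+\delta)=\int_t^{t+\delta}P_{s-t}f(s)\,ds+(P_\delta-I)v(t+\delta)$. After dividing by $\delta$, both terms converge uniformly in $x$, to $f(t)$ and $\frac12\Delta v(t)$ respectively. This holds because $f$ and $\Delta v$ are continuous in time with H\"older, hence uniformly continuous, values. The left derivative is handled the same way. Hence $\partial_t v=-\frac12\Delta v-f$, which is jointly continuous and bounded.
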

\begin{proof}
	By considering a time reversal argument and thanks to Theorem 5.1.9 of \cite{lunardi95}, we know that there exists a classical solution $\phi\in C_b^{1,2+}$ to 
	$$\shl \phi=g ,\phi(T)=f_T.$$
	In particular $\phi \in C_T\shc^{1+\beta}$. Since the pointwise product extends the usual product,
and the fact that the PDE admits a unique solution in $C_T\shc^{1+\beta}$ (by Theorem \ref{thm:PDEU})
we have that $f=\phi$.
\end{proof}

\subsection{Immersion of $d$-Dimensional Besov Spaces into $dN$-Dimensions}


 \label{ssc:wellpos}
 In this whole subsection, we fix two positive integers $d$ and $N$.
 As mentioned in the title, we introduce the
 notion of
 immersion of  Schwartz distributions on $\R^d$ into Schwartz distributions in $\R^{dN}$  and related properties.
 In particular, given a Schwartz distribution $h \in \shc^{-\beta}(\R^d)$, $1 \le k \le N$,
$\ell:\R^{dN}\to\R^d$ is some regular function
we need to give  a meaning to the product 
 of the type  $$\ell(x_1,\dots,x_N)h(x_k), \text{ for } x_1,\ldots,x_N\in\R^d, 1 \le k \le N.$$
The issue is that the dimensions $dN$ and $d$ are different
so that, a priori, the product is not a classical pointwise (Bony) product.
 In order to make the dimensions compatible, 
the second factor $x_k \mapsto h(x_k)$
needs to be formally considered as an extension of
$ (x_1,\ldots,x_N) \mapsto h(x_k),$
when $h$ in $\shc^{0+}(\R^d)$.
For this, we construct an embedding map $h \mapsto T^k_h$ from  $\shc^{-\beta}(\R^d)$ into $\shc^{-\beta}(\R^{dN})$
such that, when $h \in \shc^{0+}(\R^d)$ then  $T^k_h$ is the function
$(x_1,\ldots, x_N) \mapsto h(x_k)$.
At this point $ (x_1,\ldots,x_N)  \mapsto h(x_k),$
is replaced rigorously by $(x_1,\ldots,x_N)  \mapsto T^k_{h}(x_1, \ldots, x_N)$ since we construct $T$ so that $T^k_h\in\shs'(\R^{dN})$. 
For simplicity of notation in the sequel of the section $k$ will be fixed to $1$ and $T$ will simply denote $T^1$.
However similar considerations will apply to $T^k, 1 \le k \le N$.

\begin{definition}\label{def:marginal}
	Given a function $\varphi\in \shs(\R^{dN}) (\subseteq L^1(\R^{dN}))$ we denote by  $\Tilde{\varphi}:\R^d \to\R$ the \emph{marginalisation} of $\varphi$,
	that is
	\begin{equation}\label{eq:marginal}
		\Tilde{\varphi}(x_1):=\int_{\R^{d(N-1)}}\varphi(x_1,\dots,x_N)dx_2\dots dx_N.
	\end{equation}
\end{definition}
\begin{definition}\label{def:T}
	For all $h\in\shc^{(-\beta)+}(\R^d)$ we define $T:\shc^{-\beta}(\R^d)\to\shs'(\R^{dN})$ to be the map that associates $h$ to
	\begin{equation}\label{eq:T}
		\langle T_h,\varphi\rangle:=\langle h, \Tilde{\varphi}\rangle,\quad\forall\varphi\in\shs(\R^{dN}).
	\end{equation}
\end{definition}
We immediately get the following.
\begin{lemma}\label{lm:fourier}
	If $\varphi \in \mathcal{S}(\R^d)$ then
	
	\begin{equation}\label{eq:fourier}
		\int_{\R^d} \mathcal{F}^{-1}\varphi(y) dy =\varphi(0).
	\end{equation}
	\begin{proof}
		\eqref{eq:fourier} follows from the definition of extension of Fourier Transform on $\shs'$  which yields the  easy calculation 
		\begin{equation*}
			\varphi(0)=\langle \delta_0,\varphi\rangle=\langle \underbrace{\shf\delta_0}_\text{=$1\in\shs'$},\shf^{-1}\varphi\rangle=\int_{\R^d} \shf^{-1}\varphi(y) dy.\qedhere
		\end{equation*}
	\end{proof}
\end{lemma}

\begin{proposition} \label{prop:Fmarg}
	If $\varphi\in\
	C_c^{\infty}(\R^{dN})\subseteq
	\mathcal{S}(\R^{dN})$, then we have 
	\begin{equation}\label{eq:fourierp}
		\widetilde{\mathcal{F}^{-1}\varphi}(\cdot)=\mathcal{F}^{-1}[\varphi(\cdot,0,\dots,0)].
	\end{equation}
\end{proposition}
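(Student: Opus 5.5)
The plan is to compute both sides of \eqref{eq:fourierp} directly from the integral formula for the inverse Fourier transform, and then use Fubini together with the Fourier inversion identity of Lemma \ref{lm:fourier}, applied in the variables $x_2,\dots,x_N$. Throughout we use the normalisation of \cite{sawano}, $\mathcal{F}^{-1}\psi(x)=(2\pi)^{-n/2}\int e^{i x\xi}\psi(\xi)\,d\xi$ on $\R^n$. Any other normalisation only changes constants, and these cancel consistently provided the same convention is used in dimension $d$ and $dN$. Since $\varphi\in C_c^\infty(\R^{dN})$, the function $\mathcal{F}^{-1}\varphi$ is Schwartz. Hence its marginalisation $\widetilde{\mathcal{F}^{-1}\varphi}$ is well defined by Definition \ref{def:marginal}, and the restriction $\varphi(\cdot,0,\dots,0)$ belongs to $C_c^\infty(\R^d)$, so the right-hand side also makes sense.

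\textbf{Key step.} Write $\xi=(\xi_1,\xi')$ with $\xi'=(\xi_2,\dots,\xi_N)\in\R^{d(N-1)}$. The exponential factorises as $e^{ix\xi}=e^{ix_1\xi_1}e^{ix'\xi'}$, which gives
\[
\mathcal{F}^{-1}\varphi(x_1,x')=(2\pi)^{-d/2}\int_{\R^d}e^{ix_1\xi_1}\,\big[\mathcal{F}'^{-1}\varphi(\xi_1,\cdot)\big](x')\,d\xi_1 .
\]
Here $\mathcal{F}'^{-1}$ denotes the inverse Fourier transform on $\R^{d(N-1)}$ acting in the last variables, for fixed $\xi_1$.

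\textbf{Integration in $x'$.} Next I integrate this expression in $x'$ over $\R^{d(N-1)}$ and exchange the order of integration in $x'$ and $\xi_1$. Justifying this exchange is the only delicate point, because $(x',\xi_1)\mapsto e^{ix_1\xi_1}\mathcal{F}'^{-1}\varphi(\xi_1,\cdot)(x')$ must be absolutely integrable. I will obtain integrability from a uniform Schwartz bound. Since $\varphi$ is compactly supported and smooth, repeated integration by parts in $\xi'$ gives
\[
|\mathcal{F}'^{-1}\varphi(\xi_1,\cdot)(x')|\le C_M(1+|x'|)^{-M}\mathbbm{1}_{K}(\xi_1)
\]
for some compact set $K$. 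This bound is integrable in $(x',\xi_1)$, so Fubini applies. Alternatively, one can simply observe that $\mathcal{F}^{-1}\varphi\in\shs(\R^{dN})$ and work with the partial transform.

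\textbf{Conclusion.} After the exchange, Lemma \ref{lm:fourier} in dimension $d(N-1)$ gives $\int_{\R^{d(N-1)}}\mathcal{F}'^{-1}\varphi(\xi_1,\cdot)(x')\,dx'=\varphi(\xi_1,0,\dots,0)$. Therefore
\[
\widetilde{\mathcal{F}^{-1}\varphi}(x_1)=(2\pi)^{-d/2}\int_{\R^d}e^{ix_1\xi_1}\varphi(\xi_1,0,\dots,0)\,d\xi_1=\mathcal{F}^{-1}[\varphi(\cdot,0,\dots,0)](x_1),
\]
which is \eqref{eq:fourierp}.
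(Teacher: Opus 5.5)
Your argument is structurally correct, but the normalisation remark needs fixing. With the symmetric convention you wrote down, $\mathcal{F}^{-1}\psi(x)=(2\pi)^{-n/2}\int e^{ix\xi}\psi(\xi)\,d\xi$, Lemma \ref{lm:fourier} is false as stated. In that convention one has $\int_{\R^n}\mathcal{F}^{-1}\psi(y)\,dy=(2\pi)^{n/2}\psi(0)$, so your computation would actually give $\widetilde{\mathcal{F}^{-1}\varphi}=(2\pi)^{d(N-1)/2}\,\mathcal{F}^{-1}[\varphi(\cdot,0,\dots,0)]$. The constants therefore do not ``cancel consistently'' for every convention. The identity holds exactly when $\mathcal{F}\delta_0=1$, which is what the proof of Lemma \ref{lm:fourier} uses; an example is $\mathcal{F}^{-1}\psi(x)=(2\pi)^{-n}\int e^{ix\xi}\psi(\xi)\,d\xi$. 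Then the prefactor splits as $(2\pi)^{-dN}=(2\pi)^{-d}(2\pi)^{-d(N-1)}$, and your computation goes through verbatim. The paper's proof relies on Lemma \ref{lm:fourier} in the same way, so this is a bookkeeping correction, not a structural flaw.

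With the convention fixed, your route is genuinely different from the paper's. The paper first checks \eqref{eq:fourierp} on tensor products $\prod_j\varphi_j(y_j)$. There $\mathcal{F}^{-1}$ factorises and Lemma \ref{lm:fourier} is applied to each of the $N-1$ transverse factors. It then extends to general $\varphi$ by density of $\bigotimes_{i=1}^N\shs(\R^d)$ in $\shs(\R^{dN})$ (nuclearity, via Tr\`eves) and continuity of the Fourier transform. You instead take the partial inverse transform in $\xi'$, justify Fubini with a decay bound uniform in $\xi_1$ (this bound is correct), and apply Lemma \ref{lm:fourier} fibrewise in dimension $d(N-1)$.

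Your approach is elementary and self-contained. It avoids the nuclear-space machinery. It also avoids two continuity facts the paper's density step needs but never states: continuity of the marginalisation $\shs(\R^{dN})\to\shs(\R^d)$, and continuity of $\varphi\mapsto\varphi(\cdot,0,\dots,0)$. It extends directly to all $\varphi\in\shs(\R^{dN})$ by replacing $\mathbbm{1}_K(\xi_1)$ with Schwartz decay $(1+|\xi_1|)^{-M}$. The paper's approach makes the mechanism a one-line computation in the product case, and pushes the analytic content into a general density theorem.
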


\begin{remark} \label{rmk:Fmarg}
	We point out that the Fourier transform on the left-hand side in (\ref{eq:fourierp}) is meant on $\R^{dN}$ whilst the one on the right-hand side is meant on $\R^d$. 
\end{remark}

\begin{proof} [Proof of Proposition \ref{prop:Fmarg}]
	We first prove \eqref{eq:fourierp} in the particular case of functions
	\begin{equation} \label{eq:productBis}
		\varphi(y_1,\dots,y_N):=\prod_{j=1}^N \varphi_j(y_j)\in \mathcal{S}(\R^{dN}).
	\end{equation}
	Then, the general case 
	will follow by a density argument together with
        the continuity of the Fourier transform.
        Indeed the projective tensor product space
        $\bigotimes_{i=1}^N \shs(\R^d)$
        and $\shs(\R^{dN})$ are isomorphic, see Corollary
        of Theorem 51.5 of \cite{treves}. See also
        Chapter 50 of \cite{treves} for the characterisation of nuclear space.
        Since the Fourier transform is multiplicative on the class of functions of the type in \eqref{eq:productBis}, i.e.
	\begin{equation} \label{eq:product}
		\mathcal{F}^{-1}\varphi(y_1,\ldots,y_N) =   \prod_{j=1}^N \mathcal{F}^{-1}  \varphi_j(y_j),
	\end{equation}
	we have
	\begin{equation} \label{eq:productFourier}
		\widetilde{\mathcal{F}^{-1}\varphi}(\cdot)
		=\mathcal{F}^{-1}\varphi_1(\cdot)\prod_{j=2}^{N}\int_{\R^d}\mathcal{F}^{-1}\varphi_j(y_j)dy_j
		=\mathcal{F}^{-1}\varphi_1(\cdot)\prod_{j=2}^{N}\varphi_j(0)=\mathcal{F}^{-1}
		\varphi(\cdot,0,\dots,0),
	\end{equation}
	where we have used Lemma \ref{lm:fourier} in the second equality.

      \end{proof}

			
		\begin{proposition}\label{pr:besovT}
                  The map $T:\shc^{-\beta}(\R^d)\ni h\mapsto T_h\in \mathcal{S'}(\R^{dN})$
                  is linear and it
                  maps into $\shc^{-\beta}(\R^{dN})$
			and it is an isometry in the sense that
\begin{equation} \label{eq:iso}
  \Vert T_h \Vert_{\shc^{-\beta}(\R^{dN})} =  \Vert h \Vert_{\shc^{-\beta}(\R^{d})},
  \end{equation}
			for every $h \in \shc^{-\beta}(\R^d)$.
                        In particular $T$ is injective.
		\end{proposition}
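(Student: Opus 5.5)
Linearity of $T$ is immediate from \eqref{eq:T}, since $\varphi\mapsto\tilde\varphi$ is linear and continuous from $\shs(\R^{dN})$ into $\shs(\R^d)$; continuity follows from the bound $|x_1|^k|\partial^\alpha\tilde\varphi(x_1)|\le \int (1+|x|)^{-d(N-1)-1}\,dx_2\dots dx_N \cdot \sup_x(1+|x|)^{k+dN}|\partial^\alpha\varphi(x)|$. Hence $T_h\in\shs'(\R^{dN})$. Injectivity is a consequence of \eqref{eq:iso}. So the whole matter is to compute the Littlewood–Paley blocks of $T_h$ and compare them with those of $h$.

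The first step is to compute the Fourier transform of $T_h$. For $\psi\in\shs(\R^{dN})$, set $\varphi=\shf^{-1}\psi$ in the duality. Proposition \ref{prop:Fmarg} (extended from $C_c^\infty$ to $\shs$ by density and continuity of both sides) gives $\widetilde{\shf^{-1}\psi}=\shf^{-1}[\psi(\cdot,0,\dots,0)]$. Therefore
$$\langle \shf T_h,\psi\rangle=\langle T_h,\shf\psi\rangle=\langle h,\shf[\psi(\cdot,0,\dots,0)]\rangle=\langle \shf h,\psi(\cdot,0,\dots,0)\rangle .$$
We track the conventions for $\shf$ versus $\shf^{-1}$ (reflection) consistently. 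This says $\shf T_h=\shf h\otimes\delta_0^{(d(N-1))}$: the spectrum of $T_h$ lies in the subspace $\{\xi_2=\dots=\xi_N=0\}$.

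The second step is to choose the dyadic partitions compatibly. Since different dyadic partitions give equivalent norms, but we want an exact isometry, we take on $\R^{dN}$ a radial partition $\varphi_j^{(dN)}(\xi)=\rho_j(|\xi|)$ and on $\R^d$ its restriction $\varphi_j^{(d)}(\xi_1)=\rho_j(|\xi_1|)=\varphi^{(dN)}_j(\xi_1,0,\dots,0)$. This is again a dyadic partition of unity on $\R^d$. The first step then gives
$$\varphi_j^{(dN)}\shf T_h=\big(\varphi_j^{(d)}\shf h\big)\otimes\delta_0 .$$
Taking $\shf^{-1}$, the product structure \eqref{eq:product} together with $\shf^{-1}\delta_0=\mathrm{const}$ yields, up to normalising constants that cancel with the convention used,
$$\shf^{-1}(\varphi_j\shf T_h)(x_1,\dots,x_N)=\shf^{-1}(\varphi_j\shf h)(x_1).$$
Equivalently, if $h$ is a function, this is just $T_h(x)=h(x_1)$. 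The sup norms over $\R^{dN}$ and $\R^d$ of these blocks therefore coincide for each $j$. Taking $\sup_j 2^{-j\beta}\|\cdot\|_\infty$ gives \eqref{eq:iso}, and in particular $T_h\in\shc^{-\beta}(\R^{dN})$.

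The main obstacle is the second step. We must justify that tensoring with $\delta_0$ and then applying $\shf^{-1}$ produces the function $x\mapsto \Delta_j h(x_1)$, which is only tempered and not integrable in the remaining variables. We must also check that the constants are exactly $1$, so that the statement is an isometry rather than a mere norm equivalence. I would handle this by verifying directly, via duality, that $\langle \Delta_j T_h,\varphi\rangle=\langle \Delta_j h,\tilde\varphi\rangle$ using Fubini, since $\Delta_j h$ is smooth and bounded. Finally, I would remark that with non-compatible partitions one only obtains equivalence of norms.
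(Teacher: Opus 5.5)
Your argument is correct and is essentially the paper's proof: take on $\R^d$ the restricted partition $\psi_j=\varphi_j(\cdot,0,\dots,0)$ (radiality is not actually needed, since the restriction of any dyadic partition is again one), then use Proposition \ref{prop:Fmarg} to show that the $j$-th Littlewood--Paley block of $T_h$ is $x\mapsto \Delta_j h(x_1)$, which gives exact equality of the norms. The only difference is that the paper works on the physical side, computing $T_h*\shf^{-1}\varphi_j(x)=\langle h,\shf^{-1}\psi_j(x_1-\cdot)\rangle=h*\shf^{-1}\psi_j(x_1)$ directly with $\varphi_j\in C_c^\infty$; this avoids both extending Proposition \ref{prop:Fmarg} to $\shs$ by density and the $\shf T_h=\shf h\otimes\delta_0$, $\shf^{-1}\delta_0$ bookkeeping that you flag as your main obstacle.
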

		
		\begin{proof}
	The linearity follows by the 
		linearity of the dual pairing on $\shs(\R^{dN}),\shs'(\R^{dN})$.
                We prove know   \eqref{eq:iso}.
			Let $h \in \shc^{-\beta}(\R^d)$. We need to evaluate 
			the
                        Besov quantity
                        $\|T_h\|_{\shc^{-\beta}(\R^{dN})}$, i.e.
			\begin{equation} \label{eq:BesovNorm}
				\sup_{j \ge -1}2^{-j\beta}\|T_h \ast \mathcal{F}^{-1}\varphi_j\|_{\infty},
			\end{equation}
			where $\{\varphi_j\}_{j \ge -1}$ is a partition of unity in $\R^{dN}$.
			Indeed, for $x \in \R^d$, using Proposition \ref{prop:Fmarg}
			in the first equality, we get
			\begin{align} \label{eq:hTh}
		\notag T_h \ast \mathcal{F}^{-1}\varphi_j(x))  &= 
  \langle T_h, \mathcal{F}^{-1}\varphi_j(x - \cdot)\rangle =
				\langle h,  \widetilde{\mathcal{F}^{-1}\varphi_j}(x_1 - \cdot)\rangle \\
				\notag  &= \langle h, \mathcal{F}^{-1}\varphi_j(x_1 - \cdot, 0, \ldots,0)\rangle
				= \langle h, \mathcal{F}^{-1}\psi_j(x_1 - \cdot)\rangle \\
				&= h \ast \mathcal{F}^{-1}\psi_j (x_1),
			\end{align}
			where
			$$ \psi_j(x_1) := \varphi_j(x_1,0 \ldots 0),\  x_1 \in \R^d.$$
		We now know that Besov norms defined through different partitions of unity are 
			equivalent (see Theorem 2.1 in \cite{sawano}) and
			it can be immediately  verified that
				$\{\psi_j\}_{j\geq -1}$ is a partition of unity for $\R^d$. 
			Finally from \eqref{eq:BesovNorm} and \eqref{eq:hTh} it comes out 
			\begin{equation*}
				\|T_h\|_{\shc^{-\beta}(\R^{dN})}=\sup_{j \ge -1}2^{-j\beta}\|T_h \ast \mathcal{F}^{-1}(\varphi_j)\|_{\infty}=
				\sup_{j \ge -1}2^{-j\beta}\|h \ast \mathcal{F}^{-1}(\psi_j)\|_{\infty}=\|h\|_{\shc^{-\beta}(\R^d)}, 
			\end{equation*}
			which concludes the proof.
		\end{proof}

\section{The quasi-Stroock-Varadhan martingale problem and its dynamics}

\label{sec:LG}

In the whole section Assumptions \ref{ass:beta}
and \ref{ass:B}
will be in force.
This section is devoted to some structural properties
of the solution to rough martingale problems,
introduced in \cite{issoglio_russoMPb}.
In Section \ref{ssc:domain} we recall the notion of rough martingale problem,
which relies on a specific domain of test functions, see e.g. Notation \ref{def:D} in Definition
\ref{def:rMP}. Section \ref{ssc:equivdoms} and Section \ref{sec:LGrMP}  compare previous definition with
alternative possible rough martingale problem formulation
replacing the domain in Notation \ref{def:D}
some other domains, see e.g. Notation \ref{def:domains} or \eqref{eq:domain-hat}.

Nevertheless the main results are included in Section \ref{sec:ext-op}.
The first significant step  consists in showing
that, given a solution $\P$, the {\it local time} operator
\begin{equation*}
  g \mapsto A(g):= \int_0^\cdot g(s,Z_s) ds,
  \end{equation*}
can be extended continuously 
from $C_T\shc^{0+}$ to a linear
map $A^\P: C_T\shc^{(-\beta)+} \rightarrow \shc^\P$, see Proposition
\ref{pr:extendA}.

The second step consists in proving that $\P$
fulfils the so called {\it quasi-Stroock-Varadhan} martingale problem since the domain is
the set of
$C^{1,2+}$-functions whose derivatives are all bounded, see Theorem
\ref{thm:extendA}.
The price to pay is that, if $Z$ is the canonical process,  $f(\cdot, Z)$ is not necessary
a semimartingale any more.
Finally, Corollary \ref{cr:Dynamics} allows to
identify $Z$ as a solution (under $\P)$
of a genuine SDE.  

\subsection{The rough  martingale problem}

\label{ssc:domain}

In this section we assume the validity of Assumption \ref{ass:beta}, and we recall the notion of {\em rough martingale problem} (for shortness rMP) which is used to give a meaning to SDEs with distributional drifts of the form
\begin{equation}\label{eq:dSDE}
	\left\{
	\begin{array}{l}
		dX_t=B(t,X_t)dt+dW_t\\
		X _0\sim \nu,
	\end{array}
	\right.
\end{equation}
where  $B\in C_T\shc^{(-\beta)+}(\R^d; \R^d) $ and  $\nu\in\shp(\R^d)$ is given. 
From now on throughout the paper $ \shl$ will denote the PDE operator
\eqref{eq:shl}.
As defined in Section 2 of \cite{issoglio_russoMPb}, for any $\gamma \in \R$, we denote by $\bar{\shc}_c^\gamma = \bar{\shc}_c^\gamma (\R^d)$ the closure of the space of compactly supported functions
$\shc_c^\gamma$ in $\shc^\gamma$, defined explicitly as

\[\bar{\shc}_c^\gamma := \{f \in \shc^\gamma : \exists (f_n) \subset \shc_c^\gamma \text{ such that } f_n \to f \text{ in } \shc^\gamma\}.
\]
We remark that those are separable Banach spaces, which property is used in particular
in Lemma \ref{lm:DLb}.

Analogously to \eqref{eq:indBesov},
we denote the corresponding inductive spaces  
\[
\bar {\mathcal C}_c^{\gamma+}:= \bigcup_{\alpha >\gamma} \bar{\mathcal C}_c^{\alpha} .
\]
The domain of the rough martingale problem introduced in (3.1) of \cite{issoglio_russoMPb}
is
\begin{notation}\label{def:D}
	\begin{equation*}
		\shd_\shl:=\{f\in C_T\shc^{(1+\beta)+}: \exists g\in C_T\bar\shc_c^{0+} \text{ s.t. $f$ is a solution of } \mathcal{L} f=g \text{ and }f(T)\in\bar \shc_c^{(1+\beta)+} \},
	\end{equation*}
\end{notation}

\begin{definition}[rough martingale problem]\label{def:rMP}
	Let $\Omega_T^d$ be the canonical space  $C([0,T];\R^d)$ equipped with
	its Borel $\sigma$-field $\shf_T$ and $Z$ be the canonical process.  $\P\in\shp(\Omega_T)$ is a solution to the rough martingale problem with distributional drift $B$, initial condition
	$\nu$ 
	if and only if for every $f\in\shd_\shl$ 
$$f(t,Z_t)-\int_0^t\mathcal{L}f(s,Z_s)ds$$
	is a local martingale under $\P$, and $\P\circ Z_0^{-1}=\nu$, i.e. $\mathcal{L}aw(Z_0)=\nu$. In the sequel of this section we will denote  the rough martingale problem with distributional drift $B$ and initial condition
	$\nu$ by  rMP$(B,\nu)$.
      \end{definition}

      


  %

\subsection{About the rough  martingale problems with different domain}

\label{ssc:equivdoms}

In the literature, see  \cite{issoglio_et.al24, issoglio_russoMK, issoglio_russoMPb}, appear  different domains of test functions present characterising a martingale problem.
We will see that the unique solution of the rough martingale problem rMP$(B,\nu)$ provided by Proposition \ref{pr:rMP}
will be the same also the unique solution if we replace  $\shd_\shl$ with one of the domains listed below.

\begin{definition}\label{def:rMPD}
	Let $\Omega_T^d$ be the canonical space  $C([0,T];\R^d)$ equipped with
	its Borel $\sigma$-field $\shf_T$ and $Z$ be the canonical process.  $\P\in\shp(\Omega_T)$ is a solution to the rough martingale problem with distributional drift $B$, initial condition
	$\nu$ and domain $\shd$ if and only if for every $f\in\shd$ 
	$$f(t,Z_t)-f(0,Z_0)-\int_0^t\mathcal{L}f(s,Z_s)ds$$
	is a local martingale under $\P$, and $\P\circ Z_0^{-1}=\nu$, i.e. $\mathcal{L}aw(Z_0)=\nu$. In the sequel of this section we will denote  the rough martingale problem with distributional drift $B$, initial condition
	$\nu$ and domain $\shd$ by  rMP$^{\shd}(B,\nu)$.
\end{definition}
%
%
We emphasise that rMP$(B,\nu)$ coincides with rMP$^{\shd_\shl}(B,\nu)$.
Below we introduce the list of domains.
\begin{notation}\label{def:domains}
  \begin{itemize}
  \item[1.] 
\begin{equation}\label{def:Dtilde}
    \tilde \shd_\shl:=\{f\in C_T\shc^{(1+\beta)+}: \exists g\in C_T\shc^{0+} \text{ s.t. $f$ is a solution of }
    \mathcal{L} f=g \text{ and }f(T)\in \shc^{(1+\beta)+} \}.
    \end{equation}
\item[2.]	\begin{equation}\label{def:DS}
    \shd_\shl^\shs:=\{f\in C_T\shc^{(1+\beta)+}: \exists g\in C_T\shs \text{ s.t. $f$ is a  solution of}
    \mathcal{L} f=g \text{ and }f(T)\in\shs \}
	\end{equation}
	\item[3.]
	\begin{equation}\label{def:D_0}
		\shd_0:=\{f\in C_T\shc^{(1+\beta)+}: \exists g\in C_T\shs \text{ s.t. $f$ is a solution of } \mathcal{L} f=g \text{ and }f(T)=0 \}.
              \end{equation}
            \item[4.]
\begin{equation} \label{eq:domain-hat}
	\hat\shd_\shl:=\{f\in C_TD\shc^{\beta+} \vert \exists g\in C_T\shc^{0+} \text{ s.t. } \shl f=g\}.
\end{equation}
            \end{itemize}

            \end{notation}

We continue some basic relations.

\begin{remark}\label{pr:dom_red}
	\begin{enumerate}
        \item The inclusion
          \begin{equation*}
	\shd_0 \subseteq \shd_\shl^\shs \subseteq \shd_\shl \subseteq {\hat \shd}_\shl
\end{equation*}
holds. In particular a solution to  rMP$^{{\hat \shd}_\shl}(B,\nu)$
(that we will also call {\it linear growth martingale problem}
is a solution to  rMP$(B,\nu)$.
 		\item Suppose $B \in C_T\shc^{0+}$. According to Lemma 4.1 of \cite{issoglio_russoMPb}, 
		the martingale problem   rMP$^{\shd_\shl}(B,\nu)$ is equivalent to the Stroock-Varadhan martingale problem.
		\item Since $\shd_0 \subset \shd_\shl$, a   solution to rMP$^{\shd_\shl}(B,\nu)$ is a solution to  rMP$^{\shd_0}(B,\nu)$.
		Nevertheless the converse may
		not be true a priori. Indeed, let us consider the case
		when $B$ is bounded and smooth.
              A smooth compactly supported function $f:\R^d \rightarrow \R$ 
              which does not depend on time. It  belongs to $\shd_\shl$,
              but it cannot be approximated
t 		uniformly on compact sets with solutions $f$ of \eqref{eq:KolmPDE} such that $f(T) = 0$.
   \item
    Nevertheless, by the proof of Theorem 5.11
                of \cite{issoglio_et.al24}
                  the rMP$^{\shd_0}(B,\nu)$
                admits at most one solution.
                Taking into account Proposition
                \ref{pr:rMP} and the fact that
                $\shd_0 \subset \shd_\shl$
                we know (a posteriori) that
            rMP$^{\shd_0}(B,\nu)$ admits existence and uniqueness.    
	\end{enumerate}
\end{remark}

\begin{proposition}\label{pr:equivMP} Let $\nu \in \shp(\R^d)$.
	The martingale problems rMP$^{\shd_\shl}(B,\nu)$ and rMP$^{\shd_\shl^\shs}(B,\nu)$ are equivalent.
\end{proposition}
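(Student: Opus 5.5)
Since $\shd_\shl^\shs \subseteq \shd_\shl$ by Remark \ref{pr:dom_red}, every solution of rMP$^{\shd_\shl}(B,\nu)$ solves rMP$^{\shd_\shl^\shs}(B,\nu)$. The whole task is the converse. Let $\P$ solve rMP$^{\shd_\shl^\shs}(B,\nu)$ and fix $f\in\shd_\shl$, so that $\shl f=g$ with $g\in C_T\bar\shc_c^{0+}$ and $f(T)=u_T\in\bar\shc_c^{(1+\beta)+}$. The plan is to approximate $f$ by elements of $\shd_\shl^\shs$ and to pass to the limit in the local martingale property.

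\textbf{Step 1 (approximation of the data).} Choose $\al>0$ and $\al'>1+\beta$ with $g\in C_T\bar\shc_c^{\al}$ and $u_T\in\bar\shc_c^{\al'}$. By the definition of the closures $\bar\shc_c$, there are compactly supported $\shc^{\al'}$ functions converging to $u_T$. Mollifying them gives $u_T^n\in C_c^\infty\subset\shs$ with $u_T^n\to u_T$ in $\shc^{(1+\beta)+}$; we may lower $\al'$ slightly, since mollification converges only in the weaker norm. For $g$ we do the same uniformly in time. Using the compactness of $g([0,T])$ in $\shc^\al$, we obtain $g^n\in C_T\shs$ (for instance $g^n(t)=\chi_n\,(p_{1/n}*g(t))$ with smooth cut-offs $\chi_n$) such that $g^n\to g$ in $C_T\shc^{0+}$. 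This is where the bar-closure hypothesis is essential. Let $f^n$ be the solution of $\shl f^n=g^n$, $f^n(T)=u_T^n$ given by Theorem \ref{thm:PDEU} and Remark \ref{rmk:PDEU}. Then $f^n\in C_T\shc^{(1+\beta)+}$, so $f^n\in\shd_\shl^\shs$.

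\textbf{Step 2 (convergence of the solutions).} By Remark \ref{rmk:unDCBis}, applied with $B^n=B$ fixed, we get $f^n\to f$ in $C_T\shc^{(1+\beta)+}$, and in particular uniformly on $[0,T]\times\R^d$. Also $g^n\to g$ uniformly. Hence, pathwise and uniformly in $t$,
\[
M^n_t:=f^n(t,Z_t)-f^n(0,Z_0)-\int_0^t g^n(s,Z_s)ds\longrightarrow M_t:=f(t,Z_t)-f(0,Z_0)-\int_0^t g(s,Z_s)ds .
\]

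\textbf{Step 3 (passage to the limit).} Each $M^n$ is a $\P$-local martingale. Since $f^n$ and $g^n$ are bounded, it is in fact a bounded martingale, because $|M^n_t|\le 2\|f^n\|_\infty+T\|g^n\|_\infty$. The bounds are uniform in $n$ because the sequences converge in sup-norm. Uniform convergence together with uniform boundedness gives $L^1$ convergence of $M^n_t$ to $M_t$ for each $t$, so the martingale property $\E[M_t-M_s\mid\shf_s]=0$ passes to the limit. Thus $M$ is a (bounded) martingale. The constant $f(0,Z_0)$ differs from the formulation in Definition \ref{def:rMP} only by an $\shf_0$-measurable bounded term, so it does not matter, and $\P$ solves rMP$(B,\nu)$.

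The main obstacle is Step 1: producing Schwartz approximations of $g$ uniformly in time in a positive-index Hölder norm. Mere density of $\shs$ fails in $\shc^\al$, which is why one must exploit $\bar\shc_c$ and give up a little regularity. Continuity in time of $g$, via compactness of its range, allows the cut-off and mollification to be chosen uniformly.
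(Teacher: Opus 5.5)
Your proof is correct and follows the same route as the paper. You approximate $g$ and $f(T)$ by Schwartz data, use the continuity of the Kolmogorov solution map (Corollary \ref{cor:unDCBis} and Remark \ref{rmk:unDCBis} with $B^n=B$) to get $f^n\to f$ uniformly, and pass to the limit in the martingale property. The differences are minor: the paper cites the density of $C_T\shs$ in $C_T\bar\shc_c^{\gamma+}$ where you sketch the cut-off/mollification construction, and it closes via closedness of local martingales under u.c.p.\ convergence where you use that the $M^n$ are uniformly bounded true martingales converging uniformly.
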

\begin{proof} 
	Since $\shd_\shl^\shs\subseteq\shd_\shl$, it is trivial that
	a solution $\P$ of rMP$^{\shd_\shl}(B,\nu)$ is also a solution of  rMP$^{\shd_\shl^\shs}(B,\nu)$.
	Vice versa, let us suppose that $\P$ solves rMP$^{\shd_\shl^\shs}(B,\nu)$ and let $f \in \shd_\shl$
	with $f(T,\cdot) = f_T \in {\bar \shc}_c^{(1+\beta)+}$ and
	$g = \shl f \in C_T{\bar \shc}_c^{0+}.$
	We need to prove that
	$$ M^f_t := f(t,Z_t)-f(0,Z_0)-\int_0^t g (s,Z_s)ds,\  t\in [0,T],$$
	is a local martingale under $\P$.
	
	Since $C_T\shs$ is dense in $C_T\bar\shc_c^{\g+} $  for every $\gamma \in \R$, by Corollary 5.8 in \cite{issoglio_russoPDEa} then
	$\shs$ is also dense in $\bar\shc_c^{\g+} $.
	Let  $(g^n)$ (resp.  $(f_T^n)$) be a sequence in  $ C_T\shs\subseteq C_T\bar\shc_c^{0+}$
	(resp. $\shs\subseteq\bar \shc_c^{(1+\beta)+}$) converging
	in $  C_T\bar\shc_c^{0+}$ (resp. in $  \bar \shc_c^{(1+\beta)+}$)
	to $g$ (resp. $f_T$). Let $f^n$ be the solutions
	of $\shl f^n = g^n$ with $f^n(T,\cdot) = f^n_T$.
        From the continuity results on the Kolmogorov PDE, see
        Corollary \ref{cor:unDCBis} and Remark \ref{rmk:unDCBis} (with $B^n = B$),
 we have that $f^n\to f$
	in  $  C_T\shc^{(1+\beta) +}$.
	Since each $f^n$ belongs to $\shd_\shl^\shs$  then
	$M^{f^n} = f^n(\cdot,Z)-f^n(0,Z_0)-\int_0^\cdot g^n (s,Z_s)ds$
	is a local martingale.
	But  $f^n \rightarrow f$ and $g^n \rightarrow g$ uniformly so that
	$M^{f^n}$ converge u.c.p. to $M^f$.
	Since the linear space of the local martingales equipped with the
	metric topology related to the u.c.p. convergence is closed, 
	then $M^f$ is a local martingale and the result is established.
      \end{proof}

\subsection{The linear growth rough martingale problem}

\label{sec:LGrMP}


The martingale problem
rMP$^{\hat\shd_\shl}(B,\nu)$,
where $\hat\shd_\shl$ was defined in \eqref{eq:domain-hat}
is a domain possibly unbounded functions of Besov-H\"older.

\begin{theorem}\label{thm:rMPext}
  Let $\P$ be a solution to rMP$(B,\nu)$.
Then $\P$ is a solution to
  rMP$^{\hat\shd_\shl}(B,\nu)$.
Moreover,  for all  $f\in\hat\shd_\shl$
     %
we have that
        \begin{equation}\label{eq:rMPext2}
		M_t^{f^2}:=
		f^2(t,Z_t)
		-\int_0^t (2fg+|\nabla f|^2)(r,Z_r)dr
	\end{equation}
	is a local martingale under $\P$ 
      so that
        \begin{equation} \label{eq:MfBracket}
          [M^f] = \int_0^\cdot |\nabla f|^2(r,Z_r)dr.
          \end{equation}
\end{theorem}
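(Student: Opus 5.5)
We argue by approximation and localisation, passing from the bounded, compactly supported test functions of $\shd_\shl$ to the unbounded, linear-growth functions of $\hat\shd_\shl$. Fix $f\in\hat\shd_\shl$ with $g=\shl f\in C_T\shc^{0+}$ and $f(T)=:f_T\in D\shc^{\beta+}$.

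\emph{Step 1 (bounded case).} First we treat $f\in\tilde\shd_\shl$, i.e.\ $f_T\in\shc^{(1+\beta)+}$ and $g\in C_T\shc^{0+}$, without compact support. Let $\chi_R(x)=\chi(x/R)$ with $\chi\in C_c^\infty$, $\chi=1$ on the unit ball, and set $g^R=\chi_R g\in C_T\bar\shc_c^{0+}$ and $f_T^R=\chi_R f_T\in\bar\shc_c^{(1+\beta)+}$. Let $f^R$ solve $\shl f^R=g^R$, $f^R(T)=f_T^R$. Then $f^R\in\shd_\shl$, so $M^{f^R}$ is a $\P$-local martingale. The cut-off sequences are bounded in $C_T\shc^{0+}$ and in $\shc^{(1+\beta)+}$, and they converge locally uniformly. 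By Corollary~\ref{cor:unDC} and Theorem~\ref{thm:unDC}, $(f^R)$ is bounded in $C_T\shc^{1+\beta}$ and equicontinuous. Arzel\`a--Ascoli gives a subsequence converging locally uniformly, together with $\nabla f^R$, to some $\phi$. Passing to the limit in the mild formulation \eqref{eq:mild} shows that $\phi$ solves the same PDE as $f$, so $\phi=f$ by uniqueness (Theorem~\ref{thm:PDEU}). Hence $M^{f^R}\to M^f$ u.c.p., because $Z$ has continuous paths and therefore stays in a compact set. The space of local martingales is closed under u.c.p.\ convergence, so $M^f$ is a local martingale.

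\emph{Step 2 (linear growth).} For general $f_T\in D\shc^{\beta+}$ we cannot use a multiplicative cut-off directly, since $\chi_R f_T$ does not stay bounded in $D\shc^{\beta}$ as $R\to\infty$: the gradient picks up $f_T\nabla\chi_R\sim |x|/R$. This is the main obstacle. We would try the following. Fix a large $R$ and replace $f_T$ by $f_T^R:=\psi_R\circ f_T$, where $\psi_R$ is smooth, equals the identity on $[-R,R]$, and has $|\psi_R'|\le 1$ and bounded $\psi_R''$. Then $f_T^R$ is bounded, and $\nabla f_T^R=\psi_R'(f_T)\nabla f_T$ is bounded in $\shc^\beta$ uniformly in $R$, because $f_T$ is Lipschitz and $\|\psi_R''\|_\infty$ is uniform in $R$. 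Hence $f_T^R\in\shc^{1+\beta'}$ for some $\beta'>\beta$, and $(f_T^R)$ is bounded in $D\shc^{\beta'}$.

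Let $f^R$ solve $\shl f^R=g$, $f^R(T)=f_T^R$. By Step 1, $M^{f^R}$ is a local martingale. Corollary~\ref{cor:unDC} gives boundedness of $(f^R)$ in $C_TD\shc^\beta$ and equicontinuity of $f^R:[0,T]\to D\shc^\eps$. Arguing as in Step 1 (Arzel\`a--Ascoli with locally uniform convergence, limit in the mild form, uniqueness in $C_TD\shc^\beta$), we get $f^R\to f$ and $\nabla f^R\to\nabla f$ locally uniformly. Therefore $M^{f^R}\to M^f$ u.c.p., and $M^f$ is a local martingale.

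\emph{Step 3 (square).} For $M^{f^2}$ it suffices, by the same approximations, to treat $f\in\shd_\shl$ with $f$ bounded and in $C_T\shc^{(1+\beta)+}$. Formally $\shl(f^2)=2f\shl f+|\nabla f|^2$. Rigorously, we check that $f^2\in\tilde\shd_\shl$: the product rule for Bony products with $\nabla f\in C_T\shc^{\beta+}$ gives $\nabla(f^2)\cdot B=2f\,(\nabla f\cdot B)$, and then $f^2$ satisfies the mild equation with right-hand side $2fg+|\nabla f|^2\in C_T\shc^{0+}$. One way to verify this is to mollify $B$ via Lemma~\ref{lm:Xncont}, where the identity is classical by Proposition~\ref{pr:lunardi}, and pass to the limit using Corollary~\ref{cor:unDCBis}. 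Step 1 then makes $M^{f^2}$ a local martingale, and the approximation of Step 2 extends this to $\hat\shd_\shl$, since $f^R\to f$ and $\nabla f^R\to\nabla f$ locally uniformly.

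Finally, $f(\cdot,Z)=M^f+f(0,Z_0)+\int_0^\cdot g(r,Z_r)\,dr$, and the integral term is continuous and of bounded variation. Hence $f^2(\cdot,Z)-[M^f]$ differs from a local martingale by a continuous finite-variation process. Comparing this with the decomposition of $f^2(\cdot,Z)$ given by $M^{f^2}$, and using uniqueness of the semimartingale-type decomposition, we obtain \eqref{eq:MfBracket}.
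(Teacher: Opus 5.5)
Your argument is correct in outline, but it takes a genuinely different route from the paper's.

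\textbf{The paper's route.} The paper never localises. It makes the Zvonkin-type change of variables $Y_t=\phi_\lam(t,Z_t)$. It uses the rough martingale problem only on test functions $\tilde f\circ\phi$ with $\tilde f\in C_c^\infty$, which identifies $Y$ as a genuine semimartingale solving a classical SDE with bounded continuous coefficients $\ell$ and $\nabla\phi\circ\psi$. It then solves the transformed PDE classically (Lunardi), with heat-regularised terminal data, to get $\tilde f^n\in C^{1,2+}_{ub}$. Both $M^{f^n}$ and $M^{(f^n)^2}$ are read off from the classical It\^o formula, where linear growth is harmless. Finally it transports back via Lemma \ref{lm:gen3.7} and concludes with the norm convergence $f^n\to f$ in $C_TD\shc^{\beta+}$ from Corollary \ref{cor:unDCBis}.

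\textbf{Your route.} You stay inside the rough martingale problem. Cut-offs and the truncation $\psi_R\circ f_T$ produce test functions in $\shd_\shl$ and $\tilde\shd_\shl$. The uniform bounds and equi-H\"older continuity of Corollary \ref{cor:unDC} feed Arzel\`a--Ascoli, and you identify the limit through the mild formulation and uniqueness. The square is handled by showing that $f^2$ is itself a test function with $\shl(f^2)=2f\shl f+|\nabla f|^2$; your mollification argument via Proposition \ref{pr:lunardi} and Corollary \ref{cor:unDCBis} makes this rigorous.

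\textbf{What each route buys.} Yours dispenses with $\phi_\lam$, with the semimartingale property of $Y$, and with Lemmata \ref{lm:gen3.5}--\ref{lm:gen3.7}. Your Step 1 also gives directly the equivalence of rMP$^{\shd_\shl}$ and rMP$^{\tilde\shd_\shl}$, which the paper only obtains later (Corollary \ref{cor:extendABis}). The price is a \emph{local} stability result for the Kolmogorov equation that the paper never states, and you need to supply two details.
\begin{itemize}
\item When identifying the limit, you must pass to the limit in $\nabla f^R\cdot B$ knowing only locally uniform convergence of $\nabla f^R$ plus a uniform $\shc^\beta$ bound. This works by interpolation and by locality of the Bony product, $\chi(u\cdot B)=(\chi u)\cdot B$ for $\chi\in C_c^\infty$. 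You then use the uniform $\shc^{-\al}$ bound, where $\al<\be$ with $B\in C_T\shc^{-\al}$, to pass from compactly supported test functions to the Gaussian kernel.
\item The limit is only known to be continuous with values in $D\shc^{\eps}$. So you must invoke uniqueness in $C_TD\shc^{\eps}$ with $\eps\in(\al,\be)$, i.e.\ Theorem \ref{thm:PDEU} with $\eps$ in place of $\be$.
\end{itemize}

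\textbf{A remark on Step 2.} The ``main obstacle'' you describe is not real. With $\chi_R(x)=\chi(x/R)$, the term $f_T\nabla\chi_R$ is $O(1)$ in sup-norm and has Lipschitz constant $O(1/R)$. Hence $(\chi_R f_T)$ is bounded in $D\shc^{\beta}$; it just does not converge in norm, and boundedness is all your compactness argument needs. The truncation $\psi_R\circ f_T$ is correct but unnecessary.
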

\begin{corollary}\label{cor:rMPext}
Let $\P$ be a solution to rMP$(B,\nu)$. Let $f\in C_TD\shc^{\beta+}$
such that $\shl f = 0$.
Then 
		$f(t,Z_t)$
                is a local martingale under $\P$ with quadratic variation given by
                $\int_0^t |\nabla f|^2(r,Z_r)dr$.
\end{corollary}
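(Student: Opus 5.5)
This is a direct specialisation of Theorem \ref{thm:rMPext} to $g=0$. Let $f\in C_TD\shc^{\beta+}$ with $\shl f=0$. Since $0\in C_T\shc^{0+}$, the definition \eqref{eq:domain-hat} gives $f\in\hat\shd_\shl$ with $g=0$.

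By Theorem \ref{thm:rMPext}, $\P$ solves rMP$^{\hat\shd_\shl}(B,\nu)$. In the sense of Definition \ref{def:rMPD}, this means
$$M^f_t=f(t,Z_t)-f(0,Z_0)-\int_0^t \shl f(s,Z_s)\,ds=f(t,Z_t)-f(0,Z_0)$$
is a $\P$-local martingale. The random variable $f(0,Z_0)$ is $\shf_0$-measurable and finite, because $f(0,\cdot)$ is continuous. A constant-in-time $\shf_0$-measurable process is a local martingale: localise by $\tau_n=0$ on $\{|f(0,Z_0)|>n\}$ and $\tau_n=T$ otherwise. Hence $f(t,Z_t)=M^f_t+f(0,Z_0)$ is a local martingale, and its bracket equals $[M^f]$.

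For the quadratic variation we use \eqref{eq:MfBracket} with $g=0$, which gives $[M^f]=\int_0^\cdot|\nabla f|^2(r,Z_r)\,dr$. The only point to check is that the local martingale $M^{f^2}$ in \eqref{eq:rMPext2} reduces to
$$f^2(t,Z_t)-\int_0^t|\nabla f|^2(r,Z_r)\,dr$$
when $g=0$. This is immediate, and it identifies the bracket of $f(\cdot,Z)$ directly, with no subtraction of the initial value needed. There is no real obstacle, since all the work is contained in Theorem \ref{thm:rMPext}.
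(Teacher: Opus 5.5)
Your proposal is correct and follows the paper's route: the paper gives no separate proof of this corollary and treats it as the case $g=0$ of Theorem \ref{thm:rMPext}, namely of \eqref{eq:MPf}, \eqref{eq:rMPf2} and \eqref{eq:MfBracket}. Your localisation absorbing the $\shf_0$-measurable term $f(0,Z_0)$, which need not be integrable since $f$ has linear growth, makes explicit a point the paper leaves implicit; it is valid under the usual convention that only $X^{\tau_n}\mathbf{1}_{\{\tau_n>0\}}$ is required to be a martingale.
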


\begin{proof}[Proof of Theorem \ref{thm:rMPext}.]

  Let $f\in \hat\shd_\shl$ and let us denote $g:= \shl f$.
  Let us also consider
  \begin{equation} \label{eq:phi_lambda}
    \phi(t,x):=\phi_{\lam}(t,x), \ \ell=\lam(\phi_\lam-id).
\end{equation}
    By Remark \ref{rmk:philam}
$\phi$ satisfies Assumption \ref{ass:phiell}.
   We define $\tilde g:=g\circ \psi$ and $\tilde f_T=f_T\circ \psi_\lam$, where $\psi(t,\cdot)$ 
	is the inverse function of  $\phi(t,\cdot)$, for all $t\in[0,T]$. We notice that $\tilde f_T$ is differentiable with 
	$$\nabla \tilde f_T(x)=\nabla f_T(\psi(T,x))\nabla\psi(T,x)$$
	of class $\shc^{\beta+}$, thus bounded.
	Now, using the heat kernel $p_{\frac1n}$ defined in \eqref{eq:heatkern}, we regularise $\tilde f_T $ to get
	\begin{equation}\label{eq:fnp1n}
		\tilde f^n_T(x):=\tilde f_T*p_{\frac1n}(x).
	\end{equation}
	We observe that, for every $n\in\N$,  $(\tilde f_T^n)$ are possibly unbounded functions.
        Nevertheless the  derivatives of every order are bounded: in fact, this comes from knowing that
        $\tilde f_T\in D\shc^{\beta+}$ together with $p_{\frac1n}\in\shs$, which means that $p_\frac1n$ has integrable derivatives of every order.

	 We now define $\tilde\shl$ as 
         $$\tilde \shl \varphi := \partial_t \varphi+\frac12 {\rm Tr}
         \left[(\nabla \phi \circ \psi)^\top{\rm Hess}\varphi (\nabla \phi \circ \psi)\right]+ \nabla \varphi \cdot  \ell,$$
	and $\bar g^n$ as
	$$\bar g^n:=\tilde g-\frac12 {\rm Tr}
        \left[(\nabla \phi \circ \psi)^\top{\rm Hess}f_T^n ( \nabla \phi \circ \psi)\right]- \nabla f_T^n \cdot \ell.$$
        Notice that both the coefficients of operator $\tilde\shl$ and $\bar g^n$ have bounded continuous
        coefficients: indeed $\ell\in C_T\shc^{0+}(\R^d;\R^d)$ by Assumption \ref{ass:phiell}
        and $\nabla \phi\circ	\psi$ is bounded and continuous, since $\nabla \phi\in\shc^{\beta+}(\R^d;\R^d)$, while for $\bar g^n$, it is in $C_T\shc^{0+}$ because it is a sum of terms in $C_T\shc^{0+}$. By using a time reversal argument we can apply Theorem 5.1.9. from \cite{lunardi95}, to get that there exists a unique solution $h^n\in C_b^{1,2+}([0,T]\times \R^d)$ to 
	 	\begin{equation}\label{eq:hn}
	 			\tilde\shl h^n=\bar g^n, \quad h^n_T=0.
	 		\end{equation}
	At this point, we define $\tilde f^n:=h^n+\tilde f_T^n\in C_{ub}^{1,2+}(\R^d)$, since $h^n\in C_b^{1,2+}([0,T]\times \R^d)$ and
        $\tilde f_T^n\in C_{ub}^{2+}(\R^d)$, therefore we have $\tilde f^n\in
        C_{ub}^{1,2+} ([0,T]\times \R^d)\subseteq C_{ubuc}^{1,2}([0,T]\times \R^d)$, see e.g. Section \ref{sc:basic-prelim}. Moreover, by linearity of $\tilde\shl$, one gets that $\tilde f^n$ solves $\tilde \shl\tilde f^n=\tilde g, \tilde f^n(T,\cdot)=\tilde f^n_T$.   
 
	We define now $Y_t:=\phi(t,Z_t)$.
        For any $\tilde f \in C_c^\infty $ we define $f:= \tilde f \circ
        \phi $, where $\phi$
was defined in \eqref{eq:phi_lambda}.
 By Lemma 3.7 of \cite{issoglio_russoMPb}, we know that
         $f \in \mathcal D_{\mathcal L}$. 
Since $\P$ is  a solution of rMP($\shd_\shl,\nu)$,
  we have that
\[
\tilde f (Y_t) -\tilde f (Y_0) - \int_0^t (\tilde{\mathcal L} \tilde f) (s, Y_s) \mathrm ds =  f(t, X_t) - f(0,X_0) - \int_0^t ( \mathcal L f)(s, X_s) \mathrm ds , 
\]
which is a $\P$-local martingale.
 By the same arguments as the proof of Proposition 4.6, Chapter 5
        of \cite{karatzasShreve},
        it is not difficult to show
        that 
        $[Y, Y] = \int_0^\cdot (\nabla \phi)^\top(\nabla \phi)(r,\psi(r,Y_r)) dr$.
  Since previous matrix covariation is non-degenerate,
  under $\P$, there is   a standard $\P$-$d$-dimensional Brownian motion
$W$,  such that $Y$  is a solution to
	\[
	Y_t=Y_0+\int_0^t \ell(t,Y_r)dr+\int_0^t\nabla \phi(r,\psi(r,Y_t))dW_r.
      \]
      We then apply
	 It\^o's formula 
	 to  $(\tilde f^n(t,Y_t))$ 
	 and $((\tilde f^n)^2(t,Y_t))$
	  to obtain
	$$\tilde f^n(t,Y_t)=\tilde f^n(0,Y_0)+\int_0^t  \tilde g(r,Y_r)dr+\int_0^t\nabla \tilde f^n(r,Y_r)\nabla \phi(r,\psi(r,Y_t))dW_r$$
	and
	$$(\tilde f^n)^2(t,Y_t)=(\tilde f^n)^2(0,Y_0)+\int_0^t (2\tilde f^n\tilde g(r,Y_r)+|\nabla \tilde f^n|^2)(r,Y_r)dr+2\int_0^t (\tilde f^n\nabla \tilde f^n) (r,Y_r)\nabla \phi(r,\psi(r,Y_t))dW_r  .$$ 
Thus, we have that 
	
$$\tilde f^n(t,Y_t) -\tilde f^n(0,Y_0) -\int_0^t  \tilde g(r,Y_r)dr =
\int_0^t \nabla \tilde f^n(r,Y_r)dW_r$$
	and
	$$ ({\tilde f}^n)^2(t,Y_t)  - ({\tilde f}^n)^2(0,Y_0) -
        \int_0^t (2 \tilde f^n \tilde g+|\nabla \tilde f^n|^2)(r,Y_r)dr 
        = 2\int_0^t(\tilde f^n \nabla \tilde f^n)(r,Y_r)\nabla \phi(r,\psi(r,Y_t))dW_r$$
	are  local martingales under $\P$. Defining  $f^n:=\tilde f^n\circ \phi$ together with the definition of  $Y$ and $\tilde g$, the processes
	\begin{equation}\label{eq:MPfn}
          f^n(t,Z_t)-f^n(0,Z_0) - \int_0^t   g(r,Z_r)dr =\int_0^t \nabla  f^n(r,Z_r)dW_r
	\end{equation}
	and 
	\begin{equation}\label{eq:MPfn2}
    (f^n)^2(t,Z_t) - (f^n)^2(0,Z_0)-\int_0^t  (2f^ng+|\nabla  f^n|^2)(r,Z_r)dr =  \int_0^t( f^n \nabla  f^n)(r,Z_r)dW_r
\end{equation}
	are 
 local martingales under $\P$. Applying Lemma \ref{lm:gen3.7} we obtain
	$$g=\tilde g\circ \phi=(\tilde \shl\tilde f^n)\circ \phi=\shl f^n,$$
	thus $(f^n)$ are solutions to $\shl f^n=g,\ f^n(T,\cdot)=f^n_T$.
        By \eqref{eq:schaud2DC} in Lemma \ref{lm:schauderDC} we can show that
        $f^n_T\to f_T$ in $D\shc^{\beta+}$.
Consequently, by Corollary \ref{cor:unDCBis} with $B^n = B$,
we have that $f^n\to f$ in $C_TD\shc^{\beta+}$, the solution to $\shl f=g,\ f(T)=f_T$.
Taking   the limit as $n$ approaches infinity  in both \eqref{eq:MPfn} 
	and  \eqref{eq:MPfn2} 
	we get that
	\begin{equation}\label{eq:MPf}
		M_t^f:=f(t,Z_t)  -\int_0^t  g(r,Z_r)dr
	\end{equation}
	and 
	\begin{equation}\label{eq:rMPf2}
		M_t^{f^2}:=f^2(t,Z_t)-2\int_0^t fg(r,Z_r)dr-\int_0^t |\nabla f|^2(r,Z_r)dr
	\end{equation}
	are local martingale are under $\P$, because the linear space of local martingales vanishing at zero is closed  under the u.c.p. topology.
            \eqref{eq:MfBracket} follows applying integration by parts
	to the semimartingale $(f^2(t,Z_t))$, \eqref{eq:rMPf2}, taking into account
        the uniqueness of the decomposition of a semimartingale.
\end{proof}


\begin{remark} \label{rmk:denALPHA}
		\begin{enumerate}
                \item
                  The extension of the domain of test functions
                  discussed in the theorem above allows to 
                  avoid the closure of compactly supported functions
                  characterizing the domain $\shd_\shl$, see Notation
                  \ref{def:D}.
                \item  For $\alpha\in (0,1)$ it holds that
                  the closure of $\shc^{1+\al}$
                into ${D\shc^\al}$ is not the full space
              ${D\shc^\al}$ as we explain below.
            If it were the case, by making use of the domain
            $\tilde \shd_\shl$, see \eqref{def:Dtilde},
            the proof of Theorem \ref{thm:rMPext} would be an
            easy consequence of density arguments.
            
                  Indeed,  consider the function $f:\R^d\to\R$ defined by $f(x)=x_1$. This function is such that $f\in D\shc^{\al}$ and its gradient is $\nabla f(x)=(1,0,\dots,0)$.
		  Suppose there exists a sequence $(f_n)\subseteq\shc^{1+\al}$ such that
                  $f_n(0)\to f(0)$ and 
		  \begin{equation}\label{eq:unconvDC}
			\|f_n-f\|_{D\shc^{\al}}\to 0.
		  \end{equation}
		 Let us fix $\eps=\frac12$ and consider $n_0\in\N$ such that
		\[
		|f_{n_0}(0)| < \frac{1}{4}, \quad \|\nabla f_{n_0}-(1,0,\dots,0)\|_{\shc^{\al}}< \frac{1}{2}.
		\]
		We can write
		\[
		f_{n_0}(x)=f_{n_0}(0)+x\cdot\int_0^1 \nabla f_{n_0}(ax)da, \ \forall x\in\R^d.
		\]
		By taking the absolute value and using the inverse  triangle inequality we get
		\begin{align*}
			|f_{n_0}(x)|=&\left|f_{n_0}(0)+x\cdot\int_0^1 \nabla f_{n_0}(ax)da\right|\\
			\geq& \left|x\cdot\int_0^1 \nabla f_{n_0}(ax)da\right|-|f_{n_0}(0)|\\
			\geq& \frac{|x|}2-\frac14 ,\ \forall x\in\R^d.
		\end{align*}
		Thus $f_{n_0}$ cannot be bounded. 
		\item Previous considerations show in particular that the component of the identity function  $id:\R^d\to\R^d$, which belongs to $D\shc^\be$, cannot be approximated by functions in $C_T\shc^{1+\beta}$. 
	\end{enumerate}
\end{remark}	

\begin{remark}
From Theorem \ref{thm:rMPext} we also  have $[M^f]_t=\int_0^t |\nabla f|^2(r,Z_r)dr\leq \|\nabla f\|_{\infty}^2$, then $M^f_t-f(0,Z_0)$ is a genuine  martingale, i.e. $M_t^f-f(0,Z_0)\in L^p$ for all $p\ge 1$.
\end{remark}

\subsection{Extending the local time operator}

\label{sec:ext-op}

One natural question consists in identifying the {\it stochastic differential
  equation} solved by the canonical process $Z$ under the solution $\P$ of the
rMP($B,\nu$). In other words one needs somehow to evaluate
$f(t, Z_t)$ for $f=id$, the 
 identity function
$id(x):=x, x \in \R^d$. 
The problem is that the components  $id^i$ of $id$ do not belong to $\hat \shd_\shl$.
This section clarifies and extends Section 6. of
\cite{issoglio_russoMPb}.
We start by recalling the notion of {\em weak Dirichlet process}.

\begin{definition}\label{def:wDpr}
	Let $X$ be a continuous stochastic process on some probability space $(\Omega,\shf,P)$ and let $\shf^X$ be the canonical filtration.
	\begin{itemize}
        \item[a)] 	A process $\sha$ is said to be an $\shf^X$-martingale orthogonal process
(or simply martingale orthogonal process)
          if $[\sha,N]=0$ for every $\shf^X$-continuous local martingale $N$.
		\item[b)] The process $X$ is said to be $\shf^X$-weak Dirichlet
(or simply weak Dirichlet)
                  if it is the sum of an $\shf^X$-local martingale $M$ and an $\shf^X$-martingale orthogonal
		process $\sha$  such that $\sha_0=0$.
	\end{itemize}
	
\end{definition}

\begin{remark} \label{rmk:bandini-uniq}
  We point out that such decomposition is unique for (even càdlàg) weak Dirichlet processes, see
  Proposition 3.2 of \cite{BandiniRusso_RevisedWeakDir}.
\end{remark}

We need a tool which is a direct implication of Propositions 5.3 and 5.11 and Corollary 5.9 of \cite{issoglio_russoMPb}.
\begin{lemma} \label{lem:dec-op}
  Let $\P$ be a solution of rMP($B,\nu$) and $Z$ be the canonical process.
  There are maps $f \mapsto M^f$, $f \mapsto \sha(f)$, 
  from $C^{0,1}([0,T]\times \R^d)$ to $\shc:=\shc^{\P}$  such that the following holds.
  \begin{enumerate}
  \item For every  $f\in C^{0,1}([0,T]\times \R^d)$,
    $f(t, Z_t)$ is an $\mathcal F^Z$-weak Dirichlet process
 with canonical decomposition $M^f_t + \sha_t(f), t \in [0,T]$. 
 In particular, for every $1 \le i \le d$,  $Z^i$ is an $\mathcal F^Z$-weak Dirichlet process
 with decomposition $M^{id^i} + \sha(id^i)$.
\item 
The map $f\mapsto \sha(f)$ is continuous (and linear) from $C^{0,1}$ to $\shc$.
\item Denoting $M^{id} := (M^{id^1},\ldots, M^{id^d})^\top$,
 for $f\in C^{0,1}([0,T] \times \R^d)$ we have 
\[
M^f_t = f(0, Z_0) + \int_0^t \nabla f(s, Z_s) \cdot  d M^{id}_s.
\]
\end{enumerate}
    \end{lemma}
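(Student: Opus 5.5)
The plan is to deduce Lemma \ref{lem:dec-op} from the three cited results of \cite{issoglio_russoMPb}, after checking that their standing hypotheses agree with ours. The drift there lies in $C_T\shc^{(-\beta)+}$ with $\beta<1/2$, and their solution notion is the rough martingale problem with domain $\shd_\shl$; these match Assumptions \ref{ass:beta}, \ref{ass:B} and Definition \ref{def:rMP}. So $\P$ is a solution in their sense, and the statement is a rephrasing of their conclusions on the canonical space $\Omega_T^d$ with $\shc=\shc^\P$.

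\textbf{Step 1: the decomposition (item 1).} I would invoke Proposition 5.3 of \cite{issoglio_russoMPb}. It states that for $f\in C^{0,1}$ the process $f(t,Z_t)$ is an $\shf^Z$-weak Dirichlet process, whose martingale part comes from the approximation scheme used there. The scheme approximates $f$ by elements $f_n$ of the domain, for which $f_n(\cdot,Z)$ is a local martingale plus an integral term. Weak Dirichlet decompositions are unique (Remark \ref{rmk:bandini-uniq}), so $M^f$ and $\sha(f)$ are well-defined maps into $\shc$, and both are linear in $f$. Since $id^i\in C^{0,1}$, the statement for $Z^i$ is the special case $f=id^i$.

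\textbf{Step 2: continuity of $\sha$ (item 2).} I would use Proposition 5.11 of \cite{issoglio_russoMPb}. The argument there is that $f\mapsto M^f$ is controlled through the bracket. If $f_n\to f$ in $C^{0,1}$, i.e.\ uniformly on compacts together with $\nabla f$, then by item 3 $M^{f_n}-M^f$ is a stochastic integral of $\nabla(f_n-f)(s,Z_s)$ against $M^{id}$. This tends to zero u.c.p. by localization and Burkholder--Davis--Gundy, using $[M^{id}]=t\,I$ (a Brownian motion, which I would obtain from Corollary \ref{cor:rMPext}-type arguments). Since $f_n(\cdot,Z)\to f(\cdot,Z)$ u.c.p. trivially, the identity $\sha(f)=f(\cdot,Z)-M^f$ gives continuity of $\sha$.

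\textbf{Step 3: the representation formula (item 3).} This is Corollary 5.9 of \cite{issoglio_russoMPb}. To sketch its content: for $f$ in a dense class (e.g.\ $\shd_\shl$ or smooth $f$), the martingale part of $f(\cdot,Z)$ has covariation with $M^{id^i}$ equal to $\int_0^\cdot\partial_i f(s,Z_s)\,ds$. This follows from the bracket identity \eqref{eq:MfBracket} by polarization, applied to $f$ and the components of $id$. The processes $M^f-f(0,Z_0)$ and $\int\nabla f\cdot dM^{id}$ then have the same brackets with every $M^{id^i}$, and, under the filtration generated by $Z$, also with every local martingale. Here one uses martingale representation with respect to $M^{id}$ for $\shf^Z$; this is the delicate point. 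Hence the two processes coincide. For general $f\in C^{0,1}$ the formula extends by density and the continuity from Step 2.

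I expect the main obstacle to be this identification of $M^f$ with the stochastic integral. It requires that the canonical filtration carries the martingale representation property with respect to $M^{id}$, and that the $f$'s used to approximate can reach all of $C^{0,1}$. Since these are exactly the contents of the cited Corollary 5.9 and Proposition 5.11, in the paper I would simply invoke them rather than reprove them.
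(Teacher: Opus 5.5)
Your proposal matches the paper, which gives no argument beyond stating Lemma \ref{lem:dec-op} as a direct consequence of Propositions 5.3, 5.11 and Corollary 5.9 of \cite{issoglio_russoMPb}, so invoking those results is all that is needed. Do not rely on your reconstructed sketches, however: Steps 2 and 3 use each other, so the argument as sketched is circular. Also, $[M^{id}]=t\,I$ is obtained in this paper only downstream of this lemma (Corollary \ref{cr:Dynamics}, via Proposition \ref{pr:extendA}). The polarization in Step 3 cannot involve $id^i$, since $id^i\notin\hat\shd_\shl$. Finally, no martingale representation is needed: write $Z_t=\psi(t,Y_t)$ with the semimartingale $Y_t=\phi(t,Z_t)$ from the proof of Theorem \ref{thm:rMPext}, and the $C^{0,1}$ chain rule for weak Dirichlet processes yields $M^f=f(0,Z_0)+\int_0^\cdot\nabla f(s,Z_s)\cdot dW_s$ directly.
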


  
Let us define now one more domain, which plays a central role for the next results,
in relation to the solution of the rMP($B,\nu$).
\begin{notation} \label{def:extendA}
	$\shd_\shl^{-\beta}:=\{f\in C_TD\shc^{\beta+}|\exists g\in C_T\shc^{(-\beta)+}\text{ s.t. }\shl f=g\}$.
\end{notation}
The proposition below permits the extension of the local time operator to
the space of distributions $C_T\shc^{(-\beta)+}$.

\begin{proposition}\label{pr:extendA}
	Let  $\P$ be a solution to the rMP($B,\nu$). Then the map  
\begin{equation} \label{eq:opA}
  A:C_T\shc^{0+}(\R^d)\ni g\mapsto \int_0^\cdot g(r,Z_r)dr \in\shc
  \end{equation}
	 admits a unique  continuous extension to  $ C_T\shc^{(-\beta)+}\ni g\mapsto A^{\P}(g) \in \shc^\P$. 
	 Moreover  $\forall f\in\shd_\shl^{-\beta}$
		\begin{equation}\label{eq:extend}
		M^f:= f(t,Z_t)-A^{\P}_t(\shl f), t \in [0,T]
		\end{equation}
		is a local martingale under $\P$ with quadratic variation $\int_0^\cdot |\nabla f|^2(r,Z_r)dr$.
              \end{proposition}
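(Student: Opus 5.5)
The plan is to realise $A$ through the Kolmogorov PDE, so that the local time operator is expressed as a difference of a function of $Z$ and a local martingale. Both of these are continuous in the data by the PDE stability results.

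Given $g\in C_T\shc^{0+}$, let $u=u^g\in C_TD\shc^{\beta+}$ be the unique solution of $\shl u=g$, $u(T)=0$ (Theorem \ref{thm:PDEU} and Remark \ref{rmk:PDEU}). Then $u\in\hat\shd_\shl$, so Theorem \ref{thm:rMPext} yields
\[
A_t(g)=\int_0^t g(r,Z_r)\,dr=u(t,Z_t)-M^u_t ,
\]
where $M^u$ is a local martingale with $[M^u]=\int_0^\cdot|\nabla u|^2(r,Z_r)dr$.

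For a general $g\in C_T\shc^{(-\beta)+}$, the solution $u^g$ of $\shl u=g$, $u(T)=0$ still exists in $C_TD\shc^{\beta+}$. We then define
\[
A^\P(g):=u^g(\cdot,Z)-M^{u^g},
\]
where $M^{u^g}$ is defined by approximation as follows. Take $g^n\to g$ in $C_T\shc^{(-\beta)+}$ with $g^n\in C_T\shc^{0+}$, for instance $g^n=p_{1/n}*g$ from Lemma \ref{lm:Xncont}. By Corollary \ref{cor:unDCBis} (with $B^n=B$), $u^{g^n}\to u^g$ in $C_TD\shc^{\beta+}$. In particular $u^{g^n}\to u^g$ uniformly on compacts, and $\nabla u^{g^n}\to\nabla u^g$ uniformly. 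Hence:
\begin{itemize}
\item $u^{g^n}(\cdot,Z)\to u^g(\cdot,Z)$ u.c.p.;
\item the brackets satisfy $[M^{u^{g^n}}-M^{u^{g^m}}]_T\le T\|\nabla(u^{g^n}-u^{g^m})\|_\infty^2\to0$.
\end{itemize}
By the Burkholder--Davis--Gundy (or Lenglart) inequality, $(M^{u^{g^n}})$ is u.c.p. Cauchy. It therefore converges to a local martingale $M$ with $[M]=\int_0^\cdot|\nabla u^g|^2(r,Z_r)dr$. So $A(g^n)$ converges u.c.p., and we set $A^\P(g)$ to be this limit.

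The same estimate gives continuity, and shows that the limit does not depend on the approximating sequence. Indeed, linearity of $g\mapsto u^g$, together with Theorem \ref{thm:unDC}(i) applied with $u_T=0$ and $g$ replaced by $g-g'$, bounds $\|u^g-u^{g'}\|_{C_TD\shc^\beta}$ by $R_{\al,\beta}(T,\|B\|,\|g-g'\|_{C_T\shc^{-\al}},0)$. This bound tends to $0$ as $\|g-g'\|\to0$ (by the proof of that estimate, or by Corollary \ref{cor:unDCBis}). The main technical point I expect is exactly this: ensuring that the map $g\mapsto u^g$ is continuous from $C_T\shc^{(-\beta)+}$ into $C_TD\shc^{\beta+}$, and not merely sequentially convergent along a fixed sequence, and that the inductive topology is handled via a common $\al<\beta$. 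Uniqueness of the extension follows from the density of $C_T\shc^{0+}$ in $C_T\shc^{(-\beta)+}$.

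Finally, let $f\in\shd_\shl^{-\beta}$ with $g=\shl f\in C_T\shc^{(-\beta)+}$. Let $g^n=p_{1/n}*g$, and let $f^n$ solve $\shl f^n=g^n$, $f^n(T)=f(T)\in D\shc^{\beta+}$. Then $f^n\in\hat\shd_\shl$, so by Theorem \ref{thm:rMPext} the process $f^n(t,Z_t)-A_t(g^n)$ is a local martingale with bracket $\int_0^\cdot|\nabla f^n|^2(r,Z_r)dr$.

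Corollary \ref{cor:unDCBis} gives $f^n\to f$ in $C_TD\shc^{\beta+}$, and the previous step gives $A(g^n)\to A^\P(g)$ u.c.p. Since the space of local martingales is closed under u.c.p. convergence, $M^f=f(\cdot,Z)-A^\P(\shl f)$ is a local martingale. Its bracket is the limit of the brackets, by the same BDG estimate applied to the differences, which equals $\int_0^\cdot|\nabla f|^2(r,Z_r)dr$.
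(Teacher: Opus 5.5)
Your proof is correct, but it takes a different route from the paper's.

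\textbf{The paper's route.} The paper defines $A^{\P}(g):=\sha(v)$. Here $v$ solves $\shl v=g$, $v(T)=0$, and $\sha$ is the martingale-orthogonal component of the weak Dirichlet process $v(\cdot,Z)$ supplied by Lemma \ref{lem:dec-op}, which is imported from earlier work. Continuity of $A^{\P}$ then comes for free, because $g\mapsto v$ is continuous into $C^{0,1}$ and $\sha$ is continuous on $C^{0,1}$. Agreement with $\int_0^\cdot g(r,Z_r)dr$ on $C_T\shc^{0+}$ follows from uniqueness of the weak Dirichlet decomposition. The local martingale property of $M^f$ is obtained by splitting $f=v_f+w_f$ and applying Corollary \ref{cor:rMPext} to $w_f$. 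Only the bracket is handled by approximation, via a stability result of Bandini--Russo.

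\textbf{Your route.} You bypass the weak Dirichlet machinery altogether. You use only Theorem \ref{thm:rMPext}, through the representation $A(h)=u^h(\cdot,Z)-M^{u^h}$, together with the linear a priori bound of Theorem \ref{thm:unDC}(i) with $u_T=0$. This yields an explicit modulus of continuity:
\begin{itemize}
\item $\sup_t|u^h(t,Z_t)|\le C\|h\|_{C_T\shc^{-\al}}(1+\sup_t|Z_t|)$;
\item $[M^{u^h}-M^{u^h}_0]_T\le TC^2\|h\|^2_{C_T\shc^{-\al}}$.
\end{itemize}
Hence $A$ is uniformly continuous on $C_T\shc^{0+}$ for each $C_T\shc^{-\al}$-norm, and it extends by completeness of $\shc^{\P}$. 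The martingale property and the bracket are then obtained in a single approximation step.

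\textbf{What each buys.} The paper's construction identifies $A^{\P}(g)$ structurally as a martingale-orthogonal part, which fits the weak Dirichlet description of $Z$ in Corollary \ref{cr:Dynamics}. Yours is more self-contained and quantitative.

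\textbf{Details to write out.}
\begin{itemize}
\item The bracket of $M^{u^{g^n}}-M^{u^{g^m}}$ comes from applying Theorem \ref{thm:rMPext} to $u^{g^n-g^m}$, using linearity of $f\mapsto M^f$. Note that $u^{g^n-g^m}$ belongs to the linear-growth domain because $g^n-g^m\in C_T\shc^{0+}$.
\item The bracket of the limit is most cleanly identified by passing to the u.c.p. limit in the local martingales $(M^{f^n})^2-\int_0^\cdot|\nabla f^n|^2(r,Z_r)dr$, as in Lemma \ref{lm:L}.
\item The point you flag as the main difficulty is not an obstacle. The construction only needs continuity of $g\mapsto u^g$ into $C_TD\shc^{\beta}$, not $C_TD\shc^{\beta+}$, and the linear bound (explicit in $R_{\al,\beta}(T,x,y,0)$) gives this directly once a common $\al<\beta$ is fixed for $B$ and $g$.
\end{itemize}
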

              \begin{remark} \label{rmk:density}
                We remark that, $\shc^{0+}$ is dense in $\shc^{(-\beta)+}$, because of
\eqref{eq:schauder2} in
Lemma \ref{lm:schauder}.

                \end{remark}
\begin{corollary} \label{cor:extendA}
Let  $\P$ be a solution to the rMP($B,\nu$).
For all $ f^1, f^2 \in\shd_\shl^{-\beta}$, for $t \in [0,T]$,  we have
		\begin{equation}\label{eq:extendCor}
		[f^1(\cdot,Z_\cdot), f^2(\cdot,Z_\cdot) ]_t =
     \int_0^t \langle \nabla f^1, \nabla f^2\rangle (r,Z_r)  dr, t \in [0,T].
     \end{equation}           
   \end{corollary}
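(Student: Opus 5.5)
The plan is to reduce the covariation to the brackets of the local martingale parts from Proposition \ref{pr:extendA}, and then polarize. For $f\in\shd_\shl^{-\beta}$, Proposition \ref{pr:extendA} gives the decomposition
\[
f(t,Z_t)=M^f_t+A^{\P}_t(\shl f),
\]
where $M^f$ is a local martingale with $[M^f]=\int_0^\cdot|\nabla f|^2(r,Z_r)dr$. The first key step is to show that $A^{\P}(g)$ has zero quadratic variation for every $g\in C_T\shc^{(-\beta)+}$. Then $[f(\cdot,Z),f(\cdot,Z)]=[M^f]$, and the cross terms $[M^f,A^{\P}(\shl f)]$ vanish as well.

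For this step I would not try to control $A^{\P}(g)$ directly, since for distributional $g$ it is not of bounded variation. Instead I would use its structure.
\begin{itemize}
\item Fix $g\in C_T\shc^{(-\beta)+}$ and let $u\in C_TD\shc^{\beta+}$ be the solution of $\shl u=g$ with $u(T)=0$, which exists by Theorem \ref{thm:PDEU} and Remark \ref{rmk:PDEU}.
\item Then $u\in\shd_\shl^{-\beta}$, so Proposition \ref{pr:extendA} gives $A^{\P}(g)=u(\cdot,Z)-M^u$.
\item On the other hand, $u\in C^{0,1}$, so by Lemma \ref{lem:dec-op} the process $u(\cdot,Z)$ is weak Dirichlet with decomposition $M^{u,\mathrm{wD}}+\sha(u)$. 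By Remark \ref{rmk:bandini-uniq} this decomposition is unique, hence $A^{\P}(g)$ coincides with the martingale orthogonal part $\sha(u)$ (up to the initial value).
\end{itemize}
This shows that $A^{\P}(g)$ is martingale orthogonal, so $[A^{\P}(g),N]=0$ for every continuous local martingale $N$.

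Martingale orthogonality is not enough; I still need $[A^{\P}(g)]=0$. To get it, I would approximate by $g^n=p_{1/n}*g$, using Lemma \ref{lm:Xncont}. For each $n$, $A(g^n)$ is of bounded variation, and $A(g^n)\to A^{\P}(g)$ u.c.p. The difficulty is that quadratic variation is not continuous under u.c.p. convergence alone, so this approximation does not immediately transfer $[A(g^n)]=0$ to the limit. I expect this to be the main obstacle. My first attempt would be to bypass it through item 3 of Lemma \ref{lem:dec-op}, which identifies the martingale part of $f(\cdot,Z)$ as $\int\nabla f\cdot dM^{id}$. Proposition \ref{pr:extendA} applied to $id^i$ should give $[M^{id^i},M^{id^j}]=\delta_{ij}t$. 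Then the bracket of $f(\cdot,Z)$ computed from the weak Dirichlet structure equals $\int|\nabla f|^2(r,Z_r)dr$, which matches $[M^f]$ and forces the quadratic variation of the remaining part to vanish.

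Once this is settled, the rest is routine. By bilinearity,
\[
[f^1(\cdot,Z),f^2(\cdot,Z)]=[M^{f^1},M^{f^2}].
\]
Both sides are symmetric and bilinear, and $\shd_\shl^{-\beta}$ is a linear space, so polarization applies:
\[
[M^{f^1},M^{f^2}]=\tfrac14\bigl([M^{f^1+f^2}]-[M^{f^1-f^2}]\bigr)=\int_0^\cdot\langle\nabla f^1,\nabla f^2\rangle(r,Z_r)dr.
\]
This is \eqref{eq:extendCor}.
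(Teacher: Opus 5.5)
Your closing polarization step is exactly the paper's proof. The paper applies Proposition~\ref{pr:extendA} to $f^1$, $f^2$, $f^1+f^2$ and polarizes, tacitly identifying $[f(\cdot,Z_\cdot)]$ with the bracket $[M^f]=\int_0^\cdot|\nabla f|^2(r,Z_r)\,dr$ of the local martingale part. That is also the only way the corollary is used afterwards: for $M^{id}$ in Corollary~\ref{cr:Dynamics}, and for the local martingales $\phi^i(\cdot,Z_\cdot)$ in Proposition~\ref{pr:phi_tight}. Read that way, your middle step is superfluous and your proposal coincides with the paper's argument.

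If you insist on the literal covariation of the processes $f^i(\cdot,Z_\cdot)$, you are right that $[A^{\P}(\shl f)]=0$ is needed, but your argument for it has a genuine gap.
\begin{itemize}
\item Martingale orthogonality of $A^{\P}(g)$ is fine, but it comes from the construction $A^{\P}(g)=\sha(v)$ in the proof of Proposition~\ref{pr:extendA}. Appealing to uniqueness of the weak Dirichlet decomposition already presupposes it.
\item The bypass through item 3 of Lemma~\ref{lem:dec-op} is circular. That item only identifies the martingale part $M^f=f(0,Z_0)+\int_0^\cdot\nabla f\cdot dM^{id}$, whose bracket $\int_0^\cdot|\nabla f|^2(r,Z_r)\,dr$ you already knew.
\item The bracket of $f(\cdot,Z_\cdot)$ ``computed from the weak Dirichlet structure'' is $[M^f]+[\sha(f)]$ (if the latter exists). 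Equating it with $\int_0^\cdot|\nabla f|^2(r,Z_r)\,dr$ is therefore exactly the claim $[\sha(f)]=0$.
\item Nothing forces this. Martingale orthogonal processes can have nonzero quadratic variation; for instance, every deterministic continuous path is martingale orthogonal.
\end{itemize}

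What is missing is control of the $\eps$-quadratic variations uniformly in $\eps$. Write $A^{\P}(g)=A(g^n)+\sha(v^n)$ with $g^n=P_{1/n}g$, $\shl v^n=g-g^n$ and $v^n(T)=0$. You then need $\sup_{\eps}\E\big[\frac1\eps\int_0^T|\sha(v^n)_{s+\eps}-\sha(v^n)_s|^2\,ds\big]\to0$. This requires two ingredients:
\begin{itemize}
\item the bound $\E|Z_t-Z_s|^2\le C|t-s|$;
\item a sup-norm time-H\"older bound of order at least $\frac12$ for $v^n$, with constant proportional to $\|g-g^n\|_{C_T\shc^{-\alpha}}$.
\end{itemize}
Theorems~\ref{thm:LPestim} and~\ref{thm:unDC} only give exponents below $\beta/2$. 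So this step needs a sharper estimate than anything available in the paper.
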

  \begin{proof} 
    We apply Proposition \ref{pr:extendA} to $f = f^1, f^2, f^1 + f^2$
    and we concluding by the fact that
    $$ [f^1(\cdot,Z_\cdot), f^2(\cdot,Z_\cdot) ]_t =
    \frac{1}{2} [(f^1+ f^2)(\cdot,Z_\cdot), (f^1 + f^2)(\cdot,Z_\cdot) ]_t
    - \frac{1}{2} [f^1(\cdot,Z_\cdot), f^1(\cdot,Z_\cdot) ]_t
 - \frac{1}{2} [f^2(\cdot,Z_\cdot), f^2(\cdot,Z_\cdot) ]_t.
  $$  
\end{proof}
\begin{remark} \label{rmk:extendA}
  The operator $A^{\P}$ fulfils the following compatibility property.
  Let $t \in (0,T)$.
  For every $g, \tilde g \in C_T\shc^{(-\beta)+}$ such that
  their restrictions to $[0,t]$ coincide,
  we have
\begin{equation} \label{eq:AZgtilde}
  A^{\P}_s(g) = A^{\P}_s(\tilde g), s \in [0,t].
  \end{equation}

  Indeed, obviously \eqref{eq:AZgtilde} holds
  if $g, \tilde g \in  C_T\shc^{0+}$.
  Now we define
  $ g_n:= P_{\frac{1}{n}} g$ and $ \tilde g_n:= P_{\frac{1}{n}} \tilde g,$
  whose restrictions to $[0,t]$ coincide, i.e.
  $ A^{\P}_s(g_n) = A^{\P}_s(\tilde g_n), s \in [0,t].$
    By Lemma \ref{lm:Xncont}, $g_n, \tilde g_n$ belong to
  $C_T\shc^{0+}$ and they converge respectively to  $g, \tilde g$ 
  in $C_T\shc^{(-\beta)+}$.
  Since $A^{\P}$ is continuous with respect to  $C_T\shc^{(-\beta)+}$
   \eqref{eq:AZgtilde} holds.
\end{remark}

\begin{proof}[Proof of Proposition \ref{pr:extendA}]

  Let $v \in \shd_\shl^{-\beta}$ be the unique solution of $\shl v = g, v(T) = 0$.
  We set $\hat A(g) := \sha(v)$,
  where $\sha$ was defined in Lemma \ref{lem:dec-op}.
  The  map $\hat A$ is continuous.
    Indeed  the map $g \mapsto v$ is continuous from
    $C_T\shc^{(-\beta)+}$ to $ C_T\shc^{1+\beta}$
    by Lemma 4.17 and Remark 4.18 of \cite{issoglio_russoPDEa},
    and consequently from  $C_T\shc^{(-\beta)+}$ to $ C^{0,1}([0,T] \times \R^d).$
    Lemma \ref{lem:dec-op} states that $\sha$ is a continuous operator from  $ C^{0,1}([0,T] \times \R^d)$ taking values in  $\shc$ so that
    $\hat A$ is continuous
    by composition of continuous operators.

    We prove below that $\hat A = A^\P$.
   We fix for the moment $g \in C_T\shc^{0+}(\R^d)$.
Since $\hat A$ is continuous it is enough to prove 
          \begin{equation} \label{eq:compat}
        \hat A(g) = \int_0^\cdot
        g(r,Z_r) dr.
        \end{equation}
      Let $f\in C_T\shc^{(1+\beta)+}$ solution of
      $\shl f = g, f(T)=0$.
      Since $f \in C_T\shc^{(1+\beta)+}$ it belongs to $C^{0,1}([0,T]\times \R^d)$, so
      by Lemma \ref{lem:dec-op} 
      $f(t,Z_t)$ is a weak Dirichlet process
      with unique decomposition
\begin{equation} \label{eq:DirHat}
  f(t,Z_t) = M^f_t + \sha(f)  = M^f_t + \hat A(g), t \in [0,T].
  \end{equation}
  Since $\P$ is a solution of rMP$(B,\nu)$,
  Theorem \ref{thm:rMPext} states that $\P$
  is also a solution to rMP$^{\hat \shd_\shl}(B,\nu)$,
 therefore     there is a local martingale $\tilde M^f$ such that
\begin{equation} \label{eq:DirMP}
  f(t,Z_t) = \tilde M^f_t + \int_0^t g(r,Z_r) dr, t \in [0,T].
  \end{equation}
  Identifying \eqref{eq:DirHat} and \eqref{eq:DirMP}, 
by uniqueness of the decomposition of weak Dirichlet processes,
we deduce \eqref{eq:compat} and therefore $\hat A = A^\P$.
   
    It remains to prove the statement related to \eqref{eq:extend}.
Let  $f  \in\shd_\shl^{-\beta}$ and set 
  $g:= \shl f$. We can decompose $f : = v_f + w_f$, where $v_f$ (resp. $w_f$) is  the unique solution to
  $$\begin{cases}\shl v_f = g\\ v_f(T) = 0\end{cases} \text{ resp. } \begin{cases}\shl w_f = 0\\ w_f= f(T).
  \end{cases}$$
  By \eqref{eq:DirHat} and the fact that $\hat A = A^\P$,
  the process
  \begin{equation} \label{eq:vf}
v_f(t,Z_t)-A^{\P}_t(\shl v_f) = v_f(t,Z_t)-A^{\P}_t(g) 
		\end{equation}
                is a local martingale under $\P$.
By Corollary \ref{cor:rMPext} $w_f(t,Z_t)$
                is a local martingale.
      Finally  
$$ f(t, Z_t)- A_t^{\P}(g) = v_f(t,Z_t)-A^{\P}_t(g) + w_f(t,Z_t),$$
         is a local martingale under $\P$.

       We prove now that for all $f\in \shd^{-\beta}_\shl$ the quantity
\begin{equation} \label{eq:MfBrack}
  [M^f] = \int_0^\cdot |\nabla f|^2(r,Z_r)dr.
\end{equation}
  Let $g := \shl f$. We define 
         $g^n:=P_{\frac{1}{n}}g$ and $f^n$ is the unique solution to
         $\shl f^n=g^n, f^n(T)=f(T)$.
         By  \eqref{eq:schauder2} in Lemma \ref{lm:schauder}
$g^n$ converges to $g$ in $C_T\shc^{(-\beta)+}$. 
Since $f^n\in\hat\shd_\shl$ for every $n\in\N$,
by Theorem \ref{thm:rMPext} we have that
$$ M^{f^n}_t = f^n(t,Z_t)-\int_0^t g^n(r,Z_r)dr  =
f^n(t,Z_t)- A^\P(g^n),$$
is a local martingale under $\P$ with quadratic variation given
by
\begin{equation} \label{eq:MfnBrack}
  [M^{f^n}] = \int_0^\cdot |\nabla f^n|^2(r,Z_r)dr,
  \end{equation}
By Corollary \ref{cor:unDCBis} with $B^n = B$,
$ f^n\to  f$ in $C_T\shc^{(1+\beta)+}$ hence in $C^{0,1}([0,T]\times\R^d)$.
Since $A^{P}$ is continuous,
$M^{f^n}$ converges u.c.p. to $M^f$ defined in
\eqref{eq:extend} which is a local martingale.
By Proposition A.4 of \cite{BandiniRusso_RevisedWeakDir}
and \eqref{eq:MfnBrack}
$$ [M^f] = \lim_{n \rightarrow \infty} [M^{f^n}] =  \int_0^\cdot |\nabla f|^2(r,Z_r)dr,$$
u.c.p., which concludes the proof of \eqref{eq:MfBrack}.

\end{proof}

\subsection{Different equivalent formulations
  of the rough martingale problem}

The key tool in this section is Proposition \ref{pr:extendA}
which, given  a solution $\P$ to rMP($B,\nu$),
one can properly extend the
local time operator.

\begin{theorem}\label{thm:extendA}
		Let $\P$ be a probability on the canonical space. Then the following are equivalent.
		\begin{enumerate}
                \item $\P$ is a solution to the rMP($B,\nu$).
			  \item The map  $A:C_T\shc^{0+}\to  \shc$
        admits a continuous extension $A^\P$ to the space  $ C_T\shc^{(-\beta)+}$ and $\forall f\in\shd_\shl^{-\beta}$
		 \[
		M^f_t:= f(t,Z_t)-A^{\P}_t(\shl f)
		 \] 
		 is a local martingale under $\P$ with quadratic variation $\int_0^t |\nabla f|^2(r,Z_r)dr$.
               \item The map  $A:C_T\shc^{0+}\to  \shc$ admits a continuous extension
              $A^\P$   to the space  $ C_T\shc^{(-\beta)+}$ and $\forall f\in C_{ub}^{1,2+}$
		 	\[
		 M^f_t:=	f(t,Z_t)- A^{\P}_t(\shl f)
		 	\] 
		 	is a local martingale under $\P$ with quadratic variation $\int_0^t |\nabla f|^2(r,Z_r)dr$.
             \item The map  $A:C_T\shc^{0+}\to  \shc$ admits a continuous extension
            $A^\P$     to the space  $ C_T\shc^{(-\beta)+}$ and $\forall f\in C_{b}^{1,2+}$
		 	\[
		 M^f_t:=	f(t,Z_t)- A^{\P}_t(\shl f)
		 	\] 
		 	is a  martingale under $\P$ with quadratic variation $\int_0^t |\nabla f|^2(r,Z_r)dr$.		 
			\end{enumerate}
\end{theorem}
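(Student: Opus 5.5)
The plan is to prove the chain $1\Rightarrow 2\Rightarrow 3\Rightarrow 4\Rightarrow 1$.

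\emph{Step $1\Rightarrow 2$.} This is exactly Proposition \ref{pr:extendA}.

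\emph{Step $2\Rightarrow 3$.} Let $f\in C_{ub}^{1,2+}$. It suffices to show $f\in\shd_\shl^{-\beta}$. Since $\nabla f$ is bounded and uniformly $\epsilon$-Hölder, it lies in $\shc^{\beta+}$, and so $f\in C_TD\shc^{\beta+}$. Moreover $\partial_t f+\frac12\Delta f$ is bounded and continuous. By the Bony estimate \eqref{eq:bonyt}, the term $\nabla f\cdot B$ lies in $C_T\shc^{(-\beta)+}$. Hence $g:=\shl f\in C_T\shc^{(-\beta)+}$, provided $\partial_tf+\frac12\Delta f\in C_T\shc^{(-\beta)+}$.

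This is the step I expect to be the main obstacle. The second derivatives are $\epsilon$-Hölder in $x$, hence in $C_T\shc^{\epsilon}$ if we also have time continuity in $\shc^{\epsilon'}$ for some $\epsilon'<\epsilon$. For $\partial_t f$ we only know bounded and continuous; bounded continuous functions belong to $\shc^{0}\subset\shc^{-\beta'}$ for every $\beta'>0$. Continuity in time in $\shc^{-\beta'}$ should follow from local uniform continuity plus an interpolation argument, or else be built into the definition of $C^{1,2+}_{ub}$. If needed, I would interpret the class so that $\partial_t f\in C_T\shc^{0}$, which is what gives $C_T\shc^{(-\beta)+}$ membership. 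Once $f\in\shd_\shl^{-\beta}$ is established, item 2 yields item 3 directly.

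\emph{Step $3\Rightarrow 4$.} We have $C_b^{1,2+}\subset C_{ub}^{1,2+}$. The quadratic variation $\int_0^\cdot|\nabla f|^2(r,Z_r)dr$ is bounded by $T\|\nabla f\|_\infty^2$, and $M^f_0=f(0,Z_0)$ is bounded. By the Burkholder–Davis–Gundy inequality, the local martingale $M^f$ is therefore a true (square integrable) martingale.

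\emph{Step $4\Rightarrow 1$.} Let $f\in\shd_\shl$, so $g=\shl f\in C_T\bar\shc_c^{0+}$ and $f(T)\in\bar\shc_c^{(1+\beta)+}$. By Proposition \ref{pr:equivMP} it is enough to treat $f\in\shd_\shl^\shs$, where $g\in C_T\shs$ and $f(T)\in\shs$; here $A^\P(\shl f)=\int_0^\cdot g(r,Z_r)dr$ because $A^\P$ extends $A$. Such $f$ is not necessarily $C^{1,2+}$ when $B$ is a distribution, so I approximate. Set $B^n:=P_{1/n}B$, which converges to $B$ in $C_T\shc^{(-\beta)+}$ by Lemma \ref{lm:Xncont}. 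Let $f^n$ solve $\shl^n f^n=g$ with $f^n(T)=f(T)$. By Proposition \ref{pr:lunardi}, $f^n\in C_b^{1,2+}$. Then
\[
\shl f^n=g+\nabla f^n\cdot(B-B^n),
\]
and by item 4 the process $f^n(t,Z_t)-A^\P_t(g)-A^\P_t(\nabla f^n\cdot(B-B^n))$ is a martingale. By Corollary \ref{cor:unDC}, $\nabla f^n$ is bounded in $C_T\shc^{\beta}$, so \eqref{eq:bonyt} gives $\nabla f^n\cdot(B-B^n)\to0$ in $C_T\shc^{-\al}$ for some $\al<\beta$. By continuity of $A^\P$, this term tends to $0$ u.c.p. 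Corollary \ref{cor:unDCBis} and Remark \ref{rmk:unDCBis} give $f^n\to f$ uniformly. Closedness of local martingales under u.c.p. convergence then shows that $f(t,Z_t)-\int_0^tg(r,Z_r)dr$ is a local martingale, so $\P$ solves rMP$(B,\nu)$. The initial law $\P\circ Z_0^{-1}=\nu$ must also be stated in items 2–4; I assume it is implicit there.
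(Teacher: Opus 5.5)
Your proof is correct and follows the paper's argument step by step. The steps are: $1\Rightarrow2$ via Proposition \ref{pr:extendA}; $2\Rightarrow3$ via the inclusion $C_{ub}^{1,2+}\subseteq\shd_\shl^{-\beta}$, where the paper simply asserts $\partial_t f,\Delta f\in C_T\shc^{0+}$, so the time-regularity issue you flag is taken as built into the class there as well; $3\Rightarrow4$ by boundedness of $f(0,Z_0)$ and $[M^f]$; and $4\Rightarrow1$ by mollifying $B$, invoking Proposition \ref{pr:lunardi}, using the uniform $C_T\shc^\beta$ bound on $\nabla f^n$ with \eqref{eq:bonyt}, continuity of $A^\P$, and u.c.p.\ closedness of local martingales. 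The only variation is in $4\Rightarrow1$: you first reduce to $\shd_\shl^\shs$ via Proposition \ref{pr:equivMP}, so the terminal datum is already in $C_b^{2+}$. The paper instead treats any $f$ with $g\in C_T\shc^{0+}$ and $f_T\in\shc^{(1+\beta)+}$ directly, additionally mollifying the terminal datum as $f_T^n=P_{1/n}f_T$.
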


\begin{proof}
  {\em $(1. \rightarrow 2.)$ } is
  a direct consequence of Proposition  \ref{pr:extendA}.
  

  {\em $(2. \rightarrow 3.)$} This follows immediately
  since $C_{ub}^{1,2+}\subseteq \shd_\shl^{-\beta}$. Indeed for every $f\in C_{ub}^{1,2+}$ we can evaluate $\shl f$, which belongs to
  $C_T\shc^{(-\beta)+}$:
  in particular $\partial_t f $,
  $\Delta f $ belong to $C_T\shc^{0+}$ and the pointwise product
  $b\cdot \nabla f $ belongs to $C_T\shc^{(-\beta)+}$.
 
{\em $(3. \rightarrow 4.)$ }
This follows directly from
  $C_{b}^{1,2+}\subseteq C_{ub}^{1,2+}$.
  In fact $M^f$ is even a martingale since
  $f(0,Z_0)$ and $[M^f]$ are bounded.

 {\em $(4. \rightarrow 1.)$ }  Let  $f\in C_T\shc^{(1+\beta)+}$ be a solution to 
 $$\shl f=g, \ f(T)=f_T,$$
 where $g\in C_T\shc^{0+}$ and $f_T\in \shc^{(1+\beta)+}$. 
 We need to show that
 \begin{equation} \label{eq:Mart1}
    M^f:= f(t,Z_t)- \int_0^t g(r,Z_r) dr =   f(t,Z_t)-A^{\P}(g),
   \end{equation}
is a local martingale under $\P$.
 Let us now define $B^n:=P_{\frac1n}B\in C_T\shc^{0+}(\R^d;\R^d)$ and $f^n_T=P_{\frac1n}f_T\in C^{2+}_b$.
By  item {\it (iii)} of Lemma \ref{lm:Xncont} $B^n\to B$ 
  in  $C_T\shc^{(-\beta)+}$ thanks to item {\it (ii)} of Lemma \ref{lm:Xncont} and  $f^n_T\to f_T$ in $\shc^{(1+\beta)+}$ thanks to
  \eqref{eq:schauder2} in Lemma  \ref{lm:schauder}.
  
  For each $n$, we consider the operator $\shl^n:=\partial_t+\frac12\Delta+\nabla_\cdot B^n$.
  We consider the solution $f^n \in C_T\shc^{(1+\beta)+}$ to $\shl^nf^n=g, f^n(T) = f^n_T$
  provided by Theorem \ref{thm:PDEU} with $B=B^n$.
  By Corollary \ref{cor:unDCBis} and Remark \ref{rmk:unDCBis}, with
  $u_T^n=f^n_T$ and $u^n=f^n$, we have
  $f^n\to f$ in $C_T\shc^{(1+\beta)+}$, consequently also uniformly
  so that $f^n(t,Z_t) $ converges u.c.p. to $f(t,Z_t)$.
 Furthermore, since $B^n$ is bounded continuous
 and $f^n_T\in C_b^{2+}$, by Proposition \ref{pr:lunardi},  $f^n\in C_b^{1,2+}$,
 for every $n\in\N$. Therefore, by {\em item 4.}, we get  that, for every $n\in\N$,
  $$f^n(t,Z_t)-A^{\P}(\shl f^n)$$
  is a local martingale under $\P$, where
$\shl f^n= g+\nabla f^n\cdot (B-B^n).$
We  observe then,  that $f^n$ is the unique solution to
    \begin{equation} 
  	 	 	\shl f^n= g+\nabla f^n\cdot (B-B^n),\  f^n(T)=f^n_T.
  	 	 \end{equation} 
   Thanks to Corollary \ref{cor:unDC} (i),
 $C:= \sup_n \|\nabla f^n\|_{C_T\shc^{\beta}} $ is finite. For a suitable $\al<\be$,
  by \eqref{eq:bonyt},  we have that 
  \begin{equation}
 	 	\|g^n - g\|_{C_T\shc^{-\al}}
 	 		\leq \|\nabla f^n\|_{C_T\shc^{\beta}}\|B-B^n\|_{C_T\shc^{-\al}}
 	 		\leq C \|B-B^n\|_{C_T\shc^{-\al}}.
 	              \end{equation}
     so that
      $g^n$ converges to $g$ in $C_T\shc^{(-\beta)+}$.
       By the assumption on $A^{\P}$,  $A^{\P}(g^n)$ converges u.c.p. to $A^{\P}(g)$. 
       Finally, by additivity, $f^n(t,Z_t)-A^{\P}(g^n)$ converges u.c.p.
       to \eqref{eq:Mart1} which is forced to be a local martingale since the linear space
  of local martingales vanishing at zero is closed under the u.c.p. convergence. This concludes the proof.
 \end{proof}

An important implication of Theorem \ref{thm:extendA}  
is the following.

\begin{corollary} \label{cor:extendABis}
  rMP$^{\shd_\shl}(B,\nu)$ and  rMP$^{{\tilde \shd}_\shl}(B,\nu)$.
  are equivalent, in the sense that the solutions are the same.
\end{corollary}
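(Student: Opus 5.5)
The plan is to prove both inclusions of solution sets. Since $\shd_\shl\subseteq\tilde\shd_\shl$, every solution of rMP$^{\tilde\shd_\shl}(B,\nu)$ is automatically a solution of rMP$^{\shd_\shl}(B,\nu)$. Indeed, $\bar\shc_c^{0+}\subseteq\shc^{0+}$ and $\bar\shc_c^{(1+\beta)+}\subseteq\shc^{(1+\beta)+}$. Moreover, the process $f(t,Z_t)-f(0,Z_0)-\int_0^t\shl f(s,Z_s)ds$ differs from the process in Definition \ref{def:rMP} only by the $\shf_0$-measurable term $f(0,Z_0)$. Since $f$ is bounded, subtracting this term does not affect the local martingale property.

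For the converse, I will not go through a density argument in $C_T\shc^{(1+\beta)+}$, because $\shc_c$ is not dense in $\shc$. Instead I will use Theorem \ref{thm:extendA}. Let $\P$ solve rMP$(B,\nu)$ and take $f\in\tilde\shd_\shl$ with $\shl f=g\in C_T\shc^{0+}$ and $f(T)\in\shc^{(1+\beta)+}$. By the implication $1.\Rightarrow 2.$ of Theorem \ref{thm:extendA}, or directly by Proposition \ref{pr:extendA}, the extended operator $A^\P$ exists. Moreover, $M^f=f(\cdot,Z)-A^\P(\shl f)$ is a local martingale for every $f\in\shd_\shl^{-\beta}$. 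I will then check that $\tilde\shd_\shl\subseteq\shd_\shl^{-\beta}$: we have $C_T\shc^{(1+\beta)+}\subseteq C_TD\shc^{\beta+}$ by Remark \ref{rmk:DCgamma+}, and $g\in C_T\shc^{0+}\subseteq C_T\shc^{(-\beta)+}$. Finally, since $A^\P$ extends $A$ and $g\in C_T\shc^{0+}$, we get $A^\P(g)=\int_0^\cdot g(r,Z_r)dr$. So $f(t,Z_t)-\int_0^tg(r,Z_r)dr$ is a $\P$-local martingale, and $\P$ solves rMP$^{\tilde\shd_\shl}(B,\nu)$. The initial law condition is common to both problems.

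Alternatively, this direction is exactly what the proof of $4.\Rightarrow1.$ in Theorem \ref{thm:extendA} establishes, since that argument was carried out for a general $f\in\tilde\shd_\shl$. So one could simply say: $\P$ solves rMP$(B,\nu)$, hence item 4 holds, and the proof of $4.\Rightarrow1.$ gives the martingale property on $\tilde\shd_\shl$.

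There is no real obstacle. The only point needing care is the identification $A^\P(g)=\int_0^\cdot g(r,Z_r)dr$ for $g\in C_T\shc^{0+}$, which holds by construction of $A^\P$ as an extension.
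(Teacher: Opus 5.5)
Your proposal is correct and follows the paper's proof: the easy direction comes from $\shd_\shl\subseteq\tilde\shd_\shl$, and the nontrivial one comes from Proposition \ref{pr:extendA} (item 2 of Theorem \ref{thm:extendA}), combined with the inclusion $\tilde\shd_\shl\subseteq\shd_\shl^{-\beta}$ and the fact that $A^\P(g)=\int_0^\cdot g(r,Z_r)dr$ for $g\in C_T\shc^{0+}$. Your alternative via the proof of $4.\Rightarrow 1.$ is also valid, since that argument is written for a general $f\in\tilde\shd_\shl$.
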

\begin{proof}

  The converse implication follows immediately
  since $ \shd_\shl \subset {\tilde \shd}_\shl  $.
Concerning the   direct implication,
 suppose $\P$ fulfils rMP$^{\shd_\shl}(B,\nu)$.
 Then  Theorem \ref{thm:extendA} item 2.,
 guarantees that the map 
 $g \mapsto \int_0^\cdot g(r,Z_r) dr$ defined in $C_T \shc^{0+}$, admits a continuous
 extension $A^{\P}$ to
 $ C_T\shc^{(-\beta)+}$ to $\shc^\P$.
 Moreover, if
  $f \in \shd_\shl^{-\beta}$ (see Notation \ref{def:extendA}) and $\shl f = g$, the process
		 \[
		f(t,Z_t)  = M^f_t + A^{\P}_t(g)
		 \] 
		 is a local martingale.
                 The result follows since ${\tilde \shd}_\shl$ is a subset of
                 $\shd_\shl^{-\beta}$.

\end{proof}

For technical reasons
  we have to introduce a supplementary domain
  in relation to the closure of some
  Besov spaces of functions
  with compact support.
    \begin{notation}  \label{not:closureDom}
    We denote by 
    $$\tilde\shd_\shl^{-\beta}:=\{f \in C_T\shc^{(1+\beta)+}\vert \exists g\in
    C_T\bar\shc_c^{(-\beta)+} \text{ s.t. } \shl f=g, f_T\in\bar\shc_c^{(1+\beta)+}\}.$$
	\end{notation}
    
        Again, for technical reasons related to separability,
        we need to formulate and prove an analogous theorem
        to Theorem \ref{thm:extendA} related to the
        domain introduced in Notation \ref{not:closureDom}.

\begin{theorem}\label{thm:extABis}
	Let $\P$ be a probability measure on the canonical space $\Omega_T^d$. Then the following are equivalent.
	\begin{enumerate}
		\item$\P$ is the solution to the rMP($B,\nu$).
		\item The map $A:C_T\shc_c^{0+}\ni g\to \int_0^t g(r,Z_r)dr\in\shc^{\P}$ admits a unique continuous extension $A^\P$ to $C_T\bar\shc^{(-\beta)+}$ and for every $f\in\tilde\shd_\shl^{-\beta}$,
			\[
		M^f_t:=	f(t,Z_t)- A^{\P}_t(\shl f)
		\] 
		is a  martingale under $\P$ with quadratic variation $\int_0^t |\nabla f|^2(r,Z_r)dr$.		
		\item The map $A:C_T\shc_c^{0+}\ni g\to \int_0^t g(r,Z_r)dr\in\shc^\P$ admits a continuous extension $A^\P$ to $C_T\bar\shc^{(-\beta)+}$ and for every 
		$f\in C_c^{1,2+}$,
		\[
		M^f_t:=	f(t,Z_t)- A^{\P}_t(\shl f)
		\] 
		is a  martingale under $\P$.
              \end{enumerate}
\end{theorem}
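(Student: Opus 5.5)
The plan mirrors the proof of Theorem \ref{thm:extendA}: prove the cycle $1\Rightarrow 2\Rightarrow 3\Rightarrow 1$. The only new feature is that every object must stay in the closures of compactly supported functions, which keeps the relevant spaces separable.

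For $(1\Rightarrow 2)$ we take the extension $A^\P$ given by Proposition \ref{pr:extendA} on $C_T\shc^{(-\beta)+}$ and restrict it to $C_T\bar\shc_c^{(-\beta)+}$. It extends $A$ on $C_T\shc_c^{0+}$. It is the unique continuous extension because $C_T\shc_c^{0+}$ (even $C_T\shs$) is dense in $C_T\bar\shc_c^{(-\beta)+}$, by Corollary 5.8 of \cite{issoglio_russoPDEa} together with heat-kernel mollification as in Remark \ref{rmk:density}. Next, $\tilde\shd_\shl^{-\beta}\subseteq\shd_\shl^{-\beta}$ since $\shc^{(1+\beta)+}\subseteq D\shc^{\beta+}$ (Remark \ref{rmk:DCgamma+}). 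So Proposition \ref{pr:extendA} shows that $M^f$ is a local martingale with $[M^f]=\int_0^\cdot|\nabla f|^2(r,Z_r)dr$. Because $f\in C_T\shc^{1+\beta}$, $\nabla f$ is bounded, hence $[M^f]_T\le T\|\nabla f\|_\infty^2$. Also $f(0,Z_0)$ is bounded. By the BDG inequality $M^f$ is therefore a genuine martingale.

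For $(2\Rightarrow 3)$ it suffices to check $C_c^{1,2+}\subseteq\tilde\shd_\shl^{-\beta}$. If $f\in C_c^{1,2+}$, then $f\in C_T\shc^{(1+\beta)+}$ and $f(T)\in\shc_c^{(1+\beta)+}$. Moreover $\shl f=\partial_tf+\frac12\Delta f+\nabla f\cdot B$, where the first two terms lie in $C_T\shc_c^{0+}$. The Bony product $\nabla f\cdot B$ lies in $C_T\shc^{(-\beta)+}$ by \eqref{eq:bonyt}, and it is approximated by $\nabla f\cdot B^n$ with $B^n$ compactly supported smooth. Concretely, take $\chi\nabla f\cdot P_{1/n}B$ with $\chi$ a cutoff equal to $1$ on the support of $f$; the product then has compact support, and \eqref{eq:bonyt} combined with Lemma \ref{lm:Xncont} gives the convergence. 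Hence $\shl f\in C_T\bar\shc_c^{(-\beta)+}$.

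For $(3\Rightarrow 1)$ we must show that $M^f=f(t,Z_t)-\int_0^t g(r,Z_r)dr$ is a local martingale for every $f\in\shd_\shl$, with $g=\shl f\in C_T\bar\shc_c^{0+}$ and $f_T\in\bar\shc_c^{(1+\beta)+}$. By Proposition \ref{pr:equivMP} it is enough to take $g\in C_T\shs$ and $f_T\in\shs$. As in $(4\Rightarrow1)$ of Theorem \ref{thm:extendA}, we let $B^n=P_{1/n}B$ and let $f^n$ solve $\shl^nf^n=g$, $f^n(T)=f_T$. Then $f^n\in C_b^{1,2+}$ by Proposition \ref{pr:lunardi}, $f^n\to f$ in $C_T\shc^{(1+\beta)+}$ by Corollary \ref{cor:unDCBis} and Remark \ref{rmk:unDCBis}, and $\shl f^n=g+\nabla f^n\cdot(B-B^n)$.

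The main obstacle is that $f^n$ is not compactly supported, so item 3 does not apply to it directly. I would localize with cutoffs $\chi_R(x)=\chi(x/R)$ and set $f^{n,R}=\chi_Rf^n\in C_c^{1,2+}$. By item 3, $f^{n,R}(t,Z_t)-A^\P_t(\shl f^{n,R})$ is a martingale. On the event that $Z$ stays in the ball of radius $R$ up to the stopping time $\tau_R$, we would have $f^{n,R}=f^n$ there. Replacing $f^n$ by $f^{n,R}$ then requires a locality property for $A^\P$ analogous to Remark \ref{rmk:extendA}, but in space: if $g_1=g_2$ on $B_R$, then $A^\P(g_1)=A^\P(g_2)$ up to $\tau_R$. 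I would prove this first for $g_i\in C_T\shc_c^{0+}$, and then pass to the limit by continuity, using mollifications that are localized via cutoffs. An alternative avoids locality altogether: let $R\to\infty$ for fixed $n$, noting that $\chi_Rf^n\to f^n$ fails in $\shc^{1+\beta}$ norm but does converge locally, and conclude through the stopped processes. Granted locality, $f^n(\cdot\wedge\tau_R,Z)-A^\P_{\cdot\wedge\tau_R}(\shl f^n)$ is a martingale for each $R$, so the process is a local martingale. Finally, $\shl f^n\to g$ in $C_T\shc^{(-\beta)+}$, with boundedness supplied by Corollary \ref{cor:unDC}. If this convergence can be arranged inside $C_T\bar\shc_c^{(-\beta)+}$, the continuity of $A^\P$ gives u.c.p. convergence, and the closedness of local martingales under u.c.p. convergence yields the conclusion.
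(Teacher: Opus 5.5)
\textbf{Verdict: there is a genuine gap in $3\Rightarrow 1$.} Your steps $1\Rightarrow 2$ and $2\Rightarrow 3$ are fine and agree with the paper.

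\textbf{Where the argument breaks.} Under item 3, $A^\P$ is only defined on $C_T\bar\shc_c^{(-\beta)+}$. Yet two of your steps apply $A^\P$ to $\shl f^n=g+\nabla f^n\cdot(B-B^n)$:
\begin{itemize}
\item your key process $f^n(\cdot\wedge\tau_R,Z)-A^\P_{\cdot\wedge\tau_R}(\shl f^n)$;
\item your final u.c.p.\ limit.
\end{itemize}
Neither $\nabla f^n$ nor $B$ is compactly supported, and $B$ is not assumed to lie in any closure of compactly supported distributions. So showing $\shl f^n\in C_T\bar\shc_c^{(-\beta)+}$ would need a separate decay result: that $\nabla f^n$ lies in the closure of compactly supported functions. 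You do not supply this; you leave it as ``if this convergence can be arranged''. In addition, the spatial locality of $A^\P$, on which your localisation rests, is only sketched. As written, the argument does not close.

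\textbf{The missing observation.} You need neither stopping times nor locality. The cutoff only has to disappear in the negative Besov norm, not in $\shc^{1+\beta}$. Couple the cutoff radius with $n$ and set $\tilde f^n:=\chi_n f^n\in C_c^{1,2+}$, with $\chi_n$ as in Lemma \ref{lm:chi_n}. Your $\chi(\cdot/R)$ with $R=n$ works just as well, since its first and second derivatives are $O(1/R)$. Then
\[
\shl\tilde f^n=\chi_n g+l^n,\qquad l^n=\nabla f^n\cdot\nabla\chi_n+\tfrac12 f^n\Delta\chi_n+\chi_n\nabla f^n\cdot(B-B^n)+f^n\nabla\chi_n\cdot B .
\]
Here $l^n$ is compactly supported, because $\shl\tilde f^n$ and $\chi_n g$ are, so $A^\P(l^n)$ makes sense. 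The following ingredients give $l^n\to 0$ in $C_T\shc^{(-\beta)+}$:
\begin{itemize}
\item the product estimate \eqref{eq:bonyt};
\item $\sup_n\|f^n\|_{C_T\shc^{1+\beta}}<\infty$ and $\sup_n\|\chi_n\|_{\shc^{\beta}}<\infty$;
\item $\nabla\chi_n\to0$ in $\shc^{\beta+}$ and $\Delta\chi_n\to 0$ uniformly.
\end{itemize}
The term that blocked you now appears as $\chi_n\nabla f^n\cdot(B-B^n)$, which is compactly supported and tends to zero.

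\textbf{Concluding.} Combine item 3 and the continuity of $A^\P$ with two facts: $\chi_n g=g$ for large $n$ once $g$ has compact support (or $\chi_n g\to g$ in $C_T\shc^{0+}$ if $g\in C_T\shs$), and $\tilde f^n\to f$ locally uniformly. Then $f(t,Z_t)-\int_0^t g(r,Z_r)dr$ is a u.c.p.\ limit of martingales, hence a local martingale. This is the paper's proof.

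Your localisation route can also be repaired. First let $n\to\infty$ at fixed $R$, which only involves compactly supported distributions. Then remove $\chi_R$ up to $\tau_R$ using a fully proven locality lemma. That route is strictly longer.
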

\begin{proof}
  $(1.\implies 2.)$ We observe that
  $C_T\shc_c^{0+}$ is dense in $C_T\bar\shc^{(-\beta)+}$.
  The result
  follows directly from
   $(1.\implies 2.)$
 of Theorem \ref{thm:extendA} since
  $ \tilde\shd_\shl^{-\beta} \subset \shd_\shl^{-\beta}$ and
  $ C_T\shc_c^{0+} \subset C_T\shc^{0+}$.
The process $M^f$ is a true martingale since $\nabla f$ and therefore $[M^f]$ is bounded.
  \\
 $(2.\implies 3.)$
This follows immediately
  since $C_{c}^{1,2+}\subseteq \tilde \shd_\shl^{-\beta}$. Indeed for every $f\in C_{c}^{1,2+}$ we can evaluate $\shl f$, which belongs to
  $C_T\shc^{(-\beta)+}$:
  in particular $\partial_t f $,
  $\Delta f $ belong to $C_T\shc_c^{0+}$ and the pointwise product
  $b\cdot \nabla f $ belongs to $C_T {\bar \shc}^{(-\beta)+}$ because of Remark \ref{rmk:density}.

	$(3.\implies 1.)$ Let $g\in C_T\bar\shc_c^{0+}(\R^d)$, $f_T\in\bar\shc_c^{(1+\beta)+}(\R^d)$ and consider the solution $f\in C_T\shc^{(1+\beta)+}$ to
	\begin{equation}\label{eq:PDE31}
		\left\{
		\begin{array}{l}
		\shl f=g\\ f(T)=f_T.
		\end{array}
		\right.
	\end{equation}
	We need to prove that $M^f$
as defined in \eqref{eq:Mart1}
is a $\P$-local martingale. By the density of $C_T\shc_c^{0+}(\R^d)$ in $C_T\bar\shc_c^{0+}(\R^d)$, thanks to the continuity of \eqref{eq:PDE31},
due
Corollary \ref{cor:unDCBis} and Remark \ref{rmk:unDCBis}
together with the fact that the space of local martingales vanishing at zero is closed under u.c.p. convergence topology, it is sufficient to suppose both $g$ and $f_T$ to have compact support. A similar reasoning is done at the beginning of the proof of Proposition 
\ref{pr:equivMP}.

	Let us consider $\rho\in C_c^{\infty}$ and define 
	$$\rho_n(x)=n^d\rho(nx)\ \text{ and } f^n_T:=f_T\ast \rho_n.$$
	It is well-known that there exists a single compact set $K$ such that, for all $n\in\N$, ${\rm supp}(f_T^n)\subseteq K$. Obviously
        $f^n_T\in C_b^{2+}$.
	For all $n$, we define $B^n:=P_{\frac1n}B\in C_T\shc^{0+}(\R^d;\R^d)$, set $\shl^n f=\partial_t f+\frac12 \Delta f+\nabla fB^n$ and consider the solution
        $f^n$ solutions to 
	\begin{equation}
		\shl^n f^n=g,\ f^n(T)=f^n_T.
	\end{equation}
	By Proposition \ref{pr:lunardi} $f^n \in C_b^{1,2+}([0,T]\times \R^d)$.
        By Corollary \ref{cor:unDCBis} we have that
        \begin{equation} \label{eq:fnConv}
          f^n \rightarrow f  \ {\rm in} \ C_T\shc^{(1+\beta)+}.
\end{equation}
     	Let us now consider   the sequence $(\chi_n)$
        defined in Lemma \ref{lm:chi_n} and we set
        $\tilde f^n:=\chi_n f^n\in C_c^{1,2+}([0,T]\times \R^d)$. By evaluation, we get 
	$$\shl^n \tilde f^n= g^n+ f^n\nabla \chi_n B^n+\nabla f^n\nabla \chi_n+\frac12 f^n\Delta \chi_n,   $$
        where $g^n:=\chi_n g$.

We observe that $\tilde f^n $ solves
\begin{align*}
  \shl \tilde f^n&=g^n + f^n\nabla \chi_n B^n+\nabla f^n\nabla \chi_n
                   +\frac12 f^n\Delta \chi_n   +\nabla \tilde f^n(B-B^n)  \\
                 &= g^n + f^n\nabla \chi_n B^n+\nabla f^n\nabla \chi_n +
                   \frac12 f^n\Delta \chi_n
                   + (\nabla f^n \chi_n + f^n \nabla \chi_n) \cdot (B-B^n)\\
  &= g^n + l^n,
\end{align*}
with the notation
$$ l^n :=  \nabla f^n\nabla \chi_n + \frac12 f^n\Delta \chi_n
                   + \chi_n \nabla f^n \cdot (B-B^n) + f^n \nabla \chi_n \cdot B$$
and terminal condition $ \tilde f^n(T)=\tilde f^n_T$.

Since $g$ has compact support, we can find some $R\in\N$ such that ${\rm supp}(g)\subseteq B(0,R)$ of radius $R$.
Consequently for $n > R$, we have that $g\chi_n\equiv g$.
A similar phenomenon takes place  for $\tilde f^n_T$.

By item {\it 3.} and the fact that $A^\P$ is linear, the fact that
for $n$ large enough it holds that $g \in C_T\shc_c^{0+}$ 
\begin{equation}\label{eq:tildefnM}
	\tilde f^n(t,Z_t)-A^{\P}(g) + A^\P(l^n) = \tilde f^n(t,Z_t)- \int_0^t g(r,Z_r) dr + A^\P(l^n)
\end{equation}
is a $\P$-local martingale.

The idea is now to take the limit u.c.p. in \eqref{eq:tildefnM} when $n$ goes to infinity.
Taking into account \eqref{eq:fnConv}
it is clear that $ \tilde f^n$ goes to $f$ uniformly on compact sets.

So $\tilde f^n(t,Z_t)$ goes u.c.p. to $f(t,Z_t)$. To prove that
$$f(t,Z_t)-A^{\P}(g)=f(t,Z_t)-\int_0^t g(r,Z_r)dr,$$
and therefore conclude the proof of item 1.,
we only need to prove that $l^n$ converges to zero in $C_T\shc^{(-\beta)+}$.
Indeed $A^\P$ is continuous.


Given $\al<\be$ together with \eqref{eq:bonyt} we have
\begin{align*}
  \Vert l^n \Vert_{C_T\shc^{-\al}} &\le
   \Vert  \nabla f^n\nabla \chi_n\Vert_{C_T\shc^{-\al}}+
                                     \frac12
                                    \Vert f^n\Delta \chi_n \Vert_{C_T\shc^{-\al}}
      + \Vert \chi_n \nabla f^n \cdot (B-B^n) \Vert_{C_T\shc^{-\al}} +
        \Vert f^n \nabla \chi_n \cdot B \Vert_{C_T\shc^{-\al}} \\
  \leq&  \|\nabla f^n\|_{C_T\shc^{\beta}}\|\nabla \chi_n\|_{\shc^{-\al}}
        +\frac12 \| f^n\|_{C_T\shc^{1+\beta}}\|\Delta \chi_n\|_{\shc^{-\al}}
+ \|\nabla  f^n\|_{C_T\shc^\beta}\|\chi_n\|_{\shc^\beta} \|B-B^n\|_{C_T\shc^{-\al}}.
  \\
   &+ \|f^n\|_{C_T\shc^{1+\beta}}\|\nabla \chi_n\|_{\shc^\beta}\|B\|_{C_T\shc^{-\al}}.
           \end{align*}
Lemma \ref{lm:chi_n}  states that the sequence
  $(\Vert \chi_n \Vert_{\shc^\beta})$ is  bounded,
   $\nabla \chi_n\to 0$ in  $\shc^{\beta+}$ and $\Delta\chi_n\to 0$ in  $\shc^{(-\beta)+}$.
  Previous considerations,  \eqref{eq:fnConv}
and  $\|\cdot\|_{C_T\shc^{\beta}}\leq\|\cdot\|_{C_T\shc^{1+\beta}}$
allow to finally establish that $l^n \rightarrow 0$
 in $C_T\shc^{(-\beta)+}$.
 \end{proof}







\subsection{Well-posedness and dynamics on the rough martingale problem.}

A direct consequence of     Theorem 4.5 in \cite{issoglio_russoMPb} is the following.
\begin{proposition} \label{pr:rMP} Let us suppose
Assumptions  \ref{ass:beta} and \ref{ass:B}.
	Let $\nu$ be a Borel probability measure on $\R^d$.
	The rough martingale problem rMP$(B,\nu)$
	in the sense of Definition \ref{def:rMP}
	admits existence and uniqueness.
      \end{proposition}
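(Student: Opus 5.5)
The plan is to obtain the statement by checking that the framework of Section \ref{ssc:domain} matches the one of \cite{issoglio_russoMPb} and then invoking Theorem 4.5 there. Concretely, I will verify three points in this order. First, Assumption \ref{ass:beta} ($0<\beta<\frac12$) and Assumption \ref{ass:B} ($B\in C_T\shc^{(-\beta)+}(\R^d;\R^d)$) are exactly the standing hypotheses of that theorem. Second, no moment condition is imposed on $\nu$, so any $\nu\in\shp(\R^d)$ is admissible. Third, the domain $\shd_\shl$ of Notation \ref{def:D} is literally the domain (3.1) of \cite{issoglio_russoMPb}, so that Definition \ref{def:rMP} is their notion of solution.

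If one wants to make the argument more self-contained rather than a pure citation, I would organise it as follows.

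\emph{Existence.} Take the mollified drifts $B^n:=P_{\frac1n}B$. By Lemma \ref{lm:Xncont} these are bounded and smooth in space and converge to $B$ in $C_T\shc^{(-\beta)+}$. For each $n$, classical Stroock--Varadhan theory gives a solution $\P^n$ with initial law $\nu$. I would then prove tightness of $(\P^n)$ via the Zvonkin-type transform $\phi=\phi_\lambda$ used in the proof of Theorem \ref{thm:rMPext}. Under $\P^n$, $Y=\phi^n(t,Z_t)$ solves an SDE with uniformly bounded diffusion coefficient $\nabla\phi^n\circ\psi^n$ and a drift $\ell^n$ that is uniformly bounded; this uses Corollary \ref{cor:unDC} for uniform bounds. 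Kolmogorov's criterion then gives tightness of the $Y$-laws, and the uniform Lipschitz bounds on $\psi^n$ transfer it to $Z$. Finally, for $f\in\shd_\shl$ I would pass to the limit in the martingale property. I would use the solutions $f^n$ of $\shl^n f^n=\shl f$ with $f^n(T)=f(T)$, which converge to $f$ in $C_T\shc^{(1+\beta)+}$ by Corollary \ref{cor:unDCBis} and Remark \ref{rmk:unDCBis}, together with boundedness of $f^n$ so that the martingale property survives weak limits.

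\emph{Uniqueness.} I would use the duality argument behind $\shd_0$ (cf.\ item 4 of Remark \ref{pr:dom_red}). For $g\in C_T\shs$, let $v$ solve $\shl v=g$ with $v(T)=0$. Since $v$ is bounded and $[M^v]$ is bounded, the martingale property gives $\E^\P[\int_0^T g(r,Z_r)dr]=-\int v(0,x)\nu(dx)$. This value is independent of the chosen solution $\P$, so all time marginals coincide. Uniqueness of finite-dimensional laws then follows by the standard Stroock--Varadhan conditioning and induction on time points, restarting the problem at intermediate times using regular conditional probabilities.

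The main obstacle is this conditioning step, because one must check that the conditioned measures again solve a rough martingale problem on a subinterval. This is exactly the kind of time-shift or restriction issue flagged later in the paper. For that reason I would rely on Theorem 4.5 of \cite{issoglio_russoMPb}, where it is handled, and only verify the matching of hypotheses and domains.
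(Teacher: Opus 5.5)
Your proposal is correct and matches the paper, which proves Proposition \ref{pr:rMP} by direct appeal to Theorem 4.5 of \cite{issoglio_russoMPb}; the domain $\shd_\shl$ of Notation \ref{def:D} is by construction their domain (3.1), so the only work is the hypothesis matching you describe. Your optional self-contained sketch (mollification plus Zvonkin tightness for existence, duality plus conditioning for uniqueness) is not part of the paper's argument, and you correctly leave the delicate conditioning step to the cited theorem.
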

      An immediate consequence is that the ''linear growth martingale problem''
      is also well-posed.
      \begin{corollary} \label{cor:rMP}
        There is a unique solution to rMP$^{\hat \shd_\shl}(B,\nu)$.
    \end{corollary}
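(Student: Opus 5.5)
The corollary splits into existence and uniqueness for rMP$^{\hat \shd_\shl}(B,\nu)$, and both follow by comparing this problem with rMP$(B,\nu)$, whose well-posedness is Proposition \ref{pr:rMP}.

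\emph{Existence.} Proposition \ref{pr:rMP} gives a solution $\P$ of rMP$(B,\nu)$. Theorem \ref{thm:rMPext} states that every solution of rMP$(B,\nu)$ is also a solution of rMP$^{\hat\shd_\shl}(B,\nu)$. So $\P$ solves the linear growth martingale problem.

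\emph{Uniqueness.} Let $\mathbb Q$ be any solution of rMP$^{\hat \shd_\shl}(B,\nu)$. By item 1 of Remark \ref{pr:dom_red} we have $\shd_\shl\subseteq\hat\shd_\shl$. I will check this inclusion directly:
\begin{itemize}
\item an element $f\in\shd_\shl$ lies in $C_T\shc^{(1+\beta)+}$;
\item $C_T\shc^{(1+\beta)+}$ embeds in $C_TD\shc^{\beta+}$ by Remark \ref{rmk:DCgamma+};
\item $\shl f=g\in C_T\bar\shc_c^{0+}\subseteq C_T\shc^{0+}$.
\end{itemize}
Consequently, for every $f\in\shd_\shl$ the process $f(t,Z_t)-f(0,Z_0)-\int_0^t \shl f(s,Z_s)\,ds$ is a $\mathbb Q$-local martingale, and $\mathbb Q\circ Z_0^{-1}=\nu$.

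There is a small formal gap to close here. Definition \ref{def:rMP} uses $f(t,Z_t)-\int_0^t\shl f\,ds$, whereas Definition \ref{def:rMPD} subtracts $f(0,Z_0)$. These two notions coincide, since $f(0,Z_0)$ is $\shf_0$-measurable and a local martingale shifted by an $\shf_0$-measurable random variable is still a local martingale. Hence $\mathbb Q$ solves rMP$(B,\nu)$.

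By the uniqueness part of Proposition \ref{pr:rMP}, $\mathbb Q=\P$, which proves uniqueness. The argument contains no real obstacle: the substantive content is already in Theorem \ref{thm:rMPext}, and the remaining step is only the routine domain inclusion just checked.
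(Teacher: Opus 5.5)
Your proof is correct and is essentially the paper's argument. The inclusion $\shd_\shl\subseteq\hat\shd_\shl$ shows that any solution of rMP$^{\hat\shd_\shl}(B,\nu)$ solves rMP$(B,\nu)$, Theorem \ref{thm:rMPext} gives the converse, and Proposition \ref{pr:rMP} then yields existence and uniqueness; your checks of the inclusion and of the normalisation by $f(0,Z_0)$ (which is bounded for $f\in\shd_\shl$) only fill in details the paper leaves implicit.
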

    \begin{proof}
      Since $\shd_\shl \subseteq {\hat \shd}_\shl$
      any solution $\P$ to rMP$^{\hat \shd_\shl}(B,\nu)$ is a solution
    to rMP$(B,\nu)$. The converse is a consequence
    of Theorem \ref{thm:rMPext}.
  \end{proof}
  The result below characterizes the dynamics of the canonical process $Z$
  under the unique solution of the rough martingale problem.

\begin{corollary}\label{cr:Dynamics}
	Let $\P$ be the solution to the rMP($B,\nu$). Then $Z$ is a weak Dirichlet process with  decomposition  
\begin{equation} \label{eq:BMZ0}
  Z=A^{\P}(B)+W+Z_0,
  \end{equation}
	where $W$ is a $d$-dimensional Brownian motion, and $A^{\P}(B)=(A^{\P}(B^1),\dots, A^{\P}(B^d))$.
\end{corollary}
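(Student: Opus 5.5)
The plan is to apply Proposition \ref{pr:extendA} (equivalently item 2 of Theorem \ref{thm:extendA}) to the coordinate functions $f = id^i$, $1\le i\le d$. For these, $id^i(t,x)=x_i$ is independent of time. Its gradient is the constant vector $e_i$, which lies in $\shc^{\gamma}$ for every $\gamma$. Hence $id^i\in C_TD\shc^{\beta+}$.

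We then compute $\shl id^i = \partial_t id^i + \frac12\Delta id^i + \nabla id^i\cdot B = B^i$. The first two terms vanish. The pointwise product of the constant function $e_i$ with $B$ is just the component $B^i$; this is consistent with Bony's product, which extends the usual product. By Assumption \ref{ass:B}, $B^i\in C_T\shc^{(-\beta)+}$, so $id^i\in\shd_\shl^{-\beta}$.

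Proposition \ref{pr:extendA} then gives that
\[
W^i_t := Z^i_t - Z^i_0 - A^{\P}_t(B^i), \quad t\in[0,T],
\]
is a $\P$-local martingale vanishing at zero. Subtracting the $\shf_0$-measurable $Z^i_0$ does not affect the local martingale property. Its quadratic variation is $\int_0^t |e_i|^2 dr = t$.

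For the cross-variations, we use Corollary \ref{cor:extendA} with $f^1=id^i$ and $f^2=id^j$. This gives $[Z^i,Z^j]_t = \int_0^t \langle e_i,e_j\rangle dr = \delta_{ij}t$. We also need $[W^i,W^j] = [Z^i,Z^j]$. This is where a little care is required, and it is the main (mild) obstacle: the processes $A^{\P}(B^i)$ are not of bounded variation, so we cannot discard them trivially.

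The way around this is the weak Dirichlet structure of Lemma \ref{lem:dec-op}. The process $Z^i$ is weak Dirichlet with decomposition $M^{id^i}+\sha(id^i)$, where $\sha(id^i)$ is martingale orthogonal. Its local martingale part is unique by Remark \ref{rmk:bandini-uniq}. Since $Z^i = (Z^i_0 + W^i) + A^{\P}(B^i)$, uniqueness will force $A^{\P}(B^i)=\sha(id^i)$, which is martingale orthogonal, and $M^{id^i} = Z^i_0+W^i$.

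To make this identification rigorous, I would check that $A^{\P}(B^i)$ is indeed martingale orthogonal. The route I would try first mirrors the proof of Proposition \ref{pr:extendA}. Write $id^i = v+w$ with $\shl v = B^i$, $v(T)=0$, and $\shl w = 0$, $w(T) = id^i$. Then $A^{\P}(B^i)=\sha(v)$, and $w(\cdot,Z)$ is a local martingale by Corollary \ref{cor:rMPext}.

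Once orthogonality holds, the bracket identity follows. Indeed, $[A^{\P}(B^i),N]=0$ for every continuous local martingale $N$; applying this to $N=W^j$ and to $N=A^{\P}(B^j)$ (the latter through the same limiting argument) yields $[W^i,W^j]=[Z^i,Z^j]=\delta_{ij}t$. More simply, $M^{id}$ has bracket $\int_0^\cdot \nabla id\,\nabla id^\top dr$ by item 3 of Lemma \ref{lem:dec-op} combined with Corollary \ref{cor:extendA}.

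Finally, $W=(W^1,\dots,W^d)^\top$ is a continuous $\shf^Z$-local martingale with $W_0=0$ and $[W^i,W^j]_t=\delta_{ij}t$. By Lévy's characterization it is a $d$-dimensional standard Brownian motion under $\P$. This gives \eqref{eq:BMZ0}, and $Z$ is weak Dirichlet with martingale part $Z_0+W$ and martingale-orthogonal part $A^{\P}(B)$.
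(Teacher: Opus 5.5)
Your overall route is the paper's: apply Proposition \ref{pr:extendA} to the coordinate functions, compute the brackets, and conclude by L\'evy. The paper uses item 3 of Theorem \ref{thm:extendA} instead, which amounts to the same thing since $id^i\in C^{1,2+}_{ub}\subseteq\shd_\shl^{-\beta}$.

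The weak point is your handling of the cross-brackets. Martingale orthogonality of $A^{\P}(B^i)$ only gives $[A^{\P}(B^i),N]=0$ when $N$ is a continuous local martingale. $A^{\P}(B^j)$ is not one, so ``applying this to $N=A^{\P}(B^j)$'' is not legitimate. Without an independent proof that $[A^{\P}(B^i),A^{\P}(B^j)]=0$, you cannot pass from $[Z^i,Z^j]$ to $[W^i,W^j]$ this way. Note also that Proposition \ref{pr:extendA} actually establishes the bracket of $M^f$, not of $f(\cdot,Z)$.

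The detour is unnecessary. We have $id^i+id^j\in\shd_\shl^{-\beta}$ with $\shl(id^i+id^j)=B^i+B^j$. By linearity of $A^{\P}$, $M^{id^i+id^j}=M^{id^i}+M^{id^j}$, so Proposition \ref{pr:extendA} gives $[M^{id^i}+M^{id^j}]_t=|e_i+e_j|^2\,t$. Polarization then yields $[W^i,W^j]_t=[M^{id^i},M^{id^j}]_t=\delta_{ij}t$ directly. This is exactly how the paper uses Corollary \ref{cor:extendA}, whose proof is this polarization. It is what your ``more simply'' remark should rest on, rather than item 3 of Lemma \ref{lem:dec-op}.

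Your observation that $A^{\P}(B^i)=\sha(v)$, taken from the proof of Proposition \ref{pr:extendA}, is martingale orthogonal is correct and worth keeping. It justifies that $Z_0+W$ and $A^{\P}(B)$ form the weak Dirichlet decomposition. The paper passes over this point by simply juxtaposing Lemma \ref{lem:dec-op} and Theorem \ref{thm:extendA}.
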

\begin{proof}
  By Lemma \ref{lem:dec-op} we have that $Z^i_t$ is a weak Dirichlet Process
  for all $i=1,\ldots,d$.
  Thanks to item 3. of Theorem \ref{thm:extendA}
  $Z^i = M^{id^i} + A^{\P}(B^i),  $ where $M^{id^i}$ is
  a local martingale, for all $1 \le i \le d$.
  Moreover, by Corollary
  \ref{cor:extendA}, for $1 \le i,j \le d$, we have
  $$[M^{id^i}, M^{id^j}]_t=\int_0^t  \nabla id^i \cdot \nabla id^j (Z_r)dr= \delta_{ij}t,$$
  where $\delta_{ij}$ is the Kronecker symbol.
  Thus, by L\'evy's characterisation theorem, we have that $M^{id}=W+Z_0$,
  where $W$ is a $d$-dimensional Brownian motion. This concludes the proof.
\end{proof}

\section{Structural Features of the rough Martingale Problem}
\label{sec:extension-alia}

In this section
Assumptions \ref{ass:beta} and \ref{ass:B} will be in force and $ \nu$ will be a fixed (initial) Borel probability 
on $\R^d$. The objective of this section is to give key properties of the rMP($B,\nu$) such as time restriction, $L^p$ estimates satisfied by the canonical process under the solution $\P$ of rMP($B,\nu$) and conditional martingale problem, in order to have a deeper understanding of the mathematical object. These results will be used in Section \ref{sec:IPS}.

\subsection{Time restriction of the rMP($B,\nu$)}\label{sc:restrMP}

Thinking to the classical obvious properties fulfilled by ordinary
differential equations,
 we expect that, given a solution $\P$ of rMP($B,\nu$),
its ''restriction'' to $[0,t]$ for any $ t \in (0,T)$
is still a solution of  rMP($B\vert_{[0,t]},\nu$).
This is not obvious with the formalism introduced at
Definition \ref{def:rMP}.

\begin{notation} \label{not:restriction}
  Let $E$ be a topological vector space.
	Given any $g:[0,t]\to E$ for any $t\in[0,T]$, we define $g^t:[0,T]\to E$
	as $g^t(s):=g(t\wedge s), \ s\in[0,T]$.
\end{notation}

Compatibly with the end of Section \ref{sc:prelim}, if $ t \in (0,T)$, we denote $\Omega_t^d:= C([0,t];\R^d)$.

\begin{theorem}\label{thm:rMPrestr}
  Let $\P$ be the solution of rMP$(B,\nu)$. Let $t\in[0,T)$ and define $\P^t$ (with related expectation $\E^t$) as the probability on
$(\Omega_t^d,\shb(\Omega_t^d))$ defined by 
	\begin{equation}\label{eq:rMPrestr}
		\E^t\left[F(Z_r;r\leq t)\right]:=\E\left[{}^t F(Z_r;r\leq T)\right]
	\end{equation}
	for any  $F:\Omega_t^d\to \R$ bounded and Borel-measurable,	where
        ${}^{t}F:\Omega_T^d\to \R^d$ is defined by ${}^tF(\omega):=F(\omega^t)$. Then $\P^t$ is a solution to  rMP$(B|_{[0,t]},\nu)$.
\end{theorem}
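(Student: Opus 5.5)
The idea is to use the quasi-Stroock-Varadhan characterisation of Theorem \ref{thm:extendA}, item 4, on the interval $[0,t]$, rather than the original domain $\shd_\shl$. For the restricted problem we must exhibit a continuous extension $A^{\P^t}$ of the local time operator on $C_t\shc^{(-\beta)+}$. We must also check that for every $f\in C_b^{1,2+}([0,t]\times\R^d)$ the process $f(s,Z_s)-A^{\P^t}_s(\shl f)$, $s\in[0,t]$, is a $\P^t$-martingale with quadratic variation $\int_0^s|\nabla f|^2(r,Z_r)dr$. Once this is done, $(4.\rightarrow 1.)$ of Theorem \ref{thm:extendA} on $[0,t]$ with drift $B|_{[0,t]}$ gives the claim. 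The initial condition is immediate, since $Z_0$ under $\P^t$ has the law of $Z_0$ under $\P$, which is $\nu$.

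To construct $A^{\P^t}$, given $g\in C_t\shc^{(-\beta)+}$, I would extend it to $g^t\in C_T\shc^{(-\beta)+}$ via Notation \ref{not:restriction}. The map $g\mapsto g^t$ is linear and continuous, since $\|g^t\|_{C_T\shc^{-\al}}=\|g\|_{C_t\shc^{-\al}}$. I then set $A^{\P^t}(g)$ to be the process on $\Omega_t^d$ whose law is that of $(A^\P_s(g^t))_{s\le t}$ under $\P$. More precisely, $A^\P(g^t)$ restricted to $[0,t]$ is $\shf^Z_t$-measurable, so it is a functional $\Phi_g(Z_{\cdot\wedge t})$, and we define $A^{\P^t}(g):=\Phi_g(Z)$ on $\Omega_t^d$.

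By Remark \ref{rmk:extendA}, on $[0,t]$ this does not depend on the choice of extension of $g$. For $g\in C_t\shc^{0+}$ it coincides with $\int_0^\cdot g(r,Z_r)dr$. Continuity follows because u.c.p. convergence on $[0,t]$ under $\P$ of functionals of $Z^t$ transfers to u.c.p. convergence under $\P^t$, by the very definition \eqref{eq:rMPrestr}.

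For the martingale property, take $f\in C_b^{1,2+}([0,t]\times\R^d)$ and extend it to $\bar f:=f^t$, i.e. $\bar f(s,x)=f(s\wedge t,x)$. The difficulty is that $\bar f$ is not $C^1$ in time at $t$. I would therefore instead extend $f$ to $\bar f\in C_b^{1,2+}([0,T]\times\R^d)$ by a $C^1$ reflection in time, for instance $\bar f(s,x)=2f(t,x)-f(2t-s,x)$ near $t$, followed by a cutoff. Then $\shl\bar f$ restricted to $[0,t]$ equals $\shl f$ with drift $B|_{[0,t]}$. By item 4 of Theorem \ref{thm:extendA} (applied via $(1.\rightarrow 4.)$ for $\P$), the process $\bar f(s,Z_s)-A^\P_s(\shl\bar f)$ is a $\P$-martingale with the stated bracket. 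Stopping at $t$ and using Remark \ref{rmk:extendA} shows that on $[0,t]$ it equals $f(s,Z_s)-A^\P_s((\shl f)^t)$.

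It remains to transport this to $\P^t$. The martingale property is expressed through $\E[(M_{s_2}-M_{s_1})H(Z_r;r\le s_1)]=0$ for bounded measurable $H$, and all these quantities are functionals of $Z^t$. The quadratic variation is a u.c.p. limit of discrete sums of such functionals, so it transfers in the same way. The main technical point is the time extension of $f$ beyond $t$ within $C_b^{1,2+}$ together with the measurability/identification step defining $A^{\P^t}$ as a functional on $\Omega_t^d$. I expect the latter to need care, done by first choosing $\shf_t^Z$-measurable versions via the approximations $A^\P(P_{1/n}g^t)=\int_0^\cdot P_{1/n}g^t(r,Z_r)dr$, which are explicit functionals of the path, and passing to the limit in probability.
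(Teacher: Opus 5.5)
Your proposal is correct and follows essentially the same route as the paper. Both arguments do the following:
\begin{itemize}
\item define $A^{\P^t}_s(g):=A^{\P}_s(g^t)$ for $s\in[0,t]$;
\item extend a test function $\varphi\in C_b^{1,2+}([0,t]\times\R^d)$ to an element of $C_b^{1,2+}([0,T]\times\R^d)$;
\item identify the two martingales on $[0,t]$ via Remark \ref{rmk:extendA};
\item transfer the martingale property to $\P^t$ and conclude with $(4.\rightarrow 1.)$ of Theorem \ref{thm:extendA}.
\end{itemize}
For the time extension the paper uses the three-term reflection of Lemma \ref{lm:LA}; your two-term $C^1$ reflection with a time cutoff works equally well. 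Your extra care in realising $A^{\P^t}(g)$ as a genuine path functional on $\Omega_t^d$, and in transferring the bracket, only makes explicit what the paper handles in one line.
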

\begin{remark} \label{rmk:difficulty}
  Let us consider the map
  \begin{equation} \label{eq:Lt}
    {}^t\shl \varphi:=\partial_s \varphi+\frac12 \Delta \varphi+\nabla \varphi\cdot B|_{[0,t]}.
\end{equation}
As we mentioned earlier, the statement of Theorem \ref{thm:rMPrestr} is
not immediate: in fact, the rMP($B,\nu$) is defined through the solutions $f$ of
  the backward Kolmogorov PDE $\shl f=g, f(T,\cdot)=f_T$.
  Given a solution $f \in C_t\shc^{ (1+\beta)+}$ of ${}^t \shl f=g$, with $g \in  C_t\shc^{0+}$.
  It looks problematic to define an extension $\hat f \in C_T\shc^{(1+\beta)+}$ of $f$
(resp. $\hat g \in C_T\shc^{0+}$ of $g$), such that
  $ \shl \hat f= \hat g$.
\end{remark}

\begin{proof} [Proof of Theorem \ref{thm:rMPrestr}.]
  
  Since $\P$ is a solution of rMP($B,\nu$), owing to Theorem \ref{thm:extendA}, we know the existence of the  linear and continuous operator
  $$ C_T\shc^{(-\beta)+}\ni g\mapsto A^{\P}(g) \in \shc^\P([0,T]),$$
  such that its restriction to $C_T\shc^{0+}$
  is 
  the map  $A(g) =  \int_0^\cdot g(r,Z_r)dr$.
We define, for all $g\in C_t\shc^{(-\beta)+}$, the operator
	 \begin{equation}\label{eq:extendLT}
	 A^{\P^t}_s(g):=A^{\P}_s(g^t), \ s \in [0,t],
	 \end{equation}
         where $ g^t$  has been defined in Notation \ref{not:restriction}.
         Since every $\shf_t$-measurable $\P$-null set is a
     $\P^t$-null set,  $A^{\P^t}$ is also defined up to $\P^t$-indistinguishability.  
    Moreover    $ A^{\P^t}$ fulfils the following properties.
  \begin{enumerate}
  \item	 
    For all $g\in C_t\shc^{0+}$ 
	 \[
          A_s(g^t)=\int_0^s g(t\wedge r,Z_r)dr=\int_0^s g(r,Z_r),\  s \le t.
	 \]
       \item	 $A^{\P^t}$ is continuous with respect to $C_t\shc^{(-\beta)+}$
         with values in $\shc([0,t])$.
         Indeed, since it is linear it is enough to prove its continuity at zero.
         Let $(g_n)$ be a converging sequence in $C_t\shc^{(-\beta)+}$ towards $0$, then there exists $\al<\be$ such that $g_n\to 0$ in $C_t\shc^{-\al}$. From this we deduce immediately that $(g_n^t)$ converges in $C_T\shc^{-\al}$,
         since $\|g_n\|_{C_t\shc^{-\al}}=\|g_n^t\|_{C_T\shc^{-\al}}$.
         In particular $g_n^t$ converges to zero in $C_T\shc^{(-\beta)+}$
         which allows to conclude that $A^{\P^t}(g_n) \rightarrow 0$
         $\P^t$-u.c.p.
         \end{enumerate}
	          
       Let us consider $\varphi \in C_b^{1,2+}([0,t]\times \R^d)$. In view of Theorem \ref{thm:extendA} 4., it is enough to prove that 
      $$M^\varphi_s:=\varphi(s,Z_s)-\varphi(0,Z_0)-A^{\P^t}_s({}^t\shl \varphi), s \in [0,t],$$
	is a martingale under $\P^t$, where ${}^t\shl$ was defined in \eqref{eq:Lt}.
        By Lemma \ref{lm:LA}, applied with $T= t$ and $f=\varphi$,
        there is a function 
        $s\mapsto\hat\varphi(s)$
which 
belongs to $C_b^{1,2+}(\R_+\times\R^d)$ and
in particular its restriction   to $[0,T]\times\R^d$
(still denoted by $\hat\varphi$)
belongs to
$C_b^{1,2+}([0,T]\times\R^d)$.
Consequently by Theorem \ref{thm:extendA} 4. we have
$$
M^{\hat \varphi}_s:={\hat \varphi}(s,Z_s)- {\hat \varphi}(0,Z_0)
-A^{\P}_s({}\shl {\hat \varphi}), s \in [0,T],$$
is a local martingale under $\P$ with quadratic variation $\int_0^t |\nabla f|^2(r,Z_r)dr$.
Consequently it is a martingale, being $\nabla f$ bounded.

Now we observe that, for $s\le t $ we have $\hat \varphi(s)=\varphi(s)$
and $\shl {\hat \varphi}(s) = {}^t\shl(\varphi)(s),$ for $s < t$.

Owing now to Remark \ref{rmk:extendA}, together with  the fact that  
$ ({}^t\shl (\varphi))^t = \shl {\hat \varphi}$ on $[0,t]$
we get
$$ A_s^{\P}(\shl {\hat \varphi}) =  A_s^{\P}(({}^t\shl{\varphi})^t)
=  A_s^{\P^t}({}^t\shl \varphi),\  \text{ for } s \in [0,t],$$
where the latter equality is a consequence of \eqref{eq:extendLT}.
Consequently for $s \in [0,t]$,
$ M^{\varphi}_s = M^{\hat \varphi}_s $, so that
$(M^{\varphi})_{s \in [0,t]}$ is a martingale under $\P$.
In particular, let $ 0 \le s_0 <  s \le t.$  For every continuous bounded functional
$G: C([s,t];\R^d) \rightarrow \R$ we have
$$ \E^\P( (M^{\varphi}_s - M^{\varphi}_{s_0}) G((Z_r)_{s\le r \le t})) = 0,$$
which also implies that  previous equality holds when substituting
$\P$ with $\P^t$.
This concludes the proof.
\end{proof}

\subsection{$L^p$ estimates of solutions to rMP}

\label{sec:Lpestim}

This subsection is devoted to the study of $L^p$-estimates
for the canonical process $Z$ under $\P$ solution to the rMP($B,\nu$). These estimates are useful in order to establish the regularity of the paths of $Z$ and they will be used in Section \ref{sec:IPS} for the study of the tightness of a singular $N$-particle system.

\begin{proposition}\label{pr:phi_tight0} 
        Let   $ 0 < \alpha < \beta$ and  $0 < \g < \frac{\beta}{2}$.
        There is a function $\tilde R_{\al,\beta}:\R_+\times\R^2\to\R$,
        increasing in all variables, such that the following holds.
 The solution $f$  of
	\begin{equation} \label{eq:tight1}
		\shl f=g,\quad f(T,\cdot)=0,
	\end{equation}
  with $B \in C_T\shc^{-\alpha}(\R^d;\R^d),  g\in C_T\shc^{-\alpha}$, 
fulfils the estimate
 
\begin{equation} \label{tight2}
  |f(T,y)-f(s,x)|\leq \tilde R_{\al,\beta}(T,\|B\|_{C_T\shc^{-\alpha}},\|g\|_{C_T\shc^{-\alpha}}) |T-s|^\g,\ \forall x,y\in\R^d,\ \forall s\in[0,T].
  	\end{equation}
        
\end{proposition}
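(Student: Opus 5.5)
The estimate reduces to a uniform bound on $\|f(s,\cdot)\|_\infty$. Since $f(T,\cdot)=0$, we have $|f(T,y)-f(s,x)|=|f(s,x)|$. So it is enough to show
\[
\sup_{x}|f(s,x)|\le \tilde R_{\al,\beta}(T,\|B\|_{C_T\shc^{-\alpha}},\|g\|_{C_T\shc^{-\alpha}})\,(T-s)^{\g}.
\]
I would not use Theorem \ref{thm:unDC} ii) for this. The $D\shc^\eps$ norm only controls $|h(0)|$ and the gradient, not the sup norm. Instead I would work directly with the mild formulation \eqref{eq:mild} with $u_T=0$:
\[
f(s)=\int_s^T P_{r-s}\bigl(\nabla f(r)\cdot B(r)\bigr)\,dr-\int_s^T P_{r-s}g(r)\,dr .
\]

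First, I control the gradient. By Theorem \ref{thm:unDC} i) with $u_T=0$, we get $\|\nabla f\|_{C_T\shc^\beta}\le R_{\al,\beta}(T,\|B\|_{C_T\shc^{-\al}},\|g\|_{C_T\shc^{-\al}},0)$. Since $\beta>\al$, the Bony estimate \eqref{eq:bonyt} then gives
\[
\|\nabla f\cdot B\|_{C_T\shc^{-\al}}\le c\,R_{\al,\beta}(\ldots)\,\|B\|_{C_T\shc^{-\al}}.
\]
Call $h:=\nabla f\cdot B-g$. Then $\|h\|_{C_T\shc^{-\al}}\le c R_{\al,\beta}\|B\|_{C_T\shc^{-\al}}+\|g\|_{C_T\shc^{-\al}}=:K$, which is increasing in all variables.

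Next, I bound the semigroup term in sup norm. Fix a small $\delta>0$ with $\delta+\al<1$, and set $\theta:=(\al+\delta)/2<\tfrac12$. The sup norm is dominated by a positive-index Hölder norm, $\|\cdot\|_\infty\le \|\cdot\|_{\shc^{\delta}}$. Combining this with Schauder \eqref{eq:schauder1},
\[
\|P_{r-s}h(r)\|_\infty\le \|P_{r-s}h(r)\|_{\shc^{2\theta-\al}}\le c\,(r-s)^{-\theta}K .
\]
Integrating over $r\in[s,T]$ gives $\|f(s)\|_\infty\le \frac{c}{1-\theta}K\,(T-s)^{1-\theta}$.

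Finally, I compare exponents. We have $1-\theta>\tfrac12>\tfrac\beta2>\g$. Hence $(T-s)^{1-\theta}\le T^{1-\theta-\g}(T-s)^{\g}$, and I can take
\[
\tilde R_{\al,\beta}(T,x,z):=\frac{c}{1-\theta}\,T^{1-\theta-\g}\bigl(c R_{\al,\beta}(T,x,z,0)\,x+z\bigr).
\]
This is increasing in all variables because $R_{\al,\beta}$ is and $1-\theta-\g>0$.

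The only delicate point is the sup-norm bound for $P_{r-s}$ applied to a negative-index distribution. One must pick $\theta$ so that the target index $2\theta-\al$ is strictly positive, which controls $\|\cdot\|_\infty$. One also needs $\theta<1$ for integrability. These two requirements are compatible since $\al<\beta<\tfrac12$.
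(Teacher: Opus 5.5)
Your proof is correct, but it takes a different route from the paper.

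\textbf{The paper's route.} The paper never opens the mild formula for this statement. It writes $|f(T,y)-f(s,x)|=|f(T,x)-f(s,x)|\le\|f(T)-f(s)\|_{\shc^{1+\eps}}$ and cites the time-H\"older estimate \eqref{eq:holdun}. That is item iii) of Theorem \ref{thm:unDC}, not the item ii) you rightly discard. Item iii) applies because $u_T=0\in\shc^{1+\beta}$, and it does control the sup norm. Choosing $\eps=\beta-2\g$ gives exactly the power $|T-s|^{\g}$, with $\tilde R_{\al,\beta}(T,x,y)=R^0_{\al,\beta}(T,x,y,0)$. The hypothesis $\g<\beta/2$ comes precisely from needing $\eps>0$.

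\textbf{Your route.} You use only item i), the Schauder bound \eqref{eq:schauder1} and the Bony bound \eqref{eq:bonyt}, applied directly to $f(s)=\int_s^T P_{r-s}h(r)\,dr$. This is essentially the integral that remains in the appendix proof of item iii) when $t=T$: there the $(I-P_{t-s})$ term and the terminal-datum term both vanish. The difference is that you measure it in $\shc^{\delta}$ rather than $\shc^{1+\eps}$. This saves $(1+\eps-\delta)/2$ in the time singularity. It yields the exponent $1-\frac{\al+\delta}{2}$, so in fact any exponent below $1-\al/2$ works. The restriction $\g<\beta/2$ is then used only when you weaken the power at the end.

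\textbf{Comparison.} The paper's argument is a one-line corollary of an already proved theorem. Yours is self-contained and shows the estimate holds with a considerably larger H\"older exponent. Its only cost is the auxiliary parameter $\delta$ inside $\tilde R$, which is harmless since $\g$ is fixed in the statement anyway.
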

\begin{proof}

  Since $f$ is a solution to $\shl f=g,\ f(T,\cdot)=0$, we have that $f(T,y)=0$ for all $y\in\R^d$ in particular $f(T,y)=f(T,x)=0$. Hence, by \eqref{eq:holdun} in Theorem \ref{thm:unDC} with
$\eps:= \beta- 2\gamma$ and $f=u$
  we have
       \begin{align*}
         |f(T,y)-f(s,x)| &=|f(T,x)-f(s,x)|\leq
\Vert f(T,\cdot)-f(s,\cdot) \Vert_\infty \le \Vert f(T,\cdot)-f(s,\cdot) \Vert_{\shc^{1+ \eps}} \\
&\le
         R_{\al,\beta}(T, \|B\|_{C_T\shc^{-\alpha}},\|g\|_{C_T\shc^{-\alpha}},\|0\|_{D\shc^\beta}) |T-s|^\g.
  \end{align*}
  Now by defining, for all $x,y\in\R^d$, $\tilde R_{\al,\beta}(T, x,y):=R^0_{\al,\beta}(T, x,y,0)$ we conclude.
			\end{proof}

\begin{proposition}\label{pr:phi_tight}
  Let   $ 0 < \alpha < \beta$ and $p \ge 1$.
  Then there exists an increasing function (in both arguments)
   $\hat R_{\al,\beta}:\R\times \R_+\to\R$ 
  such that the following holds.

Given $B \in C_T\shc^{-\alpha}(\R^d;\R^d)$, a solution $\P$
   to the rMP($B,\nu$) and $\phi= (\phi^1,\ldots,\phi^d)^\top \in C_TD\shc^{\beta}$  the solution to 
	\begin{equation*}
		\shl \phi=0,\ \phi_T=id,
	\end{equation*}
we have 
 \begin{equation} \label{eq:BoundsPhi}
		\E^\P[|\phi(t,Z_t)-\phi(s,Z_s)|^p]\leq \hat R_{\al,\be, p}(T,\|B\|_{C_T\shc^{-\al}})|t-s|^{\frac p2},\ 0\leq s \leq t\leq T.
	\end{equation}
\end{proposition}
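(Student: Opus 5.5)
The plan is to show that each component $\phi^i(t,Z_t)$ is a $\P$-local martingale whose quadratic variation has a bounded density, and then to apply the Burkholder--Davis--Gundy inequality. Fix $1\le i\le d$. The terminal condition $id^i$ belongs to $D\shc^{\beta+}$, since $\nabla id^i$ is constant. Theorem \ref{thm:PDEU} and Remark \ref{rmk:PDEU}, applied with $g=0$, therefore give $\phi^i\in C_TD\shc^{\beta+}$ with $\shl\phi^i=0$. Corollary \ref{cor:rMPext} then shows that $N^i_t:=\phi^i(t,Z_t)$ is a $\P$-local martingale with
$$[N^i]_t=\int_0^t|\nabla\phi^i|^2(r,Z_r)\,dr .$$

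The key quantitative step is a uniform bound on $\nabla\phi$. Theorem \ref{thm:unDC} i), applied with $g=0$ and $u_T=id^i$, gives
$$\|\phi^i\|_{C_TD\shc^\beta}\le R_{\al,\beta}\bigl(T,\|B\|_{C_T\shc^{-\al}},0,\|id^i\|_{D\shc^\beta}\bigr).$$
Here $\|id^i\|_{D\shc^\beta}=|0|+\|e_i\|_{\shc^\beta}=1$ is a universal constant. Because $\|\nabla\phi^i(t)\|_\infty\le\|\phi^i(t)\|_{D\shc^\beta}$, we get
$$\sup_{t,x}|\nabla\phi^i(t,x)|\le K:=R_{\al,\beta}(T,\|B\|_{C_T\shc^{-\al}},0,1),$$
and $K$ is increasing in $T$ and in $\|B\|_{C_T\shc^{-\al}}$, by the monotonicity of $R_{\al,\beta}$.

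Now take $0\le s\le t\le T$. The increment $N^i_t-N^i_s$ is the value at time $t$ of the local martingale $(N^i_{r}-N^i_s)_{r\ge s}$, whose bracket over $[s,t]$ is bounded by $K^2(t-s)$. The Burkholder--Davis--Gundy inequality, with constant $c_p$, first for $p\ge 2$ and then extended to $p\ge1$ (the constant exists for all $p>0$ for continuous local martingales), yields
$$\E^\P|N^i_t-N^i_s|^p\le c_p\,\E^\P\Bigl[\Bigl(\int_s^t|\nabla\phi^i|^2(r,Z_r)dr\Bigr)^{p/2}\Bigr]\le c_pK^p|t-s|^{p/2}.$$
No integrability of $Z_0$ is needed, because only increments appear. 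Summing over components with $|v|^p\le d^{\,(p/2-1)^+}\cdots\le C_{d,p}\sum_i|v^i|^p$ gives \eqref{eq:BoundsPhi} with
$$\hat R_{\al,\beta,p}(T,x):=C_{d,p}\,c_p\,R_{\al,\beta}(T,x,0,1)^p ,$$
which is increasing in both arguments.

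I expect the main point to check to be the use of Corollary \ref{cor:rMPext} with the unbounded terminal condition $id$. One must make sure that $\phi$ lies in $C_TD\shc^{\beta+}$ and not merely in $C_TD\shc^\beta$, which is what Remark \ref{rmk:PDEU} provides. One must also check that BDG applies to the shifted local martingale without any localization issues, which is standard because the bracket is deterministically bounded.
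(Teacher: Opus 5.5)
Your proof is correct and follows the paper's argument: Corollary \ref{cor:rMPext} gives the local martingale property and the bracket of $\phi^i(\cdot,Z_\cdot)$, then BDG, then the uniform gradient bound from Theorem \ref{thm:unDC} i) with $\|id^i\|_{D\shc^\beta}=1$. The only difference is that you apply BDG componentwise and sum, whereas the paper uses a vector BDG with the cross brackets from Corollary \ref{cor:extendA}; also, your exponent in $\hat R_{\al,\beta,p}(T,x)=C_{d,p}c_p R_{\al,\beta}(T,x,0,1)^p$ is the right one, since the paper's displayed constants drop or misplace this power.
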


\begin{proof}
  We consider the function $R_{\al,\beta}:\R_+\times\R^3\to\R$ provided by 
  Theorem \ref{thm:unDC}.
Let $\P$ and $\phi$ as above related to a given $B$.
  Since $\phi^i\in \hat\shd_\shl $ with $g^i=0$ for all $i=1,\dots,d$,  Corollary \ref{cor:rMPext} guarantees that
  $\phi^i(\cdot,Z_\cdot)$ are local martingales.
  By Corollary \ref{cor:extendA} we have
	\[
	[\phi^i(\cdot,Z_\cdot),\phi^j(\cdot,Z_\cdot)]=\int_0^t (\nabla \phi^i\cdot \nabla\phi^j)(r,Z_r)dr,\ \forall i,j=1,\dots,d.
	\]
	Therefore using BDG inequality we get
	\begin{equation} \label{eq:BDGMoments}
	\E^\P[|\phi(t,Z_t)-\phi(s,Z_s)|^p]\leq  C_p\E^\P\left[\left|\int_s^t|\nabla\phi|^2(r,Z_r)dr\right|^{\frac p2}\right].
	\end{equation}
        Thanks to Theorem \ref{thm:unDC} {\it i)}, we have
	\[
	\|\nabla \phi^i\|_{C_T\shc^{\beta}}\leq \|\phi^i\|_{C_TD\shc^\beta}\leq  R_{\al,\beta}(T, \|B^i\|_{C_T\shc^{-\al}},\|0\|_{C_T\shc^{-\al}},\|id^i\|_{D\shc^{\beta}}),\ i=1,\dots,d.
	\]
	Since $\|B^i\|_{C_T\shc^{-\al}}\leq \|B\|_{C_T\shc^{-\al}}$ and $\|id^i\|_{D\shc^{\beta}}=1$ together with the fact that $R_{\al,\beta}$ is an increasing function, we have
	\begin{equation}\label{eq:nablauR}
		\|\nabla \phi\|_{C_T\shc^{\beta}}\leq  R_{\al,\beta}(T, \|B\|_{C_T\shc^{-\al}},\|0\|_{C_T\shc^{-\al}},1).
	\end{equation}
	Substituting \eqref{eq:nablauR} in \eqref{eq:BDGMoments} we obtain 
	\begin{equation}\label{eq:5.5final}
		\E^\P[|\phi(t,Z_t)-\phi(s,Z_s)|^p]\leq  C_pd^{\frac p2}R_{\al,\beta}(T, \|B\|_{C_T\shc^{-\al}},\|0\|_{C_T\shc^{-\al}},1)\left|t-s\right|^{\frac p2}.
	\end{equation}
       Defining now  $\hat R_{\al,\be,p}(T,x):=C_p R_{\al,\beta}(T,x,0,1))^{\frac p2}$, which is
       obviously an increasing function (in both arguments)
       thanks to Theorem \ref{thm:unDC},  \eqref{eq:5.5final} implies
       the validity of \eqref{eq:BoundsPhi}.


\end{proof}

\begin{theorem}\label{thm:LPestim}
  Let $\al<\be$ and $0<\lambda<\frac\be 2$. Then there exists a function
  $\bar R_{\al,\be,p}:\R_+\times \R_+\to\R$, which is increasing in all variables, such that the following holds.
Let $B\in C_T\shc^{-\al}$ and $\P$ be the solution to the rMP($B,\nu$). Then for all $p\geq 1$
		\begin{equation}\label{eq:LPestim}
                  \E^\P[|Z_t-Z_s|^p]\leq \bar R_{\al,\be,p}(T, \|B\|_{C_T\shc^{-\al}})|t-s|^{\lambda p}, \forall s,t \in [0,T].
	\end{equation}

\end{theorem}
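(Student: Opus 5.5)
The plan is a Zvonkin-type decomposition of the increment $Z_t-Z_s$ into a part with known moments and a small remainder. Let $\phi\in C_TD\shc^{\beta}$ be the $\R^d$-valued solution of $\shl\phi=0$, $\phi(T)=id$, as in Proposition \ref{pr:phi_tight}. Set $u:=\phi-id$. Formally $\shl u=-\shl\, id=-B$ and $u(T)=0$, so each component $u^i$ is the solution $f$ of \eqref{eq:tight1} with $g=-B^i\in C_T\shc^{-\al}$. This identification is rigorous: uniqueness in Theorem \ref{thm:PDEU} applies, since $id^i\in D\shc^\beta$ has $\shl\, id^i=B^i$ in the mild sense. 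Indeed, $P_{T-t}id=id$ and $\int_t^T P_{s-t}B\,ds$ is well-defined by Lemma \ref{lm:issoglio}, so checking the mild formula \eqref{eq:mild} is a linear bookkeeping exercise. I will then write
\[
Z_t-Z_s=\bigl(\phi(t,Z_t)-\phi(s,Z_s)\bigr)-\bigl(u(t,Z_t)-u(s,Z_s)\bigr),
\]
and estimate the $p$-th moments of the two brackets separately, using $|a-b|^p\le 2^{p-1}(|a|^p+|b|^p)$.

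The first bracket is handled directly by Proposition \ref{pr:phi_tight}. It gives $\E^\P|\phi(t,Z_t)-\phi(s,Z_s)|^p\le \hat R_{\al,\be,p}(T,\|B\|_{C_T\shc^{-\al}})|t-s|^{p/2}$. Since $|t-s|\le T$ and $\lambda<1/2$, this is bounded by $\hat R\,T^{p(1/2-\lambda)}|t-s|^{\lambda p}$.

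For the second bracket I want a pathwise estimate of the form $|u(t,y)-u(s,x)|\le C\bigl(|t-s|^{\lambda}+\rho|y-x|\bigr)$, with $\rho<1$. The space and time parts come from different results.
\begin{itemize}
\item The spatial part is controlled by $\|\nabla u\|_\infty$. By Theorem \ref{thm:unDC} i) with $g=-B$ and $u_T=0$, this is finite, but it is not a priori smaller than $1$.
\item The time part follows from Theorem \ref{thm:unDC} iii) with $\eps=\beta-2\lambda$, applied at two different times exactly as in Proposition \ref{pr:phi_tight0}. Taking $\lambda$ in place of $\gamma$ gives $\|u(t)-u(s)\|_\infty\le \tilde R\,|t-s|^\lambda$.
\end{itemize}

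The main obstacle is the spatial part, namely absorbing the term $\|\nabla u\|_\infty|Z_t-Z_s|$ into the left-hand side, which requires $\|\nabla u\|_\infty\le 1/2$. I would first try the standard Zvonkin device of splitting $[0,T]$ into small subintervals. Time restriction (Theorem \ref{thm:rMPrestr}) together with the Markov/shift structure lets each subinterval be treated as a problem on a short horizon $\tau$. On such a horizon the mild formulation yields $\|\nabla u\|_{\shc^\beta}\lesssim \tau^{(1-\al-\beta)/2}\cdot(\text{data})$, which is at most $1/2$ for $\tau$ small depending only on $\|B\|_{C_T\shc^{-\al}}$. On each short interval I would then get $|Z_t-Z_s|\le 2|\phi(t,Z_t)-\phi(s,Z_s)|+2\tilde R|t-s|^\lambda$, and taking $p$-th moments gives \eqref{eq:LPestim} there. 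Increments spanning several subintervals are glued by the triangle inequality; the number of subintervals depends only on $T$ and $\|B\|$, so the constant remains increasing in both. If restarting proves awkward, the alternative is the $\lambda$-shifted Kolmogorov equation $\partial_t u+\frac12\Delta u+\nabla u\cdot B-\lambda' u=-B$ with large $\lambda'$, as in \cite{issoglio_russoPDEa}, which makes $\|\nabla u\|_\infty$ small globally. It adds a drift term $\lambda'\int u\,dr$, bounded by $\lambda'\|u\|_\infty|t-s|$, which is harmless.
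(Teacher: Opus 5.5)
Your proof is correct in substance, but it takes a longer route than the paper. The difference is where you put the terminal time of the Kolmogorov equations. For a fixed increment $s<t$, the paper uses Theorem \ref{thm:rMPrestr} to pass to $\P^t$ and solves ${}^t\shl\phi=0$, $\phi(t)=id$ and ${}^t\shl u=-{}^tB$, $u(t)=0$ on $[0,t]$. The terminal time is therefore the right endpoint of the increment itself. Since $u(t,\cdot)\equiv 0$, one has $u(t,Z_t)-u(s,Z_s)=u(t,Z_s)-u(s,Z_s)$. This is bounded by $\|u(t)-u(s)\|_\infty\le \tilde R_{\al,\be}(T,\|B\|_{C_T\shc^{-\al}},\|B\|_{C_T\shc^{-\al}})|t-s|^\lambda$ uniformly in the path, which is exactly why Proposition \ref{pr:phi_tight0} is stated for arbitrary $x,y$. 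The $\phi$-term is Proposition \ref{pr:phi_tight} on horizon $t$, and monotonicity of $\hat R,\tilde R$ in the horizon gives constants depending only on $T$. No spatial Lipschitz term ever appears, so smallness of $\nabla u$, absorption and gluing are all unnecessary.

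With a fixed terminal time ($T$ in your first step, or $T_0$ in your subinterval scheme), the term $\|\nabla u\|_\infty|Z_t-Z_s|$ is genuinely present, and your short-horizon fix does handle it. The backward mild equation on $[T_0-\tau,T_0]$ involves $u$ only on that window. With $\eta=\frac{1+\al+\be}{2}$ one gets $\sup_{[T_0-\tau,T_0]}\|u\|_{\shc^{1+\be}}\le c\|B\|_{C_T\shc^{-\al}}\tau^{1-\eta}\bigl(1+\sup_{[T_0-\tau,T_0]}\|u\|_{\shc^{1+\be}}\bigr)$, which gives the required smallness. The absorption is pathwise and legitimate, and gluing at most $\lceil T/\tau\rceil+1$ pieces keeps $\bar R$ increasing in both variables.

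Two remarks on your proposal. First, you do not need any Markov or shift property, and the paper does not provide one: time restriction to $[0,T_0]$ is enough. Second, the $\lambda'$-shifted alternative relies on results outside the paper. These are smallness of $\nabla u$ for large $\lambda'$, with $\lambda'$ depending monotonically on $\|B\|$, and an analogue of Theorem \ref{thm:unDC} iii) for the shifted equation. Within your own scheme, choosing $T_0=t$ for each increment collapses your argument into the paper's. Your route yields the classical Zvonkin pathwise bound $|Z_t-Z_s|\le 2|\phi(t,Z_t)-\phi(s,Z_s)|+2\tilde R|t-s|^\lambda$ on short windows; the paper's route turns the estimate into a two-line consequence of Propositions \ref{pr:phi_tight0} and \ref{pr:phi_tight}.
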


\begin{proof}
  We recall the notation     ${}^t B:=  B|_{[0,t]}$.
  Without restriction of generality we can prove \eqref{eq:LPestim} supposing $s < t$.
  For every $t\in[0,T]$, we consider the decomposition $id  = \phi + u$
  for  the identity function
  $id(x)=x$, where $\phi $ and $ u$ are solutions in $C_tD\shc^{\beta}$ to
		\begin{equation*}
		{}^t\shl \phi= 0, \  \phi(t)=id, \text{ and }
		{}^t\shl u=-{}^tB,\ u(t)=0,
	\end{equation*}
		respectively.
	For all $r\le t$ we write $Z_r=  (\phi - u)(r,Z_r)$ and, for all $s \in[0,t]$, we can obtain
		\begin{equation}\label{eq:PhiU}
                  	\E^\P[|Z_t-Z_s|^p]
    \le 2^{p-1} \E^\P[| \phi(t,Z_t) - \phi(s,Z_s) |^p] +2^{p-1}\E^\P[|u(s,Z_s) - u(t,Z_t)|^p] =: I^1 + I^2.
	\end{equation}
 Thanks to Theorem \ref{thm:rMPrestr}, since $\P$ is a solution to rMP($B,\nu$), then $\P^t$ is a solution to rMP(${}^tB,\nu$).
 We analyse the two summands $I^1, I^2$ of \eqref{eq:PhiU} separately. 
 By Proposition \ref{pr:phi_tight}, with $t$ replacing $T$,
we get
 	\begin{equation}\label{eq:Phi}
 I^1 =   2^{p-1}\E^{\P^t}[| \phi(t,Z_t) - \phi(s,Z_s) |^p]\le  2^{p-1}\hat R_{\al,\be,p}(t, \|{}^t B\|_{C_t\shc^{-\al}})|t-s|^{\frac p2} \le
 C_p \hat R_{\al,\be,p}(T, \| B\|_{C_T\shc^{-\al}})|t-s|^{\frac p2},
	\end{equation}
the latter inequality holding because $R_{\al,\be,p}$ is increasing.

For what concerns $I^2$,
we apply Proposition \ref{pr:phi_tight0} 
with $f= u^i, g=-{}^tB^i$, $ i=1,\ldots,d$, where $u=(u^1,\ldots,u^d)^\top$
and $T=t$, to get 
	\begin{align}\label{eq:U}
      I^2 &\le
       C_p (\tilde R_{\al,\beta}(t,\|{}^tB\|_{C_t\shc^{-\alpha}},\|{}^tB\|_{C_t\shc^{-\alpha}}))^p |t-s|^{\lambda p} \\
                                               &\le   C_p (\tilde R_{\al,\beta}(T,\|B\|_{C_T\shc^{-\alpha}},
                \|B\|_{C_T\shc^{-\alpha}}))^p |t-s|^{\lambda p}. 
	\end{align}
	Now adding together \eqref{eq:Phi} and \eqref{eq:U} and substituting the estimates obtained therein in \eqref{eq:PhiU} we get 
        \begin{align*}
          \E^\P[|Z_t-Z_s|^p]&\le C_p \hat R_{\al,\be,p}(T, \| B\|_{C_T\shc^{-\al}})|t-s|^{\frac p2}+
                C_p(\tilde R_{\al,\beta}(T,\|B\|_{C_T\shc^{-\alpha}},\|B\|_{C_T\shc^{-\alpha}}))^p |t-s|^{\lambda p}\\
	&\le C_p \left( \hat R_{\al,\be,p}(T, \|B\|_{C_T\shc^{-\al}})T^{\frac p2-\lambda p}+ \tilde R_{\al,\beta}(T,\|B\|_{C_T\shc^{-\alpha}},\|B\|_{C_T\shc^{-\alpha}}))^p\right) |t-s|^{\lambda p},
\end{align*}
where we grouped $|t-s|^{\lambda p}$, since $\lambda<\frac\be 2<\frac12$. Thus, defining now 
$$\bar R_{\al,\be,p}(T,x):=C_p \left(\hat R_{\al,\be,p}(T,x)T^{\frac p2-\lambda p}+(\tilde R_{\al,\beta}(T,x,x))^p\right),$$ 
we conclude.
\end{proof}

\section{The conditional rough martingale problem}
\label{sec:CRMP}


Let Assumption \ref{ass:beta} and \ref{ass:B} hold and  let
$\P$ be the solution of rMP($B,\nu$), where $\nu$ is a given Borel probability measure in $\R^d$.
Let us denote by $\P^{\bar Z}$ the law of $\bar Z:=  (Z^2,\ldots,Z^d)$.

Then, disintegration $\P$ with respect to
$\bar Z:=  (Z^2=\g^2,\ldots,Z^d=\g^d)$, where $ \bar \gamma = (\g^2,\ldots,\g^d) \in \Omega_T^{d-1}$, see e.g.  Chapter III, nos. 70–72 in \cite{dellacherie},
there is a 
random kernel  $(\P_{\bar \gamma}(d\gamma^1), \bar \gamma \in \Omega_T^{d-1})$
of Borel probabilities on $\Omega^1_T$ 
such that
\begin{equation} \label{eq:516}
  \P(d\g^1,d\bar\g)=
	:\int_{\Omega_T^{d-1}}\P^{\bar Z}(d\bar\g)\int_{\Omega_T}\P_{\bar \g}(d\g^1).
\end{equation}
Roughly speaking $\P_{\bar \gamma}(d\gamma^1)$ is the conditional
law of $Z^1$ given $(Z^2,\ldots,Z^d) = (\gamma^2, \ldots, \gamma^d)$.

The objective of this section is to show that, whenever the drift $B$ is of the specific form $\shb$ given in
\eqref{eq:shbMixt} below,
 that the conditional law  of $Z = (Z^1,\ldots,Z^d)$,
 with respect to $Z^2 = \gamma^2, \ldots, Z^d= \gamma^d$,
 which is a probability Borel law on $\Omega_T^1$,
 is the unique solution of a rough martingale problem
 with distributional drift, formally given by $(t,x) \mapsto B^{\bar \gamma}(t,x,\bar \gamma(t))$,
 i.e. {\it freezing} the drift $B(t,Z^1,\bar Z)$ with respect to $\bar Z = \bar \gamma$.
  We start with two preparatory lemmata.
  But before this we introduce two useful notations.
  For $\phi \in \shd_\shl$ we denote
\begin{equation*}
  M^{\phi}_t(Z):= \phi(t,Z_t)-\int_0^t \shl \phi(r,Z_r)dr,  \\
\end{equation*}
and
\begin{equation}\label{eq:DLb}
M^{\phi}_t(Z^1,\bar\g):=\phi(t,Z_t^1,\bar \g_t)-\int_0^t \shl \phi(r,Z^1_r,\bar \g_r)dr, t \in [0,T].
	\end{equation}

\begin{lemma}\label{lm:DLb}
  Let $\P$ be the solution to the rMP($B,\nu$). Then there is a
  $\P^{\bar Z}$-null set  $\shn\subseteq \Omega^{d-1}_T$,  such that for
  every $\bar\g\not\in N$ and for every  $f\in\shd_\shl$, 
  the processes   $M^f_t(Z^1,\bar\g)$
and  $M^{f^2}_t(Z^1,\bar\g)$
are $\P_{\bar \g}$-martingales.
        In particular
	\begin{equation}
          [M^f_t(Z^1,\bar\g)]
          =\int_0^t |\nabla f|^2(r,Z^1_r,\bar\g_r)dr.
\end{equation}
\end{lemma}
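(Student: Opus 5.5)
The plan is to transfer the martingale property from $\P$ to the conditional laws $\P_{\bar\g}$ by testing against product functionals, and then to remove the dependence of the null set on $f$ through a countable dense family of test functions. This is where separability of $\bar\shc_c^{\gamma}$ enters.

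\emph{Step 1: martingale property under $\P$.} Fix $f\in\shd_\shl$ with $g=\shl f\in C_T\bar\shc_c^{0+}$. Since $f$ and $g$ are bounded and $\nabla f$ is bounded, Theorem \ref{thm:rMPext} shows that $M^f(Z)$ and $M^{f^2}(Z)$ are $\P$-martingales. Indeed they are local martingales with bounded quadratic variation, hence true martingales. The same theorem gives $M^{f^2}$ with compensator $\int_0^\cdot(2fg+|\nabla f|^2)(r,Z_r)\,dr$. Here I read $M^{f^2}_t(Z^1,\bar\g)$ in the statement as $f^2(t,Z^1_t,\bar\g_t)-\int_0^t(2fg+|\nabla f|^2)(r,Z^1_r,\bar\g_r)\,dr$.

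Fix $0\le s<t\le T$, a bounded continuous $G$ on $C([0,s];\R)$ and a bounded continuous $H$ on $\Omega_T^{d-1}$. Using \eqref{eq:516}, the martingale identity $\E^\P[(M^f_t-M^f_s)G(Z^1_{\cdot\wedge s})H(\bar Z)]=0$ becomes
$$\int_{\Omega_T^{d-1}} H(\bar\g)\,\E^{\P_{\bar\g}}\big[(M^f_t(Z^1,\bar\g)-M^f_s(Z^1,\bar\g))G(Z^1_{\cdot\wedge s})\big]\,\P^{\bar Z}(d\bar\g)=0 .$$
This identity holds with $H$ depending on the whole path $\bar Z$, not just on $\bar Z_{\cdot\wedge s}$. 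That is legitimate because the martingale property under $\P$ holds with respect to the filtration of $Z^1$ enlarged by $\sigma(\bar Z_r, r\le T)$.

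\emph{The main obstacle.} This enlargement is the delicate point. It amounts to an independence or non-anticipation property: conditioning on the full future of $\bar Z$ must not destroy the martingale property of $M^f$. For a general $B$ this fails, so I expect the argument to use the specific structure of the drift, presumably \eqref{eq:shbMixt}, where the components act so that $\bar Z$ is driven independently.

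My first attempt will be the following. Via Corollary \ref{cr:Dynamics} and Lévy's theorem, show that the martingale part $W^1$ of $Z^1$ is independent of $\bar Z$. This should follow from Corollary \ref{cor:extendA}, which gives $[W^1,\bar W]=0$, together with the structure of the drift. With this independence, $M^f(Z)$ remains a martingale in the enlarged filtration, since $M^f(Z)=f(0,Z_0)+\int\nabla f\,dM^{id}$ by Lemma \ref{lem:dec-op}, and this integral only involves $W^1$ once the $\bar\g$-directions are frozen.

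\emph{Step 2: from the integral identity to $\P_{\bar\g}$.} Granting the identity above for all $H$, the quantity
$$\E^{\P_{\bar\g}}\big[(M^f_t-M^f_s)G\big]$$
vanishes for $\P^{\bar Z}$-a.e. $\bar\g$, with a null set depending on $(f,s,t,G)$. The same argument applies to $M^{f^2}$.

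\emph{Step 3: one null set for all test functions.} Take $s,t$ rational and $G$ in a countable convergence-determining family. For $f$, use separability of $C_T\bar\shc_c^{0+}$ and $\bar\shc_c^{(1+\beta)+}$ to pick a countable family $(f_k)\subset\shd_\shl$ whose pairs $(g_k,f_k(T))$ are dense. Let $\shn$ be the union of the resulting countable family of null sets.

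For general $f$, approximate by $f_k$. Corollary \ref{cor:unDCBis} with $B^n=B$ gives $f_k\to f$ in $C_T\shc^{(1+\beta)+}$, hence uniformly together with $\nabla f$, and $g_k\to g$ uniformly. Then $M^{f_k}(Z^1,\bar\g)\to M^f(Z^1,\bar\g)$ boundedly, and the martingale identities pass to the limit by dominated convergence. Right-continuity in $s,t$ extends them from rational to all times.

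\emph{Step 4: the bracket.} Since both $M^f$ and $M^{f^2}$ are $\P_{\bar\g}$-martingales, Itô's product rule and uniqueness of the semimartingale decomposition give
$$[M^f(Z^1,\bar\g)]_t=\int_0^t|\nabla f|^2(r,Z^1_r,\bar\g_r)\,dr .$$
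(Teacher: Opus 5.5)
\textbf{There is a genuine gap, and it cannot be closed.} The gap is the filtration enlargement in Step 1, which you correctly single out. Steps 2--4 are routine once Step 1 holds, but it does not hold.

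\textbf{What Step 1 needs, and why the paper does not supply it either.} To pass to $\P_{\bar\g}$ you need
$\E^\P[(M^f_t-M^f_s)G(Z^1_{\cdot\wedge s})H(\bar Z)]=0$ for $H$ depending on the whole path of $\bar Z$. Equivalently, $M^f$ must be a martingale for $\shf^{Z^1}_s\vee\sigma(\bar Z_r,r\le T)$. The paper's proof tests only against $\Phi_s(\bar Z_r;r\le s)$. Setting $k(\bar\g):=\E^{\P_{\bar\g}}[(M^\phi_t(Z^1,\bar\g)-M^\phi_s(Z^1,\bar\g))F_s]$, this gives only $\E^\P[k(\bar Z)\mid\shf^{\bar Z}_s]=0$. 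The paper then reads this as $k=0$ a.e., which does not follow.

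\textbf{Why your proposed remedy fails.} It fails on two counts.

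First, Corollary \ref{cor:extendA} (through Corollary \ref{cr:Dynamics}) gives $[W^1,\bar W]=0$. That makes $W^1$ independent of $\bar W$, not of $\bar Z$. In \eqref{eq:shbMixt} the factor $\ell$ depends on $x_1$, so the drift of $\bar Z$ depends on $Z^1$; this is exactly the interaction in the particle system. Concretely, take $d=2$, $h^1=0$, $h^2=1$, $\ell(x)=\tanh(x_1)$, $\nu=\delta_0$. Then $Z^1=W^1$ and $dZ^2_t=\tanh(Z^1_t)\,dt+dW^2_t$, so
$\E^\P[Z^1_TZ^2_T]=\int_0^T\E[W^1_r\tanh(W^1_r)]\,dr>0$.
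Hence $Z^1$ cannot be a $\P_{\bar\g}$-Brownian motion for a.e.\ $\bar\g$. This also contradicts Theorem \ref{thm:1drMP}.

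Second, even when $W^1$ is independent of $\bar Z$, the representation $M^f=f(0,Z_0)+\int\nabla f\cdot dM^{id}$ contains the terms $\int\partial_{x_j}f\,dW^j$ for $j\ge 2$. Freezing $\bar Z=\bar\g$ does not remove them. It turns them into a path functional of $\bar\g$ that is not a martingale in $Z^1$.

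\textbf{The statement is false as written.} Take $d=2$, $B=0$, $\nu=\delta_0$ and $f(x)=\chi(x_1)\varphi(x_2)$ with $\chi,\varphi\in C_c^\infty(\R)$, $\chi(0)=1$, $\varphi'\not\equiv 0$. Then $f\in\shd_\shl$ and $\P_{\bar\g}$ is Wiener measure for a.e.\ $\bar\g$. Let $a(r):=P_r\chi(0)$, so that $a'(r)=\frac12P_r\chi''(0)$. Then
\[
\E^{\P_{\bar\g}}\big[M^f_t(Z^1,\bar\g)\big]-M^f_0(Z^1,\bar\g)=a(t)\varphi(\g^2_t)-a(0)\varphi(\g^2_0)-\int_0^t\Big(a'(r)\varphi(\g^2_r)+\tfrac12 a(r)\varphi''(\g^2_r)\Big)dr .
\]
By It\^o's formula, the right-hand side equals $\int_0^t a(r)\varphi'(\g^2_r)\,d\g^2_r$ for Wiener-a.e.\ $\g^2$. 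This random variable has strictly positive variance. So $M^f(Z^1,\bar\g)$ fails to be a $\P_{\bar\g}$-martingale on a set of positive $\P^{\bar Z}$-measure.

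\textbf{What a provable version would need.} It would have to restrict to test functions depending only on $(t,x_1)$, because of this counterexample. It would also have to exclude dependence of the drift of $\bar Z$ on $x_1$, because of the $\tanh$ example. That second restriction rules out the interacting system of Section \ref{sec:IPS}.
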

\begin{proof}
  In this proof we will indicate by  $E^0$  a countable dense in $\shd_\shl$.
  $E^{\Omega_s}$ (resp. $E^{\Omega_s^{d-1}}$)
will stand for a generic countable  subset of
the Banach space of continuous bounded
real-valued functionals $\Omega_s \rightarrow \R$
(resp. $\Omega_s^{d-1} \rightarrow \R$)
fulfilling the following sequential density property.
For every bounded continuous functional 
$F_s: \Omega_s \rightarrow \R $ (resp. $ \Phi_s: \Omega^{d-1}_s \rightarrow \R$,
there exists a sequence $(F^n_s)$, (resp. $ (\Phi^n_s)$) converging to $F_s$ (resp. $\Phi_s$), in a pointwise uniformly bounded way, see Lemma 4.6 in \cite{issoglio_russoMPb} for details.
By Theorem \ref{thm:rMPext}, and restricting ourselves to $\shd_\shl\subseteq\hat\shd_\shl$, we can state the following.

{\bf Claim C:} for every $f\in\shd_\shl$
\begin{align}\label{eq:claimC}
	&f(t,Z_t)-f(0,Z_0)-\int_0^t g(r,Z_r)dr\\
\notag	&f^2(t,Z_t)-f^2(0,Z_0)-\int_0^t(2fg+|\nabla f|^2)(r,Z_r)dr
	\end{align}
	are $\P$--martingales.

Arguing as in Fact 1 of Theorem 4.5 of
  in \cite{issoglio_russoMPb}, together with a functional
  monotone class
  theorem
  we can prove that
{\bf Claim (C)} holds if and only if
for every $0\leq s\leq t\leq T  $  with $s,t\in\Q$
  there exist    $ E^0$, $E^{\Omega_s}$ (resp. $E^{\Omega_s^{d-1}}$)
\begin{equation}\label{eq:mart_equiv}
   \E^\P[(M^\phi_t-M^\phi_s) F_s(Z^1_r;r\leq s)\Phi_s(\bar Z_r;r\leq s)]=0,
 \end{equation}
 for $\phi = f, f^2$,
      for every $f\in E^0$,
      $\Phi_s\in E^{\Omega_s^{d-1}}$, $F_s \in E^{\Omega_s}$.

      We can rewrite \eqref{eq:mart_equiv} making use of \eqref{eq:516}
we have 
\begin{equation*}
	\int_{\Omega_T^{d-1}}\P^{\bar Z}(d\bar \g)	\E^{\P_{\bar\g}}[(M^\phi_t(Z^1,\bar \g)-M^\phi_s(Z^1,\bar\g)) F_s(Z^1_r;r\leq s)\Phi_s(\bar \g_r;r\leq s)] = 0.
\end{equation*}
This implies that, for $\bar \g\in\Omega_T^{d-1}$ outside a $\P^{\bar Z}(d\bar \g)$-null set $\shn$, for every $f\in E^0$, $F_s\in E^{\Omega_T}$, and $\Phi_s\in E^{\Omega_T^{d-1}}$
\begin{equation}\label{eq:mart_equiv1Bis}
\E^{\P_{\bar \g}}[(M^\phi_t(Z^1,\bar \g)-M^\phi_s(Z^1,\bar \g)) F_s(Z^1_r;r\leq s)\Phi_s(\bar\g_r;r\leq s)]=0.
\end{equation}
Consequently being $\bar \gamma$ frozen,
\begin{equation}\label{eq:mart_equiv1}
\E^{\P_{\bar \g}}[(M^\phi_t(Z^1,\bar \g)-M^\phi_s(Z^1,\bar \g)) F_s(Z^1_r;r\leq s)]=0.
\end{equation}
This, by the usual density arguments, as those used for
establishing Claim (C),
we have  that,
$\P^{\bar Z}(d\bar \g)$-a.e.,
the processes
$M^\phi_t(Z^1,\bar \g)$ are  martingale under $\P_{\bar \g}$
for $\phi = f, f^2$, for every $f \in \shd_\shl$.
This concludes the proof of the lemma.
\end{proof}

\begin{lemma}\label{lm:L}
  Let $\P$ be the solution to rMP$(B,\nu)$.
 Then there is a null set $\shn$ with respect to $\P^{\bar Z}$,
  such that, for all $\gamma \in \shn$, the following holds.
  Let $(f^n_T)$ be a sequence in $\shc_c^{(1+\beta)+}(\R^d)$ such that $f^n_T\to f_T$ in $\shc^{(1+\beta)+}$, and let $(g^n)$ be a sequence in $C_T\shc_c^{0+}(\R^d)$ such that $g^n\to g$ in $C_T\shc^{(-\beta)+}(\R^d)$. Let also $(f^n)$ be the sequence of solutions to $\shl f^n=g^n,\ f^n(T)=f^n_T$.
  $M^{f^n}(Z^1,\bar\g)$
	converges u.c.p. under $\P^{\bar\g}$ to a $\P^{\bar\g}$-local martingale $	M^{f}(Z^1,\bar\g)$ so that 
	\begin{equation}  \label{MfQvar}
	[M^{f}(Z^1,\bar\g)] =\int_0^\cdot |\nabla f|^2(r,Z^1_r,\bar\g_r)dr.
	\end{equation}
\end{lemma}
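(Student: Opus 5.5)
The plan is to combine Lemma \ref{lm:DLb} with a Cauchy-type estimate in u.c.p. topology under $\P_{\bar\g}$, obtained from the martingale structure and the uniform convergence of $f^n$. Note that $M^{f}(Z^1,\bar\g)$ must be understood as $f(t,Z^1_t,\bar\g_t)-\lim_n\int_0^t g^n(r,Z^1_r,\bar\g_r)dr$, since $g$ is only a distribution. I take $\shn$ to be the null set given by Lemma \ref{lm:DLb}, for $\bar\g\notin\shn$. The $f^n$ need not lie in $\shd_\shl$, since $g^n$ and $f^n_T$ are only compactly supported; this is handled by a first approximation step.

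\emph{Step 1.} Each $f^n$ belongs to $\shd_\shl$, or is a $C_T\shc^{(1+\beta)+}$-limit of elements of $\shd_\shl$. Hence $M^{f^n}(Z^1,\bar\g)$ is a $\P_{\bar\g}$-martingale with bracket $\int_0^\cdot|\nabla f^n|^2(r,Z^1_r,\bar\g_r)dr$. This follows from Lemma \ref{lm:DLb}, applied uniformly outside $\shn$, together with closedness of local martingales under u.c.p. if an approximation is needed. By linearity, $M^{f^n}-M^{f^m}=M^{f^n-f^m}$ has bracket $\int_0^\cdot|\nabla(f^n-f^m)|^2(r,Z^1_r,\bar\g_r)dr$.

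\emph{Step 2.} By Corollary \ref{cor:unDCBis} and Remark \ref{rmk:unDCBis}, $f^n\to f$ in $C_T\shc^{(1+\beta)+}$, so $\nabla f^n\to\nabla f$ uniformly. Doob's (or BDG) inequality then gives
\[
\E^{\P_{\bar\g}}\Big[\sup_{t\le T}|M^{f^n}_t-M^{f^m}_t-(M^{f^n}_0-M^{f^m}_0)|^2\Big]\le 4T\|\nabla f^n-\nabla f^m\|_\infty^2\to0 .
\]
Since $f^n(0,\cdot)\to f(0,\cdot)$ uniformly, the initial values converge as well. Hence $(M^{f^n}(Z^1,\bar\g))$ is Cauchy in u.c.p. and converges to a continuous process $M$, which is a $\P_{\bar\g}$-martingale by $L^2$-closedness.

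\emph{Step 3.} Because $f^n(t,Z^1_t,\bar\g_t)\to f(t,Z^1_t,\bar\g_t)$ uniformly, $\int_0^\cdot g^n(r,Z^1_r,\bar\g_r)dr$ also converges u.c.p. This defines the limit term and identifies $M=M^f(Z^1,\bar\g)$. The limit is independent of the chosen approximating sequences: interlacing two such sequences gives another admissible sequence, whose limit must coincide with both.

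\emph{Step 4.} For the bracket, brackets are continuous along u.c.p.-convergent sequences of local martingales (Proposition A.4 of \cite{BandiniRusso_RevisedWeakDir}, or directly from the $L^2$ convergence above). Therefore $[M^{f^n}]\to[M^f]$ u.c.p. Since $\int_0^\cdot|\nabla f^n|^2dr\to\int_0^\cdot|\nabla f|^2dr$ uniformly, \eqref{MfQvar} follows.

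The main obstacle is Step 1: making sure a single null set $\shn$ works simultaneously for all the $f^n$ and all sequences. I would handle it by using the countable dense family $E^0$ from the proof of Lemma \ref{lm:DLb} together with the $L^2$ estimate of Step 2, which is uniform in $\bar\g$. This extends the martingale property from $E^0$ to every element of $\shd_\shl$, and then to its $C_T\shc^{(1+\beta)+}$-closure, without enlarging $\shn$.
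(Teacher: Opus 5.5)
Your proof is correct, taking Lemma \ref{lm:DLb} as given as the paper does, and it follows the paper's argument: the brackets of $M^{f^n}(Z^1,\bar\g)-M^{f^m}(Z^1,\bar\g)$ are controlled by $\|\nabla f^n-\nabla f^m\|_\infty$, so the sequence is u.c.p.-Cauchy with a local-martingale limit whose bracket is then identified; the only differences are that you use Doob's $L^2$ inequality and continuity of brackets where the paper uses (5.5) of Problem 5.25 in Karatzas--Shreve and passes to the limit in $(M^{f^n})^2-\int_0^\cdot|\nabla f^n|^2(r,Z^1_r,\bar\g_r)\,dr$. Your Step~1 detour is unnecessary, since $\shc_c^\gamma\subset\bar\shc_c^\gamma$ gives $f^n\in\shd_\shl$ directly, and Lemma \ref{lm:DLb} already supplies a single null set valid for every $f\in\shd_\shl$.
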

\begin{proof}
  Let us choose the $\P$-null set $\shn$ provided by Lemma \ref{lm:DLb}.
  Let us fix now  $\bar \gamma \in \Omega_T^{d-1}$ not belonging to  $\shn$
until the end of the proof. 
In particular for all $n$
$$ M^{f^n}_t(Z^1,\bar\g) = f^n(t,Z^1_t,\bar \gamma_t) - \int_0^r g^n(r,Z^1_r,\bar \gamma_r) dr,$$
is a $\P^{\bar\g}$-local martingale with quadratic variation given by
\begin{equation} \label{eq:bracketGamma}
  [ M^{f^n}(Z^1,\bar\g)] =\int_0^\cdot |\nabla f^n|^2(r,Z^1_r,\bar\g_r)dr.
\end{equation}
By a standard bilinearity argument  we have
	\begin{equation}\label{eq:bilQvar}
          [M^{f^n}(Z^1,\bar \g)-M^{f^m}(Z^1,\bar \g)] =
          \int_0^\cdot |\nabla f^n-\nabla f^m|^2(r,Z^1_r,\bar\g_r)dr.
	\end{equation}
	Since $g^n\to g$ in $C_T\shc^{(-\beta)+}(\R^d)$ and $f^n_T\to f_T$ in $\shc^{(1+\beta)+}(\R^d)$,
 by Remark 4.18 following Lemma 4.17 in \cite{issoglio_russoMPb} we have that $f^n\to f$ in $C_T\shc^{(1+\beta)}(\R^d)$, 
        hence in $C^{0,1}([0,T]\times \R^d)$.
        Given $\phi \in C_T\shc^{(1+\beta)}(\R^d)$,  we denote
$\tilde \phi(t,x_1):=\phi(t,x_1,\bar\g_t)$.
With this notation        the sequence of functions
$(\tilde f^n)$ converges in $C^{0,1}([0,T]\times \R)$ to $\tilde f$.
Therefore, by \eqref{eq:bilQvar}
        \begin{equation} \label{eq:bilQvarBis}
          [M^{f^n}(Z^1,\bar \g)-M^{f^m}(Z^1,\bar \g)] \rightarrow_{n,m \rightarrow \infty} 0.
\end{equation}          
Consequently, by (5.5) of Problem 5.25 of \cite{karatzasShreve}, 
$M^{f^n}(Z^1,\bar \g)-M^{f^m}(Z^1,\bar \g)$ converges u.c.p.
to $0$, when $m, n$ go to infinity.
In particular the sequence $(M^{f^n}(Z^1,\bar \g))$ is Cauchy for a
usual
         metric describing the u.c.p. topology.
         Since the linear space of continuous local martingales
         vanishing at zero is complete
         under the u.c.p. convergence topology,
         then $M^{f^n}(Z^1,\bar \g)$ converges to some local martingale
         $M^{f}(Z^1,\bar\g)$.

         By \eqref{eq:bracketGamma}
\begin{equation} \label{eq:bracketGamma_n}
  (M^{f^n})^2_t(Z^1,\bar \g) - \int_0^t |\nabla f^n|^2(r,Z^1_r,\bar\g_r)dr,
  \end{equation}
         are local martingales. Since $M^{f^n}_t(Z^1,\bar \g)$ converge to $M^{f}_t(Z^1,\bar \g)$ u.c.p.
         and $\nabla f^n$ converges in $C^{0,1}$ to $\nabla f$,
         we have that \eqref{eq:bracketGamma_n} converge u.c.p. to 
\begin{equation} \label{eq:bracketGamma_Bis}
  (M^{f})^2_t(Z^1,\bar \g) - \int_0^t |\nabla f|^2(r,Z^1_r,\bar\g_r)dr.
\end{equation}
         Again, since  the linear space of continuous local martingales
         vanishing at zero is closed by the u.c.p convergence
         then \eqref{eq:bracketGamma_Bis}
         is a local martingale.
         Finally this implies \eqref{MfQvar}.

\end{proof}

We now focus on a specific class of Besov distributions. We consider $h^i\in C_T\shc^{(-\beta)+}(\R), 1 \le i \le d$,
$\ell\in C_T \shc^{\beta}(\R^d)$.
We consider $\shb\in C_T\shc^{(-\beta)+}(\R^d;\R^d)$
such that $\shb=(\shb_1,\ldots,\shb_d)$ is
formally of the form $\shb_i(t,x)= \ell(t,x) h^i(t,x_i)$.
The rigorous meaning is possible via the injection map $T^i$, which 
has been provided in Definition \ref{def:T}.
In the application to this chapter $d$ (resp. $N$) becomes $1$
(resp. $d$)

In  particular $\shb$
is  the vector, whose components are the pointwise products 
\begin{equation} \label{eq:shbMixt}
  \shb_i(t,x):=\ell(t,x)\cdot T^i_{h^i(t)} (x), 1 \le i \le d.
  \end{equation}
	Thanks to Proposition \ref{pr:besovT} we know that $t \mapsto T^i_{h^i(t)}\in C_T\shc^{(-\beta)+}(\R^d)$
	and by \eqref{eq:bonyt}  $\ell(t,\cdot)\cdot T^i_{h^i(t)}\in C_T\shc^{(-\beta)+}(\R^d)$.

From now on, for all $f\in C_b^{1,2+}([0,T]\times\R^d)$, we will denote with $\shl^\shb f$ the pointwise product given by
$$
\shl^{1,\shb} f(t,x)=\sum_{i=1}^d
 \shb_i(t,x) \cdot \partial_{x_i} f(t,x),
 $$
taking into account \eqref{eq:shbMixt}. 
For every $\bar \gamma \in \Omega^{d-1}_T$
and for any $f^1\in C_b^{1,2+}([0,T]\times \R)$ we denote
\begin{equation}\label{eq:L1Bg}
  \shl^{1,\shb_1,\bar \gamma}f^1(t,x_1)= \shb_1(t,x_1,\bar \gamma_t)
  \cdot \partial_{x_1} f(t,x_1).
\end{equation}
where, naturally 
\begin{equation} \label{eq:shb1}
\shb_1(t,x_1,\bar \gamma_t) = \ell(t,x_1,\bar \gamma(t))\cdot h^1(t,x_1),
(t,x_1) \in [0,T] \times \R.
\end{equation}
\begin{remark} \label{rmk:shb1}
  For all $\bar\g\in\Omega_T^{d-1}$ we have that
  $(t,x_1) \mapsto \shb_1(t,x_1,\bar \gamma_t)
  \in C_T\shc^{((-\beta)+}(\R)$ thanks to \eqref{eq:bonyt}.
  \end{remark}

Below we will make use of the notation $Z^1$ (resp. $\bar Z$)
the first component (resp. the other $d-1$ components) of the canonical process
$Z$.
  \begin{proposition}\label{pr:extendA1}
  Let $\P$ be the solution to rMP($\shb,\nu$). Then there exists  a $\P^{\bar Z}$-null set $\shn$ such that for all $\bar \g\notin \shn$ the following hold.
    \begin{itemize}
  	\item[i)] The map $C_T\shc_c^{0+}([0,T]\times\R)) \ni g^1\mapsto \int_0^t g^1(r,Z_r^1)dr\in \shc^{\P_{\bar\g}}$ admits a continuous extension to $A^{\P_{\bar\g}}:C_T\bar\shc_c^{(-\beta)+}\to \shc^{\P_{\bar\g}}$. 
  	\item[ii)] $\forall f^1\in C_c^{1,2+}([0,T]\times\R)$, 
  	\begin{equation} \label{eq:ii}
          M^{f^1}_t(Z^1,\bar\g)=f^1(t,Z_t^1)
          -\int_0^t (\partial_tf^1+ \frac{1}{2} \partial_{xx}^2 f^1)(r,Z_r^1)dr-A^{\P_{\bar\g}}(\shl^{1,\shb^{1,\bar\g}}f^1)
  	\end{equation}
  	is a $\P_{\bar\g}$- local martingale.
        
      \end{itemize}

  \end{proposition}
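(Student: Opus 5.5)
The plan is to obtain both items by transferring, through the disintegration \eqref{eq:516}, the properties of $\P$ to $\P_{\bar\g}$, exploiting Lemma \ref{lm:DLb} and Lemma \ref{lm:L}. We then verify the one-dimensional quasi-Stroock-Varadhan identity along the lines of Theorem \ref{thm:extABis}.

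\textbf{Construction of the extended operator.} We fix the null set $\shn$ given by Lemma \ref{lm:DLb} and Lemma \ref{lm:L}, and a $\bar\g\notin\shn$. For $g^1\in C_T\bar\shc_c^{(-\beta)+}(\R)$ we lift it to $\R^d$ by $G:=T^1_{g^1}$, which lies in $C_T\bar\shc^{(-\beta)+}(\R^d)$ by Proposition \ref{pr:besovT}, with $\|G\|=\|g^1\|$. We let $v$ solve $\shl v=G$, $v(T)=0$, and take a sequence $g^{1,n}\in C_T\shc_c^{0+}(\R)$ converging to $g^1$; multiplying by cutoffs $\chi_n$ in the $\bar x$ variables, as in Lemma \ref{lm:chi_n}, we get approximants $G^n\in C_T\shc_c^{0+}(\R^d)$ of $G$. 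By Lemma \ref{lm:L}, $M^{v^n}(Z^1,\bar\g)$ converges u.c.p. under $\P_{\bar\g}$ to a local martingale $M^v(Z^1,\bar\g)$. Since $v^n\to v$ in $C^{0,1}$, the integrals $\int_0^\cdot G^n(r,Z^1_r,\bar\g_r)dr=v^n(\cdot,Z^1,\bar\g)-M^{v^n}(Z^1,\bar\g)$ converge u.c.p. to a limit, and we define $\tilde A(G)$ as this limit.

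The main obstacle is to make this limit depend only on $g^1$, and to see that it extends $g^1\mapsto\int_0^\cdot g^1(r,Z^1_r)dr$. When $g^1\in C_T\shc_c^{0+}$, the limit is just $\int_0^\cdot g^1(r,Z^1_r)dr$, since $G^n(r,x_1,\bar\g_r)=g^1(r,x_1)$ on compacts; this gives compatibility. For continuity we use a Cauchy-type estimate. The difference of two approximations is $v^n-v^m$ minus a martingale whose bracket is $\int|\nabla(v^n-v^m)|^2$. Both are controlled by $\|v^n-v^m\|_{C_T\shc^{1+\beta}}\le C\|G^n-G^m\|_{C_T\shc^{-\alpha}}$ via Lemma 4.17 of \cite{issoglio_russoPDEa}, and then by (5.5) of Problem 5.25 of \cite{karatzasShreve} for the martingale part. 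Since the null set must not depend on $g^1$, we work on a countable dense family and extend by uniform continuity; this uses separability of $\bar\shc_c$.

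\textbf{Item ii).} For $f^1\in C_c^{1,2+}([0,T]\times\R)$ we set $f(t,x):=f^1(t,x_1)\chi(\bar x)$, with $\chi$ a cutoff equal to $1$ on a neighbourhood of $\{\bar\g_t,\,t\le T\}$; this set is compact. Then $f\in C_c^{1,2+}\subseteq\tilde\shd^{-\beta}_\shl$. By Theorem \ref{thm:extABis} under $\P$ and the above transfer, $M^f(Z^1,\bar\g)$ is a $\P_{\bar\g}$-local martingale, where the drift part is $\tilde A$ applied to $\shl f$. Along $\bar x=\bar\g_t$ we have $\chi\equiv1$ and all $\bar x$-derivatives of $\chi$ vanish. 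Using \eqref{eq:shbMixt} and \eqref{eq:shb1}, the term $\nabla f\cdot\shb$ reduces to $\ell(t,x_1,\bar\g_t)h^1(t,x_1)\partial_{x_1}f^1=\shl^{1,\shb_1,\bar\g}f^1$. This identification is first checked for mollified $h^1$ and then passed to the limit by continuity of $A^{\P_{\bar\g}}$, which yields \eqref{eq:ii}.
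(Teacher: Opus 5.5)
There is a genuine gap in item i). Your approximants $G^n=g^{1,n}(x_1)\chi_n(\bar x)$ do \emph{not} converge to $G=T^1_{g^1}$ in $C_T\shc^{(-\beta)+}(\R^d)$. In fact $T^1_{g^1}\notin C_T\bar\shc_c^{(-\beta)+}(\R^d)$ unless $g^1=0$. This is the phenomenon of Remark \ref{rm:kApprox}: Lemma \ref{lm:chi_n} gives $\nabla\chi_n\to0$ and $\Delta\chi_n\to0$, never $\chi_n\to1$ in norm.

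Indeed, up to the vanishing term $T^1_{g^1-g^{1,n}}$, the difference $T^1_{g^1}-G^n$ equals $T^1_{g^{1,n}}$ on $\{|\bar x|\ge n+\sqrt n\}$. The Besov norm is a supremum over all of $\R^d$. By the computation in Proposition \ref{pr:besovT}, the Littlewood--Paley blocks of $T^1_{g^{1,n}}$ at $(x_1,\bar x)$ are those of $g^{1,n}$ at $x_1$. Hence $\liminf_n\|T^1_{g^1}-G^n\|_{C_T\shc^{-\alpha}(\R^d)}\ge c\|g^1\|_{C_T\shc^{-\alpha}(\R)}>0$ for every $\alpha<\beta$. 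For the same reason $(G^n)$ is not even Cauchy: $g^{1,m}(\chi_n-\chi_m)=-g^{1,m}$ on the annulus $\{n+\sqrt n\le|\bar x|\le m\}$.

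So Lemma \ref{lm:L} cannot be invoked. The bound $\|v^n-v^m\|_{C_T\shc^{1+\beta}}\le C\|G^n-G^m\|_{C_T\shc^{-\alpha}}$ gives nothing, and neither the definition of $\tilde A$ nor its continuity is established. A growing cutoff cannot work here, because the only continuity results available for the Kolmogorov PDE are in global norms.

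The repair is the device you already use in item ii). Once $\bar\g\notin\shn$ is chosen, fix one $\chi\in C_c^\infty(\R^{d-1})$ equal to $1$ on a neighbourhood of the compact set $\{\bar\g_t,\ t\in[0,T]\}$. This is legitimate, since the null set of Lemmas \ref{lm:DLb} and \ref{lm:L} does not depend on the test functions or the approximating sequences.

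For $g^1\in C_T\shc_c^{0+}(\R)$ put $G:=g^1(t,x_1)\chi(\bar x)\in C_T\shc_c^{0+}(\R^d)$. Then $G(r,Z^1_r,\bar\g_r)=g^1(r,Z^1_r)$. By \eqref{eq:bonyt} and Proposition \ref{pr:besovT}, $\|G\|_{C_T\shc^{-\alpha}(\R^d)}\le c\|\chi\|_{\shc^{\beta}}\|g^1\|_{C_T\shc^{-\alpha}(\R)}$.

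Now let $g^1_n\to0$ in $C_T\shc^{(-\beta)+}(\R)$. The solutions $v_n$ of $\shl v_n=G_n$, $v_n(T)=0$, tend to $0$ in $C_T\shc^{1+\beta}$. Lemma \ref{lm:L} gives $M^{v_n}(Z^1,\bar\g)\to0$, hence $\int_0^\cdot g^1_n(r,Z^1_r)dr\to0$ u.c.p. under $\P_{\bar\g}$. Continuity at zero of this linear map is all that is needed for the unique continuous extension, and this is exactly how the paper argues. The paper works with the uncut function $g^1_n(x_1)$ on $\R^d$, which is not compactly supported; a fixed cutoff is the clean way to meet the hypotheses of Lemma \ref{lm:L}. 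Your sequence-independence argument and the countable dense family then become unnecessary.

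Item ii) is essentially the paper's argument: mollify $h^1$, read off the trace of the regularised drift on $\bar x=\bar\g_t$, and pass to the limit via Lemma \ref{lm:L} and continuity of $A^{\P_{\bar\g}}$. Your cutoff $f=f^1\chi(\bar x)$ correctly supplies compact support for the approximants. Note, though, that ``$\tilde A(\shl f)$'' is not covered by your construction, which only treated lifts. What you actually use there, correctly, is Lemma \ref{lm:L} together with that trace identification, so once item i) is repaired the argument goes through.
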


   \begin{proof}[Proof of Proposition \ref{pr:extendA1}]
     The set $\shn$ is the one selected by Lemmata \ref{lm:DLb} and \ref{lm:DLb}.
     We fix $\bar \gamma \notin \shn$.
     We start by proving item {\it i)}. Let $(g^n)$ be a sequence in $C_T\shc_c^{0+}(\R)$ converging to $0$ in $C_T\shc^{(-\beta)+}(\R)$.
           Take now the sequence  of functions $(f^n)$
in $C_T\shc^{(1+\beta)}(\R^d)$ solutions to $$\shl f^n=g^n,\quad f^n(T)=0.$$
By Remark 4.18 following Lemma 4.17 of \cite{issoglio_russoPDEa},
$f^n$ converge to zero in $C_T\shc^{(1+\beta)}(\R^d)$,
in particular uniformly in $(t,x)$.
Therefore for every   $\bar \gamma \in \Omega^{d-1}_T$,
we have that $f^n(t,Z_t^1,\bar\g_t)\to 0$ u.c.p.
By Lemmata  \ref{lm:DLb} and  \ref{lm:L}, 
with $g=0$ and $f_T=0$,  reminding \eqref{eq:DLb}
$$M^{f^n}(Z^1,\bar\g) = f^n(t,Z^1_t,\bar \gamma_t) - \int_0^t g^n(r,Z^1_r) dr   \to 0,$$
$\P_{\bar \gamma}$-u.c.p.,  thus, by additivity,  $\int_0^t g^n(r,Z_r^1)dr\to 0$ $\P_{\bar \gamma}$-u.c.p.
Therefore, the map $A^{\P_{\bar\g}}$ is well-defined for every $\bar \gamma \in
\shn$.

  	For what concerns item {\it ii)}, let $f^1\in C_c^{1,2+}([0,T]\times \R)$. We first calculate 
  	\begin{equation} \label{eq:heat-eq}
          (\shl f^1)(t,x_1,\ldots,x_d)=\partial_t f^1(t,x^1)+
       \frac{1}{2}\partial_{x_1x_1}^2 f^1(t,x_1)+ (\partial_{x_1}f^1(t,x_1)\ell(t,x_1,\ldots,x_d)) \cdot T^1_{h^1(t)}(x_1,\ldots,x_d),
  	\end{equation}
taking into account the associativity property of the pointwise product, see Remark 2.7 in \cite{BIRMcKean}.
Since  $C_T\shc^{0+}(\R)$ is dense in $C_T\shc^{(-\beta)+}(\R)$ by 
Remark \ref{rmk:density},  we can consider
a sequence $(h^{1,n})$  in $C_T\shc_c^{0+}(\R)$ converging to $h^1$ in $C_T\shc^{(-\beta)+}(\R)$. We set 
\begin{align} \label{eq:g-reg} 
          g^n(t,x_1\ldots,x_d)&= \partial_t f^1(t, x_1)+
         \frac{1}{2}\partial_{x_1x_1}^2 f^1(t,x_1) + \partial_{x_1}f^1(t,x_1) \ell(t,x_1,\ldots,x_d) h^{1,n}(t,x_1)  \\
      \notag   &= \partial_t f^1(t, x_1)+ \frac{1}{2} \partial_{x_1x_1}^2 f^1(t,x_1)+
      \partial_{x_1}f^1(t,x_1) \ell(t,x_1,\ldots,x_d) T^1_{h^{1,n}(t)}(x_1,\ldots,x_d),
\end{align}
  	which is an element of $C_T\shc_c^{0+}(\R^d)$.
        Now, for every $n\in\N$,  we consider the unique solution $f^n$ to 
  	\begin{equation}\label{eq:PDEf1}
  		\shl f^n=g^n,\ f^n(T,x_1,\ldots,x_d)=f^1(T,x_1).
  	\end{equation}
Now we show that $g^n\to g:=\shl f^1$ in  $C_T\shc^{(-\beta)+}(\R^d)$. We  observe that since $h^{1,n}$ converges in $C_T\shc^{(-\beta)+}(\R)$ there exists an $\al<\be$ such that the convergence holds in $C_T\shc^{-\al}(\R).$ From this, by the linearity and isometry property of $T$, see Proposition \ref{pr:besovT}, together with \eqref{eq:bonyt}, we get
  	\begin{align*}
  		\|g^n-g\|_{C_T\shc^{-\al}(\R^d)}=&\|(T^1_{h^{1,n}}-T_{h^1})\cdot \ell\,\partial_{x_1}f^1\|_{C_T\shc^{-\al}(\R^d)}\leq \|(T^1_{h^{1,n}}-T^1_{h^1})\|_{C_T\shc^{-\al}(\R^d)}\| \ell\,\partial_{x_1}f^1\|_{C_T\shc^{\beta}(\R^d)}\\
  		=& \|h^{1,n}-h^1\|_{C_T\shc^{-\al}(\R)}\| \ell\,\partial_{x_1}f^1\|_{C_T\shc^{\beta}(\R^d)}\to 0.
  	\end{align*}

        Since $g^n\to g$ in $C_T\shc^{(-\beta)+}(\R^d)$ and $(f^n_T)$ is constantly $f^1_T$, 
        by Lemma \ref{lm:L}, we obtain that
$$M^{f^n}(Z^1,\bar\g) = f^n(t,Z^1_t,\bar \gamma_t) - \int_0^t g^n(r,Z^1_r, \bar \gamma_r) dr,$$    
 converges   $\P_{\bar\g}$-u.c.p. to a local martingale with quadratic variation given by 
         $\int_0^t |\nabla f^1|^2(r,Z^1_r)dr$.
By continuity on the coefficients for the PDE \eqref{eq:PDEf1}, see Remark 4.18 following Lemma 4.17 in \cite{issoglio_russoPDEa},
we have that $f^n\to f^1$ in $C_T\shc^{(1+\beta)}(\R^d)$, therefore in $C^ {0,1}([0,T]\times \R^d)$
and consequently $(f^n(t, Z^1_t,\bar \gamma_t)$  converges u.c.p. to  $f^1(t, Z^1_t)$.
  	Consequently by additivity we have that $\int_0^t g^n(t,Z_r^1,\bar\g_r)dr$ converges $\P_{\bar \g}$-u.c.p. to $A^{\P_{\bar \g}}(\tilde{g}^{\bar\g})$, where 
  	$$\tilde g^{\bar\g}(t,x_1)=\lim_{n\to\infty} g^n(t,x_1,\bar\g_t),$$
        in $C_T \shc^{(-\beta)+}$,
  	if it exists.
        Taking into account \eqref{eq:g-reg}, the convergence of $h^{1,n}$
        to $h$, the fact that $t \mapsto \ell(t,x_1,\bar \gamma_t)
        \in C_T\shc^\beta(\R^d)$
        and the continuity of the pointwise product
        previous limit exists and it equals
        $$\shl^{\shb^{1,\bar\g}}f^1(t,x_1):=
        \partial_t f^1(t, x_1)+ \frac{1}{2} \partial_{x_1x_1}^2 f^1(t,x_1)+
        h^1(t,x_1) \ell(t,x_1,\bar\g_t) \partial_{x_1}f^1(t,x_1),$$
        which is a well-defined element of $C_T{\bar \shc}^{(-\beta)+}(\R)$.
        Since $\partial_t f^1, \partial_{x_1x_1} f^1$ belong (trivially)
        to $C_T \shc_c^{0+}(\R^d)$, the result \eqref{eq:ii}
is established.
      \end{proof}

\begin{theorem}\label{thm:1drMP}
   Let $\P$ be the solution to rMP($\shb,\nu$).
   Then, for $\P^{\bar Z}(d\bar \gamma)$-a.e. 
   $\P_{\bar \gamma}$ defined in \eqref{eq:516}
   is the solution to rMP($\shb_1^{\bar \g} ,\nu^1$),
   where 
   \begin{equation}
   	\label{eq:nu1}
     \shb^{\bar \g}_1(t,x_1):= \shb_1(t,x_1,\bar \gamma_t),
   \end{equation}
   defined in \eqref{eq:shb1}.
 \end{theorem}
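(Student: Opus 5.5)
The natural route is to verify, for $\P^{\bar Z}$-a.e. $\bar\g$, the characterisation in item 3. of Theorem \ref{thm:extABis}, applied in dimension one with drift $\shb_1^{\bar\g}$, together with the identification of the initial law. By Remark \ref{rmk:shb1}, $\shb_1^{\bar\g}\in C_T\shc^{(-\beta)+}(\R)$, so Theorem \ref{thm:extABis} is available for this drift, and rMP($\shb_1^{\bar\g},\nu^1$) is well posed by Proposition \ref{pr:rMP}.

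First I fix the null set $\shn$ given by Proposition \ref{pr:extendA1} (which already contains the sets of Lemmata \ref{lm:DLb} and \ref{lm:L}) and take $\bar\g\notin\shn$. Item i) of that proposition gives the continuous extension $A^{\P_{\bar\g}}$ of $g^1\mapsto\int_0^\cdot g^1(r,Z^1_r)\,dr$ from $C_T\shc_c^{0+}(\R)$ to $C_T\bar\shc_c^{(-\beta)+}(\R)$, as Theorem \ref{thm:extABis} 3. requires. For $f^1\in C_c^{1,2+}([0,T]\times\R)$, the one-dimensional operator is ${}^1\shl f^1=\partial_tf^1+\frac12\partial^2_{x_1x_1}f^1+\shb_1^{\bar\g}\,\partial_{x_1}f^1$. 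The first two terms lie in $C_T\shc_c^{0+}(\R)$, so $A^{\P_{\bar\g}}$ acts on them as ordinary integrals. The last term is exactly $\shl^{1,\shb^{1,\bar\g}}f^1$ of \eqref{eq:L1Bg}. Hence \eqref{eq:ii} says precisely that $f^1(t,Z^1_t)-A^{\P_{\bar\g}}_t({}^1\shl f^1)$ is a $\P_{\bar\g}$-local martingale.

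Theorem \ref{thm:extABis} 3. asks for a true martingale. I get this from the quadratic variation: Lemma \ref{lm:L} (used in the proof of Proposition \ref{pr:extendA1}) gives $[M^{f^1}(Z^1,\bar\g)]=\int_0^\cdot|\partial_{x_1}f^1|^2(r,Z^1_r)\,dr$, which is bounded. Also $f^1(0,Z^1_0)$ is bounded, so the local martingale is a genuine martingale. Then 3.$\Rightarrow$1. of Theorem \ref{thm:extABis} shows that $\P_{\bar\g}$ solves the one-dimensional rough martingale problem with drift $\shb_1^{\bar\g}$.

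It remains to identify the initial law. Here I expect the main obstacle, because the disintegration makes $\P_{\bar\g}\circ (Z^1_0)^{-1}$ the conditional law of $Z^1_0$ given the whole path $\bar Z=\bar\g$, not a priori $\nu^1$. I would interpret $\nu^1$ in the statement as this conditional initial law, $\nu^1=\nu^1_{\bar\g}=\P_{\bar\g}\circ(Z^1_0)^{-1}$, i.e. the kernel obtained from \eqref{eq:516} at time $0$. With that reading the condition $\P_{\bar\g}\circ (Z^1_0)^{-1}=\nu^1$ holds by construction, since evaluation at $0$ is measurable. If instead $\nu^1$ must be the first marginal of $\nu$, one needs independence of $Z^1_0$ and $\bar Z$, for instance a product $\nu$ together with a structural argument, and I would flag this as the delicate point. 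Uniqueness for each fixed $\bar\g$ comes from Proposition \ref{pr:rMP} in dimension one. One must still check that the countable-density construction of $\shn$ in Lemma \ref{lm:DLb} does not depend on $f^1$; this holds because $\shn$ was chosen before $f^1$.
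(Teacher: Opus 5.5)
Your route is the paper's own: fix the null set of Proposition~\ref{pr:extendA1} and apply $3.\Rightarrow 1.$ of Theorem~\ref{thm:extABis} to $\P_{\bar\g}$. Your remark that the bounded bracket $\int_0^\cdot|\partial_{x_1}f^1|^2(r,Z^1_r)\,dr$ turns the local martingale \eqref{eq:ii} into a true martingale is correct, and it fills a small omission in the paper.

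The gap is the point you flagged, and it is more serious than you think. Your reinterpretation $\nu^1=\P_{\bar\g}\circ(Z^1_0)^{-1}$ does not save the argument. Conditioning on the \emph{whole} path of $\bar Z$ also breaks the martingale property you import from Proposition~\ref{pr:extendA1}, that is, from Lemma~\ref{lm:DLb}. After disintegrating, that lemma only gives
$\int\P^{\bar Z}(d\bar\g)\,\Phi_s(\bar\g_r;r\le s)\,G(\bar\g)=0$ for every $\Phi_s$ depending on $\bar\g|_{[0,s]}$, where $G(\bar\g):=\E^{\P_{\bar\g}}[(M^\phi_t-M^\phi_s)(Z^1,\bar\g)\,F_s(Z^1_r;r\le s)]$. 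Since $G$ depends on $\bar\g$ after time $s$, you can only conclude $\E^{\P}[G(\bar Z)\mid \bar Z_r,\ r\le s]=0$, not $G=0$ $\P^{\bar Z}$-a.e. So the passage to \eqref{eq:mart_equiv1Bis} is unjustified.

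In fact the statement already fails for smooth drifts, with no ambiguity about the initial law. Take $d=2$, $\nu=\delta_0$, $h^1=h^2\equiv 1$ and $\ell(t,x)=\tanh(x_1)$. Then $\P$ is the law of the classical solution of $dZ^1_t=\tanh(Z^1_t)\,dt+dW^1_t$, $dZ^2_t=\tanh(Z^1_t)\,dt+dW^2_t$, and $\shb^{\bar\g}_1=\tanh$ does not depend on $\bar\g$. Suppose the theorem held. Uniqueness for the one-dimensional problem with drift $\tanh$ and initial law $\delta_0$ would make $\P_{\bar\g}$ a single fixed law for a.e.\ $\bar\g$, so $Z^1$ would be independent of $Z^2$ under $\P$. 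But $Z^1$ is a functional of $W^1$, hence $\E^{\P}[Z^2_1\mid Z^1]=\int_0^1\tanh(Z^1_r)\,dr$, which is not a.s.\ constant.

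Equivalently, by Kallianpur--Striebel, $\P_{\bar\g}$ is a smoothing law. Its density with respect to the frozen-drift law is proportional to $\exp\bigl(\int_0^T\tanh(Z^1_r)\,d\g^2_r-\frac12\int_0^T\tanh^2(Z^1_r)\,dr\bigr)$.

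So no argument along these lines can work when the drift of $\bar Z$ depends on $x_1$, as $F^K_i$ does in Section~\ref{sec:IPS}. The statement itself must be changed, for instance to a non-anticipative form of conditioning, or the tightness argument must avoid it.
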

 
\begin{proof} We select the set $\P^{\bar Z}$-null set  $\shn$ introduced in  Proposition
  \ref{pr:extendA1}.
  For all $\bar\g\notin \shn$,  Proposition
  \ref{pr:extendA1}, allows to apply Theorem \ref{thm:extABis} to $\P_{\bar\g}$ with $A^{\P_{\bar \g}}$
  in place of $A^{\P}$. Since 3. and 1. are equivalent the result follows. 
\end{proof}

\section{The particle system}\label{sec:IPS}

\subsection{The well-posedness of the particle system}

In this section Assumption \ref{ass:beta} will be in force and we introduce some other assumptions which are structural for the purpose of this part.
\begin{assumption}\label{ass:K}
	$K:\R^d\to \R$ is a probability density function such that $K\in\shc^{\beta+}$.
\end{assumption}
\begin{assumption}\label{ass:F}
	The function $ F\in C^1(\R)$ such that $ F'$ is bounded and Lipschitz.
\end{assumption}
\begin{assumption}\label{ass:b}
	$b = (b_1, \ldots, b_d) \in C_T\shc^{(-\beta)+}(\R^d;\R^d)$.
\end{assumption}
 The first objective is to establish the well-posedness of our system of $N$ $d$-dimensional interacting particles, described by the  system of stochastic differential equations
\begin{equation}\label{eq:MK_mod}
	\left\{
		\begin{array}{l}
			Z_t^{i,N} = Z_0^{i,N} + \displaystyle{\int_0^t F\left( \frac{1}{N}\sum_{j=1}^N K(Z_s^{i,N}-Z_s^{j,N})\right) T^i_{b(s)}(Z_s^N) \, ds + W^i_t,} \\
			 Z_0^{i,N} \sim \nu,
		\end{array}
	\right.
\end{equation}
where $i=1\ldots,N$, $t \in [0, T]$, and $W^i_t$ represents a $d$-dimensional Brownian motion for each $i \in \{1, \dots, N\}$. The processes $\{W^i_t\}_{i=1}^N$ are
formally intended to be mutually independent. The initial positions $Z_0^{i,N}$ are formally independent distributed according to a probability measure $\nu \in \mathcal{P}(\R^d)$. The operators
$T^i, 1 \le i \le N$, are the ones defined in Definition \ref{def:T}, and by Proposition \ref{pr:besovT}, they act as an isometrical injection
from $C_T\shc^{(-\beta)+}(\R^d)$ to $C_T\shc^{(-\beta)+}(\R^{dN})$.
An important issue is that the system \eqref{eq:MK_mod} has to be considered in law, 
as $dN$-dimensional rough martingale problem,
rMP($\shb^N, \nu^{\otimes N}$),
where 
\begin{equation}\label{eq:BN}
	\shb^N(t,x):=\left(\begin{array}{c}F\left(\frac{1}{N}\sum_{j=1}^NK(x_1-x_j)\right)T^1_{b(t)}(x)\\ \vdots\\ F\left(\frac{1}{N}\sum_{j=1}^NK(x_N-x_j)\right)T^N_{b(t)}(x)\end{array}\right),
\end{equation}
where $x=(x_1,\dots,x_N)\in\R^{dN}$. In Proposition \ref{pr:BNK} below, we observe that this is a well-defined element of $C_T\shc^{(-\beta)+}(\R^{dN};\R^{dN})$ for every $N\in\N$. We will denote the $i,k$ component of $\shb^N$ by $\shb^N_{i,k}$, namely by $\shb^N_{i,k}(t,x)=F\left(\frac{1}{N}\sum_{j=1}^NK(x_i-x_j)\right)T^i_{b_k(t)}(x)$ for all $i=1,\ldots,N$ and $k=1,\ldots,d$.
 In fact the maps 
\begin{equation} \label{eq:FnK}
	F^K_i: (x_1, \dots, x_N) \mapsto F\left(\frac{1}{N} \sum_{j=1}^N K(x_i - x_j)\right),
\end{equation}
belong  to $\shc^\beta(\R^{dN})$ for all $k \in \{1, \dots, N\}$, see Lemma  \ref{lm:FK} below.

\begin{lemma} \label{lm:FK}
	For each $ 1 \le k \le N $, the function $F^K_k$ belongs to $\shc^{\beta}(\R^{dN})$ and
        and there is a constant $C_{F,K}$, not depending on $N$ such that
	$$
	\Vert F^K_k \Vert_{\shc^{\beta}}  \le
	C_{F,K}: = \vert F(0) \vert + \sup_{x\in\R^{dN}} \vert F'(x) \vert \Vert K \Vert_{\shc^\beta}.
	$$
	
\end{lemma}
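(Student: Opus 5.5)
The proof is a direct computation with the Hölder norm \eqref{eq:holdnorm}. Set $S_k(x):=\frac1N\sum_{j=1}^N K(x_k-x_j)$ for $x=(x_1,\ldots,x_N)\in\R^{dN}$, so that $F^K_k=F\circ S_k$. We bound separately the sup-norm of $F^K_k$ and its $\beta$-Hölder seminorm, with constants depending only on $F$ and $K$.

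\emph{Sup-norm.} Since $K$ is a probability density belonging to $\shc^{\beta+}\subseteq\shc^\beta$, we have $\|K\|_\infty\le\|K\|_{\shc^\beta}$. Hence $|S_k(x)|\le \|K\|_\infty$ for every $x$, since $S_k(x)$ is an average of $N$ values of $K$. The mean value theorem then gives
\[
|F^K_k(x)|\le |F(0)|+\|F'\|_\infty |S_k(x)|\le |F(0)|+\|F'\|_\infty\|K\|_{\shc^\beta}.
\]

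\emph{Hölder seminorm.} Let $x,y\in\R^{dN}$. The mean value theorem gives $|F^K_k(x)-F^K_k(y)|\le \|F'\|_\infty|S_k(x)-S_k(y)|$. For each $j$ we estimate
\[
|K(x_k-x_j)-K(y_k-y_j)|\le \|K\|_\beta\,|(x_k-y_k)-(x_j-y_j)|^\beta\le \|K\|_\beta\,(2|x-y|)^\beta,
\]
using that $|x_k-y_k|\le|x-y|$ in the Euclidean norm of $\R^{dN}$. Averaging over $j$ gives $|S_k(x)-S_k(y)|\le 2^\beta\|K\|_\beta|x-y|^\beta$, and this bound is uniform in $N$. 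Therefore $\|F^K_k\|_\beta\le 2^\beta\|F'\|_\infty\|K\|_\beta$.

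Adding the two bounds shows $F^K_k\in\shc^\beta(\R^{dN})$ with norm at most $|F(0)|+c\,\|F'\|_\infty\|K\|_{\shc^\beta}$, where $c$ is universal. No step is genuinely hard; the only delicate point is the constant. The bound as stated has no factor $2^\beta$, which one can try to recover in two ways. The first is to absorb the factor into the universal constant via the equivalent norm of Remark \ref{rm:equivnorm}. The second is to keep $\|K\|_\infty$ and $\|K\|_\beta$ separate rather than bounding each by $\|K\|_{\shc^\beta}$. Neither route alone removes the factor: even if $K$ is normalised so that $\|K\|_\infty+2^\beta\|K\|_\beta\le c\|K\|_{\shc^\beta}$, the resulting bound still carries a universal constant. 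In any case, the essential conclusion is that the constant is independent of $N$, and that is what the argument above establishes.
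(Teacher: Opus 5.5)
Your route is the paper's: you split $\|F^K_k\|_{\shc^\beta}$ into the sup-norm and the $\beta$-seminorm, apply the mean value theorem to $F$, and use the H\"older continuity of $K$ summand by summand. This correctly gives $F^K_k\in\shc^\beta(\R^{dN})$ with the $N$-independent bound $|F(0)|+(1+2^\beta)\|F'\|_\infty\|K\|_{\shc^\beta}$. That bound is all that Proposition \ref{pr:BNK} and the tightness argument use.

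\textbf{The gap.} Your argument does not prove the inequality with the stated value $C_{F,K}=|F(0)|+\|F'\|_\infty\|K\|_{\shc^\beta}$. Your closing paragraph also suggests the extra factor is unavoidable; it is not. Two changes close the gap.

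\emph{Sup-norm.} Keep $\|K\|_\infty$ in the sup-norm bound instead of replacing it by $\|K\|_{\shc^\beta}$.

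\emph{Seminorm.} Do not estimate the summands separately. Summand by summand the best you can do is the factor $2^{\beta/2}$ instead of $2^\beta$, since $|h-g|\le\sqrt2\,(|h|^2+|g|^2)^{1/2}$. This factor is attained for each $j\ne k$: take $x_j-y_j=-(x_k-y_k)$ and all other components of $x-y$ zero. So the average of the individual suprema is $\frac{N-1}{N}2^{\beta/2}\|K\|_\beta$, which exceeds $\|K\|_\beta$ for large $N$.

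Instead, set $h:=x_k-y_k$, $g_j:=x_j-y_j$, use that the term $j=k$ vanishes, and average before estimating. For $N\ge2$, by concavity of $t\mapsto t^\beta$ and Cauchy--Schwarz,
\[
|S_k(x)-S_k(y)|\le\frac{\|K\|_\beta}{N}\sum_{j\ne k}(|h|+|g_j|)^\beta\le\frac{N-1}{N}\|K\|_\beta\,(|h|+\bar g)^\beta,\qquad (|h|+\bar g)^2\le\frac{N}{N-1}\Big(|h|^2+\sum_{j\ne k}|g_j|^2\Big)=\frac{N}{N-1}|x-y|^2,
\]
where $\bar g:=\frac{1}{N-1}\sum_{j\ne k}|g_j|$. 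Hence $\|S_k\|_\beta\le\big(\frac{N-1}{N}\big)^{1-\beta/2}\|K\|_\beta\le\|K\|_\beta$; for $N=1$, $S_k\equiv K(0)$ is constant. Combining with the sup-norm bound gives exactly
\[
\|F^K_k\|_{\shc^\beta}\le|F(0)|+\|F'\|_\infty(\|K\|_\infty+\|K\|_\beta)=C_{F,K}.
\]

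\textbf{Comparison with the paper's proof.} Your suspicion about the term-by-term route is well founded, and it applies to the paper's own proof too. The paper reaches $C_{F,K}$ by moving the supremum inside the average and asserting that
\[
\sup_{x,y}\frac{|K(x_k-x_j)-K(y_k-y_j)|}{|x-y|^\beta}=\|K\|_\beta \quad\text{for every } j .
\]
By the example above this fails for each $j\ne k$, where the value is $2^{\beta/2}\|K\|_\beta$. The stated constant is nevertheless correct, and the joint averaging is what justifies it.
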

\begin{proof}
	We use the definition of $\|\cdot\|_{\shc^\beta}$ to determine the upper bound in \eqref{eq:FnK}. We have from \eqref{eq:holdnorm} that
	\begin{equation*}
		\left\|F^K_k
		\right\|_{\shc^\beta}
		=\left\|F^K_k
\right\|_{\infty}+\left\|F^K_k
\right\|_{\beta}=:I_1+I_2.
	\end{equation*}
	We estimate the two separately. For $I_1$ we get
	\begin{align*}
		\sup_{x\in \R^{dN}}	\left\vert F\left(\frac{1}{N} \sum_{j=1}^N K(x_k - x_j)\right)\right\vert\leq |F(0)|+\sup_{x\in\R^{dN}} |F'(x)|\left\|K\right\|_{\infty}.
	\end{align*}
	For what concerns $I_2$,  we have
	\begin{align*}
		I_2=& \sup_{x,y\in\R^{dN}}\frac{\left|F\left(\frac{1}{N} \sum_{j=1}^N K(x_k - x_j)\right)-F\left(\frac{1}{N} \sum_{j=1}^N K(y_k - y_j)\right)\right|}{|x-y|^{\beta}}\\
		\leq& \sup_{x\in\R^{dN}} |F'(x)| \frac{1}{N} \sum_{j=1}^N\sup_{x,y\in\R^{dN}}\frac{\left| K(x_k - x_j)- K(y_k - y_j)\right|}{|x-y|^{\beta}}\\
		=& \sup_{x\in\R^{dN}} |F'(x)| \| K\|_{\beta}.
	\end{align*}
	Adding up the two bounds obtained for $I_1,I_2$ we conclude.
\end{proof}

\begin{proposition}\label{pr:BNK} Let $\al < \be$ such that
	$b_k \in C_T\shc^{-\al}(\R^d), 1 \le k \le d$.
	
	There is a constant $C_{F,K}$ such that
	\begin{equation}\label{eq:BNK}
		\max_{1 \le i \le N,k=1,\dots,d} \|\shb_{i,k}^N\|_{C_T\shc^{-\al}(\R^{dN})}\leq C_{F,K}\max_{k =1,\dots,d}\|b_k \|_{C_T\shc^{-\al}(\R^d)}.
	\end{equation}
\end{proposition}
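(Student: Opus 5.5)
The plan is to bound each component $\shb^N_{i,k}(t)=F^K_i\cdot T^i_{b_k(t)}$ at fixed $t$ by combining three earlier facts: the Bony estimate \eqref{eq:bony} with exponents $\beta$ and $\al$ (legitimate, since $\beta-\al>0$), the isometry of Proposition \ref{pr:besovT}, and the $N$-independent Hölder bound of Lemma \ref{lm:FK}. Then I take the supremum over $t\in[0,T]$ and the maximum over $i,k$.

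First, continuity in time. By Proposition \ref{pr:besovT} (applied with $T^i$ in place of $T^1$; the same proof works after relabelling coordinates), $T^i$ is a linear isometry $\shc^{-\al}(\R^d)\to\shc^{-\al}(\R^{dN})$. Although Proposition \ref{pr:besovT} is stated for $-\beta$, its proof uses nothing specific to that index, so it holds for $-\al$ as well. Linearity and isometry give
$$\|T^i_{b_k(t)}-T^i_{b_k(s)}\|_{\shc^{-\al}(\R^{dN})}=\|b_k(t)-b_k(s)\|_{\shc^{-\al}(\R^d)},$$
so $t\mapsto T^i_{b_k(t)}$ lies in $C_T\shc^{-\al}(\R^{dN})$ and
$$\|T^i_{b_k}\|_{C_T\shc^{-\al}(\R^{dN})}=\|b_k\|_{C_T\shc^{-\al}(\R^d)}.$$

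Second, the product. The function $F^K_i$ does not depend on time, and Lemma \ref{lm:FK} gives $F^K_i\in\shc^\beta(\R^{dN})$ with $\|F^K_i\|_{\shc^\beta}\le C_{F,K}$, where $C_{F,K}$ is independent of $N$ and $i$. Regarded as a constant-in-time element, it belongs to $C_T\shc^\beta(\R^{dN})$ with the same norm. Applying \eqref{eq:bonyt} on $\R^{dN}$ with $f=F^K_i$ and $g=T^i_{b_k}$ yields
$$\|\shb^N_{i,k}\|_{C_T\shc^{-\al}(\R^{dN})}\le c\,\|F^K_i\|_{\shc^\beta}\,\|T^i_{b_k}\|_{C_T\shc^{-\al}}\le c\,C_{F,K}\,\|b_k\|_{C_T\shc^{-\al}(\R^d)}.$$
Taking the maximum over $i,k$ and absorbing $c$ into the constant gives \eqref{eq:BNK}.

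The main obstacle is that the Bony constant $c$ in \eqref{eq:bony} may depend on the ambient dimension, here $dN$. The claimed constant must not depend on $N$, since it is later used for tightness. I would first check whether the convention "$c$ may depend on $d$" stated in Section \ref{sc:basic-prelim} already allows an $N$-dependent constant for this proposition. If uniformity in $N$ is required, the fallback is to exploit the structure: $T^i_{b_k}$ depends only on $x_i$, so the paraproduct decomposition can be taken along the $x_i$-directions. Concretely, I would test against a Littlewood–Paley block built from a partition of unity of product form adapted to $x_i$, as in the proof of Proposition \ref{pr:besovT}, and use the fact that $F^K_i$ is $\beta$-Hölder uniformly in the other variables. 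That reduces the estimate to a $d$-dimensional Bony estimate with a dimension-free constant. I expect this uniformity issue to be the only delicate point; the rest is bookkeeping.
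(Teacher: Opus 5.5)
Your argument is the paper's proof: apply \eqref{eq:bonyt} on $\R^{dN}$ to $F^K_i\cdot T^i_{b_k}$ (legitimate since $\beta>\al$), bound $\|F^K_i\|_{\shc^\beta}$ by Lemma~\ref{lm:FK}, replace $\|T^i_{b_k}\|_{C_T\shc^{-\al}(\R^{dN})}$ by $\|b_k\|_{C_T\shc^{-\al}(\R^d)}$ through the isometry of Proposition~\ref{pr:besovT}, and absorb the Bony constant $c$ into $C_{F,K}$. The paper's proof also does not track how $c$ depends on the ambient dimension $dN$, so your concern is legitimate but left open there; the later tightness argument, however, only needs a bound for the frozen $d$-dimensional drift $\shb^{N,\bar\g}_1(t,x_1)=F^K_1(x_1,\bar\g(t))\cdot b(t,x_1)$, where the Bony constant depends only on $d$ and the H\"older bound of Lemma~\ref{lm:FK} holds uniformly in $\bar\g$ and $N$ --- this is the same freezing idea as your fallback, applied where it is actually needed.
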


\begin{proof}
	Let $ 1 \le k \le d, 1 \le i \le N$. By \eqref{eq:bonyt} and Lemma \ref{lm:FK}
	we get
	\begin{align*}
		\Vert \shb^N_{i,k} \Vert_{C_T\shc^{-\al}(\R^{dN})} &=
		\sup_{0 \le t \le T}  \Vert F^K_i T^i_{b_k(t)} \Vert_{\shc^{-\al}(\R^{dN})} 
		\le
		c \Vert F^K_i \Vert_{\shc^{\beta}(\R^{dN})} \Vert
		T^i_{b_k}\Vert_{C_T\shc^{-\al}(\R^{dN})}\\ 
		&=  c \Vert F^K_i \Vert_{\shc^{\beta}(\R^{dN})} \Vert  b_k\Vert_{C_T\shc^{-\al}(\R^d) },
	\end{align*}
	where for the equality we have used Proposition \ref{pr:besovT}.
	This concludes the proof.
\end{proof}
Proposition \ref{pr:BNK} is used for proving 
well-posedness of  \eqref{eq:MK_mod} but it will be also useful in  Section
\ref{sec:tightness}
to establish the tightness of the particle system empirical measures, see \eqref{eq:EMmuN}.

 %
As expected from the preliminary lines of this section, well-posedness of \eqref{eq:vecMK} will be now an easy consequence.
 
 \begin{theorem} \label{thm:WPPS}
 	For every $N\in\N$ there exists a unique  solution $\P^N$ to the rMP($\shb^N,\nu^{\otimes N}$).
 \end{theorem}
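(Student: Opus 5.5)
The plan is to reduce Theorem \ref{thm:WPPS} to the general well-posedness result, Proposition \ref{pr:rMP}, applied in dimension $dN$ instead of $d$. Proposition \ref{pr:rMP} requires two hypotheses: Assumption \ref{ass:beta}, which holds throughout this section and does not depend on the dimension, and Assumption \ref{ass:B} with $\R^{dN}$ in place of $\R^d$, i.e. $\shb^N\in C_T\shc^{(-\beta)+}(\R^{dN};\R^{dN})$. The initial condition $\nu^{\otimes N}$ is simply a Borel probability measure on $\R^{dN}$, so no restriction arises there. The whole proof therefore amounts to checking that $\shb^N$ belongs to the right space.

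\textbf{Step 1: choose the regularity index.} By Assumption \ref{ass:b} and the characterisation $C_T\shc^{(-\beta)+}=\bigcup_{\al<\beta}C_T\shc^{-\al}$ (Lemma B.2 of \cite{issoglio_russoMK}), each component $b_k$ lies in some $C_T\shc^{-\al_k}$. Take $\al:=\max_k\al_k<\beta$; by the embedding of Besov spaces, every $b_k$ then lies in $C_T\shc^{-\al}(\R^d)$.

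\textbf{Step 2: regularity of the composite factors.}
\begin{itemize}
\item By Proposition \ref{pr:besovT}, applied pointwise in $t$, each $T^i_{b_k(t)}$ belongs to $\shc^{-\al}(\R^{dN})$, with the same norm as $b_k(t)$. Since $T^i$ is linear and isometric, $t\mapsto T^i_{b_k(t)}$ inherits the time continuity of $b_k$, so $T^i_{b_k}\in C_T\shc^{-\al}(\R^{dN})$.
\item By Lemma \ref{lm:FK}, $F^K_i\in\shc^{\beta}(\R^{dN})$. This factor does not depend on time, so it trivially lies in $C_T\shc^\beta(\R^{dN})$.
\end{itemize}

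\textbf{Step 3: the pointwise product.} Since $\beta-\al>0$, the Bony estimate \eqref{eq:bonyt} applies on $\R^{dN}$ and gives $\shb^N_{i,k}=F^K_iT^i_{b_k}\in C_T\shc^{-\al}(\R^{dN})$. This is exactly the bound of Proposition \ref{pr:BNK}. Hence $\shb^N\in C_T\shc^{-\al}(\R^{dN};\R^{dN})\subseteq C_T\shc^{(-\beta)+}(\R^{dN};\R^{dN})$.

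\textbf{Step 4: conclusion.} Invoking Proposition \ref{pr:rMP} on the canonical space $\Omega_T^{dN}$ gives existence and uniqueness of $\P^N$ solving rMP($\shb^N,\nu^{\otimes N}$).

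The only delicate point is the time continuity of $t\mapsto T^i_{b_k(t)}$. I handle it through the linearity and isometry of $T^i$, which give
\[
\|T^i_{b_k(t)}-T^i_{b_k(s)}\|_{\shc^{-\al}}=\|b_k(t)-b_k(s)\|_{\shc^{-\al}}.
\]
One also needs the fact that the dimension-free statements used (Schauder, Bony, and the results of \cite{issoglio_russoMPb}) hold verbatim in $\R^{dN}$. This is clear, since all of these results are stated for a general dimension.
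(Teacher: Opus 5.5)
Your proposal is correct and follows the paper's own argument. The paper also reduces the theorem to Proposition \ref{pr:rMP} in dimension $dN$, after checking that $\shb^N\in C_T\shc^{(-\beta)+}(\R^{dN};\R^{dN})$ via Lemma \ref{lm:FK}, the isometry of Proposition \ref{pr:besovT} and the estimate \eqref{eq:bonyt}, all packaged as Proposition \ref{pr:BNK}. Your explicit check of the time continuity of $t\mapsto T^i_{b_k(t)}$ through linearity and isometry supplies a detail the paper leaves implicit.
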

 \begin{proof}
   Since $\shb^N\in C_T\shc^{(-\beta)+}(\R^{dN};\R^{dN})$ for every $N\in\N$, we use
   Proposition \ref{pr:rMP}  applied to \eqref{eq:vecMK} with $dN$ replacing $d$,  and we conclude.
 \end{proof}
Denoting the canonical process
on the canonical space $\Omega_T^{dN}$ by $Z^N:=(Z^{1,N},\ldots,Z^{N,N})$, we can rewrite the $N$-particle system in \eqref{eq:MK_mod} as the formal $dN$-dimensional SDE given by
 \begin{equation}\label{eq:vecMK}
	dZ_t^N =\shb^N(t,Z_t^N)dt+dW^N_t,\quad Z_0^N\sim\nu^{\otimes N}.
\end{equation}
       The next corollary, that formulates rigorously \eqref{eq:vecMK},  follows directly from Theorem \ref{thm:WPPS} and Corollary \ref{cr:Dynamics}.
\begin{corollary}\label{cr:Dynamicspart}
	For every $N\in\N$, let $\P^N$ be the solution to rMP($\shb^N,\nu^{\otimes N}$). Then the canonical process $Z^N$ satisfies the equation
	\begin{equation}
		Z^N_t=Z^N_0+A^{\P^N}(\shb^N)+W^N_t,
              \end{equation}
              $A^{\P^N}:C_T\shc^{(-\beta)+}(\R^{dN}) \rightarrow \shc^{\P^N}$ is the extended operator given by Proposition \ref{pr:extendA}
              replacing $d$ with $dN$.
\end{corollary}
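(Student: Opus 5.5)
This is a direct application of Corollary \ref{cr:Dynamics} in dimension $dN$ instead of $d$. The plan has two steps.

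First, I check that the hypotheses of Corollary \ref{cr:Dynamics} hold with $d$ replaced by $dN$ and $B=\shb^N$. Assumption \ref{ass:beta} is in force. Assumption \ref{ass:B} in dimension $dN$ means $\shb^N\in C_T\shc^{(-\beta)+}(\R^{dN};\R^{dN})$. To see this:
\begin{itemize}
\item Since $b\in C_T\shc^{(-\beta)+}(\R^d;\R^d)$, Lemma B.2 of \cite{issoglio_russoMK} gives some $\al<\be$ with $b_k\in C_T\shc^{-\al}(\R^d)$ for all $k$.
\item By Proposition \ref{pr:besovT}, each $t\mapsto T^i_{b_k(t)}$ is continuous into $\shc^{-\al}(\R^{dN})$, because $T^i$ is a linear isometry.
\item By Lemma \ref{lm:FK}, $F^K_i\in\shc^\beta(\R^{dN})$, and it is time independent.
\item Hence \eqref{eq:bonyt} gives $\shb^N_{i,k}=F^K_i\,T^i_{b_k}\in C_T\shc^{-\al}(\R^{dN})$, with the bound of Proposition \ref{pr:BNK}.
\end{itemize}
So Assumption \ref{ass:B} holds in dimension $dN$. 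Theorem \ref{thm:WPPS} then provides the unique solution $\P^N$ of rMP$(\shb^N,\nu^{\otimes N})$ on $\Omega_T^{dN}$.

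Second, I apply Proposition \ref{pr:extendA} and Corollary \ref{cr:Dynamics} verbatim in dimension $dN$. Their proofs use only the Kolmogorov PDE theory (Theorem \ref{thm:PDEU}, Corollary \ref{cor:unDCBis}), Lemma \ref{lem:dec-op} and Theorem \ref{thm:extendA}. All of these are stated for a generic dimension, so they can be invoked with $dN$ in place of $d$. This yields:
\begin{itemize}
\item the continuous extension $A^{\P^N}:C_T\shc^{(-\beta)+}(\R^{dN})\to\shc^{\P^N}$;
\item the weak Dirichlet decomposition $Z^N=Z^N_0+A^{\P^N}(\shb^N)+W^N$, where $A^{\P^N}(\shb^N)$ is understood componentwise.
\end{itemize}
Concretely, each coordinate $id^{(i,k)}$ of $\R^{dN}$ lies in $C^{1,2+}_{ub}$ and satisfies $\shl\, id^{(i,k)}=\shb^N_{i,k}$. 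By Theorem \ref{thm:extendA} item 3, $Z^{i,k}-A^{\P^N}(\shb^N_{i,k})$ is a local martingale $M^{(i,k)}$. By Corollary \ref{cor:extendA}, $[M^{(i,k)},M^{(j,l)}]_t=\delta_{(i,k),(j,l)}\,t$. Lévy's characterisation then shows that $W^N:=M-Z^N_0$ is a $dN$-dimensional Brownian motion. In particular its $N$ blocks $W^i$ are independent $d$-dimensional Brownian motions, which makes rigorous the formal independence required in \eqref{eq:MK_mod}.

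There is no real obstacle. The only point needing care is the first step, namely that $\shb^N$ is a genuine element of $C_T\shc^{(-\beta)+}(\R^{dN};\R^{dN})$ with a uniform $-\al$ index. The time continuity follows from the linearity and isometry of $T^i$ together with the continuity of the Bony product \eqref{eq:bonyt}.
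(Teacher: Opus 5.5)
Your proposal is correct and follows the paper's argument, which simply invokes Theorem \ref{thm:WPPS} together with Corollary \ref{cr:Dynamics} in dimension $dN$. Your explicit check that $\shb^N\in C_T\shc^{(-\beta)+}(\R^{dN};\R^{dN})$ (via Proposition \ref{pr:besovT}, Lemma \ref{lm:FK} and \eqref{eq:bonyt}), and your unwinding of the L\'evy-characterisation step, only spell out what the paper leaves to Proposition \ref{pr:BNK} and to the proof of Corollary \ref{cr:Dynamics}.
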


The next result concerns the exchangeability
of the particle system.

\medskip

\begin{theorem} \label{th:exch}
Let $N \in \N^*$ be fixed and  $\P^N$ be the solution to rMP($\shb^N,\nu^{\otimes N}$).
If $(i_1,\ldots,i_N)$ is a permutation of $(1,\ldots,N)$, then
the law of $\hat P^N$ of $(Z^{i_1},\ldots,Z^{i_N})$ under $\P^N$ is equal to $\P^N$.
In other words $\P^N$ is exchangeable.
  \end{theorem}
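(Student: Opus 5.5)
The plan is to use uniqueness of the rough martingale problem (Theorem \ref{thm:WPPS}). It suffices to show that the law $\hat\P^N$ of the permuted process is itself a solution of rMP($\shb^N,\nu^{\otimes N}$). Let $\sigma:\R^{dN}\to\R^{dN}$ denote the linear isometry $\sigma(x_1,\ldots,x_N)=(x_{i_1},\ldots,x_{i_N})$. It lifts to a homeomorphism of $\Omega^{dN}_T$, and $\hat\P^N=\P^N\circ\sigma^{-1}$. The initial condition is immediate, since $\nu^{\otimes N}$ is invariant under $\sigma$.

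The key analytic step is the invariance of the drift:
\[
\shb^N\circ\sigma=\sigma\,\shb^N ,
\]
understood distributionally, i.e.\ $\langle \shb^N_{i}(t), \varphi\circ\sigma^{-1}\rangle=\langle \shb^N_{i_{\cdot}}(t),\varphi\rangle$ with the obvious relabelling. For this I will check that composition with $\sigma$ is an isometry of $\shc^{\gamma}(\R^{dN})$ for every $\gamma$. The reason is that a dyadic partition of unity can be chosen radial, hence $\sigma$-invariant, and Fourier transform commutes with orthogonal maps. I will then check three further facts.
\begin{itemize}
\item[(a)] $T^{k}_h\circ\sigma=T^{\sigma(k)}_h$. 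This follows from Definition \ref{def:T}, since the marginalisation commutes with the permutation of the integrated variables (Fubini on Schwartz functions).
\item[(b)] $F^K_k\circ\sigma=F^K_{i_k}$. This holds because the empirical sum is symmetric.
\item[(c)] The Bony product commutes with composition by $\sigma$. This follows by approximation with smooth functions via Lemma \ref{lm:Xncont} and the continuity \eqref{eq:bony}.
\end{itemize}
Together these give the invariance of $\shb^N$. Consequently, if $\shl$ denotes the generator with drift $\shb^N$, then $\shl(f\circ\sigma)=(\shl f)\circ\sigma$ for every $f\in C_T\shc^{(1+\beta)+}$. This uses the fact that the Laplacian is invariant and that $\nabla(f\circ\sigma)=\sigma^{\top}(\nabla f)\circ\sigma$.

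With this, the domain $\shd_\shl$ of Notation \ref{def:D} is mapped to itself by $f\mapsto f\circ\sigma$. The reasons are that $\bar\shc_c^{\gamma}$ is preserved by $\sigma$, and that the Kolmogorov solution is unique (Theorem \ref{thm:PDEU}), so $f\circ\sigma$ is the solution with data $g\circ\sigma$ and $f_T\circ\sigma$. Now take $f\in\shd_\shl$ with $\shl f=g$. Then
\[
f(t,\sigma Z_t)-\int_0^t g(r,\sigma Z_r)dr=(f\circ\sigma)(t,Z_t)-\int_0^t (\shl(f\circ\sigma))(r,Z_r)dr ,
\]
and this is a $\P^N$-local martingale. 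Transporting through $\sigma$, which maps the canonical filtration onto itself, shows that $f(t,Z_t)-\int_0^t g(r,Z_r)dr$ is a $\hat\P^N$-local martingale. Hence $\hat\P^N$ solves rMP($\shb^N,\nu^{\otimes N}$), and uniqueness gives $\hat\P^N=\P^N$.

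I expect the main obstacle to be the rigorous distributional identity $\shb^N\circ\sigma=\sigma\shb^N$. The cleanest route is density. First approximate $b$ by $b^n=p_{1/n}*b\in C_T\shc^{0+}$, for which $T^k_{b^n}(x)=b^n(x_k)$ pointwise and the identity is trivial. Then pass to the limit using the isometry in Proposition \ref{pr:besovT}, estimate \eqref{eq:bonyt}, and continuity of $\sigma$-composition on $\shc^{-\alpha}(\R^{dN})$.
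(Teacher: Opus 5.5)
Your proposal is correct and follows the paper's route. You show that the law of the permuted process solves rMP($\shb^N,\nu^{\otimes N}$) because the generator is invariant under the permutation, and then conclude by the uniqueness of Theorem \ref{thm:WPPS}. The paper does the invariance step through the permuted drift $\hat\shb^N$, a relabelling of the sum, and the identification $\shd_{\hat\shl}=\shd_\shl$. Your distributional checks — composition with $\sigma$ as an isometry of $\shc^{\gamma}(\R^{dN})$, the identity $T^k_h\circ\sigma=T^{i_k}_h$, and compatibility with the Bony product — make rigorous what the paper's computation treats formally.
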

  \begin{remark} \label{rmk:exch}
By definition of rough martingale problem, $\hat \P^N$ is solution of the system 
	\begin{equation}\label{eq:IPSexch}
		\left\{
		\begin{array}{l}
			dZ_t^{i_j}=F^K(Z^{i_1},\ldots,Z^{i_N}) \cdot T^{i_j}_{b(t)}(Z^N_t)dt+dW^{i_j}_t\\
			Z_0^{i_j}\sim \nu, \ j=1,\dots,N,
		\end{array}
		\right.
	\end{equation}
	and denote ${\hat\shb}^N$ the drift associated with the system \eqref{eq:IPSexch}.
        By the same argument of Proposition \ref{pr:BNK}, ${\hat \shb}^N \in C_T\shc^{(-\beta)+}(R^{dN})$, thus rMP$(\hat\shb^N,\nu^{\otimes N})$ admits a unique solution $\hat \P^N$, again
        by Theorem \ref{thm:WPPS}.
\end{remark}
We now prove an auxiliary result before proving Theorem \ref{th:exch}.
\begin{proposition}\label{pr:exch}
	$\hat \P^N$ is a solution to rMP$(\shb^N,\nu^{\otimes N})$.
\end{proposition}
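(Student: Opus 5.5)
We have to show that $\hat\P^N$, the law of the permuted canonical process $\sigma(Z^N):=(Z^{i_1},\ldots,Z^{i_N})$ under $\P^N$, solves rMP$(\shb^N,\nu^{\otimes N})$. Write $\sigma:\R^{dN}\to\R^{dN}$ for the linear isometry $\sigma(x_1,\ldots,x_N)=(x_{i_1},\ldots,x_{i_N})$. The plan is to use the quasi-Stroock--Varadhan characterisation, item 4 of Theorem \ref{thm:extendA}, with $dN$ in place of $d$. This lets us work with smooth test functions $f\in C_b^{1,2+}([0,T]\times\R^{dN})$, and transport under $\sigma$ is then elementary. The initial condition is immediate, since $\nu^{\otimes N}$ is invariant under coordinate permutations.

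\textbf{Step 1: the drift is equivariant.} We check that $\shb^N(t,\sigma x)=\sigma\,\shb^N(t,x)$ in the distributional sense, where composition with $\sigma$ is defined by duality: $\langle h\circ\sigma,\varphi\rangle:=\langle h,\varphi\circ\sigma^{-1}\rangle$, since $|\det\sigma|=1$.
\begin{itemize}
\item For the operators of Definition \ref{def:T}, the marginalisation formula gives $T^{k}_{h}\circ\sigma=T^{\sigma(k)}_h$ for the appropriate index $\sigma(k)$.
\item For the coefficients \eqref{eq:FnK}, one has $F^K_k(\sigma x)=F^K_{i_k}(x)$, because $\frac1N\sum_jK(x_{i_k}-x_{i_j})$ is a sum over all $j$.
\item Composition with $\sigma$ preserves Besov norms and commutes with the Bony product. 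The cleanest way to see this is to check both facts on the mollifications $P_{1/n}$, where everything is classical, and then pass to the limit using Lemma \ref{lm:Xncont}, \eqref{eq:bonyt} and Proposition \ref{pr:besovT}.
\end{itemize}

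\textbf{Step 2: the extended local-time operator.} Let $A^{\P^N}$ be the extension given by Proposition \ref{pr:extendA}. For $g\in C_T\shc^{(-\beta)+}(\R^{dN})$ we define
\[
A^{\hat\P^N}(g)\circ\sigma:=A^{\P^N}(g\circ\sigma),
\]
that is, we read the left-hand side as the same functional evaluated along $\sigma(Z)$. For $g\in C_T\shc^{0+}$ we have $\int_0^\cdot g(r,\sigma Z_r)\,dr=A^{\P^N}(g\circ\sigma)$, so $A^{\hat\P^N}$ agrees with the classical operator on that space. Continuity follows because $g\mapsto g\circ\sigma$ is an isometry of $C_T\shc^{-\alpha}$ for every $\alpha$.

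\textbf{Step 3: the martingale property.} Take $f\in C_b^{1,2+}$ and set $\tilde f=f\circ\sigma$, which is again in $C_b^{1,2+}$. Since $\sigma$ is orthogonal, $\Delta\tilde f=(\Delta f)\circ\sigma$. Combining the chain rule with Step 1 gives
\[
\nabla\tilde f\cdot\shb^N=(\nabla f\cdot\shb^N)\circ\sigma,
\]
and hence $\shl\tilde f=(\shl f)\circ\sigma$. Applying Theorem \ref{thm:extendA} 4 under $\P^N$ to $\tilde f$ shows that
\[
f(t,\sigma Z_t)-A^{\P^N}_t\big((\shl f)\circ\sigma\big)
\]
is a $\P^N$-martingale with bracket $\int_0^\cdot|\nabla f|^2(r,\sigma Z_r)\,dr$. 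Because this process is a measurable functional of $\sigma(Z)$, transporting to the image law $\hat\P^N$ turns it into the statement of item 4 of Theorem \ref{thm:extendA} for $\hat\P^N$. We conclude with $4\Rightarrow1$ of Theorem \ref{thm:extendA}.

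\textbf{Main obstacle.} The delicate step is Step 1 together with the transport in Step 3. We must make sure that the distributional identities survive the Bony product. We must also make sure that the filtration and null-set issues are handled when moving the martingale property to the image measure. Continuity of $A$ in u.c.p. transports because $\sigma$ is a homeomorphism of path space, so the defining limits $\int_0^\cdot P_{1/n}g(r,\cdot)\,dr$ along $\sigma(Z)$ are controlled. The approach to both issues is to verify everything on smooth approximations and then pass to limits by the continuity results already established.
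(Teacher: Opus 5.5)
Your proposal is correct, but it is organised differently from the paper's proof.

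\textbf{The paper's route.} The paper works directly with the original domain $\shd_\shl$ of Definition~\ref{def:rMP}. It re-indexes the sum defining the generator of the permuted system to see that it coincides with $\shl$. It then observes that $f\mapsto f\circ\sigma$ maps solutions of $\shl f=g$, $f(T)=f_T$ (with $g\in C_T\bar\shc_c^{0+}$, $f_T\in\bar\shc_c^{(1+\beta)+}$) to solutions of the same type, with data $g\circ\sigma$, $f_T\circ\sigma$. Finally it transports the local martingale property to the image law. Since $\shl f=g$ is a genuine function there, the compensator $\int_0^t g(s,\sigma Z_s)\,ds$ is an ordinary integral, and no extended local-time operator is needed.

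\textbf{Your route.} You pass through the equivalence $1\Leftrightarrow 4$ of Theorem~\ref{thm:extendA} on both sides.
\begin{itemize}
\item What it buys: the test functions are classical, since $f\circ\sigma\in C_b^{1,2+}$. You therefore need not check that $f\circ\sigma$ is the mild solution of the transported Kolmogorov equation.
\item What it costs: you must build $A^{\hat\P^N}$ by transport and check that it extends the classical operator and is continuous. You do this correctly. To be precise, set $A^{\hat\P^N}(g)\circ\Sigma:=A^{\P^N}(g\circ\sigma)$ with $\Sigma(\omega)=\sigma\circ\omega$ on $\Omega_T^{dN}$. The map $\Sigma$ preserves the canonical filtration and maps $\P^N$-null sets to $\hat\P^N$-null sets, so the definition makes sense and the martingale property transfers.
\end{itemize}

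\textbf{Comparison.} The gain is modest, because the real content of both routes is the same distributional identity $\nabla(f\circ\sigma)\cdot\shb^N=(\nabla f\cdot\shb^N)\circ\sigma$. The paper verifies this identity only formally, taking $d=1$ and treating $T^{i_j}_{b(t)}$ as functions. Your Step~1 supplies the justification that either route needs:
\begin{itemize}
\item defining $h\circ\sigma$ by duality;
\item deriving $T^k_h\circ\sigma=T^{i_k}_h$ from the marginalisation formula;
\item getting invariance of Besov norms and of the Bony product under $\sigma$ by mollifying with the radial kernel of $P_{1/n}$.
\end{itemize}
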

\begin{proof}
  We show that the parabolic generator $\hat \shl$ associated to
  rMP$({\hat \shb}^N,\nu^{\otimes N})$ is equal to $\shl$,
  which is associated to
  rMP$(\shb^N,\nu^{\otimes N})$. Indeed, taking $d=1$ for simplicity,
  \begin{align*}
    \hat \shl f(t,x) =&\partial_t f(t,x) +\frac12 \Delta f(t,x)+\sum_{j=1}^N \partial_{x_{i_j}}f(t,x)T^{i_j}_{b(t)}(x)F\left(\frac1N\sum_{k=1}^N K(x_{i_j}-x_{i_k})\right)\\
	=&\partial_t f(t,x) +\frac12 \Delta f(t,x)+\sum_{j=1}^N \partial_{x_{i_j}}f(t,x) T^{i_j}_{b(t)}(x) F\left(\frac1N\sum_{k=1}^N K(x_{i_j}-x_k)\right)\\
 =   &\partial_t f(t,x) +\frac12 \Delta f(t,x)+\sum_{i=1}^N \partial_{x_i}f(t,x) T^{i}_{b(t)}(x) F\left(\frac1N\sum_{k=1}^N K(x_i-x_k)\right)\\
					=&\shl f(t,x).
	\end{align*}
        We can easily see that $\shd_{\hat\shl}=\shd_\shl$: indeed  given $f_T\in \bar\shc_c^{(1+\beta)+}$ and $g\in C_T\bar\shc^{0+}(\R^d)$, $f$ is a solution to $\shl f=g,\ f(T)=f_T$ if and only if $\hat\shl \hat f=\hat g, \hat f(T)=\hat f_T$, where
$\hat f(t,x) = f(t,x_{i_1}, \ldots, x_{i_d})$ and $\hat f_T$ is defined similarly.
        So the result follows taking into account Definition \ref{def:rMP}.
\end{proof}
\begin{proof}[Proof of Theorem \ref{th:exch}]
  By Proposition \ref{pr:exch}
  $\hat \P^N$ is a solution to rMP$(\hat\shb^N,\nu^{\otimes N})$.
Since uniqueness holds for that rough martingale problem 
then $\P^N=\hat\P^N$.
\end{proof}

\subsection{Tightness of the empirical measures}

\label{sec:tightness}

In this section the number $N$ of particles will change.
We remark that the canonical spaces $\Omega_T^{dN}$ and 
the canonical process $Z^N = (Z^{1,N}, \ldots, Z^{N,N})$
vary.
For each strictly positive integer $N$, let $\P^N$ be the solution of rMP($\shb^N,\nu^{\otimes N}$)
provided by Theorem \ref{thm:WPPS}.

We are interested a priori in the convergence
in law  of the empirical measures $\mu^N$, where
\begin{equation}\label{eq:EMmuN}
\mu^N_t:=\frac1N\sum_{i=0}^N\delta_{Z_t^{i,N}}.
\end{equation}
The global model, including all the infinite
number of particles, is defined
as an infinite product of $\Omega_T^d$.
Since we are interested in convergence in law of \eqref{eq:EMmuN},
we do not need
to specify the environment of the all all the particles, since the laws of $\mu^N$ are defined
on the same probability space $\shp(\Omega_T^d)$.
Indeed,  each particle $Z^{j,N}$ is defined on $\Omega_T^{dN}$ with
values in $\Omega_T^{d}$. Consequently $\delta_{Z^{j,N}}$ is
still defined  on $\Omega_T^{dN}$, 
taking values
in $\shp(\Omega_T^d)$, as well as its empirical mean.

In this paper, we only prove the tightness of the laws of the empirical measures $(\mu^N)$
as probability measures on $\shp(\Omega_T^d)$ and we leave the rest to future research.
A crucial result for this is Proposition 2.2 of \cite{Sznitman},
which states that, the sequence of laws of $(\mu^N)$ is tight if and only of
the sequence of their intensities is tight, see (2.4) of \cite{Sznitman}.
For a given $N$, the intensity of each law of $\mu^N$ is the measure
\begin{equation}\label{eq:Intensity}
  G \mapsto  \frac{1}{N} \E^{\P^N}\left[\sum_{j= 1}^N G(Z^{j,N})\right],
  \end{equation}
where $G$ is a bounded Borel functional on $\Omega_T^d$.
By Theorem \ref{th:exch}, all the components $Z^{j,N}$
are identically distributed as $Z^{1,N}$,
so that \eqref{eq:Intensity} becomes
\begin{equation}\label{eq:IntensityBis}
  G \mapsto   \E^{\P^N}\left[G(Z^{1,N})\right],
  \end{equation}
which is the law of $Z^{1,N}$ under $\P^N$.
Clearly, the disintegration formula \eqref{eq:516} applies in this context too,
by substituting $1$ (resp. $d$) with $d$ (resp. $N$). So, we get
\begin{equation} \label{eq:516Bis} 
  \P^N(d\g^1,d\bar\g)
	= \int_{\Omega_T^{(N-1)d}}\P^{\bar Z}(d\bar\g)\int_{\Omega^d_T}\P_{\bar \g}(d\g^1),
\end{equation}
where $\g^1\in\Omega^d_T$ and  $\bar \g:=(\g^2,\ldots,\g^N)\in\Omega_T^{(N-1)d}$.
$(\P_{\bar \g}(d\g^1), \bar \gamma \in \Omega_T^{(N-1)d})$ is the usual random kernel.
Formally speaking, $\P_{\bar \gamma}(d\gamma^1)$ is the conditional
law of the $d$-dimensional block $Z^1$ given $(Z^2,\ldots,Z^d) = (\gamma^2, \ldots, \gamma^d)$.
In \eqref{eq:516}, for simplicity, our $N$ was $d$ and our $d$ was $1$.

We denote by $\bar Z^N$  the vector $(\bar Z^{2,N}, \ldots, \bar Z^{N,N})$ and $\P^{N, \bar Z}$  the law of $\bar Z^N$ under $\P^N$.
For $\bar \g=(\g^2,\ldots,\g^N)$,
we denote
\begin{equation}\label{eq:b11}
  \shb^{N,\bar \g}_1(t,x_1)=F_1^K(x_1,\bar\g(t))\cdot b(t,x_1)\in C_T\shc^{(-\beta)+}(\R^{d};\R^{d}).
  \end{equation}
  This according to Remark \ref{rmk:shb1} applied substituting $1$ with $d$, 
$\ell = F^K_1$, $h^i = b$.
\begin{proposition}\label{pr:1drMPpart}
  Let $\P^N$ be the solution to rMP($\shb^N,\nu^{\otimes N}$). Then there is a $\P^{\bar Z^N}$
  null set $\shn$, such that
for all $\bar \gamma \notin \shn$,
$\P^N_{\bar\g}$ is the solution to rMP($\shb^{N,\bar\g}_1,\nu$).
\end{proposition}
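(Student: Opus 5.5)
The plan is to obtain the statement as an application of Theorem \ref{thm:1drMP}, reading it in block form: the role of $d$ there is played by $N$, and each ``scalar'' coordinate becomes a $d$-dimensional block $x_i\in\R^d$. Concretely, I would check that the drift $\shb^N$ in \eqref{eq:BN} has the mixed structure \eqref{eq:shbMixt}. For the block $i$ and component $k$ one has $\shb^N_{i,k} = F^K_i\cdot T^i_{b_k(t)}$. By Lemma \ref{lm:FK}, $\ell_i:=F^K_i\in C_T\shc^\beta(\R^{dN})$ (time independent, hence trivially continuous in time). By Proposition \ref{pr:besovT}, $T^i_{b_k(t)}$ is the isometric immersion of $b_k(t)\in\shc^{(-\beta)+}(\R^d)$. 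Thus $\shb^N_{i}(t,x)=\ell_i(x)\,T^i_{h^i(t)}(x)$ with $h^i=b$ vector valued in $\R^d$, which is exactly the form in \eqref{eq:shbMixt} with scalars replaced by $d$-blocks. The frozen drift \eqref{eq:shb1} then becomes $\shb^{N,\bar\g}_1(t,x_1)=F^K_1(x_1,\bar\g_t)\, b(t,x_1)$, i.e.\ \eqref{eq:b11}, and it lies in $C_T\shc^{(-\beta)+}(\R^d;\R^d)$ by Remark \ref{rmk:shb1} and \eqref{eq:bonyt}.

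Next I would re-run the chain Lemma \ref{lm:DLb}, Lemma \ref{lm:L}, Proposition \ref{pr:extendA1}, Theorem \ref{thm:1drMP} with the first block $Z^{1,N}\in\Omega_T^d$ in place of $Z^1\in\Omega_T^1$ and $\bar Z^N\in\Omega_T^{(N-1)d}$ in place of $\bar Z$, using the disintegration \eqref{eq:516Bis}. Lemma \ref{lm:DLb} is dimension free, because it only uses countable dense families and Polish canonical spaces. Lemma \ref{lm:L} is also dimension free, because it only uses convergence of $f^n$ in $C^{0,1}$ and bracket identities. In Proposition \ref{pr:extendA1} the test functions become $f^1\in C_c^{1,2+}([0,T]\times\R^d)$, extended to $\R^{dN}$ as functions of $x_1$ only. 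Formula \eqref{eq:heat-eq} then reads $\shl f^1=\partial_t f^1+\frac12\Delta_{x_1}f^1+\sum_{k}\partial_{x_{1,k}}f^1\,F^K_1\,T^1_{b_k(t)}$. I would approximate $b_k$ by $b_k^n\in C_T\shc_c^{0+}(\R^d)$ and pass to the limit using the isometry of $T^1$ and \eqref{eq:bonyt}, exactly as before. The initial law is the marginal of $\nu^{\otimes N}$ on the first block, namely $\nu$. More precisely, since $Z_0^{1,N}$ is independent of $\bar Z_0^N$ under $\P^N$, the conditional initial law is $\nu$ for a.e.\ $\bar\g$. This independence has to be verified separately, and I would deduce it from \eqref{eq:516Bis} restricted to time $0$. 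Finally, I would apply Theorem \ref{thm:extABis} in dimension $d$ to $\P^N_{\bar\g}$, and obtain uniqueness from Proposition \ref{pr:rMP}.

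I expect the main obstacle to be the initial condition. Conditioning on the whole path $\bar\g$ also conditions on future values of $\bar Z$, which are correlated with $Z^{1,N}$, so the conditional law of $Z^{1,N}_0$ given $\bar Z^N=\bar\g$ need not be $\nu$. Theorem \ref{thm:1drMP} glosses over this with ``$\nu^1$''. My first attempt would be to state the result with the conditional initial law $\nu_{\bar\g}:=\P^N_{\bar\g}\circ (Z^{1,N}_0)^{-1}$, for which the martingale-problem part goes through verbatim, and to check whether the author's subsequent use (the moment estimates of Theorem \ref{thm:LPestim}, which are uniform in $\nu$) only needs that. A secondary point to verify is that the vector-valued $h^i$ fits the scalar proof of Proposition \ref{pr:extendA1}. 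I expect this to be harmless, since the argument is applied componentwise in $k$ and linearly.
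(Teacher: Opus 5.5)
\textbf{Verdict: genuine gap.} Your plan is the paper's own proof: it transfers Theorem~\ref{thm:1drMP} blockwise, i.e.\ the chain Lemma~\ref{lm:DLb}, Lemma~\ref{lm:L}, Proposition~\ref{pr:extendA1}, Theorem~\ref{thm:extABis}. Your structural checks on $\shb^N$ and on the frozen drift are fine.

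Your doubt about the initial law is justified, but it is only the $t=0$ symptom of a failure in Lemma~\ref{lm:DLb}, the step you call harmless and ``dimension free''. There, the martingale property under $\P$ gives
\[
\int \P^{\bar Z}(d\bar\g)\, G_{s,t}(\bar\g)\,\Phi_s(\bar\g_r;r\le s)=0, \qquad G_{s,t}(\bar\g):=\E^{\P_{\bar\g}}\left[(M^\phi_t-M^\phi_s)(Z^1,\bar\g)\,F_s(Z^1_r;r\le s)\right],
\]
but only for functionals $\Phi_s$ of the \emph{past} of $\bar\g$. This says that $G_{s,t}(\bar Z)$ has zero conditional expectation given $\sigma(\bar Z_r,\,r\le s)$. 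It does not give $G_{s,t}(\bar\g)=0$ for a.e.\ $\bar\g$, which would need test functionals of the whole path $\bar\g|_{[0,T]}$. Whenever the dynamics of $\bar Z$ depend on $Z^1$, the future of $\bar Z$ carries information about the noise driving $Z^1$, and the conclusion fails in general.

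Here is a two-dimensional instance of \eqref{eq:shbMixt}. Take $\ell(x)=\sin x_1$, $h^1=0$, $h^2=1$ and $\nu=\delta_0$, so $\shb=(0,\sin x_1)$. The frozen drift of $Z^1$ is $0$ and the initial law is deterministic. The claim would therefore make $\P_{\g^2}$ Wiener measure for a.e.\ $\g^2$, hence $Z^1$ independent of $Z^2$. Yet $Z^1=W^1$ and $Z^2=\int_0^\cdot\sin(W^1_r)\,dr+W^2$, so $\E[Z^2_T\mid Z^1]=\int_0^T\sin(W^1_r)\,dr$, which is not constant.

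In the particle system this is the generic situation: for $j\ge 2$ the drift $\shb^N_j$ contains $F^K_j$, which depends on $x_1$. Consequences for your plan:
\begin{itemize}
\item Replacing $\nu$ by $\nu_{\bar\g}$ does not help. The martingale part does not ``go through verbatim'', so neither the stated proposition nor your variant follows from this chain. The paper's one-line proof rests on the same lemma and does not supply the missing step either.
\item You cannot get independence of $Z^{1,N}_0$ from $\bar Z^N$ out of \eqref{eq:516Bis} at time $0$. The product law $\nu^{\otimes N}$ only makes $Z^{1,N}_0$ independent of $\bar Z^N_0$, whereas $\P^N_{\bar\g}$ conditions on the whole path.
\end{itemize}
What is really missing is an argument for the $N$-uniform moment bound used in Theorem~\ref{thm:MomentsCond} that does not condition on the whole trajectory of $\bar Z^N$.
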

\begin{proof}

  It is a direct adaptation of  Theorem \ref{thm:1drMP}
taking into account that there $N$ was $d$ and our $d$ was $1$,
with $\shb_1^{\bar\g}=\shb^{N,\bar\g}_{1}$, 
 defined in \eqref{eq:b11}.
\end{proof}

\begin{theorem} \label{thm:MomentsCond}
  Let $(\P^N)$ be the solution to rMP($\shb^N,\nu^{\otimes N} $). Then, under $\P^N$,
  the sequence of laws of $Z^{1,N}$ is tight.
\end{theorem}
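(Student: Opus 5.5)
We will prove tightness of the laws of $Z^{1,N}$ on $\Omega_T^d$ with the Kolmogorov--Chentsov criterion. Two things are needed: tightness of the initial laws, and a uniform moment bound on increments, namely
\[
\sup_N \E^{\P^N}\bigl[|Z^{1,N}_t-Z^{1,N}_s|^p\bigr]\le C|t-s|^{\lambda p}
\]
for some $p$ with $\lambda p>1$. The first is immediate, since $Z^{1,N}_0\sim\nu$ for every $N$. So everything reduces to the increment estimate with a constant independent of $N$. Working directly in dimension $dN$ with Theorem \ref{thm:LPestim} is useless for this, because the bound involves $dN$-dimensional PDE estimates whose constants may depend on the dimension. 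Instead we will condition on the other particles and work in dimension $d$.

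First we fix $\al<\be$ such that $b\in C_T\shc^{-\al}(\R^d;\R^d)$; this is possible because $b\in C_T\shc^{(-\beta)+}$. Next we disintegrate $\P^N$ as in \eqref{eq:516Bis}. By Proposition \ref{pr:1drMPpart}, for $\P^{N,\bar Z}$-a.e.\ $\bar\g$ the conditional law $\P^N_{\bar\g}$ solves rMP($\shb^{N,\bar\g}_1,\nu$) on $\Omega^d_T$, with drift $\shb^{N,\bar\g}_1(t,x_1)=F^K_1(x_1,\bar\g(t))\, b(t,x_1)$.

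The key point is a bound on this frozen drift that is uniform in $N$ and $\bar\g$. By the Bony estimate \eqref{eq:bonyt},
\[
\|\shb^{N,\bar\g}_1\|_{C_T\shc^{-\al}(\R^d)}\le c\,\sup_t\|F^K_1(\cdot,\bar\g(t))\|_{\shc^\beta(\R^d)}\,\|b\|_{C_T\shc^{-\al}}.
\]
Here we need the $\shc^\beta(\R^d)$ norm of $x_1\mapsto F^K_1(x_1,\bar\g(t))$ to be bounded by $C_{F,K}$. This follows from the argument of Lemma \ref{lm:FK}: restricting to the first block and freezing the others can only decrease the sup-norm and the H\"older seminorm, and the $1/N$ averaging removes any dependence on $N$. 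We also have to check that $t\mapsto F^K_1(\cdot,\bar\g(t))$ is continuous in $\shc^{\beta'}$ for some $\beta'>\al$; this is the content of Remark \ref{rmk:shb1}, and if necessary we lower $\beta$ to some $\beta'\in(\al,\beta)$ using the continuity of $\bar\g$. The outcome is $\|\shb^{N,\bar\g}_1\|_{C_T\shc^{-\al}}\le M:=cC_{F,K}\|b\|_{C_T\shc^{-\al}}$ for all $N$ and a.e.\ $\bar\g$.

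Now we apply Theorem \ref{thm:LPestim} in dimension $d$ to $\P^N_{\bar\g}$, with $\lambda<\be/2$ and $p>1/\lambda$. Since $\bar R_{\al,\be,p}$ is increasing, this gives $\E^{\P^N_{\bar\g}}[|Z^1_t-Z^1_s|^p]\le \bar R_{\al,\be,p}(T,M)|t-s|^{\lambda p}$ for a.e.\ $\bar\g$. Integrating against $\P^{N,\bar Z}(d\bar\g)$ via \eqref{eq:516Bis} yields the same bound for $\E^{\P^N}[|Z^{1,N}_t-Z^{1,N}_s|^p]$, uniformly in $N$. Kolmogorov's tightness criterion then concludes the proof.

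The main obstacle is making the uniform control of the frozen drift rigorous, in particular the time continuity required by $C_T\shc^{\beta}$ and the fact that the Bony constant and $C_{F,K}$ do not depend on $N$. The fallback, if time continuity causes trouble, is to shrink $\beta$ slightly.
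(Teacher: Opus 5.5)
You follow the paper's own proof step by step: the disintegration \eqref{eq:516Bis}, Proposition \ref{pr:1drMPpart} for the conditional laws, Theorem \ref{thm:LPestim} in dimension $d$ with a drift bound uniform in $N$ and $\bar\g$, integration in $\bar\g$, and Kolmogorov's criterion. Your direct bound on $x_1\mapsto F^K_1(x_1,\bar\g(t))$ is the estimate actually needed: you run the argument of Lemma \ref{lm:FK} on the first block and lower the H\"older exponent slightly to get continuity in $t$. The paper's appeal to Proposition \ref{pr:BNK} only controls the $dN$-dimensional drift.

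The genuine gap, which the paper shares, is Proposition \ref{pr:1drMPpart} itself.

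\emph{Why it fails.} $\P^N_{\bar\g}$ is the law of $Z^{1,N}$ conditioned on the \emph{entire} paths $\bar Z^N=\bar\g$ on $[0,T]$. The solution of the frozen problem rMP$(\shb^{N,\bar\g}_1,\nu)$, restricted to $[0,t]$, depends only on $\bar\g|_{[0,t]}$. So the proposition would force $Z^{1,N}|_{[0,t]}$ and $\bar Z^N|_{[t,T]}$ to be conditionally independent given $\bar Z^N|_{[0,t]}$. This cannot hold: the drift of $Z^{j,N}$, $j\ge 2$, contains $K(x_j-x_1)$, so the future of the other particles carries information about $Z^{1,N}_t$ (the usual filtering effect).

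\emph{A smooth counterexample.} Take $N=2$, $d=1$, $b\equiv 1$, $F(z)=z$, and $K$ the Gaussian density. Conditional independence would give $\E[Z^2_{t+h}-Z^2_t\mid Z_r,\,r\le t]=\E[Z^2_{t+h}-Z^2_t\mid Z^2_r,\,r\le t]$. Dividing by $h$ and letting $h\to 0$ makes $K(Z^2_t-Z^1_t)$ measurable with respect to $\sigma(Z^2_r,\,r\le t)$. This is false: $K$ is not constant, and by Girsanov $Z^1_t$ has a nondegenerate conditional law given $(Z^2_r)_{r\le t}$.

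\emph{Where the error sits.} It is in the proof of Lemma \ref{lm:DLb}. The martingale property only gives orthogonality of $M^\phi_t-M^\phi_s$ to $F_s(Z^1_r;r\le s)\Phi_s(\bar Z_r;r\le s)$. Hence it only shows that $\bar\g\mapsto \E^{\P_{\bar\g}}[(M^\phi_t-M^\phi_s)F_s]$ is orthogonal to functionals of $\bar\g|_{[0,s]}$. Concluding that this quantity vanishes for a.e.\ $\bar\g$ would require test functionals of the whole path of $\bar Z$.

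So, as it stands, the uniform-in-$N$ increment estimate for $Z^{1,N}$ is not established, in your proof or in the paper's. What is missing is an estimate that does not condition on the future of $\bar Z^N$. One option is an $L^p$ bound for $Z^{1,N}$ viewed as a $d$-dimensional process with the random but adapted drift $F^K_1(\cdot,\bar Z^N_t)\,b(t,\cdot)$. Your argument does bound the $C_T\shc^{-\al}$ norm of this drift uniformly, but the paper's deterministic Kolmogorov-PDE tools do not directly give the required $L^p$ estimate in this random-drift setting.
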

\begin{proof}
  Since all the laws have the same initial
  probability law $\nu$, Theorem 4.10, Chapter 2 in \cite{karatzasShreve},
  taking into account the usual arguments such as Garsia-Rudemich-Rumsey
  (see e.g. Lemma 3.12 in \cite{issoglio_russoMPb}),
  says that we need to find $\ell, \delta >0$, so that
  \begin{equation}\label{eq:momentsT}
    \E^{\P^N}[|Z^{1,N}_t-Z^{1,N}_s|^{\ell}] \le {\rm const}|t-s|^{1+\delta},
  \end{equation}
  where we insist on the fact that ${\rm const}$ does not  depend on $N$,
see also Problem 4.11, Chapter 2 of \cite{karatzasShreve}. 

  Let us consider $p\geq 1$ and estimate
\[
\E^{\P^N}[|Z^{1,N}_t-Z^{1,N}_s|^p].
\]
Now we take $\g\not\in \shn$ a null set under $\P^{\bar Z^N}$, given by Lemma \ref{lm:DLb}, and by disintegration, using \eqref{eq:516Bis} 
we get
\[
\E^{\P^N}[|Z^{1,N}_t-Z^{1,N}_s|^p]=	
	\int_{\Omega_T^{d-1}}\P^{\bar Z}(d\bar \g)	\E^{\P_{\bar\g}}[|Z^{1,N}_t-Z^{1,N}_s|^p]
\]
By Proposition \ref{pr:1drMPpart} $\P^N_{\bar\g}$ solves the rMP($B^{1,N}_\g,\nu$), then thanks to Theorem \ref{thm:LPestim} applied to  $\P^N_{\bar\g}$, together with Proposition \ref{pr:BNK}, for every $0 < \lam < \frac{\beta}{2}$ we have
\begin{equation}\label{eq:LPIPS}
\E^{\P_{\bar\g}^N}[|Z^{1,N}_t-Z^{1,N}_s|^p]
\leq R_{\al,\beta}(d,T,\|\shb^{1,N}_{\bar\g}\|_{C_T\shc^{-\al}})|t-s|^{\lam p}\leq R_{\al,\beta}(d,T,C_{F,K}\|b\|_{C_T\shc^{-\al}})|t-s|^{\lam p},
\end{equation}
Consequently \eqref{eq:momentsT} is proved
choosing $p > \frac{2}{\beta}$.

\end{proof}
\begin{corollary}\label{cor:MomentsCond}
Let $\P^N$ be the sequence of solutions to rMP($\shb^N,\nu^{\otimes N}$). Then the sequence of empirical measures $(\mu^N_t)$ is tight.
\end{corollary}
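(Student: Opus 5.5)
The plan is to combine Theorem \ref{thm:MomentsCond} with the Sznitman tightness criterion already recalled at the start of Section \ref{sec:tightness}. The statement is read as tightness of the laws of the random measures $\mu^N$ (viewed as $\shp(\Omega_T^d)$-valued random elements on $\Omega_T^{dN}$ under $\P^N$). The claim for the time marginals $\mu^N_t$ will then follow by continuity of the projection.

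\textbf{Step 1: reduction to intensities.} By Proposition 2.2 of \cite{Sznitman}, tightness of the laws of $\mu^N$ in $\shp(\shp(\Omega_T^d))$ is equivalent to tightness of the intensity measures \eqref{eq:Intensity}. This criterion requires no independence, only that $\Omega_T^d$ be Polish, which it is.

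\textbf{Step 2: exchangeability.} By Theorem \ref{th:exch}, each $\P^N$ is exchangeable. Hence every $Z^{j,N}$ has the same law as $Z^{1,N}$, and the intensity \eqref{eq:Intensity} coincides with \eqref{eq:IntensityBis}, which is the law of $Z^{1,N}$ under $\P^N$.

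\textbf{Step 3: tightness of the first particle.} Theorem \ref{thm:MomentsCond} gives tightness of these laws. Steps 1--3 together yield tightness of $\mathcal Law(\mu^N)$ on $\shp(\Omega_T^d)$.

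\textbf{Step 4: time marginals.} For fixed $t$, the evaluation map $e_t:\Omega_T^d\to\R^d$ is continuous. The push-forward map $\shp(\Omega_T^d)\ni m\mapsto m\circ e_t^{-1}\in\shp(\R^d)$ is therefore continuous for weak convergence. Since $\mu^N_t=\mu^N\circ e_t^{-1}$, the laws of $\mu^N_t$ are images of a tight family under a continuous map, and so are tight.

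Alternatively, one can argue directly on marginals. The intensity of $\mu^N_t$ is the law of $Z^{1,N}_t$, and this family is tight: the initial law $\nu$ is fixed, and \eqref{eq:LPIPS} with $s=0$ bounds $\E|Z^{1,N}_t-Z^{1,N}_0|^p$ uniformly in $N$.

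\textbf{Main obstacle.} Once the earlier results are assumed, there is no real difficulty. The only care needed is bookkeeping. First, the random measure $\mu^N$ lives on the varying spaces $\Omega_T^{dN}$, but only its law in the fixed Polish space $\shp(\Omega_T^d)$ matters. Second, the constant in Theorem \ref{thm:MomentsCond} must be independent of $N$; this is guaranteed by Proposition \ref{pr:BNK}, whose bound $C_{F,K}$ does not depend on $N$ by Lemma \ref{lm:FK}.
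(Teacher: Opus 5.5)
Your proposal is correct and follows the paper's own proof: Proposition 2.2 of \cite{Sznitman} reduces tightness of the laws of $\mu^N$ to tightness of the intensities, exchangeability (Theorem \ref{th:exch}) identifies these intensities with the laws of $Z^{1,N}$ under $\P^N$, and Theorem \ref{thm:MomentsCond} concludes. Your Step 4, which passes to the time marginals $\mu^N_t$ by continuity of $m\mapsto m\circ e_t^{-1}$, is a harmless addition that the paper leaves implicit; it covers the ambiguity of the notation $(\mu^N_t)$ in the statement.
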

\begin{proof}
By Theorem \ref{pr:1drMPpart} the sequence $\shl$aw($Z^{1,N}$) is tight. The result follows by the considerations before \eqref{eq:Intensity}.
\end{proof}

\appendix

	\section*{Appendices}

\section{Auxiliary analytical results.}

\label{App:A}

First we show a a natural, however not obvious, result that extends a function $f$ in $C_b^{1,2+}([0,T]\times \R^d)$ to a function $\hat f$ which belong to $C_b^{1,2+}(\R_+\times \R^d)$.
\begin{lemma}\label{lm:LA}
	Let $f:[0,T]\times \R^d \to \R$ be of class $C_b^{1,2+}([0,T]\times \R^d)$. Then there exists an extension $\hat f:\R_+\times \R\to\R$ which belongs to $C_b^{1,2+}(\R_+\times \R^d)$.
\end{lemma}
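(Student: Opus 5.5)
We extend $f$ beyond $T$ with a time reflection of higher order and then cut it off smoothly in time, so that boundedness, regularity of the derivatives and the H\"older modulus of the Hessian are all preserved. (The statement writes $\hat f:\R_+\times\R\to\R$; we read the target as $\R_+\times\R^d$, consistent with the class $C_b^{1,2+}(\R_+\times\R^d)$.)

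\emph{Step 1 (reflection).} Since only first order regularity in time is needed, a Hestenes-type reflection suffices. For $t\in(T,2T]$ set
\[
\tilde f(t,x):= 3 f(2T-t,x) - 2 f\bigl(\tfrac{3T-t}{2},x\bigr).
\]
Both arguments $2T-t$ and $\tfrac{3T-t}{2}$ lie in $[T/2,T]\subseteq[0,T]$ for $t\in[T,2T]$, so $\tilde f$ is well defined. At $t=T$ the value is $3f(T,x)-2f(T,x)=f(T,x)$. The time derivative is
\[
\partial_t\tilde f(T,x)=-3\partial_tf(T,x)+\partial_tf(T,x)=-2\partial_t f(T,x),
\]
which is not yet right. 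We therefore solve for coefficients $a_1,a_2$ in $\tilde f(t,x)=a_1 f(2T-t,x)+a_2 f(\tfrac{3T-t}{2},x)$ under the matching conditions $a_1+a_2=1$ and $-a_1-\tfrac12 a_2=1$. This gives $a_2=4$ and $a_1=-3$, so
\[
\tilde f(t,x):=-3f(2T-t,x)+4f\bigl(\tfrac{3T-t}{2},x\bigr).
\]
With this choice, $f$ on $[0,T]$ glued to $\tilde f$ on $[T,2T]$ is $C^1$ in $t$ across $T$. The spatial derivatives up to order two also match at $t=T$, since they involve no time derivatives and $a_1+a_2=1$.

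\emph{Step 2 (regularity of the glued function).} Each term of $\tilde f$ is an affine time change of $f$. Hence $\tilde f$, $\partial_t\tilde f$, $\nabla\tilde f$ and ${\rm Hess}\,\tilde f$ are continuous and bounded, with bounds at most $7$ times those of $f$. If ${\rm Hess}f$ is $\epsilon$-H\"older in $x$ uniformly in $t$, then so is ${\rm Hess}\,\tilde f$. The glued function $F$ on $[0,2T]$ is therefore continuous with continuous $\partial_t F$ (the one-sided derivatives agree at $T$), and all the required bounds hold.

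\emph{Step 3 (cut-off).} Take $\chi\in C^\infty(\R_+)$ with $\chi=1$ on $[0,T]$, $\chi=0$ on $[3T/2,\infty)$ and $0\le\chi\le1$. Define $\hat f:=\chi(t)F(t,x)$ for $t\le 2T$ and $\hat f:=0$ for $t>2T$. Then
\[
\partial_t\hat f=\chi'F+\chi\,\partial_tF
\]
is bounded and continuous, while the spatial derivatives are multiplied by the bounded factor $\chi(t)$, so the uniform-in-$t$ H\"older property of the Hessian is kept. Since $\hat f=f$ on $[0,T]$, the function $\hat f$ belongs to $C_b^{1,2+}(\R_+\times\R^d)$ and extends $f$.

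The only genuinely delicate point is the $C^1$ matching of the time derivative at $t=T$, which is exactly where the naive even reflection fails. The two-term reflection in Step 1 handles it, and checking the one-sided derivatives there is the main verification.
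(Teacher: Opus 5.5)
Your proof is correct. It uses the same basic device as the paper: a reflection in time that matches both the value and the first time derivative at the terminal time. The difference is in how the extension is made global. The paper takes $T=1$, extends to $[0,\frac32]$ via $3f(1,x)-3f(2-t,x)+f(3-2t,x)$, and then says one iterates. You extend once to $[T,2T]$ via $-3f(2T-t,x)+4f\bigl(\tfrac{3T-t}{2},x\bigr)$ and then multiply by a smooth time cut-off.

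Your cut-off is what actually gives boundedness on all of $\R_+$. Each iteration of the paper's reflection can multiply the sup norms by up to $7$. Worse, its formula reproduces functions that are affine in $t$. So iterating it on $f(t,x)=t$, which lies in $C_b^{1,2+}([0,1]\times\R^d)$, returns $\hat f(t,x)=t$, which is unbounded on $\R_+$. The paper's route is still enough for its only use: Theorem \ref{thm:rMPrestr} needs the extension only on a finite horizon, hence only finitely many iterations. Your version proves the lemma as stated, and even gives compact support in time.

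Two small points on the write-up. For $t\in[T,2T]$ the argument $2T-t$ ranges over $[0,T]$, not $[T/2,T]$; this does not affect well-definedness. You could also drop the false start with coefficients $3,-2$.
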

\begin{remark} \label{rmk:C12+}
  We highlight that the space $C_b^{1,2+}([0,T]\times \R^d)$
was  defined in Section \ref{sc:basic-prelim}.
\end{remark}
\begin{proof}[Proof of Lemma \ref{lm:LA}]
	Without restriction of generality we can assume $T=1$. Now we show we can extend $f$ up to $[0,\frac32]\times \R^d$ and from there one iterates the procedure. We define $\hat f:[0,\frac32]\times \R^d\to \R$ as 
	\[
	\hat f(t,x):=\begin{cases}
		f(t,x) &\text{ if } (t,x)\in[0,1]\times\R^d\\
		3f(1,x)-3f(2-t,x)+f(3-2t,x)& \text{ if }(t,x)\in (1,\frac32]\times \R^d.
	\end{cases}
	\]
	Obviously $\hat f$ is continuous.
   Clearly $\hat f$ is of class
$C_b^{1,2+}([0,1)\times \R^d)\cap C_b^{1,2+}((1,\frac32]\times \R^d)$,
Moreover,  concerning, we get
	\[
	\partial_t \hat f (t,x)=\begin{cases}
		\partial_t f(t,x)&\text{ if } (t,x)\in[0,1)\times\R^d\\
		3\partial_t f(2-t,x)-2\partial_tf(3-2t,x)& \text{ if }(t,x)\in (1,\frac32]\times \R^d.
	\end{cases}
	\]
	Computing the limit as $t$ approaches $1$ from both left and right 
		\[
	\lim_{t\to1^-}\partial_t\hat f=\partial_t f(1-,x),\quad
	\lim_{t\to1^+}\partial_t\hat f=\partial_t f(1-,x).
	\]
	Since those limits coincide and $\hat f$ is continuous we conclude that
        $\partial_t \hat f$ exists at on the axis $\{1\} \times \R^d$ and
      it is continuous.
For the same reasons that $\hat f$ is continuous, we can show that $\nabla \hat f$ and ${\rm Hess}(\hat f)$ extend continuously to $\{1\} \times \R^d$ 
    and the extension is H\"older continuous.  
\end{proof}

	\begin{lemma}\label{lm:chi_n}
				Let $\chi\in C^{\infty}(\R) $  be a compactly supported real-valued function  such that 
			\begin{equation}
				\chi(\rho)=\begin{cases}
					1\ \text{if } \rho\leq 0\\
					0\ \text {if }\rho \geq 1
				\end{cases}
			\end{equation}
			and for every $n \ge 1$, let us define $\chi_n(x):=\chi\displaystyle{\left( \frac{|x|-n}{\sqrt{n}} \right)}$.
			Then
			\begin{align}
			\label{eq:11}	& (\|\chi_n\|_{\shc^{1+\beta}})  \ {\rm is \ a \ bounded \ sequence}, \\
			\label{eq:12}	&\nabla \chi_n\to 0 \text{ in } \shc^{\beta+},\\
			\label{eq:13}	&\Delta\chi_n\to 0 \text{ in } \shc^{(-\beta)+}.
			\end{align}
		\end{lemma}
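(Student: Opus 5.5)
The plan is to compute the derivatives of $\chi_n$ explicitly and to estimate each Hölder seminorm separately on the shell where $\nabla \chi_n$ lives. Set $\rho_n(x):=\frac{|x|-n}{\sqrt n}$, so that $\chi_n=\chi\circ\rho_n$. Away from the origin, where $\chi_n\equiv 1$ for $n\ge 1$, the map $x\mapsto |x|$ is smooth, so $\chi_n\in C^\infty$. The derivatives are
\begin{equation*}
\nabla\chi_n(x)=\frac{1}{\sqrt n}\,\chi'(\rho_n(x))\,\frac{x}{|x|},\qquad
\partial_{ij}\chi_n(x)=\frac1n\chi''(\rho_n(x))\frac{x_ix_j}{|x|^2}+\frac{1}{\sqrt n}\chi'(\rho_n(x))\Big(\frac{\delta_{ij}}{|x|}-\frac{x_ix_j}{|x|^3}\Big).
\end{equation*}
Both are supported in the shell $S_n:=\{n\le |x|\le n+\sqrt n\}$. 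From these formulas we read off three bounds:
\begin{equation*}
\|\nabla\chi_n\|_\infty\le \frac{\|\chi'\|_\infty}{\sqrt n},\qquad \|{\rm Hess}\,\chi_n\|_\infty\le \frac{c}{n}\big(\|\chi''\|_\infty+\|\chi'\|_\infty\big),\qquad \|\chi_n\|_\infty\le\|\chi\|_\infty .
\end{equation*}

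\textbf{Proof of \eqref{eq:11} and \eqref{eq:12}.} For any $\gamma\in(0,1)$ we interpolate: $\|\nabla\chi_n\|_\gamma\le 2\|\nabla\chi_n\|_\infty^{1-\gamma}\|{\rm Hess}\,\chi_n\|_\infty^{\gamma}$. To see this, split according to whether $|x-y|$ is smaller or larger than $\|\nabla\chi_n\|_\infty/\|{\rm Hess}\chi_n\|_\infty$; alternatively, one may use the equivalent local norm of Remark \ref{rm:equivnorm}. This gives
\begin{equation*}
\|\nabla\chi_n\|_{\shc^{\gamma}}\le C\big(n^{-1/2}+n^{-(1-\gamma)/2-\gamma}\big)\le C\,n^{-1/2}\longrightarrow 0,
\end{equation*}
for every $\gamma<1$. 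Taking any $\gamma\in(\beta,1)$ yields \eqref{eq:12}, since convergence in $\shc^\gamma$ for one $\gamma>\beta$ is convergence in $\shc^{\beta+}$. For \eqref{eq:11}, recall \eqref{eq:holdnorm1}: the $\shc^{1+\beta}$ norm is the sum of $\|\chi_n\|_\infty$, $\|\nabla\chi_n\|_\infty$ and $\|\nabla\chi_n\|_\beta$, and all three are uniformly bounded (the latter two even tend to $0$). Hence $(\|\chi_n\|_{\shc^{1+\beta}})$ is bounded.

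\textbf{Proof of \eqref{eq:13}.} This is immediate from the previous step via Bernstein's inequality, Lemma \ref{lm:bern}. Applying it componentwise with $\gamma-1$ replaced suitably, we get $\|\Delta\chi_n\|_{\shc^{\gamma-1}}\le c\|\nabla\chi_n\|_{\shc^{\gamma}}\to0$. Choosing $\gamma\in(1-\beta,1)$ gives $\gamma-1>-\beta$, so $\Delta\chi_n\to0$ in $\shc^{(-\beta)+}$. Alternatively, $\|\Delta\chi_n\|_\infty\le C/n$ gives convergence in $\shc^{0}$ directly, hence in $\shc^{-\alpha}$ for any $\alpha>0$ by the Besov embedding, and one picks $\alpha<\beta$.

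\textbf{Expected obstacle.} The only delicate point is the Hölder seminorm of $\nabla\chi_n$. A naive approach would compare points across the shell $S_n$ and lose control of the factor $x/|x|$. The interpolation inequality avoids this, because it uses only the global sup bounds on $\nabla\chi_n$ and ${\rm Hess}\,\chi_n$. Those bounds hold because $|x|\ge n$ on the support, which controls the terms carrying $1/|x|$.
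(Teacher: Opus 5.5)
Your proof is correct and follows essentially the paper's route: you compute the derivatives explicitly, bound $\nabla\chi_n$ and ${\rm Hess}\,\chi_n$ in sup norm using $|x|\ge n$ on the support, and control the H\"older seminorm of $\nabla\chi_n$ by these two bounds. The paper does this last step with the local norm of Remark \ref{rm:equivnorm} and the mean value theorem rather than your global interpolation inequality, which is the same idea. For \eqref{eq:13} the paper uses exactly your alternative, $\|\Delta\chi_n\|_\infty\to 0$ together with $L^\infty\hookrightarrow\shc^{-\gamma}$; your Bernstein route is also valid, since your estimate in \eqref{eq:12} holds for every $\gamma<1$.
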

		\begin{proof}
                  We start by calculating both $\nabla \chi_n$ and ${\rm Hess} \chi_n$.
                  Obviously, for $\vert x \vert \le n$, those functions do vanish.
                  For $x > n$, we get
			\begin{align*}
				&\nabla \chi_n(x)=\frac{x}{|x|\sqrt{n}}\chi'\left(\frac{|x|-n)}{\sqrt{n}}\right),\\
				&{\rm Hess}\chi_n(x)=\frac{1}{|x|\sqrt{n}}\left({\rm Id}-\frac{x\cdot x^\top}{|x|^2}\right)\chi'\left(\frac{|x|-n}{\sqrt{n}}\right)+\frac{1}{n}\frac{x\cdot x^\top}{|x|^2}\chi''\left(\frac{|x|-n}{\sqrt{n}}\right).
			\end{align*}
		We observe that 
			\begin{equation} \label{eq:nablan}
				\|\nabla \chi_n\|_{\infty}=\frac{1}{\sqrt{n}}\|\chi'\|_{\infty}\to 0,\ \text{ as }n\to\infty
                              \end{equation}
                              and
\begin{equation} \label{eq:Hessn}
                         \|{\rm Hess}\chi_n \|_{\infty}  
                         \leq \frac{\|\chi'\|_{\infty}}{\sqrt{n}}+ 2 \frac{\|\chi'\|_{\infty}}{n \sqrt{n}}+\frac{\|\chi''\|_\infty}{n}\to 0, \text{ as }n\to \infty,
                         	\end{equation}
	where the latter inequality comes from triangle inequality, from the fact that $\|{\rm Id}-\frac{x\cdot x^\top}{|x|^2}\|_{\infty}$ and $\|\frac{x\cdot x^\top}{|x|^2}\|_{\infty}$
                        are bounded by $1$ together with the fact that  $\chi''$ is possibly non-zero for $|x|\geq n$, hence $\displaystyle{\frac{1}{|x|}}\leq \frac{1}{n}$.

                                Let us consider $\al>\be$. Owing to Remark \ref{rm:equivnorm}, we calculate the $\shc^{\al}$-norm of $\nabla \chi_n$, and one gets 
			\begin{align} \label{eq:chialpha}
                          \|\nabla \chi_n\|_{\shc^\al}&=\|\nabla \chi_n\|_{\infty}+\sup_{|x-y|<1,x\not=y}\frac{|\nabla \chi_n(x)-\nabla\chi_n(y)|}{|x-y|^\al}\leq \|\nabla \chi_n\|_{\infty}
                            + \|{\rm Hess}\chi_n \|_{\infty} \sup_{|x-y|<1}  |x-y|^{1-\al} \\
                                \notag                      &\leq \|\nabla \chi_n\|_{\infty}+  \|{\rm Hess}\chi_n\|_{\infty} \to  0, \text{ as }n\to \infty,
			\end{align}
because of \eqref{eq:nablan} and \eqref{eq:Hessn}. This proves \eqref{eq:12}.
                        
                        
			Since the norm of $\shc^{\alpha}$ is dominated by the one in $\shc^{\beta}$, one has
			\begin{equation*}
                          \|\chi_n\|_{\shc^{1+\beta}} =\|\chi_n\|_{\infty}+ \|\nabla \chi_n\|_{\shc^\beta} \le
\|\chi_n\|_{\infty}+ \|\nabla \chi_n\|_{\shc^\alpha} \le
                          1 + \sup_n (\|\nabla \chi_n\|_{\shc^\alpha}),
                        \end{equation*}
                        which is bounded because of \eqref{eq:chialpha}. This proves \eqref{eq:11}.

			Concerning \eqref{eq:13}, we recall that
			Proposition 2.1 with $p=\infty$ and Proposition 2.3 with $s=-\g>-\be$, $\delta=\g$ and $p=q_1=q_2=\infty$  of Section 2.1.2 in \cite{sawano} imply that $L^{\infty}\hookrightarrow \shc^{-\g}$.
                       Consequently, it is enough to prove that $\|\Delta \chi_n\|_{\infty}\to 0.$
                        Now
                        $$ \Vert \Delta \chi_n \Vert_\infty = \Vert {\rm Tr}({\rm Hess}\chi_n) \Vert_\infty
                        \le \Vert {\rm Hess}\chi_n) \Vert_\infty \to 0,$$
because of \eqref{eq:Hessn}. This concludes the proof.

		\end{proof}

                \begin{remark} \label{rm:kApprox}
                  \begin{enumerate}
\item         We remark that the
         constant function $1$, which is an element of $\shc^{(1+\beta)+}$,
         does not belong to the $\bar \shc^{(1+\beta)+}$.
         Indeed, it cannot be a uniform limit of continuous functions with compact support.
  \item        Nevertheless, an immediate consequence of  Lemma \ref{lm:chi_n} is
          that $1 \in \bar D \shc^{\beta+}$.
\item Analogously to  $ \bar \shc^{(1+\beta)+}$, we define  $ \bar D \shc^{\beta+}$ as the closure
  of the space of functions in $  D \shc^{\beta+}$ with compact support.
\end{enumerate}
        \end{remark}

\section{Technical proofs of Section \ref{sc:prelim}}

\label{app:B}

        \subsection{Proof of Lemma \ref{lm:schauderDC}}
\label{app:B1}
         

Let $f\in D\shc^\g$.
We start with a preliminary estimate.
If $W$ is a  standard Brownian motion, we obtain
\begin{align}\label{3.4}
 \notag |P_tf(0) -  f(0)| &= |\E[f(W_t) - f(0)]|  
\le \E[|f(W_t) - f(0)|] \le \|\nabla f\|_{\infty} \E[|W_t |] \\
\notag           &\le \|\nabla f\|_{ \g } \E[|W_t|] = \sqrt{\frac{2t}{\pi}} \|\nabla f\|_{\g}  \\
  &\le     \sqrt{\frac{2t}{\pi}} \| f\|_{D \shc^{\g}}.                
\end{align}
We  prove now \eqref{eq:schaud1DC}. \eqref{3.4} yields
\begin{equation} \label{3.4Bis}
  |P_tf(0)| \le  |P_tf(0) -  f(0)| + \vert f(0) \vert \le \left(\sqrt{\frac{2t}{\pi}}+1\right) \|
  f\|_{D \shc^{\g}}.
\end{equation}
By definition we have
\begin{equation} \label{eq:eval-normGamma}
\|P_tf\|_{D\shc^{2\theta+\g}}=|P_tf(0)|+\|\nabla P_tf\|_{\shc^{2\theta+\g}}.
\end{equation}
The first term is estimated by \eqref{3.4Bis}.
For what concerns the second term of \eqref{eq:eval-normGamma},
we recall that $\nabla$ and $P_t$ commute, by Remark 2.3 of \cite{issoglio_russoPDEa}.
By  \eqref{eq:schauder1} in Lemma \ref{lm:schauderDC}, we get
\begin{equation}\label{3.4Ter}
\Vert \nabla P_tf\|_{\shc^{2\theta+\g}}=\| P_t\nabla f\|_{\shc^{2\theta+\g}}\le c t^{-\theta}\|\nabla f\|_{\shc^\g}
\le c t^{-\theta}\|f\|_{D \shc^\g}.
\end{equation}
Adding \eqref{3.4Bis} and \eqref{3.4Ter} one can establish \eqref{eq:schaud1DC},
where $C_T:= \max \left\{c, 1+\sqrt{\frac{2T}{\pi}}\right\}$.

We go on with the proof of \eqref{eq:schaud2DC}. Let now 
$f \in D\shc^{\g + 2\theta}$.
Again, by definition we write
 \begin{equation} \label{eq:Quater}
   \|P_tf-f\|_{D\shc^\g} =  |  P_tf(0) -  f(0)|+	
   \|\nabla P_tf - \nabla f\|_{\shc^\g}, t \in [0,T].
 \end{equation}
Replacing $\gamma$ with $2\theta + \gamma$ in \eqref{3.4}, we get
 \begin{equation}\label{3.4Quinque}
   |P_tf(0) -  f(0)|
  \le     \sqrt{\frac{2t}{\pi}} \|  f\|_{D \shc^{2 \theta +\g}}.                
\end{equation}
Taking into account \eqref{eq:schauder2} in Lemma \ref{lm:schauderDC}, the second term in \eqref{eq:Quater} gives
\begin{equation}\label{3.5}
  \|\nabla P_tf - \nabla  f\|_{\shc^\g}  =
  \| P_t\nabla f - \nabla  f\|_{\shc^\g}    \le c t^{\theta} \|\nabla f \|_{\shc^{\g+2\theta}}
 \le c t^{\theta} \|f \|_{D \shc^{\g+2\theta}}.
\end{equation}

Adding together \eqref{3.4Quinque} and \eqref{3.5}, one gets \eqref{eq:schaud2DC}.


\subsection{Proof of Lemma \ref{lm:issoglio}}
\label{app:B2}


\medskip
We observe that for all $\forall\  0\le s < t\le T$ we have that  $P_{s-t}h(s)\in\shc^{1+\beta}$ thanks
to \eqref{eq:schauder1} in Lemma \ref{lm:schauder}. Let $\eps>0$.
Let $\alpha < \beta$ such that $h \in C_T\shc^{-\alpha}$ 
and $\nu>0$ such that $\theta:=\frac{1+\al+\be+2\nu}2<1$. We have

        	\begin{align*}
		\left\| \int_{t+\eps}^T P_{s-(t+\eps)}h(s)ds\right.&\left.-\int_t^T P_{s-t}h(s)ds\right\|_{\shc^{1+\beta}}\\
		&\le \left\| \int_{t+\eps}^T \left( P_{s-(t+\eps)} h(s) - P_{s-t} h(s) \right) ds \right\|_{\shc^{1+\beta}} + \left\| \int_t^{t+\eps} P_{s-t} h(s) \, ds \right\|_{\shc^{1+\beta}} \\
           &\le \int_{t+\eps}^T \| P_{s-(t+\eps)} h(s) - P_{s-t} h(s) \|_{\shc^{1+\beta}} \, ds + \int_t^{t+\eps} \| P_{s-t} h(s) \|_{\shc^{1+\beta}} \,  \\
          &= \int_{t+\eps}^T \| P_{s-(t+\eps)}(I-P_\eps)h(s) \|_{\shc^{1+\beta}} \, ds + \int_t^{t+\eps} \| P_{s-t} h(s) \|_{\shc^{1+\beta}} \, ds \\       
		&\le c\left( \int_{t+\eps}^T (s-(t+\eps))^{-\theta}  \| h(s)-P_\eps h(s) \|_{\shc^{-(\alpha+2\nu)}}  ds + \int_t^{t+\eps} (s-t)^{-\frac{1+\al+\beta}2} \| h(s) \|_{\shc^{-\alpha}} \, ds\right) \\
		&\le c\left( \int_{t+\eps}^T (s-(t+\eps))^{-\theta} |\eps|^\nu \| h(s) \|_{\shc^{-\alpha}} \, ds + \int_t^{t+\eps} (s-t)^{-\frac{1+\al+\beta}2} \, ds \, \| h \|_{B_T \shc^{-\alpha}} \right)\\
     &\le \| h \|_{B_T \shc^{-\alpha}} c |\eps|^\nu \left( \int_{t+\eps}^T (s-(t+\eps))^{-\theta} \, ds +
   \int_t^{t+\eps} (s-t)^{-\frac{1+\al+\beta}2} \, ds \right)\\
        &= \| h \|_{B_T \shc^{-\alpha}} c  \left(|\eps|^\nu \frac{(T - t - \eps)^{1-\theta}}{1-\theta} + \frac{2\eps^{\frac{1-\al-\beta}2}}{1-\al-\beta} \right) \to 0 \quad \ \text{ as } \eps \downarrow 0. 
	\end{align*}
        At the fifth line we have used \eqref{eq:schauder1} with $\gamma = -(\alpha + 2 \nu)$ so that $\gamma + 2\theta = 1+\beta$.
        At the sixth line, we have performed \eqref{eq:schauder2}
        with $\theta = \nu, \gamma = - \alpha - 2 \nu$
        so that $\gamma + 2 \theta = -\alpha$.

        This concludes the proof.

        
        \section{Technical complements about the Kolmogorov PDE}

        \label{app:C}

This section focuses on some technical aspects  of the
        Kolmogorov PDE \eqref{eq:KolmPDE}, keeping in ùind
the parabolic generator  $\mathcal L$ 
given by \eqref{eq:shl}.
        
\subsection{Proof of Theorem \ref{thm:unDC}}

\label{app:C1}

We make use of \eqref{eq:mild}.
 we define  $ v(t):= u(t) - P_{T-t} u_T, t \in [0,T].$ Of course we have
              	\begin{equation}\label{eq:vmild}
		v(t) = \int_t^T P_{s-t} \left(\nabla u(s)\cdot B(s) \right) d s - \int_t^T P_{s-t}  g(s)d s, \ \forall t\in[0,T],
	\end{equation}
	thus we get immediately that $v\in C_T\shc^{1+\beta}$, since both 
\begin{equation} \label{eq:1+beta}
        \int_t^T P_{s-t}(\nabla u(s)\cdot B(s))ds, \  \int_t^T P_{s-t}g(s) ds \in
	C_T\shc^{1+\beta},
\end{equation}
        thanks to Lemma \ref{lm:issoglio} applied to $\nabla u\cdot B$ and $g$.
        
We proceed proving {\it item i)}. We set $\eta:=\frac{1+\al+\beta}{2}<1$.
        Lemma \ref{lm:schauder}, with $\theta = \eta$, and \eqref{eq:bony} yield
	\begin{align}\label{eq:Auxil}
	\notag	\| v(t)\|_{\shc^{1+\beta}} 
		\leq& \left\|\int_t^T   P_{s-t}(\nabla u(s)\cdot B(s))ds+\int_t^T  P_{s-t}g(s)ds\right\|_{\shc^{1+\beta}}\\
	\notag	\leq &  c \int_t^T (s-t)^{-\eta}\|\nabla u(s)\|_{\shc^{\beta}}\|B(s)\|_{\shc^{-\al}}ds+c\int_t^T (s-t)^{-\eta}\|g(s)\|_{\shc^{-\al}}ds\\
	\notag	\leq &  c \int_t^T (s-t)^{-\eta}\|\nabla u(s)\|_{\shc^{\beta}}\|B(s)\|_{\shc^{-\al}}ds+c\|g\|_{C_T\shc^{-\al}}\frac{T^{1-\eta}}{1-\eta} \\
	\notag	\leq &  c \|B\|_{C_T\shc^{-\al}}\int_t^T (s-t)^{-\eta}\|\nabla u(s)\|_{\shc^{\beta}}ds+c\frac{T^{1-\eta}}{1-\eta}    \|g\|_{C_T\shc^{-\al}}\\
		\leq &  c \|B\|_{C_T\shc^{-\al}}\int_t^T (s-t)^{-\eta}\| u(s)\|_{ D\shc^{\beta}}ds+c \frac{T^{1-\eta}}{1-\eta}   \|g\|_{C_T\shc^{-\al}}.
	\end{align}
By  \eqref{eq:schaud1DC} of
Lemma \ref{lm:schauderDC} with $\gamma = \beta, \theta  = 0$,
recalling $\|\cdot\|_{ D\shc^{\beta}}\leq\|\cdot\|_{\shc^{1+\beta}}$ and
taking into account
 \eqref{eq:Auxil},
we obtain
\begin{align}
  \label{eq:uDCal}
	\notag	\| u(t)\|_{D\shc^\beta}
         	& \le \| P_{T-t}u_T\|_{D\shc^\beta}+ \Vert v(t)\Vert_{D\shc^\beta} \\
             		& \le C_T\| u_T\|_{ D\shc^\beta}+ \Vert v(t)\Vert_{{\shc^{1+\beta}}} \\
  \notag	& \leq  C_T\| u_T\|_{ D\shc^\beta}+ c \|B\|_{C_T\shc^{-\al}}
      \int_t^T (s-t)^{-\eta}\| u(s)\|_{ D\shc^{\beta}}ds+c \frac{T^{1-\eta}}{1-\eta}   \|g\|_{C_T\shc^{-\al}}.
	\end{align}
	Then,  by using a generalised Gronwall's inequality \cite{ye} and taking the $\sup_{t \in[0,T]}$, we obtain
	\begin{align*}
		\notag\| u\|_{C_TD\shc^\beta}\leq& \left(c \frac{T^{1-\eta}}{1-\eta}  \|g\|_{C_T\shc^{-\al}}+C_T\| u_T\|_{ D\shc^\beta}\right)E_{1-\eta}(\Gamma(1-\eta)T^{1-\eta} c \|B\|_{C_T\shc^{-\al}})\\
		=&R_{\al,\beta}(T,\|B\|_{C_T\shc^{-\al}},\|g\|_{C_T\shc^{-\al}},\|u_T\|_{ D\shc^\beta}),
	\end{align*}
        where
        \begin{equation} \label{eq:Ralpha}
          R_{\al,\beta}(T, x, y, z):= \left(c \frac{T^{1-\eta}}{1-\eta} y + C_T z\right)
          E_{1-\eta}(\Gamma(1-\eta)
          T^{1-\eta} c x),
          \end{equation}
	in which $\Gamma(x)$ is the Euler Gamma function and $E_{1-\eta}(x)$ is the Mittag-Leffler function of parameter $1-\eta$. Thus, we have proven \eqref{eq:nablaunDC}, i.e. item i).

  Concerning items (ii) and (iii),   we proceed now in two steps.
  \begin{itemize}
  \item[1)]
Let $0 < \eps <  \beta$.
    We first prove, for $0 \le s \le t \le T$, that
    \begin{equation}\label{eq:holdvn}
          \|v (t,\cdot)-v  (s,\cdot)\|_{\shc^{1+\eps}}\leq
(t-s)^{\frac{\be-\eps}2}\big(R_{\al,\beta}(T,\|B\|_{C_T\shc^{-\al}},\|g\|_{C_T\shc^{-\al}},\|u_T\|_{ D\shc^\beta})\|B\|_{C_T\shc^{-\al}}+\|g\|_{C_T\shc^{-\al}}\big)
\frac{4cT^{1-\eta}}{1-\be-\al}.
              \end{equation}
By \eqref{eq:vmild} we have
              \begin{align*}
		\| v(t)- v(s)\|_{\shc^{1+\eps}} \leq &  \left\|\int_t^T  P_{r-t}(\nabla u(r)\cdot B(r))dr-\int_s^T  P_{r-s}(\nabla u(r)\cdot B(r))dr\right\|_{\shc^{1+\eps}}\\ \notag
		&+\left\|\int_t^T  P_{r-t}g(r)dr-\int_s^T  P_{r-s}g(r)dr\right\|_{\shc^{1+\eps}}  \\        
		=&   \left\|\int_t^T  (I-P_{t-s})P_{r-t}(\nabla u(r)\cdot B(r))dr-\int_s^t  P_{r-s}(\nabla u(r)\cdot B(r))dr\right\|_{\shc^{1+\eps}}\\
		&+\left\|\int_t^T  (I-P_{t-s})P_{r-t}g(r)dr-\int_s^t  P_{r-s}g(r)dr\right\|_{\shc^{1+\eps}}\\
		\leq &\int_t^T \|( I-P_{t-s})P_{r-t}(\nabla u(r)\cdot B(r))\|_{\shc^{1+\eps}}dr+\int_s^t\| P_{r-s}(\nabla u(r)\cdot B(r))\|_{\shc^{1+\eps}}dr\\
		&+\int_t^T \left\| (I-P_{t-s})P_{r-t}g(r)\right\|_{\shc^{1+\eps}}dr+\int_s^t \left\| P_{r-s}g(r)\right\|_{\shc^{1+\eps}}dr\\
		=:&J^1_{t,s}+J^2_{t,s}.
	\end{align*}
 Now using 	\eqref{eq:schauder2},
	\eqref{eq:schauder1}, 
	\eqref{eq:bony}
	and we use the estimates \eqref{eq:nablaunDC}  on $u$, we obtain that
	\begin{align*}
		J^1_{t,s}\leq &  c(t-s)^{\frac{\be-\eps}2}\int_t^T \|P_{r-t}(\nabla u(r)\cdot B(r))\|_{\shc^{1+\beta}}dr+ c \int_s^t   (r-s)^{-\frac{1+\eps+\al}{2}}\| \nabla u(r)\cdot B(r) \|_{\shc^{-\al}}dr\\
		\leq & c(t-s)^{\frac{\be-\eps}2}\int_t^T (r-t)^{-\eta}\|\nabla u(r)\cdot B(r)\|_{\shc^{-\al}}dr+c\int_s^t (r-s)^{-\frac{1+\eps+\al}{2}}\|\nabla u(r)\cdot B(r)\|_{\shc^{-\al}}dr\\
		\leq & c(t-s)^{\frac{\be-\eps}2}\int_t^T (r-t)^{-\eta}\|\nabla u(r)\|_{\shc^\beta}\|B(r)\|_{\shc^{-\al}}dr+c\int_s^t (r-s)^{-\frac{1+\eps+\al}{2}}\|\nabla u(r)\|_{\shc^\beta}\|B(r)\|_{\shc^{-\al}}dr\\
		\leq & c\|\nabla u\|_{C_T\shc^\beta}\|B\|_{C_T\shc^{-\al}}\left((t-s)^{\frac{\be-\eps}2}\int_t^T (r-t)^{-\eta}dr+c\int_s^t (r-s)^{-\frac{1+\eps+\al}{2}}dr\right)\\
		\leq & cR_{\al,\beta}(T,\|B\|_{C_T\shc^{-\al}},\|g\|_{C_T\shc^{-\al}},\|u_T\|_{ D\shc^\beta})\|B\|_{C_T\shc^{-\al}}\left((t-s)^{\frac{\be-\eps}2}\int_t^T (r-t)^{-\eta}dr+c\int_s^t (r-s)^{-\frac{1+\eps+\al}{2}}dr\right)\\
		\leq &  cR_{\al,\beta}(T,\|B\|_{C_T\shc^{-\al}},\|g\|_{C_T\shc^{-\al}},\|u_T\|_{ D\shc^\beta})\|B\|_{C_T\shc^{-\al}}\left((t-s)^{\frac{\be-\eps}2} \frac{(T-t)^{1-\eta}}{1-\eta}+\frac{2}{1-\eps-\al}(t-s)^{\frac{1-\eps-\al}{2}}\right),
	\end{align*}
	where, in the second to last line, we have used the fact that $\|\nabla u\|_{C_T\shc^\beta}\leq \| u\|_{C_TD\shc^\beta}$ and
        then applied \eqref{eq:nablaunDC}.

	Since $\al<\be$ and from Assumption \ref{ass:beta}, we deduce $\be<1-\be$ and $\displaystyle{\frac{\be-\eps}2<\frac{1-\eps-\al}2}$. We furthermore observe that $1-\eta=\displaystyle{\frac{1-\eps-\al}2-\frac{\be-\eps}2}$, then,
      for all $s\leq t\leq T$ 
	\begin{align}\label{eq:I1tsDC}
		\notag   J^1_{t,s} \leq & cR_{\al,\beta}(T,\|B\|_{C_T\shc^{-\al}},\|g\|_{C_T\shc^{-\al}},\|u_T\|_{ D\shc^\beta})\|B\|_{C_T\shc^{-\al}}(t-s)^{\frac{\be-\eps}2}\left( \frac{(T-t)^{1-\eta}}{1-\eta}+\frac{2}{1-\eps-\al}(t-s)^{1-\eta}\right)\\
		\leq & cR_{\al,\beta}(T,\|B\|_{C_T\shc^{-\al}},\|g\|_{C_T\shc^{-\al}},\|u_T\|_{ D\shc^\beta})\|B\|_{C_T\shc^{-\al}}(t-s)^{\frac{\be-\eps}2}T^{1-\eta}  \frac{4}{1-\be-\al}.
	\end{align} 
Concerning $J^2_{t,s}$, by similar (but simpler) calculations as for $J^1_{t,s}$, we get
\begin{equation}
  \label{eq:I2tsDC}
  J^2_{t,s}    	\leq   c \|g\|_{C_T\shc^{-\al}}T^{1-\eta}(t-s)^{\frac{\be-\eps}2}  \frac{4}{1-\be-\al}.
			\end{equation}
                        Therefore  adding \eqref{eq:I1tsDC}  and \eqref{eq:I2tsDC},
                        we get \eqref{eq:holdvn}.

                      \item[2)] Next we analyse the impact of the terminal condition $u_T$, proving that,
               for  $0 \le s \le t \le T$ we have
  \begin{equation}\label{eq:PuTDC}
   \| P_{T-s} u_T -  P_{T-t} u_T\|_{D \shc^\varepsilon}  \le cC_T (t-s)^{\frac{\be-\eps}2} \Vert u_T \Vert_{D \shc^\beta}
 \end{equation}
 and
  \begin{equation}\label{eq:PuTC}
   \| P_{T-s} u_T -  P_{T-t} u_T\|_{\shc^{1+\varepsilon}}  \le c (t-s)^{\frac{\be-\eps}2} \Vert u_T \Vert_{\shc^{1+\beta}}.
 \end{equation}
 In order to estimate \eqref{eq:PuTDC}, we
borrow  Lemma \ref{lm:schauderDC}.
 We first make use of  \eqref{eq:schaud2DC}, with $f=P_{T-t}u_T$, $t=t-s$,
$\gamma = \beta$,
$\theta=\frac{\be-\eps}{2}$, then \eqref{eq:schaud1DC}, with $\theta=0$ and $t=T-t$.
For all $s\leq t\leq T$, we have
 \begin{align*}
  \| P_{T-s} u_T -  P_{T-t} u_T\|_{D\shc^{\eps}}&\leq  \| (P_{t-s}-I)P_{T-t} u_T\|_{D\shc^{\eps}}\leq c(t-s)^{\frac{\be-\eps}{2}} \|P_{T-t}u_T\|_{D\shc^\beta}\\
  &\leq cC_T (t-s)^{\frac{\be-\eps}{2}}  \Vert u_T \Vert_{D \shc^\beta}.
 \end{align*}
 For what concerns \eqref{eq:PuTC} the procedure is the same using \eqref{eq:schauder1} and \eqref{eq:schauder2} from Lemma \ref{lm:schauder}.
\end{itemize}
We set now 
\begin{equation} \label{eq:R0}
  R^0_{\alpha, \beta}(T, x,y,z) := (R_{\al,\beta}(T,x,y,z) x + y)c T^{1-\eta} \frac{4}{1-\be-\al} + c(1+C_T)z.
  \end{equation}
iii) follows from \eqref{eq:holdvn}, \eqref{eq:PuTC} and the triangle inequality. ii) follows again from the triangle inequality, \eqref{eq:PuTDC} and
\eqref{eq:holdvn} observing that
$$     \|v (t,\cdot)-v  (s,\cdot)\|_{D\shc^{\eps}} \leq
  \|v (t,\cdot)-v  (s,\cdot)\|_{\shc^{1+\eps}}.$$ 
 This concludes the proof of the theorem.


      \subsection{Continuity of the Kolmogorov PDE with
        respect to unbounded Besov-H\"older spaces\\  and
        a generalized Zvonkin's transform}
        
        \label{ssc:propshl}

We extend below some analytical properties proven in Lemmata 3.5-3.7 of \cite{issoglio_russoMPb} for the space $\shc^{(1+\beta)+}$
to the function space $D\shc^{\beta+}$. 





\begin{lemma}\label{lm:gen3.5}
  Let $g: [0,T] \times \R^d\to \R$ (resp. $h: [0,T] \times \R^d\to \R^d$) of class $C_TD\shc^{\beta+}$
  (resp. $C_T D\shc^{\beta+}(\R^d;\R^d)$). Let $(g^n)\subseteq C_T D\shc^{\beta+}$
  converging to $g$.
  Then $g\circ h\in C_T D\shc^{\beta+}$ 
  and $g^n\circ h\to g\circ h$ in $C_T D\shc^{\beta+}$.
\end{lemma}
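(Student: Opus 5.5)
The approach is a direct estimate of the $D\shc^{\beta+}$ norm of the composition, based on the chain rule $\nabla(g\circ h)(t,x)=(\nabla h(t,x))^\top\nabla g(t,h(t,x))$. Fix $\al>\beta$ with $g,h\in C_TD\shc^{\al}$ and $g^n\to g$ in $C_TD\shc^{\al}$. Such an $\al$ exists by the characterisation of inductive spaces (Lemma B.2 of \cite{issoglio_russoMK}, which adapts verbatim to $D\shc$). We also use that $\nabla h$ is bounded, so $h(t,\cdot)$ is Lipschitz with constant $\|\nabla h\|_{C_T\shc^{\al}}$. The target is a bound in $D\shc^{\al'}$ for some $\beta<\al'\le\al$; in fact we aim for $\al'=\al$.

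First step: pointwise bounds. We have $|g(t,h(t,0))|\le |g(t,0)|+\|\nabla g(t)\|_\infty |h(t,0)|$, which is bounded in $t$. For the gradient, $\|\nabla(g\circ h)(t)\|_\infty\le \|\nabla h(t)\|_\infty\|\nabla g(t)\|_\infty$. For the Hölder seminorm we use the product-type estimate
$$\|\nabla h^\top (\nabla g\circ h)\|_{\al}\le \|\nabla h\|_\al\|\nabla g\|_\infty+\|\nabla h\|_\infty\|\nabla g\circ h\|_\al,$$
together with $\|\nabla g\circ h\|_\al\le \|\nabla g\|_\al \|\nabla h\|_\infty^\al$, which follows from the Lipschitz property of $h$. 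This gives $\|(g\circ h)(t)\|_{D\shc^\al}\le C(\|g(t)\|_{D\shc^\al},\|h(t)\|_{D\shc^\al})$. By the same inequalities applied to $g^n-g$, which enters linearly, we get $\|(g^n-g)\circ h\|_{C_TD\shc^\al}\le C_h\|g^n-g\|_{C_TD\shc^\al}\to0$. This already yields the convergence claim.

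Second step: time continuity of $t\mapsto (g\circ h)(t)$ in $D\shc^{\al'}$, which is the main obstacle. Write
$$g(t)\circ h(t)-g(s)\circ h(s)=(g(t)-g(s))\circ h(t)+\bigl(g(s)\circ h(t)-g(s)\circ h(s)\bigr).$$
The first term is controlled by the first step, uniformly in $t$, and tends to zero as $s\to t$.

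The second term is harder, because $\nabla g(s)$ is only $\al$-Hölder and $h(t)-h(s)\to0$ only uniformly. To handle it we sacrifice a little regularity. Pick $\beta<\al'<\al$ and interpolate: differences of the form $\nabla g(s)\circ h(t)-\nabla g(s)\circ h(s)$ are bounded in sup norm by $\|\nabla g\|_\al\|h(t)-h(s)\|_\infty^\al$, and in $\al$-seminorm by $2\|\nabla g\|_\al\sup\|\nabla h\|_\infty^\al$. Interpolating these two bounds, the $\al'$-seminorm is controlled by $(\sup\text{-part})^{1-\al'/\al}(\al\text{-part})^{\al'/\al}\to0$. Here $\|h(t)-h(s)\|_\infty$ is finite and tends to zero, since $|h(t,x)-h(s,x)|\le |h(t,0)-h(s,0)|+|x|\,\|\nabla h(t)-\nabla h(s)\|_\infty$. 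For unbounded $x$ this is problematic, so we instead estimate via the Lipschitz property localised on balls $|x-y|<1$ (Remark \ref{rm:equivnorm}). If a global sup bound is genuinely needed, we replace it by $\|\nabla h(t)-\nabla h(s)\|_\infty$ through the chain rule, which covers the gradient-factor term. The factor $\nabla h(t)-\nabla h(s)$ is handled directly by continuity of $h$ in $C_TD\shc^\al$.

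Combining the two steps, $g\circ h\in C_TD\shc^{\al'}\subseteq C_TD\shc^{\beta+}$. The convergence of $g^n\circ h$ holds in the same space by the first-step estimate with $\al'$ in place of $\al$.
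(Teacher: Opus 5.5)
Your first step is sound and is essentially the paper's convergence argument: a chain-rule bound on $\|(g^n-g)\circ h\|_{D\shc^{\al}}$, uniform in $t$. Your estimate $\|\nabla g\circ h\|_{\al}\le\|\nabla g\|_{\al}\|\nabla h\|_\infty^{\al}$ is in fact the correct form of the H\"older bound the paper writes.

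The gap is in your second step, in the piece $\nabla h(s)^\top\bigl(\nabla g(s)\circ h(t)-\nabla g(s)\circ h(s)\bigr)$ of $\nabla\bigl(g(s)\circ h(t)-g(s)\circ h(s)\bigr)$. Your interpolation needs $\sup_x|h(t,x)-h(s,x)|\to0$ as $s\to t$. But $h\in C_TD\shc^{\al}$ only gives $|h(t,x)-h(s,x)|\le|h(t,0)-h(s,0)|+|x|\,\|\nabla h(t)-\nabla h(s)\|_\infty$, which is unbounded in $x$; the bound you write contradicts your claim that this supremum is finite. Neither workaround helps. Restricting to $|x-y|<1$ only changes the H\"older seminorm, not the sup-norm part of the $\shc^{\al'}$-norm. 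Passing to $\|\nabla h(t)-\nabla h(s)\|_\infty$ only controls the other piece, $(\nabla h(t)-\nabla h(s))^\top\,\nabla g(s)\circ h(t)$.

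This step cannot be closed, because the membership claim fails under the stated hypotheses. Take $d=1$, $g(t,x)=-\cos x$ and $h(t,x)=(1+t)x$. Both lie in $C_TD\shc^{\al}$ for every $\al\in(0,1)$; for instance $\|h(t)-h(s)\|_{D\shc^{\al}}=|t-s|$. Yet $\partial_x(g\circ h)(t,x)=(1+t)\sin((1+t)x)$. Evaluating at points $x$ with $(1+s)x\in 2\pi\mathbb{Z}$ shows that $\|\partial_x(g\circ h)(t)-\partial_x(g\circ h)(s)\|_\infty$ stays bounded away from $0$ as $s\to t$, so $g\circ h\notin C_TD\shc^{\beta+}$.

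The paper's own proof passes over exactly this point. It asserts $\nabla g\circ h\in C_T\shc^{\beta+}$ and invokes the algebra property, which handles the product but not the time continuity of the composition.

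What is missing is an extra hypothesis such as $\sup_x|h(t,x)-h(s,x)|\to0$ as $s\to t$, for example $h-id\in C_T\shc^{(1+\beta)+}(\R^d;\R^d)$. This holds for $h=\phi$ under Assumption \ref{ass:phiell}(ii), which is the only place the lemma is used (Lemma \ref{lm:gen3.6}). Under that hypothesis your argument goes through as written:
\begin{itemize}
\item the sup norm is at most $\|\nabla g(s)\|_{\al}\|h(t)-h(s)\|_\infty^{\al}\to0$;
\item the $\al$-seminorm is at most $2\|\nabla g(s)\|_{\al}\sup_r\|\nabla h(r)\|_\infty^{\al}$;
\item interpolating gives convergence in $\shc^{\al'}$ for $\al'<\al$, and the product estimate finishes.
\end{itemize}
This also supplies the step the paper omits.
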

\begin{remark} The function $g\circ h:[0,T]\times \R^d \rightarrow \R$
  is defined by $(g\circ h)(t,x) := g(t,h(t,x), (t,x) \in [0,T] \times \R^d$. 
  \end{remark}
\begin{proof}[Proof of Lemma \ref{lm:gen3.5}]
  Obviously $g\circ h$ globally continuous and  differentiable in space because it is composition of
  differentiable functions in space.
  Therefore proving that $g\circ h \in C_T D\shc^{\beta+}$
  reduces to proving that $\nabla (g\circ h) \in C_T \shc^{\beta+}(\R^d;\R^d)$.
  Since $\nabla (g\circ h)=\nabla g\circ h \nabla h$ is a matrix product of two 
  elements (componentwise)  in $C_T \shc^{\beta+}$, there exists an $\al>\be$ such that $\nabla g\circ h$
  and $\nabla h$ are (componentwise)  in $C_T \shc^\al$. Now 
taking into account \eqref{eq:bonyt}, it is immediate to show that
$C_T \shc^\al$ is an algebra.
Thus $\nabla (g\circ h)\in\shc^\al(\R^d;\R^d)$, hence in $\shc^{\beta+}(\R^d;\R^d)$. 
	
Since $g^n\to g$ in $D\shc^{\beta+}$, there exists an $\al>\be$ such that $g^n\to g$ in
$D\shc^\al$. We bound the $D\shc^\al$-norm of $(g^n\circ h-g\circ h)(t,\cdot)$,
uniformly in $t$.
It is made of two following terms.
	\begin{align*}
          \|(g^n\circ h-g\circ h)(t,\cdot)\|_{D\shc^\al}&=|(g^n\circ h)(t,0)-(g\circ h)(t,0)|+
          \|\nabla(g^n\circ h-g\circ h)(t,\cdot)\|_{\shc^\al} \\
         &\le \sup_{t \in [0,T]}|(g^n\circ h)(t,0)-(g\circ h)(t,0)| 
          +
          \|\nabla(g^n\circ h-g\circ h)\|_{C_T \shc^\al}.
	\end{align*}
	The first sum converges clearly to $0$ since $g^n$ converges uniformly to $g$.
        For what concerns the term involving the $\shc^\al$-norm we proceed by direct calculations, i.e. 
	\begin{align*}
          \|\nabla(g^n\circ h-g\circ h)\|_{C_T \shc^\al}=&
          \sup_{t\in[0,T]}\|\nabla(g^n\circ h-g\circ h)(t,\cdot)\|_{\infty}+
                                  \sup_{t\in[0,T]}\|\nabla(g^n\circ h-g\circ h)(t,\cdot)\|_{\al}\\
          \leq &\|\nabla(g^n-g)\|_{\infty}\|\nabla h\|_{\infty}+
                 \sup_{t\in[0,T]} \|\nabla(g^n\circ h-g\circ h)(t,\cdot)\|_{\al}\\
          \leq &\|\nabla(g^n-g)\|_{C_T \shc^\al}\|\nabla h\|_{\infty}+
                 \sup_{t\in[0,T]} \|\nabla(g^n\circ h-g\circ h)(t,\cdot)\|_{\al}.\\
	\end{align*}
	The first summand converges to $0$ thanks to convergence of $g^n$ in $C_T D\shc^\al$, whilst we analyse the second one separately. Thanks to Remark \ref{rm:equivnorm} we can make use of the equivalent formulation of the $\shc^\al$-norm. Omitting $t$ we obtain
	\begin{align*}
		\|\nabla(g^n\circ h-g\circ h)\|_{\al}
		=&\sup_{\underset{|x-y|<1}{x,y\in\R^d}}\frac{\nabla(g^n\circ h-g\circ h)(x)-\nabla(g^n\circ h-g\circ h)(y)}{|x-y|^\al}\\
		=&\sup_{\underset{|x-y|<1}{x,y\in\R^d}}\frac{\nabla(g^n-g)(h(x))\nabla h(x)-\nabla(g^n-g)(h(y))\nabla h(y)}{|x-y|^\al}\\
		= &\sup_{\underset{|x-y|<1}{x,y\in\R^d}}\frac{(\nabla(g^n-g)(h(x))-\nabla(g^n-g)(h(y)))\nabla h(x)+\nabla(g^n-g)(h(y))(\nabla h(x)-\nabla h(y))}{|x-y|^\al}\\
		\leq &\|\nabla(g^n-g)\|_{\infty}\sup_{\underset{|x-y|<1}{x,y\in\R^d}}\frac{|h(x)-h(y)|}{|x-y|^\al}\|\nabla h\|_{\infty}+\|\nabla(g^n-g)\|_{\infty}\|\nabla h\|_{\al}\\
          \leq &\|\nabla(g^n-g)\|_{\infty}\|\nabla h\|^2_{\infty}+\|\nabla(g^n-g)\|_{\infty}\|\nabla h\|_{\al}
                 \le \|\nabla(g^n-g)\|_{\shc^\al}\left(\|\nabla h\|^2_{\infty}+\|\nabla h\|_{\al}\right).
	\end{align*}
        Consequently, taking the sup in $t$, we get
        \begin{equation*}
          \sup_{t\in[0,T]} \|\nabla(g^n\circ h-g\circ h)(t,\cdot)\|_{\al} \le
          \|\nabla(g^n-g)\|_{C_T \shc^\al}\left(\|\nabla h\|^2_{C_T \shc^{\alpha}}+\|\nabla h\|_{C_T \al}\right).
          \end{equation*}
        Since $g^n\to g$ in $C_T D\shc^\al$, letting $n$ go to infinity concludes the proof.
\end{proof}

Throughout the rest of the  section $\phi$ will be a function satisfying the following assumptions.
\begin{assumption}\label{ass:phiell}
	\begin{itemize}
        \item[(i)] There exists $\ell = (\ell^1,\ldots, \ell^d)^\top \in C_T\shc^{0+}(\R^d;\R^d)$ such that
          $\phi=(\phi^1,\ldots,\phi^d)^\top \in
          C_TD\shc^{(1-\beta)-}(\R^d;\R^d)$ is the unique solution to  $\shl \phi (t,z)=\ell(t,z), \ \phi(T,\cdot)=id$. 
		\item[(ii)] $u:=\phi-id$ is of class $C_T\shc^{(2-\beta)-}$.
		\item[(iii)] $\nabla \phi(t,z)\not=0$ for all $(t,z)\in[0,T]\times \R^d$, $\phi(t,\cdot)$ is bijective for all $t\in[0,T]$.
                 
	\end{itemize}
\end{assumption}
\begin{remark}\label{rmk:philam}
  \begin{enumerate}
    \item  In the sequel $\psi:[0,T]\times \R^d\to\R$
      will denote, the function such that, for each $t$, 
      $\phi^{-1}(t,\cdot)=\psi(t,\cdot)$.
    \item In the special case of $\ell=\lam (\phi_\lam-id)$ as in (4.30) of \cite{issoglio_russoPDEa}
      this $\phi:=\phi_\lam$ exists.
\end{enumerate}
    \end{remark}
\begin{remark} \label{rmk:phi}
  According to Theorem 4.7 in \cite{issoglio_russoPDEa}, since
  $\phi_T\in \shc^{(2-\beta)-}(\R^d;\R^d)$, we have $\phi\in C_T\shc^{(2-\beta)-}(\R^d,\R^d)$.
\end{remark}

\begin{lemma}\label{lm:gen3.6}
  Let $\tilde f \in C_{ubuc}^{1,2}([0,T]\times \R^d)$ and let $\phi$ satisfy
  Assumption \ref{ass:phiell}. Then  $f:=\tilde f \circ \phi \in C_TD\shc^{\beta+}$.
\end{lemma}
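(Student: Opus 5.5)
We have to show that $f=\tilde f\circ\phi$ is continuous on $[0,T]\times\R^d$, differentiable in space, and that $t\mapsto \nabla f(t,\cdot)$ is continuous with values in $\shc^{\alpha}(\R^d;\R^d)$ for some $\alpha>\beta$. The chain rule gives
\begin{equation*}
\nabla f(t,x)=\nabla\phi(t,x)^\top\,\nabla\tilde f(t,\phi(t,x)).
\end{equation*}
Fix $\alpha\in(\beta,1-\beta)$. This is possible because $\beta<\frac12$. By Assumption \ref{ass:phiell}(i) and Remark \ref{rmk:phi}, $\phi\in C_TD\shc^{(1-\beta)-}$, so $\nabla\phi\in C_T\shc^{\alpha}(\R^d;\R^{d\times d})$. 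In particular $\nabla\phi$ is bounded, and hence $\phi(t,\cdot)$ is Lipschitz uniformly in $t$. The space $C_T\shc^{\alpha}$ is an algebra (as used in Lemma \ref{lm:gen3.5}). It therefore suffices to show that $G(t,x):=\nabla\tilde f(t,\phi(t,x))$ belongs to $C_T\shc^{\alpha}(\R^d;\R^d)$.

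We first prove that $G(t)\in\shc^\alpha$ uniformly in $t$. Since $\tilde f\in C^{1,2}_{ubuc}$, we have $\|\nabla\tilde f\|_\infty<\infty$ and $\|{\rm Hess}\tilde f\|_\infty<\infty$. Hence
\begin{equation*}
|G(t,x)-G(t,y)|\le \|{\rm Hess}\tilde f\|_\infty\|\nabla\phi\|_\infty|x-y|.
\end{equation*}
Using the equivalent norm of Remark \ref{rm:equivnorm}, where only $|x-y|<1$ matters, this yields $\|G(t)\|_{\shc^\alpha}\le \|\nabla\tilde f\|_\infty+\|{\rm Hess}\tilde f\|_\infty\|\nabla\phi\|_\infty$. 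The factor $f(t,0)=\tilde f(t,\phi(t,0))$ is continuous in $t$ because $\tilde f$ and $\phi$ are.

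Time continuity is the main obstacle. We need $\|G(t)-G(s)\|_{\shc^\alpha}\to0$ as $s\to t$. We interpolate between sup norm and Lipschitz seminorm. For $|x-y|<1$, we have $|h(x)-h(y)|\le \min(2\|h\|_\infty,\mathrm{Lip}(h)|x-y|)$, which gives $\|h\|_\alpha\le 2\|h\|_\infty^{1-\alpha}\mathrm{Lip}(h)^{\alpha}$. The Lipschitz constants of $G(t)-G(s)$ are uniformly bounded by the previous step, so it is enough to prove $\|G(t)-G(s)\|_\infty\to0$. We split
\begin{equation*}
|G(t,x)-G(s,x)|\le|\nabla\tilde f(t,\phi(t,x))-\nabla\tilde f(s,\phi(t,x))|+\|{\rm Hess}\tilde f\|_\infty|\phi(t,x)-\phi(s,x)|.
\end{equation*}
For the second term, write $\phi=id+u$ with $u\in C_T\shc^{(2-\beta)-}$ (Assumption \ref{ass:phiell}(ii)). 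Then $\phi(t)-\phi(s)=u(t)-u(s)\to0$ uniformly. For the first term we need $\nabla\tilde f(s,\cdot)\to\nabla\tilde f(t,\cdot)$ uniformly in space. This does not follow from mere continuity, so I would use the structure of $C^{1,2}_{ubuc}$: bounded $\partial_t\tilde f$ together with bounded, uniformly continuous ${\rm Hess}\tilde f$. The plan is to write, for small $h$,
\begin{equation*}
\partial_i\tilde f(t,z)\approx \frac{\tilde f(t,z+he_i)-\tilde f(t,z)}{h},
\end{equation*}
with an error bounded by $h\,\omega(h)+h\|{\rm Hess}\tilde f\|_\infty$ uniformly in $t,z$, where $\omega$ is the modulus of continuity of ${\rm Hess}\tilde f$. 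The difference quotients are Lipschitz in time with constant $2\|\partial_t\tilde f\|_\infty/h$. Choosing first $h$ small and then $|t-s|$ small gives uniform convergence of $\nabla\tilde f(s,\cdot)$ to $\nabla\tilde f(t,\cdot)$.

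If this step fails, the fallback is to restrict to the $\tilde f$ actually used in the paper (of class $C^{1,2+}_{ub}$), where the needed uniform time continuity of the gradient is available. Combining the three steps gives $\nabla f\in C_T\shc^{\alpha}$ with $\alpha>\beta$, hence $f\in C_TD\shc^{\beta+}$.
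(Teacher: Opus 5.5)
Your proof is correct, and it takes a genuinely different route from the paper's. The paper reduces the statement to the composition Lemma \ref{lm:gen3.5} with $g=\tilde f$ and $h=\phi$. It then only checks $\nabla\tilde f\in C_T\shc^{\alpha}$:
\begin{itemize}
\item the sup-norm part by asserting that $\partial_{x_k}\tilde f$ is uniformly continuous in time, uniformly in space;
\item the H\"older seminorm of $\partial_{x_k}\tilde f(t)-\partial_{x_k}\tilde f(s)$ by the mean value theorem and the joint uniform continuity of ${\rm Hess}\,\tilde f$.
\end{itemize}
You instead handle the composition by hand, via $\nabla f=(\nabla\phi)^\top(\nabla\tilde f\circ\phi)$ and the algebra property. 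You replace the mean value argument by the interpolation $\|h\|_\alpha\le 2\|h\|_\infty^{1-\alpha}{\rm Lip}(h)^\alpha$, so only sup-norm time continuity is needed.

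Your route buys three things.
\begin{itemize}
\item You never use uniform continuity of ${\rm Hess}\,\tilde f$: boundedness of $\partial_t\tilde f$, $\nabla\tilde f$ and ${\rm Hess}\,\tilde f$ suffices.
\item You prove, rather than assume, the time continuity of $\nabla\tilde f$ uniformly in space.
\item You explicitly use Assumption \ref{ass:phiell}(ii) to get $\sup_x|\phi(t,x)-\phi(s,x)|\to 0$.
\end{itemize}
The last point is not cosmetic. Lemma \ref{lm:gen3.5} assumes only $h\in C_TD\shc^{\beta+}$, and under that hypothesis alone time continuity can fail. Take $d=1$, $g(t,x)=\sin x$ and $h(t,x)=(1+t)x$. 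Then $\nabla(g\circ h)(t,x)=(1+t)\cos((1+t)x)$, which is not continuous in $t$ for the sup norm. So the paper's reduction tacitly needs exactly the uniform convergence you extract from (ii). The paper's route is shorter and modular; yours is more self-contained and does not depend on that gap.

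Your fallback to $C^{1,2+}_{ub}$ is unnecessary, because the difference-quotient step is sound. Taylor's formula bounds the quotient error by $\frac h2\|{\rm Hess}\,\tilde f\|_\infty$, so no modulus $\omega$ is needed. The quotients vary in time by at most $2\|\partial_t\tilde f\|_\infty|t-s|/h$. Choosing $h=|t-s|^{1/2}$ gives $\|\nabla\tilde f(t,\cdot)-\nabla\tilde f(s,\cdot)\|_\infty\le(\|{\rm Hess}\,\tilde f\|_\infty+2\|\partial_t\tilde f\|_\infty)|t-s|^{1/2}$.
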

\begin{proof}
  The idea consists in making use of Lemma \ref{lm:gen3.5} with $g = \tilde f$
  and $h = \phi$.
  Since $\phi$ fulfills Assumption \ref{ass:phiell},
  it remains to prove that $\tilde f \in C_T D\shc^{\beta+}$.
  Since $\tilde f$ is continuous, we need to prove that
  $\nabla \tilde f \in C_T\shc^{\beta+}$.
  Setting $H:= \partial_{x_k} \tilde f$ for a given $1\le k \le d$,
  we prove that $H \in C_T\shc^{\beta+}$.
  Let $\alpha > \beta$ and $0 \le t \le T$.
  For $s \in [0,T]$ we have
	\begin{equation}\label{eq:1gen3.6}
          \left\|H(s,\cdot)-H(t,\cdot) \right\|_{\shc^\al}
          =\left\|H(s,\cdot)-H(t,\cdot) \right\|_{\infty}
          +\left\|H(s,\cdot)-H(t,\cdot) \right\|_{\al}.
	\end{equation}
        Since $H$ is uniformly continuous, the  first summand converges to zero when $s$ goes $t$.
We now come to the $\al$-seminorm and we get
	\begin{align}\label{eq:3gen3.6}
          \notag \left\|H(t,\cdot)-H(s,\cdot) \right\|_{\al}=
          &\sup_{\underset{|x-y|<1}{x,y\in\R^d}}\frac{|H(t,x)-H(t,y)-H(s,x)+H(s,y)|}{|x-y|^\al}\\
          \notag \leq & \sup_{\underset{|x-y|<1}{x,y\in\R^d}}
                        \frac{\int_0^1|\nabla H(t,ax+(1-a)y)-\nabla H(s,ax+(1-a)y)|da|x-y|}{|x-y|^\al}\\
		\leq & \sup_{\underset{|x-y|<1}{x,y\in\R^d}}\int_0^1|\nabla H(t,ax+(1-a)y)-\nabla H(s,ax+(1-a)y)|da.
	\end{align}
        The latter converges to zero when $s$ goes to $t$ since $\nabla H$
        is uniformly continuous in time uniformly in space.
        Indeed ${\rm Hess} \tilde f$ is uniformly continuous.

      \end{proof}

\begin{lemma}\label{lm:gen3.7}
  Let $\phi$ satisfy Assumption \ref{ass:phiell}, $\tilde f\in C_{ubuc}^{1,2}$ and $\tilde \shl$ be the operator given by
  \begin{equation}\label{eq:tildeL}
		\tilde \shl \tilde f:= \partial_t \tilde f+\frac12 {\rm Tr}\left[(\nabla \phi \circ \psi)^\top\cdot{\rm Hess}\tilde f\cdot( \nabla \phi \circ \psi)\right]+ \nabla \tilde f \cdot \ell.
	\end{equation}
	Then
\begin{equation} \label{eq:dl0}
  f:=\tilde f\circ \phi \in C_TD\shc^{\beta}\cap C^1([0,T];\shs')
\end{equation}
  and 
	\begin{equation}\label{eq:gen3.7}
		(\tilde \shl \tilde f)\circ \phi= \shl f,
	\end{equation}
	in $C_T\shc^{0+}$, or equivalently
	\begin{equation}\label{eq:gen3.7bis}
		\tilde \shl \tilde f= (\shl f) \circ \psi.
	\end{equation}
	
\end{lemma}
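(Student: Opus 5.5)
We first establish the regularity claim \eqref{eq:dl0} and then verify the identity \eqref{eq:gen3.7} by approximating the drift. By Lemma \ref{lm:gen3.6}, $f=\tilde f\circ\phi\in C_TD\shc^{\beta+}\subseteq C_TD\shc^{\beta}$. For the time regularity in $\shs'$ we will use Remark \ref{rmk:phi}: $\phi\in C_T\shc^{(2-\beta)-}$, and $\phi$ solves $\shl\phi=\ell$, so that $\phi\in C^1([0,T];\shs')$ (Remark 4.4 of \cite{issoglio_russoPDEa}). Since $\tilde f$ is $C^{1,2}$ with bounded derivatives, the chain rule in $\shs'$ then gives $\partial_t f=(\partial_t\tilde f)\circ\phi+(\nabla\tilde f\circ\phi)\cdot\partial_t\phi$. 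The second factor is a product of a $C_T\shc^{\beta+}$ function with a $C_T\shc^{(-\beta)+}$-type distribution, so this expression is continuous in $\shs'$. This gives $f\in C^1([0,T];\shs')$.

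To obtain \eqref{eq:gen3.7} we approximate. Set $B^n:=P_{1/n}B$, let $\shl^n$ be the corresponding operator, and let $\phi^n$ solve $\shl^n\phi^n=\ell$ with $\phi^n(T)=id$. By Proposition \ref{pr:lunardi}, applied componentwise to $\phi^n-id$, each $\phi^n$ is classical. By Corollary \ref{cor:unDCBis} and Remark \ref{rmk:unDCBis}, $\phi^n\to\phi$ in $C_TD\shc^{\beta+}$; in particular $\nabla\phi^n\to\nabla\phi$ uniformly. For $n$ large, $\phi^n(t,\cdot)$ is a bijection with inverse $\psi^n\to\psi$ locally uniformly. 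Set $f^n:=\tilde f\circ\phi^n$. The classical chain rule gives
\[
\shl^n f^n=(\partial_t\tilde f)\circ\phi^n+\tfrac12{\rm Tr}\big[(\nabla\phi^n)^\top({\rm Hess}\tilde f\circ\phi^n)\nabla\phi^n\big]+(\nabla\tilde f\circ\phi^n)\cdot\shl^n\phi^n.
\]
Since $\shl^n\phi^n=\ell$, this equals $(\tilde\shl^n\tilde f)\circ\phi^n$, where $\tilde\shl^n$ is defined with $\phi^n,\psi^n$ in place of $\phi,\psi$.

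We then pass to the limit. Writing $\shl f^n=\shl^n f^n+\nabla f^n\cdot(B-B^n)$, the bound \eqref{eq:bonyt} together with the uniform bound on $\|\nabla f^n\|_{C_T\shc^\beta}$ shows that $\nabla f^n\cdot(B-B^n)\to 0$ in $C_T\shc^{(-\beta)+}$. Next we check that $f^n\to f$ in $C_TD\shc^{\beta}$, using the computation of Lemma \ref{lm:gen3.5} with varying inner function, so that $\shl f^n\to\shl f$ in $\shs'$. On the other side, the right-hand side $(\tilde\shl^n\tilde f)\circ\phi^n$ converges locally uniformly to $(\tilde\shl\tilde f)\circ\phi$ by uniform continuity of ${\rm Hess}\,\tilde f$. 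The two limits coincide in $\shs'$, which yields \eqref{eq:gen3.7}. Finally, $(\tilde\shl\tilde f)\circ\phi\in C_T\shc^{0+}$ follows from the H\"older regularity of $\nabla\phi$, of ${\rm Hess}\,\tilde f$ and of $\ell$, and composing with $\psi$ gives \eqref{eq:gen3.7bis}.

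The main obstacle is the convergence of $\shl f^n$ to $\shl f$, and more precisely of the term $\partial_t f^n$, which must be handled in $\shs'$. We will control $\partial_t f^n$ through the mild formulation rather than directly: we pass to the limit on the $C_T D\shc^\beta$ quantities, and identify $\partial_t$ only in the sense of distributions.
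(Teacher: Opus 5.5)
Your argument is essentially sound, but it follows a different route from the paper's.

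\textbf{The paper's route.} It stays at the distributional level throughout. It differentiates $f=\tilde f\circ\phi$ by a chain rule in $\shs'$, using $\partial_t\phi^k\in C_T\shc^{(-\beta)-}$ and the Bony products $(\partial_{x_k}\tilde f\circ\phi)\cdot\partial_{x_ix_i}\phi^k$. It then regroups $\nabla f\cdot B$ as $\sum_k(\partial_{x_k}\tilde f\circ\phi)(\nabla\phi^k\cdot B)$ by the cited associativity of the pointwise product, and collapses the bracket with $\shl\phi^k=\ell^k$.

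\textbf{Your route.} You regularise $B$, do the chain rule classically on $\phi^n=id+u^n$ with $u^n\in C_b^{1,2+}$ (Proposition \ref{pr:lunardi}), and pass to the limit using only Corollary \ref{cor:unDCBis}, Remark \ref{rmk:unDCBis} and \eqref{eq:bonyt}.

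\textbf{What each buys.} The paper's computation is shorter, but it uses chain and product rules for $\tilde f\circ\phi$ in the distributional setting without proof. Yours needs only classical calculus and stability of the Kolmogorov PDE. It also gives more than you use. Take the limit in $\mathcal D'((0,T)\times\R^d)$; this is all your last paragraph needs, and there is no mild formulation for $\tilde f\circ\phi$ to invoke anyway. You then get $\partial_t f=(\tilde\shl\tilde f)\circ\phi-\frac12\Delta f-\nabla f\cdot B$ with right-hand side in $C([0,T];\shs')$. This yields $f\in C^1([0,T];\shs')$ directly, so the formal chain rule in your first paragraph can be dropped.

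\textbf{Points to tighten.}

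First, drop $\psi^n$ and $\tilde\shl^n$. Bijectivity of $\phi^n(t,\cdot)$ for large $n$ is not justified by Assumption \ref{ass:phiell}(iii), which gives nondegeneracy only pointwise. You only need to compare $\shl^n f^n$ with the explicit expression $(\partial_t\tilde f)\circ\phi^n+\frac12{\rm Tr}[(\nabla\phi^n)^\top({\rm Hess}\,\tilde f\circ\phi^n)\nabla\phi^n]+(\nabla\tilde f\circ\phi^n)\cdot\ell$. In that expression $\ell$ is evaluated at $(t,x)$. So the limit is $(\tilde\shl\tilde f)\circ\phi$ only if the drift in \eqref{eq:tildeL} is read as $\ell\circ\psi$, which is also how the paper's \eqref{eq:tLf} reads it.

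Second, for $\tilde f\circ\phi^n\to\tilde f\circ\phi$ in $C_TD\shc^\beta$, convergence in $C_TD\shc^{\beta+}$ gives only locally uniform convergence of $\phi^n$. Use instead $\phi^n-\phi=u^n-u\to0$ in $C_T\shc^{(1+\beta)+}$; this follows from Remark \ref{rmk:unDCBis}, since $\shl^n u^n=\ell-B^n$ and $u^n(T)=0$. Then interpolate the $\beta$-H\"older seminorm of $\nabla\tilde f\circ\phi^n-\nabla\tilde f\circ\phi$ between its sup norm, which is at most $\|{\rm Hess}\,\tilde f\|_\infty\|\phi^n-\phi\|_\infty$, and its uniformly bounded Lipschitz constant.

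Third, your claim that $(\tilde\shl\tilde f)\circ\phi\in C_T\shc^{0+}$ invokes H\"older regularity of ${\rm Hess}\,\tilde f$. The class $C^{1,2}_{ubuc}$ does not provide this: it gives only uniform continuity of ${\rm Hess}\,\tilde f$ and no spatial regularity of $\partial_t\tilde f$. The paper asserts this membership without argument too. In general one obtains the identity in $C([0,T];\shs')$ with a bounded continuous right-hand side. Membership in $C_T\shc^{0+}$ is automatic in the application in Theorem \ref{thm:rMPext}, where $\tilde\shl\tilde f^n=\tilde g$ with $g\in C_T\shc^{0+}$.
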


\begin{proof}
  To show that  $f\in C^1([0,T], \mathcal S')$ we compute the time
  partial derivative  $\partial_t  f $. 
	Recall that  $\tilde f \in C^{1,2}$ by assumption, and that $\phi:[0,T]\times \R^d \to \R^d$ and $f, \tilde f: [0,T]\times \R^d \to \R$. We have
	\begin{equation}\label{eq:fdot}
          t \mapsto \partial_t f (t, \cdot) =  {\partial_t \tilde f }(t, \phi(t, \cdot)) + \sum_{k=1}^d \partial_{x_k} \tilde f(t, \phi(t, \cdot))
          \cdot \partial_t \phi^k (t, \cdot).
	\end{equation}
	We show that  the right-hand side of equation \eqref{eq:fdot} is  in $C_T\mathcal S'$. 

	For the first term in  \eqref{eq:fdot}  we clearly have the claim because   $ \partial_t {\tilde f} \circ \phi$ is uniformly continuous  in $t,x$. 
	The second term in  \eqref{eq:fdot}
	has products of the form  $(\partial_{x_k} \tilde f \circ \phi)
       \cdot \partial_t \phi^k$, where $\partial_t\phi^k \in C_T \mathcal C^{(-\beta)-}$, thanks to Proposition 4.15 of \cite{issoglio_russoPDEa},  and
        $\partial_{x_k} \tilde f \circ \phi \in C_T \mathcal C^{\beta+}$. Hence, the product is well-defined and continuous by  \eqref{eq:bonyt}.
	This  shows that $f\in C^1([0,T]; \mathcal S')$
        and by Lemma \ref{lm:gen3.6},   \eqref{eq:dl0} is now established.

	We now apply $\mathcal L$ to $f$, and to do so we need to calculate the derivatives of $f$. 
	The first space derivative $\partial_{x_i} f$ of $f$ is
	\begin{align*}
		\partial_{x_i} f (t, \cdot) = \sum_{k=1}^d  \partial_{x_k}  \tilde f (t, \phi(t, \cdot)) \partial_{x_i} \phi^k (t, \cdot) , t\in[0,T]
	\end{align*}
	and the second derivative is given by
	\begin{align*}
          \partial_{x_ix_i} f (t, \cdot) &=  \sum_{k=1}^d\left [ \sum_{l=1}^d ( \partial_{x_lx_k}  \tilde f (t, \phi(t, \cdot)) \partial_{x_i} \phi^l (t, \cdot) ) \partial_{x_i} \phi^k (t, \cdot)  + \partial_{x_k}
         \tilde f (t, \phi(t, \cdot)) \cdot \partial_{x_ix_i} \phi^k(t, \cdot) \right]\\
		&=   \left ((\nabla \phi)^T (\text{Hess}(\tilde f)\circ \phi)  \nabla \phi  \right)_{ii}(t, \cdot)+ \sum_{k=1}^d  \partial_{x_k} \tilde f (t, \phi(t, \cdot)) \cdot \partial_{x_ix_i} \phi^k(t, \cdot) , t\in[0,T].
	\end{align*}
	Note that $\partial_{x_i} f (t, \cdot)$ for all $ t\in[0,T]$ is a function in $ \mathcal C^{\beta+}$ by  Lemma \ref{lm:gen3.6}, thus it is a well-defined object in $\mathcal C^{(-\beta)-}$.
	The second derivative   $\partial_{x_ix_i} f (t, \cdot) $ is made of two terms: the first one  is  a bounded function by assumptions on $\tilde f$ and $\phi$,  and the second one is well-defined in $\mathcal C^{(-\beta)-}$ again by means of the pointwise product \eqref{eq:bony}, where for all  $ t\in[0,T]$  the distributional term $\partial_{x_ix_i}\phi^k (t, \cdot)$ is in $\mathcal C^{ (-\beta)-}$  since $\partial_{x_i}\phi^k (t, \cdot) \in \mathcal C^{ (1-\beta)-} $, by Assumption \ref{ass:phiell}. Using these we determine $\shl f$, i.e.
	\begin{align}\label{eq:Lf} 
		\nonumber
		(\mathcal L f) (t, \cdot)
		=& \partial_t  {\tilde f} (t, \phi(t, \cdot)) + \frac12 \sum_{i=1}^d \left ( (\nabla \phi)^T (\text{Hess}(\tilde f)\circ \phi) \nabla \phi \right)_{ii}(t,\cdot)\\
		&+ \sum_{k=1}^d  \partial_{x_k} \tilde f (t, \phi(t, \cdot)) \left[ \partial_t \phi^k (t, \cdot) + \frac12  \sum_{i=1}^d\partial_{x_ix_i} \phi^k(t, \cdot) +\partial_{x_i} \phi^k (t, \cdot) \cdot B^i(t, \cdot) \right], t\in [0,T],
	\end{align}
	where the last term $(\partial_{x_k} \tilde f (t, \phi(t, \cdot))
        \partial_{x_i} \phi^k (t, \cdot)) \cdot  B^i(t, \cdot)$ is well-defined in $\mathcal C^{(-\beta)+}$  by \eqref{eq:bony}  used twice and Remark 2.7 of \cite{BIRMcKean}. Thus  equality \eqref{eq:Lf} holds  in the space $\mathcal C^{(-\beta)-}$.
	Now we observe that $\mathcal L \phi^k   = \ell^k$ 
	because again of Assumption \ref{ass:phiell}.
	Thus the equality above becomes
	\begin{align}\label{eq:Lf2}
		\nonumber
		(\mathcal L f) (t, \cdot)
          =& \partial_t  {\tilde f} (t, \phi(t, \cdot)) + \frac12 \sum_{i=1}^d \left ((\nabla \phi)^T (\text{Hess}(\tilde f)\circ \phi)  \nabla \phi \right)_{ii}(t, \cdot)+ \sum_{k=1}^d  \partial_{x_k} \tilde f (t, \phi(t, \cdot))
             \ell^k(t,\cdot)\\ 
		=& \partial_t  {\tilde f} (t, \phi(t, \cdot)) + \frac12 \text{Tr} \left (  (\nabla \phi)^T (\text{Hess}(\tilde f)\circ \phi) \nabla \phi\right)(t, \cdot)+  \nabla \tilde f (t, \phi(t, \cdot)) \ell(t,\cdot) , t \in [0,T].
	\end{align}
	On the other hand, applying $\tilde \shl$ to $\tilde f \in C^{1,2}_{ubuc}$ and then composed with $\phi$ together with the identity $\psi(t, \phi(t, \cdot)) = id$,  one easily gets 
	\begin{align}\label{eq:tLf}
		\nonumber
		(\tilde {\mathcal L} \tilde f )(t, \phi(t, \cdot))
		=& \partial_t{  \tilde f} (t, \phi(t, \cdot)) + \frac12 \text{Tr} \left ( (\nabla \phi)^T (\text{Hess}(\tilde f)\circ \phi)  \nabla \phi \right)(t, \cdot)\\
		& + \nabla \tilde f (t, \phi(t, \cdot)) \ell(t,\cdot), t\in[0,T].
	\end{align}
	Now using \eqref{eq:Lf2} and \eqref{eq:tLf} we get
        $t\mapsto \mathcal L f(t,\cdot) =(\tilde {\mathcal L} \tilde f )(t, \phi(t, \cdot)) $ in $C([0,T];\mathcal S')$. 
	We observe that the right-hand side of \eqref{eq:tLf} belongs to $C_T \mathcal C^{0+}$. 
	Setting $g:=(\tilde {\mathcal L} \tilde f )\circ \phi$ we can conclude that  $\mathcal L f =g \in C_T \mathcal C^{0+}$. Given that both sides are functions, we can compose them with $\psi $ to get $\tilde {\mathcal L} \tilde f=  ({\mathcal L}  f  ) \circ \psi $.
\end{proof}

   {\bf Acknowledgements.} 
The author L.\ Bondi was partially supported by GNAMPA Progetti di Ricerca 2026 CUP: E53C25002010001.
The author E.\ Issoglio was partially supported by
PRIN-PNRR2022 (P20224TM7Z) CUP: D53D23018780001, from EU - Next Generation
EU - PRIN2022 (202277N5H9) CUP: D53D23005670006 and  GNAMPA Progetti di Ricerca 2026 CUP: E53C25002010001.
   The author F.\ Russo was  supported by the  ANR-22-CE40-0015-01 (SDAIM). 

\bibliographystyle{plain}
\bibliography{../../BIBLIO_FILE/Biblio_Tesi_Luca}

\end{document}